\newtheorem{theorem}{Theorem}[section]
\newtheorem*{theorem*}{Theorem}
\newtheorem{corollary}[theorem]{Corollary}
\newtheorem{lemma}[theorem]{Lemma}
\newtheorem{proposition}[theorem]{Proposition}
\newtheorem{claim}[theorem]{Claim}
\newtheorem{fact}{Fact}[section]
\newtheorem{example}{Example}[section]
\newtheorem{question}{Question}[section]
\theoremstyle{definition}
\newenvironment{remark}[1][Remark]{\begin{trivlist}
\item[\hskip \labelsep {\bfseries #1}]}{\end{trivlist}}
\newcommand{\cts}{C(2^\nn)}
\newcommand{\infin}{[\mathbb{N}]}
\newcommand{\rr}{\mathbb{R}}
\newcommand{\nn}{\mathbb{N}}
\newcommand{\ee}{\varepsilon}
\newcommand{\aaa}{\mathcal{A}}
\newcommand{\bbb}{\mathcal{B}}
\newcommand{\fff}{\mathcal{F}}
\newcommand{\gggg}{\mathcal{G}}
\newcommand{\sss}{\mathcal{S}}
\newcommand{\nnn}{\mathcal{N}}
\newcommand{\supp}{\mathrm{supp}}
\DeclareMathOperator{\sgn}{sgn}
\newcommand{\uuu}{\mathcal{U}}
\newcommand{\ord}{\mathbf{Ord}}
\newcommand{\Ban}{\mathbf{Ban}}
\newcommand{\rank}{\text{rank}}
\newcommand{\mt}{\mathcal{MT}}
\newcommand{\ttt}{\mathcal{T}}
\newcommand{\qn}{(\mathbb{Q}\times \mathbb{N})^{<\nn}}
\begin{document}

\begin{abstract} We introduce and study the Bourgain index of an operator between two Banach spaces. In particular, we study the Bourgain $\ell_p$ and $c_0$ indices of an operator.  Several estimates for finite and infinite direct sums are established.  We define classes determined by these indices and show that some of these classes form operator ideals.  We characterize the ordinals which occur as the index of an operator and establish exactly when the defined classes are closed.  We study associated indices for non-preservation of $\ell_p^\xi$ and $c_0^\xi$ spreading models and indices characterizing weak compactness of operators between separable Banach spaces.  We also show that some of these classes are operator ideals and discuss closedness and distinctness of these classes.

\end{abstract}

\title[Classes determined by ordinal indices]{Classes of operators \\ determined by ordinal indices}
%\author{Kevin Beanland, Ryan M Causey, \\ Daniel Freeman, Ben Wallis}

\author{Kevin Beanland}
\address{Department of Mathematics, Washington and Lee University, Lexington, VA 24450}
\email{beanlandk@wlu.edu}

\author{Ryan Causey}
\address{Department of Mathematics, University of South Carolina, Columbia, SC 29208}
\email{causeyrm@mailbox.sc.edu}

\author{Daniel Freeman}
\address{Department of Mathematics and Computer Science, Saint Louis University, St. Louis, MO 63103}
\email{dfreema7@slu.edu}

\author{Ben Wallis}
\address{Department of Mathematical Sciences, Northern Illinois University, DeKalb, IL
60115}
\email{wallis@math.niu.edu}

\thanks{2010 \textit{Mathematics Subject Classification}. Primary: 46B28; Secondary: 03E15.}
\thanks{\textit{Key words}: operator ideals, ordinal ranks.}

\maketitle

\tableofcontents

\addtocontents{toc}{\setcounter{tocdepth}{1}}

\section{Introduction}

The Bourgain $\ell_1$ index uses trees and ordinal numbers as a way
of quantifying the representation of the unit vector basis for
$\ell_1$ in a Banach space \cite{Bo}.  The larger the Bourgain
$\ell_1$ index of a Banach space, the better represented the unit
vector basis for $\ell_1$ is in that space. In particular, a
separable Banach space has countable Bourgain $\ell_1$ index if and
only if $\ell_1$ does not embed into the space.  It was quickly
realized that the analagous index for other bases could provide
useful results as well. For instance, Bourgain  used the
corresponding index for a basis of $C(2^\nn)$ to prove that if $X$
is a separable Banach space such that every separable reflexive
Banach space embeds into $X$ then every separable Banach space
embeds into $X$ as well \cite{Bo1}. Given a basic sequence
$(e_i)_{i=1}^\infty$, we introduce in Section 3 an ordinal index of
operators between Banach spaces which quantifies the property of an
operator not preserving the basis $(e_i)_{i=1}^\infty$. We call this
the non-preservation $(e_i)_{i=1}^\infty$ index, and it is a natural
generalization of the Bourgain $(e_i)_{i=1}^\infty$ index of a
Banach space in the sense that the Bourgain index of a Banach space
is the non-preservation index of the identity operator on that
space.

The classification of operator ideals is a fundamental area of
research in the study of operators on Banach spaces, and thus when a
new operator property is introduced, it is natural to consider its
connection to operator ideals. In \cite{ADST}, an ordinal index is
constructed which quantifies the property of an operator being
strictly singular.  It was hoped that this index could be used to
define new operator ideals, but  an example was later given of two
$\sss_1$-strictly singular operators whose sum was not $\sss_1$
strictly singular \cite{OT}.  It is unknown if the
$\sss_\zeta$-strictly singular operators form an ideal for any
countable ordinal $\zeta$, but  it follows from Proposition 2.4 in
\cite{ADST} that for each countable ordinal $\zeta$ the set of
operators whose strictly singular index is less than $\omega^\zeta$
forms an ideal.  For each $1\leqslant p \leqslant\infty$ and ordinal
$\zeta$, we let $\mathfrak{NP}_p^{\zeta}$ denote the set of
operators whose non-preserving $(e_i)$ index is at most $\zeta$
where $(e_i)$ is the unit vector basis for $\ell_p$ (or $c_0$ in the
case that $p=\infty$).  For each ordinal $\zeta$, we prove that
$\mathfrak{NP}_1^{\omega^\zeta}$ is a closed operator ideal and that
for each infinite ordinal $\zeta$, $\mathfrak{NP}_\infty^{\zeta}$ is
a closed operator ideal. For the other cases of $1\leqslant
p\leqslant \infty$, we have that $\cup_{\xi<\omega^\zeta}
\mathfrak{NP}_p^{\xi}$ is an operator ideal.

 The higher order spreading models, use higher order Schreier sets to measure the
asymptotic structure of a sequence in a Banach space.  Given
$1\leqslant p\leqslant \infty$ and a countable ordinal $\xi$, the
existence of an $\ell_p^\xi$ spreading model in a Banach space $X$
is a strong measurement of the representation of $\ell_p$ in $X$. In
particular, if $X$ contains  an $\ell_p^\xi$ spreading model then
the Bourgain index of $X$ is at least $\omega^\xi$ (the order of the
Schreier-$\xi$ family), but we show in Section 7 that there exist
Banach spaces whose Bourgain index is at least $\omega^\xi$ and
which do not contain even an $\ell_p^1$ spreading model. We let
$\mathfrak{SM}_p^\xi$ denote the set of bounded operators which
don't preserve any $\ell_p^\xi$ spreading model.  We prove that for
all countable ordinals $\xi$, both $\mathfrak{SM}_1^\xi$ and
$\mathfrak{SM}_\infty^\xi$ are closed operator ideals, and that for
$1\leqslant p\leqslant \infty$ we have that $\cup_{\xi<\omega^\zeta}
\mathfrak{SM}_p^{\xi}$ is an operator ideal.  In \cite{BF}, an
ordinal index is constructed to measure the weak compactness of an
operator in an analogous way to how strictly singularity is measured
in \cite{ADST}.  We prove that for every countable ordinal $\xi$, an
operator $A$ is $\sss_\xi$ weakly compact if and only if $A$ is
weakly compact and $A\in\mathfrak{SM}_1^{\xi}$.  Thus, we have that
the set of $\sss_\xi$ weakly compact operators forms a closed ideal.

So far, all of the ideals we have considered are constructed using
the unit vector basis $(e_i)$ for $\ell_p$ or $c_0$. In these cases,
for any bounded operators $A$ and $B$, we are able to obtain
explicit bounds for the non-preservation $(e_i)$ index of $A+B$ in
terms of the individual indexes of $A$ and $B$.  It is natural to
ask what can be proven for other bases. Unfortunately, our proofs
implicitly make use of the fact that the unit vector basis for
$\ell_p$ or $c_0$ is equivalent to all its normalized block bases.
Thus, our proofs cannot be generalized to any other basic sequences.
However, given a basic sequence $(e_i)$, we may not be able to
explicitly calculate a bound for the non-preservation $(e_i)$ index
of $A+B$ in terms of the individual
 indexes of $A$ and $B$, but we would like to know if such a bound
 exists.  In section 8 we introduce a property $(S')$ analogous to
 Dodos' property $(S)$ \cite{D} and use descriptive set theory
 techniques to prove that if $(e_i)$ is a Schauder basis with
 property $(S')$ then there is a function
 $\psi_{(e_i)}:[1,\omega_1)\rightarrow[1,\omega_1)$ so that for every countable ordinal $\xi$, if $X$ and
 $Y$ are separable Banach spaces and $A$ and $B$ are bounded
 operators from $X$ to $Y$ whose non-preserving $(e_i)$ index is at
 most $\xi$ then the non-preserving $(e_i)$ index of $A+B$ is at
 most $\psi(\xi)$.

\section{Trees, orders, and combinatorial lemmas}

\subsection{Minimal trees and Schreier families}

Throughout, we let $\Ban$ denote the class of Banach spaces, $\textbf{SB}$ the class of separable Banach spaces, $\ord$ the class of ordinal numbers. For $X,Y\in \Ban$, we will let $\mathfrak{L}(X,Y)$ denote the bounded, linear operators, in the sequel referred to simply as operators, from $X$ to $Y$.   We let $\omega$ (resp. $\omega_1$) denote the first infinite (resp. uncountable) ordinal.

If $X$ is a Banach space, we let $S_X$, $B_X$ denote the unit sphere and unit ball of $X$, respectively.  For a subset $S$ of a Banach space, we let $[S]$ denote the closed span of $S$.  For $K\geqslant 1$ and a (finite or infinite) sequence $(x_i)$ in a Banach space, we say $(x_i)$ is $K$-basic if for all scalar sequences $(a_i)$ and all $m\leqslant n$, $n$ not exceeding the length of $(x_i)$, $$\|\sum_{i=1}^m a_ix_i\|\leqslant K\|\sum_{i=1}^n a_ix_i\|.$$ The basis constant of a basic sequence is the smallest $K$ so that the sequence is $K$-basic.  

If $(e_i), (f_i)$ are sequences of the same length in (possibly different) Banach spaces, we say $(e_i)$ is $K$-dominated by $(f_i)$ if for all scalar sequences $(a_i)$ (finitely non-zero in the case that $(e_i)$ and $(f_i)$ are infinite), $$\|\sum a_ie_i\|\leqslant K\|\sum a_if_i\|.$$  In this case, we will write $(e_i)\lesssim_K (f_i)$.  We write $(e_i)\approx_K (f_i)$ to mean that there exist $a, b>0$ with $ab\leqslant K$ so that $(e_i)\lesssim_a (f_i)$ and $(f_i)\lesssim_b (e_i)$.  

If $E\in \Ban$, by an \emph{unconditional basis for} $E$, we shall mean an unordered, not necessarily countable set of vectors $(e_i)_{i\in I}\subset E$ so that each $x\in E$ has a unique representation $x=\sum a_ie_i$, with $\{i\in I: a_i\neq 0\}$ countable and $\sum a_ie_i$ unconditionally converging to $x$. We recall the definition of the coordinate functionals $(e_i^*)_{i\in I}\subset E^*$ corresponding to $(e_i)_{i\in I}$.  If $x=\sum_{i\in I}a_ie_i\in E$, and if $j\in I$, $e_j^*(x)=a_j$.  We recall that if $(e_i)_{i\in I}$ is an unconditional (resp. $1$-unconditional) basis for $E$, $(e_i^*)_{i\in I}$ is an unconditional (resp. $1$-unconditional) basis for its closed span in $E^*$.  We say $(e_i)_{i\in I}$ is \emph{shrinking} if $(e_i^*)_{i\in I}$ is a basis for $E^*$. This is equivalent to $(e_i^*)_{i\in I}$ having dense span in $E^*$, and equivalent to $E$ not containing an isomorphic copy of $\ell_1$.   We also recall the definition of the $p$-\emph{convexification} of a Banach space with $1$-unconditional basis.  If $E$ is a Banach space and $(e_i)_{i\in I}$ is a $1$-unconditional basis for $E$, for $1\leqslant p<\infty$, the $p$-convexification $E^p$ of $E$ is given by $$E^p=\Bigl\{\sum a_ie_i: \sum|a_i|^pe_i\in E\Bigr\}.$$  This is a Banach space when endowed with the norm $$\bigl\|\sum a_ie_i\bigr\|_{E^p}= \bigl\|\sum |a_i|^pe_i\bigr\|_E^{1/p}.$$ Often we will refer to the $p$-convexification of a Banach space $E$ having an unconditional basis $(e_i)$ without assuming the basis is $1$-unconditional.  In these instances, we will mean the $p$-convexification of $E$ with its equivalent norm $\|\cdot\|_0$ defined by $\|\sum a_ie_i\|_0=\sup_{|\ee_i|=1} \|\sum \ee_i a_ie_i\|$.  

If $\Lambda$ is a set, we let $\Lambda^\nn$ (resp. $\Lambda^{<\nn}$) denote the infinite (resp. finite) sequences in $\Lambda$, including the empty sequence.  If $s=(x_i)_{i=1}^n\in \Lambda$, we let $|s|=n$ and let $s|_k=(x_i)_{i=1}^k$ for any $0\leqslant k\leqslant n$.  We order $\Lambda^{<\nn}$ by letting $s\preceq t$ if $s=t|_{|s|}$, and in this case we say $s$ is an \emph{initial segment} of $t$, and that $t$ is an \emph{extension} of $s$.  For $s, t\in \Lambda^{<\nn}$, we let $s\verb!^!t$ denote the concatenation of $s$ with $t$ listing the members of $s$ first.  If $T\subset \Lambda^{<\nn}$ is downward closed with respect to the order $\preceq$, we say $T$ is a \emph{tree on} $\Lambda$ or, if the set $\Lambda$ is understood, simply a \emph{tree}.  If $T$ is a tree on $\Lambda$ and $t\in \Lambda^{<\nn}$, we let $$T(t)=\{s\in \Lambda^{<\nn}: t\verb!^!s\in T\}.$$  Note that $T(t)$ is a (possibly empty) tree on $\Lambda$.  We refer to the non-empty, linearly ordered subsets of a tree as \emph{chains} of the tree.  

If $T$ is a tree, we let $T'= T\setminus MAX(T)$, where $MAX(T)$ is the set of members of $T$ which are maximal with respect to $\preceq$.  By transfinite induction, we define the higher order derived trees $T^\xi$ of $T$ for each $\xi\in \ord$.  We let $$T^0=T,$$ $$T^{\xi+1}=(T^\xi)',$$ and if $T^\zeta$ has been defined for each $\zeta<\xi$, $\xi$ a limit ordinal, $$T^\xi= \bigcap_{\zeta<\xi}T^\zeta.$$  Note that for any $\xi\in \ord$ and $t\in \Lambda^{<\nn}$, $(T^\xi)(t)= (T(t))^\xi$, which can be shown by a standard induction argument.  Another fact easily verified by induction is that for any tree $T$ and any $\xi, \zeta\in \ord$, $(T^\zeta)^\xi=T^{\zeta+\xi}$.  

Of course, if $\zeta<\xi$, $T^\xi\subset T^\zeta$, and there must exist some $\xi\in \ord$ so that $T^\xi=T^{\xi+1}$.  If there exists $\xi\in \ord$ so that $T^\xi=\varnothing$, we say $T$ is \emph{well-founded}, and let $o(T)=\min \{\xi\in \ord: T^\xi=\varnothing\}$.  Otherwise, there exists $\xi\in \ord$ so that $T^\xi=T^{\xi+1}\neq \varnothing$, and in this case we say $T$ is \emph{ill-founded}, and we write $o(T)=\infty$.  By convention, we will declare that for any $\xi\in \ord \cup\{\infty\}$, $\xi\infty = \infty\xi=\infty$, and $\xi+\infty= \infty+\xi=\infty$. We also declare that $\xi<\infty$ for any $\xi\in \ord$. Note that $T$ is ill-founded if and only if there exists $(x_i)\in \Lambda^\nn$ so that $(x_i)_{i=1}^n \in T$ for all $n\in \nn$.  

For $1\leqslant p\leqslant \infty$, if $(x_i)_{i=1}^n$ is a sequence in a Banach space, we say $(y_i)_{i=1}^m$ is a $p$-\emph{absolutely convex block} of $(x_i)_{i=1}^n$ provided there exist $0=k_0<k_1<\ldots <k_m\leqslant n$ and scalars $(a_i)_{i=1}^n$ so that for each $1\leqslant j\leqslant m$, $(a_i)_{i= k_{j-1}+1}^{k_j}$ has norm $1$ in $\ell_p^{k_j-k_{j-1}}$ and $y_j=\sum_{i=k_{j-1}+1}^{k_j} a_ix_i$.  If $\Lambda$ is a subset of a Banach space, and if $T$ is a tree on $\Lambda$, we say $T$ is $p$-\emph{absolutely convex} if any $p$-absolutely convex block of a member of $T$ is also a member of $T$.  We will call $T$ \emph{block closed} if every normalized block of a member of $T$ is also a member of $T$.  

If $\Lambda$ is a set and $T\subset \Lambda^{<\nn}\setminus \{\varnothing\}$, we say $T$ is a $B$-tree if $T\cup \{\varnothing\}$ is a tree. Each of the notions for trees above can also be applied to $B$-trees.  The presentation of the main results of this work is significantly improved by including the empty sequence in the considerations, but the presentation of the proofs is much improved by only considering $B$-trees.  For this reason, we will readily use both.  We will define a collection of tree $\mt_\xi$, $\xi\in \ord$, and associated $B$-trees which will be useful in our considerations for witnessing the orders of given trees, in a sense which will be made apparent in the following proposition.

Let $$\mt_0=\{\varnothing\},$$ $$\mt_{\xi+1}=\{\varnothing\}\cup \{(\xi+1)\verb!^!t: t\in \mt_\xi\},$$ and if $\mt_\zeta$ has been defined for every ordinal $\zeta$ less than a limit ordinal $\xi$, we let $$\mt_\xi=\bigcup_{\zeta<\xi} \mt_{\zeta+1}.$$  Let $\ttt_\xi=\mt_\xi\setminus \{\varnothing\}$.  The following items are easily checked.  

\begin{proposition}\cite{C2}Fix $\xi\in \ord$.  \begin{enumerate}[(i)]\item $\ttt_\xi$ is a $B$-tree on $[1,\xi]$ with $o(\ttt_\xi)=\xi$.  \item If $\Lambda$ is any set and $T$ is a tree on $\Lambda$, then $o(T)>\xi$ if and only if there exists a function $f:\ttt_\xi\to \Lambda$ so that for each $t\in \ttt_\xi$, $(f(t|_i))_{i=1}^{|t|}\in T$.

\end{enumerate}
\label{indexing}

\end{proposition}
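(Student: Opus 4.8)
The plan is to prove both items by transfinite induction: (i) by a direct computation of tree orders, and (ii) by the standard monotone-tree-map argument. The only genuine care is needed in tracking the empty sequence.

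For (i), note first that $\mt_\xi$ is a tree on $[1,\xi]$ (so $\ttt_\xi=\mt_\xi\setminus\{\varnothing\}$ is a $B$-tree on $[1,\xi]$): downward closure is inherited at each step of the recursion, and prepending an entry from $[1,\xi]$ keeps all entries in $[1,\xi]$. I would then record three routine facts about $B$-trees: (a) an order isomorphism preserves every derived tree, hence the order; (b) $o(\{\varnothing\}\cup S)=o(S)+1$ for any $B$-tree $S$, since $(\{\varnothing\}\cup S)^\zeta=\{\varnothing\}\cup S^\zeta$ for $\zeta<o(S)$ and this vertex set is stripped one step later; (c) if $(T_j)_j$ are pairwise disjoint $B$-trees then $(\bigcup_j T_j)^\zeta=\bigcup_j T_j^\zeta$, so $o(\bigcup_j T_j)=\sup_j o(T_j)$. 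Granting these, $o(\mt_\xi)=\xi+1$ follows by induction on $\xi$: the case $\xi=0$ is clear; if $\xi=\eta+1$, then $t\mapsto(\eta+1)\con t$ is an order isomorphism of $\mt_\eta$ onto $\ttt_{\eta+1}$, whence $o(\mt_{\eta+1})=o(\{\varnothing\}\cup\ttt_{\eta+1})=o(\mt_\eta)+1=(\eta+1)+1$; if $\xi$ is a limit, then $\mt_\xi=\{\varnothing\}\cup\bigcup_{\zeta<\xi}\ttt_{\zeta+1}$ with the $\ttt_{\zeta+1}$ pairwise disjoint, whence $o(\mt_\xi)=\sup_{\zeta<\xi}o(\ttt_{\zeta+1})+1=\sup_{\zeta<\xi}(\zeta+1)+1=\xi+1$. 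Finally, $o(\ttt_\xi)=\xi$ because $o(\{\varnothing\}\cup\ttt_\xi)=o(\ttt_\xi)+1$ by (b) while the left side equals $o(\mt_\xi)=\xi+1$.

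For (ii) I may assume $T\neq\varnothing$, so $\varnothing\in T$ (if $T=\varnothing$ then for $\xi\geqslant1$ both sides fail). The key lemma is: if $F\colon\mt_\xi\to T$ is $\preceq$-monotone and length-preserving, hence satisfies $s\prec t\Rightarrow F(s)\prec F(t)$, then $F(\mt_\xi^\zeta)\subseteq T^\zeta$ for every $\zeta\in\ord$ --- a one-line transfinite induction, the successor step observing that a proper extension of $s$ inside $\mt_\xi^\zeta$ maps to a proper extension of $F(s)$ inside $T^\zeta$, so $F(s)$ is not maximal there. For ``if'', given $f$ put $F(\varnothing)=\varnothing$ and $F(t)=(f(t|_i))_{i=1}^{|t|}$ for $t\in\ttt_\xi$; as $s|_i=t|_i$ for $i\leqslant|s|$ whenever $s\preceq t$, this $F$ is $\preceq$-monotone and length-preserving and lands in $T$ by hypothesis, so by the lemma and (i), $\varnothing=F(\varnothing)\in F(\mt_\xi^\xi)\subseteq T^\xi$; thus $o(T)>\xi$.

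For ``only if'', suppose $o(T)>\xi$, so $\varnothing\in T^\xi$. Let $\rho(t)$ be the rank of $t$ in $\mt_\xi$ (the ordinal with $t\in\mt_\xi^{\rho(t)}\setminus\mt_\xi^{\rho(t)+1}$); by (i), $\rho(\varnothing)=\xi$, and the rank recursion gives $\rho(t)\geqslant\rho(t\con(\beta))+1$ for each child $t\con(\beta)\in\mt_\xi$. I would construct, recursively on $|t|$, a $\preceq$-monotone length-preserving $g\colon\mt_\xi\to T$ with $g(\varnothing)=\varnothing$ and $g(t)\in T^{\rho(t)}$: set $g(\varnothing)=\varnothing\in T^\xi$, and given $g(t)\in T^{\rho(t)}\subseteq(T^{\rho(t\con(\beta))})'$, the vertex $g(t)$ is non-maximal in the tree $T^{\rho(t\con(\beta))}$ and so has an immediate successor $g(t)\con(a)$ in it; set $g(t\con(\beta))=g(t)\con(a)$. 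Putting $f(t)=$ the last coordinate of $g(t)$ for $t\in\ttt_\xi$, monotonicity and length-preservation force $g(t)=(f(t|_i))_{i=1}^{|t|}\in T$, as needed. I foresee no real obstacle beyond this bookkeeping; the one thing to keep straight is that (ii) must go through $\mt_\xi$, of order $\xi+1$, rather than $\ttt_\xi$, of order $\xi$ --- it is precisely this extra root that sharpens the conclusion from $o(T)\geqslant\xi$ to $o(T)>\xi$ --- and that the ill-founded case of $T$ requires no separate argument, since every rank $\rho(t)$ used above is at most $\xi<\infty$.
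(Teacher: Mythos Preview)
The paper does not actually prove this proposition: it attributes it to \cite{C2} and remarks only that ``the following items are easily checked,'' without supplying an argument. So there is no in-paper proof to compare against.

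That said, your argument is correct and is the standard one. Part (i) is handled cleanly by the three routine facts (a)--(c) and the transfinite induction on $\xi$; the only place worth a second glance is the limit case, and your decomposition $\mt_\xi=\{\varnothing\}\cup\bigcup_{\zeta<\xi}\ttt_{\zeta+1}$ with the $\ttt_{\zeta+1}$ pairwise disjoint (they begin with distinct first entries $\zeta+1$) is exactly what makes (c) apply. For (ii), the monotone length-preserving map lemma together with the rank recursion $\rho(t)\geqslant\rho(t\con(\beta))+1$ gives both directions, and your closing remarks about the role of the root $\varnothing\in\mt_\xi$ and about the ill-founded case are apt. This is precisely the kind of bookkeeping the paper presumably intended by ``easily checked.''
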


For $\zeta, \xi\in \ord$, we will say a function $g:\ttt_\zeta\to \ttt_\xi$ is \emph{monotone} if for each $s, t\in \ttt_\zeta$ with $s\prec t$, $g(s)\prec g(t)$.  If $h$ is a function mapping $\ttt_\zeta$ into the chains of $\ttt_\xi$, we will call $h$ a \emph{block map} if for each $s,t\in \ttt_\zeta$ with $s\prec t$, and for all $s'\in h(s)$, $t'\in h(t)$, $s' \prec t'$.  That is, if $h$ is a block map, each branch $(t|_i)_{i=1}^{|t|}$ of $\ttt_\zeta$ will be mapped to successive chains lying along the same branch of $\ttt_\xi$.

In addition to these trees, which we will use to measure local $\ell_p$ structure, we will be interested in computing the complexity of sequences which exhibit $\ell_p$ behavior.  For this, we will use the Schreier families.  We let $[\nn]^{<\nn}$ denote the finite subsets of $\nn$, which we identify with strictly increasing sequences in $\nn$ in the natural way.   With this identification, the order $\preceq$ described above can be applied to $[\nn]^{<\nn}$. That is, $E\preceq F$ if $E$ is an initial segment of $F$ when the two sets are listed as sequences in increasing order. We similarly identify $[\nn]$, the infinite subsets of $\nn$, with the infinite, strictly increasing sequences in $\nn$. In the sequel, we will assume all sequences in $\nn$ are written in strictly increasing order. Furthermore, for any $M\in [\nn]$, we let $[M]^{<\nn}$ (resp. $[M]$) denote the finite (resp. infinite) subsets of $M$.  

For $E, F\in [\nn]^{<\nn}$, we write $E<F$ to mean $\max E<\min F$.  We write $n<E$ (resp. $n\leqslant E$) to mean $n<\min E$ (resp. $n\leqslant \min E$).  For $E\in [\nn]^{<\nn}$ and $(m_n)=M\in \infin$, $M(E)=(m_n:n\in E)$.  For $\fff\subset [\nn]^{<\nn}$, we let $\fff(M)=\{M(E):E\in \fff\}$.

If $\fff, \gggg$ are regular, we define $\fff[\gggg]=\Bigl\{\bigcup_{i=1}^n E_i: E_1<\ldots <E_n, (\min E_i)_{i=1}^n\in \fff, E_i\in \gggg\Bigr\},$ noting that $\fff[\gggg]$ is also regular.  We let $\sss=\{E\in [\nn]^{<\nn}: |E|\leqslant E\}$.  For $k\in \nn$, we let $$\aaa_k=\{E\in [\nn]^{<\nn}: |E|\leqslant k\}.$$  Recall the Schreier families from \cite{AA}.  We let $$\sss_0=\{\varnothing\}\cup \bigl\{(n):n\in \nn\bigr\},$$ $$\sss_{\xi+1}= \sss[\sss_\xi],$$ and if $\xi<\omega_1$ is a limit ordinal, we fix a sequence of successors $\xi_n\uparrow \xi$ and let $$\sss_\xi = \{E\in [\nn]^{<\nn}: \exists n\leqslant E\in \sss_{\xi_n}\}.$$ It is known that in this case, $\xi_n\uparrow \xi$ can be chosen so that $\sss_{\xi_n}\subset \sss_{\xi_{n+1}}$ for all $n\in \nn$.  For convenience, we let $\sss_{\omega_1}=[\nn]^{<\nn}$.  We note that each family $\sss_\xi$ is \emph{spreading}, meaning that if $(m_i)_{i=1}^k\in \sss_\xi$ and if $n_i\geqslant m_i$ for each $1\leqslant i\leqslant k$, $(n_i)_{i=1}^k\in \sss_\xi$.  We also note that since $\sss_\xi$ is spreading, the derived tree $\sss_\xi^\zeta$ as defined above coincide with the $\zeta^{th}$ Cantor-Bendixson derivative, where $[\nn]^{<\nn}$ is topologized by identifying $E\leftrightarrow 1_E\in 2^\nn$ and endowing $2^\nn$ with the product topology.  It is well known that the Cantor-Bendixson index of $\sss_\xi$ is $\omega^\xi+1$.  For regular families, however, it is usually more convenient to consider the index $\iota(\fff)=\min \{\xi: \fff^\xi\subset \{\varnothing\}\}$.  For $\fff\neq \varnothing$, $\iota(\fff)+1$ is the Cantor-Bendixson index.  This index is somewhat more natural than the Cantor-Bendixson for our purposes, since $\iota(\fff[\gggg])=\iota(\gggg)\iota(\fff)$ for regular families $\fff, \gggg$.  

We recall the following result.  

\begin{proposition}\cite{C1} For regular families $\fff, \gggg$, there exists $M\in \infin$ so that $\fff(M)\subset \gggg$ if and only if the Cantor-Bendixson index of $\fff$ does not exceed the Cantor-Bendixson index of $\gggg$.    Moreover, if such an $M$ exists, then for any $N\in \infin$, there exists $L\in [N]$ so that $\fff(L)\subset \gggg$.  

\label{Gasparis analogue}

\end{proposition}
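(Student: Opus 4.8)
I would prove the two implications separately. The forward direction --- if $\fff(M)\subseteq\gggg$ for some $M\in\infin$ then the Cantor--Bendixson index of $\fff$ is at most that of $\gggg$ --- is the easy half. The point is that $E\mapsto M(E)$ is a continuous injection of $[\nn]^{<\nn}$ (with the topology it inherits from $2^\nn$) into itself fixing $\varnothing$, so it restricts to a homeomorphism of the compact set $\fff$ onto $\fff(M)$. A homeomorphism commutes with the Cantor--Bendixson derivative, so the Cantor--Bendixson derivatives of $\fff(M)$ are the $M$-images of those of $\fff$, which for the spreading family $\fff$ are just the derived families $\fff^\alpha$ of the excerpt; and since $\fff(M)$ is a closed subset of the closed set $\gggg$, each of these is contained in $\gggg^\alpha$. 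As $M(E)=\varnothing$ only for $E=\varnothing$, the condition $\gggg^\alpha\subseteq\{\varnothing\}$ forces $\fff^\alpha\subseteq\{\varnothing\}$, i.e. $\iota(\fff)\le\iota(\gggg)$; and $\iota(\cdot)+1$ is the Cantor--Bendixson index for nonempty regular families, so this is the stated inequality.

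For the converse I would prove directly the stronger statement in the ``moreover'' clause: if $\iota(\fff)\le\iota(\gggg)$, then for every $N\in\infin$ there is $L\in[N]$ with $\fff(L)\subseteq\gggg$; the first assertion is then the case $N=\nn$. The argument is a transfinite induction on $\zeta:=\iota(\fff)$, the case $\zeta=0$ being immediate. In the inductive step I would pass to the ``cones'': for $n\in\nn$ set $\fff_{(n)}=\{E: n<\min E,\ \{n\}\cup E\in\fff\}$, and $\gggg_{(n)}$ likewise; after a routine shift these are regular families on $\nn$, to which the inductive hypothesis applies. I would first record two facts about cones: that $\iota(\fff)=\sup_n(\iota(\fff_{(n)})+1)$ (discarding empty cones), and similarly for $\gggg$, so in particular every $\iota(\fff_{(n)})<\zeta$; and that $m\mapsto\iota(\gggg_{(m)})$ is nondecreasing, which follows from the spreading property (a tail of $\gggg_{(m)}$ embeds into $\gggg_{(m')}$ for $m'\ge m$, and tails of spreading families have the same index). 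Combined with $\iota(\gggg)\ge\zeta$, these yield, for each $n$, an index $m_n$ such that $\iota(\gggg_{(m)})\ge\iota(\fff_{(n)})$ whenever $m\ge m_n$.

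I would then build $L=(l_1<l_2<\cdots)\in[N]$ by a diagonal construction, together with a decreasing chain $N\supseteq R_1\supseteq R_2\supseteq\cdots$ of infinite sets. At stage $j$, having $R_{j-1}$, pick $l_j\in R_{j-1}$ with $l_j\ge m_j$; apply the inductive hypothesis to the pair $(\fff_{(j)},\gggg_{(l_j)})$ --- legitimate since $\iota(\fff_{(j)})<\zeta$ and $\iota(\gggg_{(l_j)})\ge\iota(\fff_{(j)})$ --- with a suitably translated copy of $R_{j-1}$ as the pool, and let $R_j$ be the resulting infinite set translated back so that $R_j\subseteq R_{j-1}$ and $\min R_j>l_j$. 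For the resulting $L$, every nonempty $E\in\fff$ with $\min E=j$ can be written $\{j\}\cup F$ with $F\in\fff_{(j)}$, and the construction guarantees that $(l_f:f\in F)$ spreads a member of $\gggg_{(l_j)}$; since $\gggg_{(l_j)}$ is spreading, $(l_f:f\in F)\in\gggg_{(l_j)}$, i.e. $L(E)\in\gggg$. Here I use repeatedly that the relation ``$\hhh(Q)\subseteq\kkk$'' passes to every infinite $Q'\subseteq Q$ when $\kkk$ is spreading.

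The conceptual content is light, but two points require genuine care and are where I would concentrate. First, verifying the cone index identities (the $\sup$-formula and the monotonicity of $m\mapsto\iota(\gggg_{(m)})$), which hinge on the spreading property and on the behaviour of the index under tails. Second, the combinatorics of the diagonal construction: keeping the translated pools $R_j$ nested so that the countably many requirements ``$\fff_{(j)}(\cdot)\subseteq\gggg_{(l_j)}$'' are met simultaneously, and translating cleanly between families on $\nn$ and families on tails $\{n+1,n+2,\dots\}$. I expect this bookkeeping to be the main obstacle to a clean write-up; everything else is routine.
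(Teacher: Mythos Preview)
The paper does not prove this proposition; it is quoted from \cite{C1} and stated without proof. Your argument is correct and is essentially the standard transfinite induction proof of this Gasparis-type dichotomy: the forward direction via the homeomorphism $E\mapsto M(E)$ preserving Cantor--Bendixson rank, and the converse by induction on $\iota(\fff)$ using the cone decomposition and a diagonal construction. The two points you flag as needing care --- the cone index identities and the bookkeeping of the nested pools $R_j$ --- are indeed the only places where anything substantive happens, and your sketch handles them correctly.
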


In particular, for any $k\in \nn$ and $\xi, \zeta<\omega_1$ with $0<\zeta$, the Cantor-Bendixson index of $\sss_\zeta[\aaa_k[\sss_\xi]]$ is $\omega^\xi k \omega^\zeta+1 = \omega^{\xi+\zeta}+1$, which is the Cantor-Bendixson index of $\sss_{\xi+\zeta}$.  Thus there exists $M\in \infin$ so that $\sss_\zeta[\aaa_k[\sss_\xi]](M)\subset\sss_{\xi+\zeta}$.

\subsection{Coloring lemma}

Throughout this work we will make use of a dichotomy which was introduced in \cite{C2}.  For readability, we do not include in this work all of the formalities involved in the statement and use of this dichotomy.  We will discuss here an interpretation of that dichotomy which is applicable to this work.  The most basic example will involve an operator $A:X\to Y$ between Banach spaces.  Suppose we have a collection $(x_t)_{t\in \ttt_{\xi\zeta}}\subset B_X$.  Suppose also that we have a decreasing collection of real-valued functions $(f_t)_{t\in \mt_{\xi\zeta}}$ defined on the chains in $\ttt_{\xi\zeta}$.  Here, decreasing means that for each non-empty chain $S$ of $\ttt_{\xi\zeta}$ and each $s,t\in \mt_{\xi\zeta}$ with $s\prec t$, $f_s(S)\leqslant f_t(S)$.  

\begin{lemma}\cite{C2} With the definitions above, either there exist a monotone function $g:\ttt_\xi\to \ttt_{\xi\zeta}$, $\delta>0$, and $t_0\in \mt_{\xi\zeta}$ so that for each $t\in \ttt_\zeta$ and for each chain $S$ in $\ttt_\zeta$, $f_{t_0}(\{g(t): t\in S\})\geqslant \delta$, or for any $\delta_n\downarrow 0$, there exists a block map $h$ taking $\ttt_\zeta$ into the chains of $\ttt_{\xi\zeta}$ so that with $h(\varnothing)=\{\varnothing\}$, for each $s,t\in \mt_\zeta$ with $s\prec t$, and for each $s'\in h(s)$, $f_{s'}(h(t))<\delta_{|t|}$.  

\label{dichotomy}

\end{lemma}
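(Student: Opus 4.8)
The statement above is the dichotomy of \cite{C2}, given here in an interpreted form with its formalities suppressed; I sketch how its proof should run. The plan is a transfinite induction on $\zeta$, carried out for all $\xi$ at once, the engine being the recursion defining the minimal trees together with the identity $\xi(\zeta+1)=\xi\zeta+\xi$. One first checks from the definitions that $\ttt_{\xi(\zeta+1)}$ is obtained from a ``top'' copy $U$ of $\ttt_\xi$, sitting just below $\varnothing$ with labels in $(\xi\zeta,\xi(\zeta+1)]$, by grafting below each leaf $\ell$ of $U$ a fresh copy $T_\ell$ of $\ttt_{\xi\zeta}$; moreover $\{\varnothing\}\cup U$ is a copy of $\mt_\xi$, and for each leaf $\ell$ the set $\{\ell\}\cup T_\ell$ is a copy of $\mt_{\xi\zeta}$ with root $\ell$. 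This is compatible with the hypotheses: restricting to $T_\ell$ and putting $\tilde f_r(S)=f_{\ell\con r}(S)$ for $r\in\mt_{\xi\zeta}$ (so $\tilde f_\varnothing=f_\ell$) produces data of the same type on $T_\ell$, with $(\tilde f_r)$ decreasing since $r\prec r'$ forces $\ell\con r\prec\ell\con r'$. Verifying these decompositions is routine but must be done carefully. Throughout I would work with $B$-trees, as the paper recommends, and use that every tree here is well-founded, so every chain is finite with a maximum; the vectors $(x_t)$ do not enter the conclusion and may be ignored.

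The base case $\zeta=1$ is immediate, $\ttt_\zeta$ being a single node: the second alternative merely asks, given $\delta_1>0$, for a chain $C$ of $\ttt_\xi$ with $f_\varnothing(C)<\delta_1$ (the telescoping is vacuous), and if there is none then $f_\varnothing(C)\geqslant\delta_1$ for every chain $C$, which is the first alternative with $g$ the identity, $t_0=\varnothing$, $\delta=\delta_1$. For the successor step, assume the lemma for $\zeta$ and take data on $\ttt_{\xi(\zeta+1)}$ together with $\delta_n\downarrow 0$. Ask whether there are a leaf $\ell_0$ of $U$ and a chain $C_0$ contained in the branch $\beta_{\ell_0}$ of $U$ ending at $\ell_0$ with $f_\varnothing(C_0)<\delta_1$. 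If not, then $f_\varnothing(C)\geqslant\delta_1$ for every chain $C$ of $U$ (every chain extends to a branch), and the first alternative holds with $g\colon\ttt_\xi\to\ttt_{\xi(\zeta+1)}$ the canonical order-embedding of image $U$, $t_0=\varnothing$, $\delta=\delta_1$. If so, fix such $\ell_0,C_0$ and apply the inductive hypothesis to the data on $T_{\ell_0}$ using the decay sequence $\e_n:=\delta_{n+1}$. If it returns its first alternative, the resulting monotone map $\ttt_\xi\to T_{\ell_0}\subseteq\ttt_{\xi(\zeta+1)}$ and the corresponding node of $\{\ell_0\}\cup T_{\ell_0}\subseteq\mt_{\xi(\zeta+1)}$ give the first alternative at level $\xi(\zeta+1)$, and we stop.

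Otherwise the inductive hypothesis returns a block map $\hat h\colon\ttt_\zeta\to$ chains of $T_{\ell_0}$ obeying the telescoping bounds for $(\e_n)$, and I define $h\colon\ttt_{\zeta+1}\to$ chains of $\ttt_{\xi(\zeta+1)}$ by $h(\varnothing)=\{\varnothing\}$, $h((\zeta+1))=C_0$, and $h((\zeta+1)\con t)=\hat h(t)$ for $t\in\ttt_\zeta$. The block property is clear, since $C_0\subseteq\beta_{\ell_0}$ forces every node of $C_0$ to be $\preceq\ell_0$ while every node of $T_{\ell_0}$ strictly extends $\ell_0$, and inside the image of $\hat h$ it is inherited. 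For the telescoping, the pair $(\varnothing,(\zeta+1))$ is handled by $f_\varnothing(C_0)<\delta_1$; for a target $q=(\zeta+1)\con t$ and any $p'\in h(p)$ with $p\prec q$ in $\mt_{\zeta+1}$, either $p\preceq(\zeta+1)$, so $p'\preceq\ell_0$ and hence $f_{p'}\leqslant f_{\ell_0}$ with $f_{\ell_0}(\hat h(t))=\tilde f_\varnothing(\hat h(t))<\e_{|t|}=\delta_{|q|}$, or $p=(\zeta+1)\con s$, in which case $f_{p'}(\hat h(t))<\e_{|t|}=\delta_{|q|}$ is exactly the telescoping bound supplied by $\hat h$ (recalling $f_{\ell\con r}=\tilde f_r$ on chains of $T_\ell$). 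So the second alternative holds at level $\xi(\zeta+1)$; the choice $\e_n=\delta_{n+1}$ absorbs the length shift from prepending $(\zeta+1)$, which is why the statement quantifies over all $\delta_n\downarrow 0$ rather than fixing one.

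For a limit ordinal $\zeta$: off the root, $\ttt_\zeta$ is the disjoint union of the pairwise incomparable subtrees $\ttt_{\eta+1}$, $\eta<\zeta$, so it suffices to define $h$ on each of these (sharing only $h(\varnothing)=\{\varnothing\}$); no telescoping condition ever crosses between them, and conditions involving $\varnothing$ hold automatically since $f_\varnothing$ is the least $f_t$ and lengths agree in $\ttt_\zeta$ and in its subtrees. For each $\eta<\zeta$ one iterates the decomposition above to exhibit, inside $\ttt_{\xi\zeta}$, a grafted copy of $\ttt_{\xi(\eta+1)}$ carrying compatible restricted data, and applies the inductive hypothesis at $\eta+1<\zeta$ with the sequence $(\delta_n)$ itself: either it yields the first alternative (and we stop, as the copy lies inside $\ttt_{\xi\zeta}$), or it yields a block map $\ttt_{\eta+1}\to$ chains of $\ttt_{\xi\zeta}$ with precisely the required bounds; gluing over $\eta$ finishes the case. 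I expect the genuine obstacle to be not the gluing arguments, which are short, but the careful verification of the minimal-tree decompositions and of their compatibility with the $\mt$-indexing and the ``decreasing'' condition — together with pinning down the exact notions of ``chain'' and of ``decreasing'' as arranged in \cite{C2}, around which everything else is bookkeeping.
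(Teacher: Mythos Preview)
The paper does not prove this lemma: it is stated with a citation to \cite{C2} and followed only by an informal interpretation, not an argument. So there is no proof in the paper to compare your proposal against. Your sketch --- transfinite induction on $\zeta$, using the decomposition of $\ttt_{\xi(\zeta+1)}$ as a top copy of $\ttt_\xi$ with copies of $\ttt_{\xi\zeta}$ grafted below its leaves, and handling the limit case via the definition $\mt_\zeta=\bigcup_{\eta<\zeta}\mt_{\eta+1}$ --- is the natural line and is presumably close to what appears in \cite{C2}. The bookkeeping you flag (the precise decomposition of $\ttt_{\xi\zeta+\xi}$, compatibility with the $\mt$-indexing, and the monotonicity convention for $(f_t)$) is indeed where the work lies, and your treatment of the length shift via $\ee_n=\delta_{n+1}$ is correct. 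One small point to tighten in the limit case: you should make explicit why, for each $\eta<\zeta$, a suitable copy of $\ttt_{\xi(\eta+1)}$ sits inside $\ttt_{\xi\zeta}$ in a way compatible with the restricted data; this is true but is not quite immediate from the recursive definition when $\xi(\eta+1)$ is a successor.
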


Often we will apply a simpler version of this lemma in which $f_\varnothing=f_t$ for all $t\in \mt_{\xi\zeta}$.  The idea is a refinement of ideas appearing in \cite{JO}.  We view the tree $\ttt_{\xi\zeta}$ as a tree of order $\zeta$ consisting of trees of order $\xi$.  Either one of the functions $f_t$ can be bounded away from zero on all chains of one of the ``interior'' trees of order $\xi$, which is the first alternative, or we can choose in a ``compatible'' manner one  chain from each of the interior trees so that what remains is ordered so as to resemble $\ttt_\zeta$ and, moreover, the chains can have a small value under a prescribed function, where both the value and the function depend upon the choices of chains which lie above the current segment in the tree resembling $\ttt_\zeta$.  

We remark here that if $h$ is a block map from $T_\zeta$ to the chains of $\ttt_\xi$, then for each $t\in \ttt_\zeta$ and $t'\in h(t)$, $|t|\leqslant |t'|$.  

\section{The Bourgain index of an operator}

Fix a normalized Schauder basis $(e_i)$.  For Banach spaces $X, Y$ and $A:X\to Y$ and $K\geqslant 1$, let $$T_{(e_i)}(A, X, Y, K)=\Bigl\{(x_i)_{i=1}^n\in B_X^{<\nn}:  (x_i)_{i=1}^n\lesssim_1 (e_i)_{i=1}^n, (e_i)_{i=1}^n\lesssim_K (Ax_i)_{i=1}^n\Bigr\}.$$  We define the $K$-$(e_i)$ \emph{non-preservation indices} of $A$ by  $$\textbf{NP}_{(e_i)}(A, X, Y, K)=o(T_{(e_i)}(A,X,Y,K)),$$ and the $(e_i)$ \emph{non-preservation index} of $A$ by $$\textbf{NP}_{(e_i)}(A,X,Y)=\sup_{K\geqslant 1} \textbf{NP}_{(e_i)}(A,X,Y,K).$$  

Note that there exists a subspace $Z$ of $X$ isomorphic to $[e_i]$ so that $A|_Z$ is an isomorphic embedding if and only if there exists $K\geqslant 1$ so that $T_{(e_i)}(A,X,Y,K)$ is ill-founded, so that $A$ fails to preserve a copy of $(e_i)$ if and only if $\textbf{NP}_{(e_i)}(A,X,Y)<\infty$.  We let $\mathfrak{NP}_{(e_i)}(X,Y)$ denote the operators from $X$ to $Y$ not preserving a copy of $(e_i)$.  We let $\mathfrak{NP}_{(e_i)}$ be the class consisting of all components $\mathfrak{NP}_{(e_i)}(X,Y)$, $X,Y\in \Ban$.  We write $T_p$ in place of $T_{(e_i)}$, $\textbf{NP}_p$ in place of $\textbf{NP}_{(e_i)}$, etc., in the case that $(e_i)$ is the canonical $\ell_p$ (resp. $c_0$ if $p=\infty$) basis.  Observe that $T_p(A,X,Y,K)$ and all of its derived trees are $p$-absolutely convex.   

For $X\in \Ban$ and $K\geqslant 1$, we write $T_p(X,K)$ in place of $T_p(I_X,X,X,K)$, $\textbf{I}_p(X,K)$ in place of $\textbf{NP}_p(I_X,X,X,K)$ and $\textbf{I}_p(X)$ in place of $\textbf{NP}_p(I_X,X,X)$.  We note that $\textbf{I}_p$ is the Bourgain $\ell_p$ (resp. $c_0$) index of $X$.  We recall that $\textbf{I}_p(X)>\omega$ if and only if $\ell_p$ (resp. $c_0$) is finitely representable in $X$.  

We make the following easy observations about these indices.    

\begin{proposition} Let $X, Y\in \Ban$. Fix a normalized basis $(e_i)$.  \begin{enumerate}[(i)]\item If $A:X\to Y$ is finite rank, $\emph{\textbf{NP}}_{(e_i)}(A,X,Y)=1+\emph{\rank}(A).$  \item For any $\xi\in \ord$, $\{A\in \mathfrak{L}(X,Y): \emph{\textbf{NP}}_{(e_i)}(A,X,Y)\leqslant \xi\}$ is closed with respect to the norm topology on $\mathfrak{L}(X,Y)$.  \item For any $W,Z\in \Ban$, $A\in \mathfrak{L}(Y,Z)$, $C\in \mathfrak{L}(W,X)$, $\emph{\textbf{NP}}_{(e_i)}(ABC, W,Z)\leqslant \emph{\textbf{NP}}_{(e_i)}(B, X, Y)$. \item If $X$ is separable and $A\in \mathfrak{L}(X,Y)$, $A\in \mathfrak{NP}_{(e_i)}(X,Y)$ if and only if $\emph{\textbf{NP}}_{(e_i)}(A,X,Y)<\omega_1$.  \end{enumerate}

\label{tedious}
\end{proposition}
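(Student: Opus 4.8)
The plan is to treat the four items separately, using Proposition~\ref{indexing}(ii) throughout: for a tree $S$ on a set $\Lambda$, $o(S)>\xi$ if and only if there is $f\colon\ttt_\xi\to\Lambda$ with $(f(t|_i))_{i=1}^{|t|}\in S$ for every $t\in\ttt_\xi$. I abbreviate $T_{(e_i)}(A,X,Y,K)$ by $T(A,K)$ when the spaces are clear, and recall $\varnothing\in T(A,K)$, so $o(T(A,K))\geqslant1$ always.

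For (i), write $r=\rank(A)$ (the case $A=0$, where both sides equal $1$, being immediate). If $(x_i)_{i=1}^n\in T(A,K)$ then $(e_i)_{i=1}^n\lesssim_K(Ax_i)_{i=1}^n$ forces $(Ax_i)_{i=1}^n$ to be linearly independent in $\ran(A)$, so $n\leqslant r$; hence every node of $T(A,K)$ has length $\leqslant r$ and $o(T(A,K))\leqslant r+1$ for all $K$. For the reverse inequality I would lift a basis $v_1,\dots,v_r$ of $\ran(A)$ to nonzero vectors $u_1,\dots,u_r\in X$ with $Au_j=v_j$ and set $x_j=\lambda u_j$, choosing $\lambda>0$ small enough --- in terms of $r$, $\max_j\|u_j\|$, and the basis constant of $(e_i)$ --- that $(x_j)_{j=1}^r\lesssim_1(e_j)_{j=1}^r$. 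Since $(\lambda v_j)_{j=1}^r$ and $(e_j)_{j=1}^r$ are two bases of $r$-dimensional spaces, $(e_j)_{j=1}^r\lesssim_{K_0}(Ax_j)_{j=1}^r$ for some $K_0\geqslant1$; both estimates pass to initial segments, so $(x_1,\dots,x_r)$ and all its restrictions lie in $T(A,K_0)$, and as $\ttt_r$ is the single chain $(r,r-1,\dots,1)$, Proposition~\ref{indexing}(ii) gives $o(T(A,K_0))>r$. Hence $\np(A,X,Y)=\sup_Ko(T(A,K))=r+1=1+\rank(A)$.

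For (ii), I would argue by perturbation. Suppose $A_n\to A$ in norm with $\np(A_n,X,Y)\leqslant\xi$ for all $n$, and, for contradiction, that $o(T(A,K))>\xi$ for some $K$, witnessed by $f\colon\ttt_\xi\to B_X$. The requirement $(f(t|_i))_i\lesssim_1(e_i)$ does not involve the operator, and for any $n$ with $\|A_n-A\|<1/(2K)$ the triangle inequality together with $(f(t|_i))_i\lesssim_1(e_i)$ shows $(e_i)\lesssim_{2K}(A_nf(t|_i))_i$ along every branch of $f$; thus $f$ witnesses $o(T(A_n,2K))>\xi$, so $\np(A_n,X,Y)>\xi$, a contradiction, and each $\{A:\np(A,X,Y)\leqslant\xi\}$ is norm-closed. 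For (iii) --- where $B\colon X\to Y$, $C\colon W\to X$, $A\colon Y\to Z$, and the cases $A=0$ or $C=0$ reduce to (i) --- I would push branches forward under the single-coordinate map $\varphi(w)=Cw/\|C\|$, which sends $B_W$ into $B_X$: a short computation shows $(w_i)_{i=1}^n\in T_{(e_i)}(ABC,W,Z,K)$ implies $(\varphi(w_i))_{i=1}^n\in T_{(e_i)}(B,X,Y,K')$ with $K'=\max\{1,K\|A\|\,\|C\|\}$. Composing a witness $f\colon\ttt_\xi\to B_W$ for $o(T_{(e_i)}(ABC,W,Z,K))>\xi$ with $\varphi$ then yields a witness for $o(T_{(e_i)}(B,X,Y,K'))>\xi$, so $o(T_{(e_i)}(ABC,W,Z,K))\leqslant o(T_{(e_i)}(B,X,Y,K'))\leqslant\np(B,X,Y)$, and taking $\sup_K$ gives $\np(ABC,W,Z)\leqslant\np(B,X,Y)$.

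Item (iv) is where the genuine content lies, and where I expect the main obstacle. The direction $\np(A,X,Y)<\omega_1\Rightarrow A\in\npi(X,Y)$ is immediate, since $\omega_1<\infty$. For the converse, assume $A$ preserves no copy of $(e_i)$, so each $T(A,K)$ is well-founded; since $o(T(A,K))$ is non-decreasing in $K$, it suffices to show each $o(T(A,n))$, $n\in\nn$, is countable, as then $\np(A,X,Y)=\sup_{n\in\nn}o(T(A,n))<\omega_1$. Separability of $X$ enters precisely here: $B_X$, hence the disjoint sum $B_X^{<\nn}=\bigsqcup_nB_X^n$, is Polish, and since $A$ and the norm are continuous, each condition $(x_i)_{i=1}^n\lesssim_1(e_i)_{i=1}^n$ and $(e_i)_{i=1}^n\lesssim_K(Ax_i)_{i=1}^n$ is closed, so $T(A,K)$ is a \emph{closed} subtree of the Polish space $B_X^{<\nn}$. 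A closed (indeed analytic) well-founded tree on a Polish space has countable order --- this is the standard boundedness theorem, applied to the closed well-founded relation of immediate succession on $T(A,K)$ --- so $o(T(A,K))<\omega_1$, as needed. Pinning down this closedness carefully enough that the boundedness theorem applies verbatim is the only delicate point; everything else is elementary.
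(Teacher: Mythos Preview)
Your proposal is correct and follows essentially the same approach as the paper's own proof: in each part the argument is the same up to cosmetic variation (you phrase (ii) via a convergent sequence rather than an open-complement argument, and in (iii) you take $c=\|C\|^{-1}$ where the paper uses any $c<\|C\|^{-1}$, but these are immaterial). For (iv) the paper invokes exactly the result you isolate---Bourgain's version of the Kunen--Martin theorem applied to the closed tree $T_{(e_i)}(A,X,Y,K)$ on the Polish space $X$---so your ``only delicate point'' is precisely what the paper cites as well.
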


\begin{proof}$(i)$ Let $r=\rank(A)$.  Fix $(x_i)_{i=1}^r$ so that $(Ax_i)_{i=1}^r$ is a basis of $A(X)$.  Then there exist $a,b>0$ so that $(x_i)_{i=1}^r\lesssim_a (e_i)_{i=1}^r$ and $(e_i)_{i=1}^r\lesssim_b (Ax_i)_{i=1}^r$.  Thus $(a^{-1}x_i)_{i=1}^r\in T_{(e_i)}(A,X,Y,ab)$, and $\textbf{NP}_{(e_i)}(A,X,Y,ab)>r$, since $\varnothing\in T_{(e_i)}(A,X,Y, ab)^r$.  But for any $(u_i)_{i=1}^{r+1}\subset X$, there exist scalars $(a_i)_{i=1}^{r+1}$ not all zero so that $\sum_{i=1}^{r+1} a_iAu_i=0$.  Therefore $(Au_i)_{i=1}^{r+1}$ does not $K$-dominate $(e_i)_{i=1}^{r+1}$ for any $K$.  Therefore $(u_i)_{i=1}^{r+1}\notin T_{(e_i)}(A,X,Y,K)$ for any $K$. 

$(ii)$ Assume $A\in \mathfrak{L}(X,Y)$ is such that $\textbf{NP}_{(e_i)}(A,X,Y)>\xi$.  There exists $K\geqslant 1$ so that $\textbf{NP}_{(e_i)}(A,X,Y,K)>\xi$.   By Proposition \ref{indexing}, there exists $(x_t)_{t\in \ttt_\xi}\subset B_X$ so that for each $t\in \ttt_\xi$, $(x_{t|_i})_{i=1}^{|t|}\in T_{(e_i)}(A,X,Y,K)$.  We claim that $(x_t)_{t\in \ttt_\xi}$ can be used to show that $\textbf{NP}_{(e_i)}(B,X,Y,2K)>\xi$ for any $B:X\to Y$ with $\|A-B\|<1/2K$, which will give that the complement of the indicated set is open.  By Proposition \ref{indexing}, it suffices to show that $(e_i)_{i=1}^{|t|}\lesssim_{2K}(Bx_{t|_i})_{i=1}^{|t|}$ for each $t\in \ttt_\xi$.  Fix scalars $(a_i)_{i=1}^{|t|}$ with $\|\sum_{i=1}^{|t|} a_ie_i\|=1$.  Then $\|\sum_{i=1}^{|t|} a_i x_{t|_i}\|\leqslant 1$, and $$\Bigl\|\sum_{i=1}^{|t|} a_i Bx_{t|_i}\Bigr\|\geqslant \Bigl\|\sum_{i=1}^{|t|} a_iAx_{t|_i}\Bigr\|- \|A-B\|\Bigl\|\sum_{i=1}^{|t|} a_ix_{t|_i}\Bigr\| \geqslant 1/K-1/2K = 1/2K.$$  

$(iii)$ Assume $\textbf{NP}_{(e_i)}(ABC,X,Y)>\xi$.   We will show $\textbf{NP}_{(e_i)}(B,X,Y)>\xi$. Note that if $A=0$ or $C=0$, $\textbf{NP}_{(e_i)}(ABC, X, Y)=1$, and $\xi=0$.  Then $\textbf{NP}_{(e_i)}(B,X,Y)>\xi=0$, since every tree contains the empty sequence. Therefore we must consider the case that neither $A$ nor $C$ is the zero operator.   Fix $K\geqslant 1$ so that $\textbf{NP}_{(e_i)}(ABC,X,Y,K)>\xi$.  Choose $(w_t)_{t\in \ttt_\xi}$ so that $(w_{t|_i})_{i=1}^{|t|}\in T_{(e_i)}(ABC,X,Y,K)$ for each $t\in \ttt_\xi$.  Choose a number $0<c<\|C\|^{-1}$ and let $x_t=cCw_t$.  Then since $\|cC\|<1$, for any  $t\in\ttt_\xi$, $$(x_{t|_i})_{i=1}^{|t|}\lesssim_1 (w_{t|_i})_{i=1}^{|t|} \lesssim_1 (e_i)_{i=1}^{|t|}.$$  Moreover, $$(e_i)_{i=1}^{|t|}\lesssim_K (ABCw_{t|_i})_{i=1}^{|t|} \lesssim_{\|A\|} (BCw_{t|_i})_{i=1}^{|t|} \lesssim_{c^{-1}} (cBCw_{t|_i})_{i=1}^{|t|} = (Bx_{t|_i})_{i=1}^{|t|}.$$  Thus $(x_t)_{t\in \ttt_\xi}$ witnesses the fact that $\textbf{NP}_{(e_i)}(B, X,Y, \|A\|Kc^{-1})>\xi$.

$(iv)$ This follows from the fact that $\textbf{NP}_{(e_i)}(A,X,Y)<\omega_1$ if and only if $\textbf{NP}_{(e_i)}(A,X,Y,K)<\omega_1$ for all $K\geqslant 1$.  Since $T_{(e_i)}(A,X,Y,K)$ is clearly seen to be a closed tree on the Polish space $X$, Bourgain's version of the Kunen-Martin theorem \cite{Bo} guarantees that $T_{(e_i)}(A,X,Y,K)$ is well-founded if and only if its order is countable.  

\end{proof}

Of particular interest to us will be the cases when $(e_i)$ is the $\ell_p$ or $c_0$ basis.  The following facts are known for computing the Bourgain $\ell_p$ index of a Banach space.  The modifications for operators are inessential, so we only sketch the proof.   

\begin{proposition} Fix $1\leqslant p\leqslant \infty$, $X,Y\in \Ban$, and $A\in \mathfrak{L}(X,Y)$ not finite rank.  \begin{enumerate}[(i)]\item For $K\geqslant 1$, if $W\leqslant X$, $Z\leqslant Y$ have finite codimension in $X$, $Y$, respectively, and if $\eta$ is a limit ordinal, $\emph{\textbf{NP}}_p(A, X, Y,K)>\eta $ if and only if $ o(\{(x_i)_{i=1}^n\in T_p(A,X,Y,K): x_i\in W, Ax_i\in Z\})>\eta.$ \item Either $A$ preserves a copy of $\ell_p$ (or $c_0$ if $p=\infty$) or there exists $0<\xi\in \ord$ so that $\emph{\textbf{NP}}_p(A,X,Y)=\omega^\xi$. \item If $0<\xi\in \ord$ and $\emph{\textbf{NP}}_p(A, X, Y)\leqslant \omega^\xi$, then $\emph{\textbf{NP}}_p(A,X,Y,K)<\omega^\xi$ for every $K\geqslant 1$.   

\end{enumerate}

\label{special form}

\end{proposition}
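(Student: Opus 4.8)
The plan rests on two facts I will use throughout. First, $T_p(A,X,Y,K)$ and all its derived trees are $p$-absolutely convex. Second, the order of a nonempty well-founded tree is always a successor ordinal: if $o(T)=\lambda$ were a limit, then $T^\lambda=\bigcap_{\xi<\lambda}T^\xi=\varnothing$, so for $\varnothing\in T$ there would be some $\xi<\lambda$ with $\varnothing\notin T^\xi$; but $T^\xi$ is a downward-closed subset of $T$, hence empty, contradicting $\xi<\lambda=o(T)$.

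\emph{Item (i).} The implication ``$\Leftarrow$'' is immediate, since the right-hand tree is a subtree of $T_p(A,X,Y,K)$. For ``$\Rightarrow$'' I would first combine $W$ and $Z$: writing $W=\bigcap_{i\leqslant a}\ker\phi_i$ and $Z=\bigcap_{j\leqslant b}\ker\psi_j$ with $\phi_i\in X^*$, $\psi_j\in Y^*$, set $W_0=\bigcap_{i\leqslant a}\ker\phi_i\cap\bigcap_{j\leqslant b}\ker(A^*\psi_j)$, a subspace of $X$ of some finite codimension $\ell$ for which $x\in W_0$ forces $x\in W$ and $Ax\in Z$; then it suffices to bound $o(R)$ from below, where $R=\{(x_i)_{i=1}^n\in T_p(A,X,Y,K):x_i\in W_0\}$. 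Next I would pass to the subtree $R'\subseteq R$ consisting of all $(y_1,\dots,y_m)$, $y_j\in W_0$, obtained by cutting a member $(x_1,\dots,x_{(\ell+1)m})$ of $T_p(A,X,Y,K)$ into $m$ successive blocks of length $\ell+1$ and replacing the $j$-th block by a combination of it that is normalized in $\ell_p$ and lies in $W_0$ — such a combination exists because $\ell+1$ vectors always admit a nonzero solution of the $\ell$ linear constraints cutting out $W_0$, and $p$-absolute convexity keeps $(y_1,\dots,y_m)$ inside $T_p(A,X,Y,K)$. The standard estimate for the order of a tree under block-merging of a fixed size $k$ (the tree-level counterpart of $\iota(\fff[\gggg])=\iota(\gggg)\iota(\fff)$; cf.\ \cite{C2}) gives $o(R')\geqslant\gamma$ whenever $o(T_p(A,X,Y,K))\geqslant k\gamma+1$. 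Applying this with $k=\ell+1$, $\gamma=\eta$, and using $(\ell+1)\eta=\eta$ (valid as $\eta$ is a limit), so that $o(T_p(A,X,Y,K))\geqslant\eta+1=(\ell+1)\eta+1$, yields $o(R)\geqslant o(R')\geqslant\eta$; and since $o(R)$ is a successor ordinal while $\eta$ is a limit, $o(R)>\eta$. This is precisely where the hypothesis that $\eta$ is a limit is used.

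\emph{Item (ii).} If $A$ preserves a copy of $\ell_p$ (or $c_0$) there is nothing to prove; otherwise set $\gamma:=\textbf{NP}_p(A,X,Y)=\sup_{K\geqslant1}o(T_p(A,X,Y,K))<\infty$, each $o(T_p(A,X,Y,K))$ being a successor ordinal. Since $A$ is not finite rank, for each $n$ one may rescale vectors $x_1,\dots,x_n$ with $Ax_1,\dots,Ax_n$ linearly independent so that $(x_i)_{i=1}^n\lesssim_1(e_i)_{i=1}^n$ and $(e_i)_{i=1}^n\lesssim_K(Ax_i)_{i=1}^n$ for some $K$; this produces a node of length $n$, so $\gamma\geqslant\omega$. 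The crux is the self-similarity estimate: \emph{if $o(T_p(A,X,Y,K))>\omega^\xi$ then $o(T_p(A,X,Y,K_m))\geqslant\omega^\xi\cdot m$ for every $m$}, for a sequence $K_m$ depending only on $m$, $K$ and $p$. I would prove this by gluing, below each maximal node $s=(x_1,\dots,x_r)$ of $T_p(A,X,Y,K)$, a fresh copy of $T_p(A,X,Y,K)$ restricted to $A^{-1}(Z_s)\cap V_s$, where $V_s\leqslant X$ has finite codimension and $Z_s\leqslant Y$ is a $2$-complement of the finite-dimensional space $A([x_1,\dots,x_r])$ (obtained from a norming net of functionals on it). By item (i) — applicable since $\omega^\xi$ is a limit ordinal strictly below $o(T_p(A,X,Y,K))$ — the restricted tree still has order $>\omega^\xi$, and, its vectors having images in $Z_s$, the concatenation of one of its branches with $(x_1,\dots,x_r)$ is, after rescaling by $\tfrac12$, a branch of $T_p(A,X,Y,K_1)$ with $K_1$ a fixed multiple of $K$ (this is the sole point where the $\ell_p$ or $c_0$ structure — uniform equivalence of the basis with its normalized block bases — is essential, keeping the constant bounded). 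Iterating the gluing and tracking the orders of the stacked trees gives the estimate, so $\gamma\geqslant\sup_m\omega^\xi\cdot m=\omega^{\xi+1}$. Thus $\gamma>\omega^\xi\Rightarrow\gamma\geqslant\omega^{\xi+1}$; comparing with the Cantor normal form of $\gamma$ rules out any value strictly between consecutive powers of $\omega$, so $\gamma=\omega^\xi$, with $\xi\geqslant1$ because $\gamma\geqslant\omega$.

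\emph{Item (iii)} is then immediate: $\textbf{NP}_p(A,X,Y)\leqslant\omega^\xi<\infty$ makes every $T_p(A,X,Y,K)$ well-founded, so $\textbf{NP}_p(A,X,Y,K)=o(T_p(A,X,Y,K))$ is a successor ordinal with $\textbf{NP}_p(A,X,Y,K)\leqslant\textbf{NP}_p(A,X,Y)\leqslant\omega^\xi$; since $\xi\geqslant1$ makes $\omega^\xi$ a limit ordinal, a successor ordinal bounded above by it must be strictly smaller, i.e.\ $\textbf{NP}_p(A,X,Y,K)<\omega^\xi$ for every $K\geqslant1$. The step I expect to be the genuine obstacle is the self-similarity estimate of item (ii): one must verify both that a freshly glued copy of the restricted tree keeps the concatenated branches inside $T_p(A,X,Y,K')$ for a controlled $K'$ — which is exactly why one passes to a finite-codimensional piece of $Y$ and calls on item (i) to retain the order — and that the ordinal arithmetic of the iterated gluing genuinely adds $\omega^\xi$ at each stage. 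Both are standard for the Bourgain $\ell_p$ (or $c_0$) index of a Banach space, and passing to operators only adds the bookkeeping of $A$ and of the target restriction, pulled back through $A^*$.
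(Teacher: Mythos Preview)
Your proposal is correct and follows essentially the same route as the paper. For (i) both arguments pull the constraints back to finitely many functionals, use a dimension count to find $p$-absolutely convex combinations inside the kernel on blocks of a fixed length $k$, and invoke $k\eta=\eta$ for limit $\eta$; the paper phrases this via its dichotomy Lemma~\ref{dichotomy}, you via the block-merging order estimate from \cite{C2}, which amounts to the same thing. For (ii) both glue a copy of the (codimension-restricted) tree under each node using (i) and the $2$-norming set trick to control the constant; the only difference is that the paper glues \emph{once} to obtain $\eta\cdot 2<\gamma$ for every limit $\eta<\gamma$ and then cites the standard characterization of powers of $\omega$ from \cite{M}, whereas you iterate the gluing $m$ times to get $\omega^\xi\cdot m\leqslant\gamma$ directly --- the paper's single step is marginally cleaner since the constant jumps only from $K$ to $12K$, but your version is equally valid. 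Item (iii) is identical in both.
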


\begin{proof}[Sketch] $(i)$ One direction is obvious.  Let $E\subset X^*$ and $F\subset Y^*$ be finite sets so that $W=\cap_{x^*\in E} \ker(x^*)$ and $Z=\cap_{y^*\in F} \ker(y^*)$.  Choose $k\in \nn$ so that $k>|E|+|F|$. Assume that $\textbf{NP}_p(A,X,Y,K)>\eta$.  Note that $\eta=k\eta$.  Choose $(x_t)_{t\in \ttt_{k\eta}}$ so that $(x_{t|_i})_{i=1}^{|t|}\in T_p(A,X,Y,K)$ for every $t\in \ttt_{k\eta}$.  For a chain $S$ of $\ttt_{k\omega^\xi}$, let $f(S)=1$ provided $$0<\min \Bigl\{\sum_{x^*\in E} |x^*(x)|+\sum_{y^*\in F} |y^*(Ax)|:x\text{\ is a\ }p\text{-absolutely convex combination of\ }(x_t)_{t\in S}\Bigr\},$$ and $f(S)=0$ otherwise. Then by Lemma \ref{dichotomy}, either there exists a monotone $g:\ttt_k\to \ttt_{k\eta}$ so that for each segment $S$ of $\ttt_k$, $f(\{g(s): s\in S\})=1$, or there exists $(w_t)_{t\in \ttt_\eta}$ each branch of which consists of a $p$-absolutely convex block of a branch of $(x_t)_{t\in \ttt_{k\eta}}$ and so that for each $t\in \ttt_\eta$, $\sum_{x^*\in E}|x^*(w_t)|+\sum_{y^*\in F}|y^*(w_t)|=0$.  A dimension argument implies that the first alternative fails.  But the properties of $(w_t)_{t\in \ttt_\eta}$ and choices of $E,F$ witness the fact that $o(\{(x_i)_{i=1}^n\in T_p(A,X,Y,K): x_i\in W, Ax_i\in Z\})>\eta$.

$(ii)$ Note that, since we have assumed $A$ is not finite rank, $\textbf{NP}_p(A,X,Y)\geqslant \omega$.  This means that if $\textbf{NP}_p(A,X,Y)\in \ord$, it is an infinite ordinal.  Therefore in order to prove the existence of the desired $\xi$, it is sufficient to prove that for any limit ordinal $\eta< \textbf{NP}_p(A,X,Y)$, $\eta \cdot 2 <\textbf{NP}_p(A,X,Y)$ \cite{M}.  If there are no limit ordinals less than $\textbf{NP}_p(A,X,Y)$, then $\textbf{NP}_p(A,X,Y)\leqslant \omega$, and this inequality must be equality.  So assume $\omega<\textbf{NP}_p(A,X,Y)$ and fix a limit ordinal $\eta<\textbf{NP}_p(A,X,Y)$.  Choose $K\geqslant 1$ so that $\eta<\textbf{NP}_p(A,X,Y,K)$.  Fix $(x_i)_{i=1}^m\in T_p(A,X,Y,K)$ and choose $F\subset Y^*$ finite and $2$-norming for $[Ax_i]_{i=1}^m$.  If $(u_i)_{i=1}^n\in T_p(A,X,Y,K)$ is such that $Au_i\in \cap_{y^*\in F}\ker(y^*)$ for each $1\leqslant i\leqslant n$, then for any scalars $(a_i)_{i=1}^m$ and $(b_j)_{j=1}^n$, $$\Bigl\|\sum_{i=1}^m a_i x_i+\sum_{j=1}^n b_ju_j\Bigr\| \leqslant 2\Bigl[\Bigl\|\sum_{i=1}^m a_ix_i\Bigr\|\vee \Bigl\|\sum_{j=1}^n b_ju_j\Bigr\|\Bigr]\leqslant 2\Bigl(\sum_{i=1}^m |a_i|^p + \sum_{j=1}^n |b_j|^p\Bigr)^{1/p}$$ and $$\frac{1}{2}\Bigl(\sum_{i=1}^m |a_i|^p +\sum_{j=1}^n |b_j|^p\Bigr)^{1/p} \leqslant \Bigl\|\sum_{i=1}^m a_iA x_i\Bigr\| \vee \Bigl\|\sum_{j=1}^n b_jA u_j\Bigr\|\leqslant 3\Bigl\|\sum_{i=1}^n a_iAx_i+ \sum_{j=1}^n b_j A u_j\Bigr\|.$$   From this we deduce that $$\frac{1}{2}(x_1, \ldots, x_m, u_1, \ldots, u_n)\in T_p(A,X,Y,12K).$$  By $(i)$, since $\textbf{NP}_p(A,X,Y,K)>\eta$, we can choose $(u_t)_{t\in \ttt_\eta}$ so that for each $t\in \ttt_\eta$, $(u_{t|_i})_{i=1}^{|t|}\in T_p(A,X,Y,K)$ and $Au_t\in \cap_{y^*\in F}\ker(y^*)$.  Then with $T=T_p(A,X,Y, 12K)$, for each $t\in \ttt_\eta$, $$(\frac{1}{2}u_{t|_i})_{i=1}^{|t|}\in T(\frac{1}{2}x_1, \ldots, \frac{1}{2}x_m).$$ This means $$o\Bigl(T(\frac{1}{2}x_1, \ldots, \frac{1}{2}x_m)\Bigr)>\eta,$$ which is equivalent to $$(\frac{1}{2}x_1, \ldots, \frac{1}{2}x_m)\in T^\eta = T_p(A,X,Y, 12K)^\eta.$$  Since $(x_i)_{i=1}^m\in T_p(A,X,Y,K)$ was arbitrary, $$\Bigl\{\frac{1}{2}(x_i)_{i=1}^m: (x_i)_{i=1}^m\in T_p(A,X,Y,K)\Bigr\}\subset T_p(A,X,Y,12K)^\eta.$$  This means $\textbf{NP}_p(A,X,Y,12K)>\eta \cdot 2$.

$(iii)$ This follows from the fact that $\textbf{NP}_p(A,X,Y,K)$ is always a successor, since we include the empty sequence in $T_p(A,X,Y,K)$, while $\omega^\xi$ is a limit ordinal.

\end{proof}

We make the following definition: For $X,Y\in \Ban$, $\xi\in \ord$, $1\leqslant p\leqslant \infty$, we let $$\mathfrak{NP}_p^\xi(X,Y)= \{A\in \mathfrak{L}(X,Y): \textbf{NP}_p(A,X,Y)\leqslant \omega^\xi\}.$$  We let $\mathfrak{NP}_p^\xi$ be the class of all operators $A:X\to Y$ so that  $A\in \mathfrak{NP}_p^\xi(X,Y)$ for some $X,Y\in \Ban$.  

We have already noted that $$\mathfrak{NP}_p=\cup_{\xi\in \ord}\mathfrak{NP}_p^\xi,$$ and if we only consider operators on a separable domain, we only need to include all countable ordinals in this union.  It is not difficult to construct examples to show that neither of these unions can be replaced with a smaller union.  That is, for any $\xi \in \ord$ and $1\leqslant p\leqslant \infty$, there exist $X,Y\in \Ban$ and $A:X\to Y$ with $A\in \mathfrak{NP}_p\setminus \mathfrak{NP}_p^\xi$.  Moreover, if $\xi<\omega_1$, $X$ can be taken to be separable.  In fact, we will show later that in all cases one can take $X=Y$ and $A=I_X$.

We wish to determine when the classes $\mathfrak{NP}_p^\xi$ give ideals, or can be used to determine ideals.  For this we will use the following estimates.

\begin{lemma} Fix $1\leqslant p\leqslant \infty$, $X,Y\in \Ban$, $A, B\in \mathfrak{L}(X,Y)$, $K\geqslant 1$.  \begin{enumerate}[(i)]\item For any $\ee>0$, $\emph{\textbf{NP}}_p(A+B, X, Y, K)\leqslant \emph{\textbf{NP}}_p(A, X, Y)\emph{\textbf{NP}}_p(B, X, Y, K+\ee).$  \item $\emph{\textbf{NP}}_1(A+B, X, Y, K)\leqslant \emph{\textbf{NP}}_1(A,X,Y, 2K)\emph{\textbf{NP}}_1(B,X,Y,2K).$   \end{enumerate}

\label{product}

\end{lemma}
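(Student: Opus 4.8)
The plan is to prove both estimates by contradiction, in each case using Proposition~\ref{indexing} to turn the failure of the inequality into a family $(x_t)_{t\in\ttt_{\alpha\beta}}$ witnessing $\ell_p$ (or $c_0$) behaviour of $A+B$, and then applying the coloring dichotomy of Lemma~\ref{dichotomy} to separate that behaviour into a contribution coming from $A$ and one coming from $B$. The two operators attach to the two alternatives of the dichotomy: the monotone map $g\colon\ttt_\xi\to\ttt_{\xi\zeta}$ of the first alternative will detect $A$, and the block map $h\colon\ttt_\zeta\to$ chains of $\ttt_{\xi\zeta}$ of the second alternative will detect $B$, so one takes $\xi$ to be the index attached to $A$ and $\zeta$ the one attached to $B$.

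For (i), write $\alpha=\mathbf{NP}_p(A,X,Y)$ and $\beta=\mathbf{NP}_p(B,X,Y,K+\ee)$. If either is $\infty$ the product is $\infty$ and there is nothing to prove, so assume both are ordinals and suppose $\mathbf{NP}_p(A+B,X,Y,K)>\alpha\beta$. By Proposition~\ref{indexing} fix $(x_t)_{t\in\ttt_{\alpha\beta}}\subset B_X$ with $(x_{t|_i})_{i=1}^{|t|}\in T_p(A+B,X,Y,K)$ for every $t$, and apply Lemma~\ref{dichotomy} with $\xi=\alpha$, $\zeta=\beta$ and (in the simple version, with a single function)
\[
f(S)=\inf\Bigl\{\bigl\|\textstyle\sum_j a_j Ay_j\bigr\|:\ (y_j)_j\text{ a }p\text{-absolutely convex block of }(x_t)_{t\in S},\ \bigl\|\textstyle\sum_j a_je_j\bigr\|=1\Bigr\}.
\]
In the first alternative one gets a monotone $g\colon\ttt_\alpha\to\ttt_{\alpha\beta}$ on which $f$ stays $\geqslant\delta>0$; applying this to the trivial block along each branch yields $(x_{g(t|_i)})_{i=1}^{|t|}\in T_p(A,X,Y,1/\delta)$ (the $\lesssim_1(e_i)$ half being inherited because a subsequence of a branch is still $1$-dominated by $(e_i)$), so $\mathbf{NP}_p(A,X,Y)\geqslant\mathbf{NP}_p(A,X,Y,1/\delta)>\alpha$, a contradiction. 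In the second alternative, choosing $\delta_n\downarrow0$ with $\sum_n\delta_n<K^{-1}-(K+\ee)^{-1}$, one gets a block map $h\colon\ttt_\beta\to$ chains of $\ttt_{\alpha\beta}$ with $f(h(t))<\delta_{|t|}$; a near-minimizing block gives $w_t\in B_X$, a $p$-absolutely convex combination of $(x_{t'})_{t'\in h(t)}$ with $\|Aw_t\|<\delta_{|t|}$. By the block-map property, along a branch of $\ttt_\beta$ the vectors $w_{t|_i}$ are successive $p$-absolutely convex blocks of a subsequence of a single branch of the original tree, hence $(w_{t|_i})\lesssim_1(e_i)$ and $(e_i)\lesssim_K((A+B)w_{t|_i})$; since $\max_i|b_i|\leqslant1$ whenever $\|\sum_i b_ie_i\|=1$, the triangle inequality gives $\|\sum_i b_iBw_{t|_i}\|\geqslant K^{-1}-\sum_n\delta_n>(K+\ee)^{-1}$, so $(w_{t|_i})\in T_p(B,X,Y,K+\ee)$ for every $t\in\ttt_\beta$, forcing $\mathbf{NP}_p(B,X,Y,K+\ee)>\beta$, again a contradiction. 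Therefore $\mathbf{NP}_p(A+B,X,Y,K)\leqslant\alpha\beta$.

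For (ii) the scheme is identical with $\alpha=\mathbf{NP}_1(A,X,Y,2K)$ and $\beta=\mathbf{NP}_1(B,X,Y,2K)$, the only change being that one uses the $\{0,1\}$-valued coloring $f(S)=1$ iff every $1$-absolutely convex block $(y_j)$ of $(x_t)_{t\in S}$ with $\sum_j|a_j|=1$ satisfies $\|\sum_j a_jAy_j\|\geqslant(2K)^{-1}$. The first alternative then produces branches in $T_1(A,X,Y,2K)$, contradicting the choice of $\alpha$; the second alternative produces $w_t\in B_X$ with $\|Aw_t\|<(2K)^{-1}$, and since for $\ell_1$ the coefficients satisfy $\sum_i|b_i|=\|\sum_i b_ie_i\|=1$, the triangle inequality gives $\|\sum_i b_iBw_{t|_i}\|\geqslant K^{-1}-(2K)^{-1}=(2K)^{-1}$, so $(w_{t|_i})\in T_1(B,X,Y,2K)$ and $\mathbf{NP}_1(B,X,Y,2K)>\beta$, a contradiction.

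The step I expect to be the main obstacle is the bookkeeping that shows the two domination relations defining $T_p(\cdot)$ survive the passage from a branch of the original tree to a subsequence of it and then to a $p$-absolutely convex block: this is exactly where one invokes that the unit vector basis of $\ell_p$ (resp.\ $c_0$) is $1$-equivalent to all of its normalized block bases, and it is why the argument does not extend to arbitrary bases. A secondary point requiring care is the control of the error terms: in (i) the quantity $\sum_i|b_i|\|Aw_{t|_i}\|$ must be bounded uniformly in $|t|$, which works because $\max_i|b_i|\leqslant1$ for an $\ell_p$- or $c_0$-normalized coefficient vector; in (ii) the sharper statement with the single fixed constant $2K$ on both factors is available precisely because for $\ell_1$ one has the exact identity $\sum_i|b_i|=1$ together with $K^{-1}-(2K)^{-1}=(2K)^{-1}$, which lets that constant propagate without loss.
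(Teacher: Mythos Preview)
Your proposal is correct and follows essentially the same route as the paper: the same coloring function $f(S)=\min\{\|Ax\|: x\text{ a }p\text{-absolutely convex combination of }(x_t)_{t\in S}\}$ (your more elaborate phrasing with blocks and normalized coefficients collapses to this), the same use of Lemma~\ref{dichotomy} with the monotone branch catching $A$ and the block map catching $B$, and the same triangle-inequality estimates with $\sum\delta_n<K^{-1}-(K+\ee)^{-1}$ in (i) and the exact constant $(2K)^{-1}$ in (ii). The only cosmetic difference is that the paper fixes $(\delta_n)$ before invoking the dichotomy rather than inside the second alternative.
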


\begin{proof}$(i)$ We treat the $p<\infty$ case, with the $p=\infty$ case requiring only notational changes.  Fix $\ee>0$.  If either $\textbf{NP}_p(A, X, Y)=\infty$ or $\textbf{NP}_p(B, X, Y, K+\ee)=\infty$, there is nothing to prove.  So assume $\xi=\textbf{NP}_p(A,X,Y)\in \ord$ and $\zeta=\textbf{NP}_p(B,X,Y, K+\ee)\in \ord$.  To obtain a contradiction, assume $\textbf{NP}_p(A+B,X,Y, K)>\xi\zeta$.  Fix $(x_t)_{t\in \ttt_{\xi\zeta}}\subset B_X$ so that for each $t\in \ttt_{\xi\zeta}$, $(x_{t|_i})_{i=1}^{|t|}\in T_p(A+B, X, Y, K)$.  Fix $\delta_n\downarrow 0$ so that $K^{-1}- (K+\ee)^{-1}>\sum \delta_n$.  Define the function $f$ on the chains of $\ttt_{\xi\zeta}$ by $$f(S)=\min \{\|Ax\|: x\text{\ is a\ }p\text{-absolutely convex combination of\ }(x_t)_{t\in S}\}.$$  By Lemma \ref{dichotomy}, either there exists a monotone $g:\ttt_\xi\to \ttt_{\xi\zeta}$ and $\delta>0$ so that for each segment $S$ in $\ttt_\xi$, $f(\{g(t):t\in S\})\geqslant \delta$, or there exists a block map $h$ taking $\ttt_\zeta$ into the chains of $\ttt_{\xi\zeta}$ so that $f(h(t))<\delta_{|t|}$ for each $t\in \ttt_\zeta$.   In the first case, $(x_{g(t)})_{t\in \ttt_\xi}$ gives that $\textbf{NP}_p(A,X,Y)\geqslant \textbf{NP}_p(A,X,Y,\delta^{-1})>\xi$, a contradiction.  In the second case, for each $t\in \ttt_\zeta$, choose $u_t$ to be a $p$-absoblutely convex combination of $(x_s)_{s\in h(t)}$ so that $\|Au_t\| = f(h(t))<\delta_{|t|}$. We claim $(u_t)_{t\in \ttt_\zeta}$ implies that $\textbf{NP}_p(B,X,Y, K+\ee)>\zeta$.  To see this, we need to show that $(Bu_{t|_i})_{i=1}^{|t|}$ $(K+\ee)$-dominates the $\ell_p$ basis for each $t\in \ttt_\zeta$.  Fix scalars $(a_i)_{i=1}^{|t|}$ with $p$-norm equal to $1$.  Then \begin{align*} \Bigl\|\sum_{i=1}^{|t|} a_i Bu_{t|_i}\Bigr\| & \geqslant \Bigl\|\sum_{i=1}^{|t|} a_i(A+B)u_{t|_i}\Bigr\| - \Bigl\|\sum_{i=1}^{|t|} a_iAu_{t|_i}\Bigr\| \\ & \geqslant K^{-1} - \sum_{i=1}^{|t|} |a_i|\delta_i > (K+\ee)^{-1}. \end{align*} Of course, in both cases we have used that $T_p(A+B, X, Y, K)$ is $p$-absolutely convex and that $(u_{t|_i})_{i=1}^{|t|}$ was a $p$-absolutely convex block of a branch of $T_p(A+B, X, Y, K)$.

$(ii)$ The proof is similar to $(i)$.  Assume $\textbf{NP}_1(A, X, Y, 2K)=\xi\in \ord$, $\textbf{NP}_1(B, X, Y, 2K)=\zeta\in \ord$.  Again, assume $(x_t)_{t\in \ttt_{\xi\zeta}}\subset B_X$ is such that $(x_{t|_i})_{i=1}^{|t|}\in T_1(A+B, X, Y, K)$ for each $t\in \ttt_{\xi\zeta}$.  We define the function $f$ on the chains of $\ttt_{\xi\zeta}$ by letting $f(S)=1$ if $$1/2K\leqslant \min \{\|Ax\|: x\text{\ is a\ }1\text{-absolutely convex combination of\ }(x_t)_{t\in S}\}$$ and $f(S)=0$ otherwise.  By Lemma \ref{dichotomy}, either there exists $(z_t)_{t\in \ttt_\xi}$ each branch of which consists of a subsequence of a branch of $(x_t)_{t\in \ttt_{\xi\zeta}}$ so that $(Az_{t|_i})_{i=1}^{|t|}$ $2K$-dominates the $\ell_1^{|t|}$ basis for each $t\in \ttt_\xi$, in which case we reach the contradiction $\textbf{NP}_1(A, X, Y, 2K)>\xi$, or there exists a tree $(u_t)_{t\in \ttt_\zeta}$ consisting of $1$-absolutely convex blocks of branches of $(x_t)_{t\in \ttt_{\xi\zeta}}$, so that $\|Au_t\|<1/2K$ for each $t\in \ttt_\zeta$.  In the second case, $(u_t)_{t\in \ttt_\zeta}$ gives that $\textbf{NP}_1(B,X,Y,2K)>\zeta$, another contradiction.  To see the last statement, fix $t\in \ttt_\zeta$ and $(a_i)_{i\in \ttt_\zeta}$ with $1$-norm equal to $1$.  Then \begin{align*} \Bigl\|\sum_{i=1}^{|t|} a_iBu_{t|_i}\Bigr\| & \geqslant\Bigl\|\sum_{i=1}^{|t|} a_i(A+B)u_{t|_i}\Bigr\| - \Bigl\|\sum_{i=1}^{|t|} a_iAu_{t|_i}\Bigr\| \\ & \geqslant   \Bigl\|\sum_{i=1}^{|t|} a_i (A+B)u_{t|_i}\Bigr\| - \sum_{i=1}^{|t|}|a_i|/2K \\ & \geqslant 1/K - 1/2K = 1/2K.\end{align*}

\end{proof}

We note that since $c_0$ lower estimates are often easy to satisfy, we have the following improvement of the above estimates in the $p=\infty$ case.  

\begin{lemma}
Let $X, Y$ be Banach spaces and $A,B:X\to Y$ operators between $X$ and $Y$.  Then $\emph{\textbf{NP}}_\infty(A+B, X, Y)\leqslant \omega(\emph{\textbf{NP}}_\infty(A,X,Y) \vee \emph{\textbf{NP}}_\infty(B,X,Y))$. \label{c_0}
\end{lemma}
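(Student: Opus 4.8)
\emph{Plan.} Write $\gamma=\textbf{NP}_\infty(A,X,Y)\vee\textbf{NP}_\infty(B,X,Y)$. Two reductions dispose of the edge cases. If $\gamma=\infty$ then $\omega\gamma=\infty$ and the inequality is vacuous. If $\gamma<\omega$ then, by Proposition \ref{tedious}$(i)$, both $A$ and $B$ are finite rank, hence so is $A+B$, and $\textbf{NP}_\infty(A+B,X,Y)=1+\text{rank}(A+B)\leqslant 1+\text{rank}(A)+\text{rank}(B)<\omega\leqslant\omega\gamma$. So assume $\gamma$ is an infinite ordinal; by the symmetry of $A+B$ we may assume $\gamma=\textbf{NP}_\infty(B,X,Y)$, so $B$ is not finite rank and Proposition \ref{special form}$(ii)$ gives $\gamma=\omega^\sigma$ for some $\sigma\geqslant 1$, whence $\omega\gamma=\omega^{1+\sigma}$. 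Since $\textbf{NP}_\infty(A+B,X,Y)=\sup_{K\geqslant 1}\textbf{NP}_\infty(A+B,X,Y,K)$, it suffices to show $\textbf{NP}_\infty(A+B,X,Y,K)\leqslant\omega\gamma$ for each $K$.

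Fix $K$ and suppose $o(T_\infty(A+B,X,Y,K))>\omega\gamma=\omega\cdot\gamma$. By Proposition \ref{indexing} choose $(x_t)_{t\in\ttt_{\omega\gamma}}\subset B_X$ with $(x_{t|_i})_{i=1}^{|t|}\in T_\infty(A+B,X,Y,K)$ for every $t$, pick $\delta_n\downarrow 0$ with $\sum_n\delta_n<1/2K$, and set $f(S)=\min\{\|Ay\|:y\text{ is an }\infty\text{-absolutely convex combination of }(x_t)_{t\in S}\}$ on chains $S$ of $\ttt_{\omega\gamma}$. Apply Lemma \ref{dichotomy}, viewing $\ttt_{\omega\gamma}=\ttt_{\omega\cdot\gamma}$ as a tree of order $\gamma$ of interior trees of order $\omega$. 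In the second alternative one obtains a block map $h:\ttt_\gamma\to$ chains of $\ttt_{\omega\gamma}$ and, for each $t$, an $\infty$-absolutely convex combination $u_t$ of $(x_s)_{s\in h(t)}$ with $\|Au_{t|_i}\|=f(h(t|_i))<\delta_i$; since $(u_{t|_i})_{i=1}^{|t|}$ is an $\infty$-absolutely convex block of a branch of $T_\infty(A+B,X,Y,K)$ it lies in that tree, so $(u_{t|_i})\lesssim_1(e_i)$ and $((A+B)u_{t|_i})$ $K$-dominates the $c_0$ basis, whence for $\|(a_i)\|_\infty=1$,
\[
\Bigl\|\sum_{i=1}^{|t|}a_iBu_{t|_i}\Bigr\|\geqslant\Bigl\|\sum_{i=1}^{|t|}a_i(A+B)u_{t|_i}\Bigr\|-\sum_{i=1}^{|t|}|a_i|\,\delta_i\geqslant\frac1K-\sum_n\delta_n>\frac1{2K}.
\]
Thus $(Bu_{t|_i})_{i=1}^{|t|}$ $2K$-dominates the $c_0$ basis, so $(u_{t|_i})_{i=1}^{|t|}\in T_\infty(B,X,Y,2K)$ and $(u_t)_{t\in\ttt_\gamma}$ forces $o(T_\infty(B,X,Y,2K))>\gamma=\omega^\sigma$, contradicting Proposition \ref{special form}$(iii)$. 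This is exactly where the $c_0$ structure enters: no ``length'' of blocks is needed for $B$, a \emph{single} $\infty$-absolutely convex combination with small $\|A\cdot\|$ already carries the $c_0$ lower estimate for $B$ with constant $2K$, which is why $\zeta=\gamma$ (rather than something multiplied by $\textbf{NP}_\infty(A,X,Y)$) is affordable on the $B$-side.

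The main obstacle is the first alternative of Lemma \ref{dichotomy}: a monotone $g:\ttt_\omega\to\ttt_{\omega\gamma}$ and $\delta>0$ with $\|Ay\|\geqslant\delta$ on every $\infty$-absolutely convex combination of a chain of $g(\ttt_\omega)$. This only yields $(x_{g(t|_i)})_{i=1}^{|t|}\in T_\infty(A,X,Y,\delta^{-1})$ for $t\in\ttt_\omega$, i.e.\ $o(T_\infty(A,X,Y,\delta^{-1}))>\omega$, which is no contradiction when $A$ is not finite rank — indeed it is then always available. The point of the lemma is that this $A$-alternative may be charged only the factor $\omega$, not the factor $\textbf{NP}_\infty(A,X,Y)$ that Lemma \ref{product}$(i)$ incurs. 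The plan is to iterate: each occurrence of the $A$-alternative consumes only a length-$\omega$ piece of the $\ttt_{\omega\gamma}$-structure (one of the $\omega$-many interior trees stacked above one outer level), after which one passes to the part of the tree beyond $g(\ttt_\omega)$ and reapplies the dichotomy; organizing this with a decreasing family $(f_t)_{t\in\mt_{\omega\gamma}}$ that also records how often the $A$-alternative has fired along the current chain, the total charge is $\omega\cdot\textbf{NP}_\infty(B,X,Y,2K)$, which by Proposition \ref{special form}$(iii)$ is $<\omega\cdot\omega^\sigma=\omega\gamma$ for every $K$ and so supremizes to $\omega\gamma$; that the iteration cannot run the $A$-alternative all the way down a full $\gamma$-descent of the outer tree is forced by $\textbf{NP}_\infty(A,X,Y)\leqslant\gamma<\infty$, and once it exits the $A$-alternatives the residual subtree is immediately a subtree of $T_\infty(B,X,Y,2K)$ of the full residual order — again because one $\infty$-absolutely convex combination suffices for $B$ — so there is no further multiplicative loss. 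Making this bookkeeping precise is the crux of the proof; when $\sigma$ is infinite it gives $\textbf{NP}_\infty(A+B,X,Y)\leqslant\omega^\sigma=\gamma$, i.e.\ $\mathfrak{NP}_\infty^\sigma$ is closed under addition.
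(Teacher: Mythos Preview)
Your proof is incomplete, and you acknowledge as much: ``Making this bookkeeping precise is the crux of the proof.'' The second alternative of Lemma \ref{dichotomy} is handled correctly, but the first alternative is not resolved, and the iteration you sketch to handle it is not viable as described.

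When the $A$-alternative fires, you obtain a monotone $g:\ttt_\omega\to\ttt_{\omega\gamma}$ and some $\delta>0$ with $\|Ay\|\geqslant\delta$ on all $\infty$-absolutely convex combinations along chains of $g(\ttt_\omega)$. This gives $\textbf{NP}_\infty(A,X,Y,\delta^{-1})>\omega$ and nothing more. To ``iterate'' you would need a residual subtree of $\ttt_{\omega\gamma}$ of order still $\omega\gamma'$ for some $\gamma'$ decreasing along a transfinite descent, but $g$ only picks out a subtree of order $\omega$; there is no canonical tree ``beyond $g(\ttt_\omega)$'' on which to restart. More seriously, the $\delta$'s extracted at successive stages need not be uniformly bounded below, so you cannot concatenate the resulting pieces into a single tree in $T_\infty(A,X,Y,K)$ for any fixed $K$. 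The assertion that running the $A$-alternative down a full $\gamma$-descent would contradict $\textbf{NP}_\infty(A,X,Y)\leqslant\gamma$ is therefore unjustified.

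The paper avoids iteration entirely by changing the function fed into Lemma \ref{dichotomy}. For each $t\in\ttt_{\omega\omega^\xi}$ it fixes finite sets $E_t,F_t\subset B_{Y^*}$ that are $(1-\ee_{|t|})$-norming for $[Au_s:s\preceq t]$ and $[Bu_s:s\preceq t]$ respectively, and sets $f_t(S)=0$ if some $\infty$-absolutely convex combination $x$ of $(u_s)_{s\in S}$ is annihilated by every functional in $E_t\cup F_t$ (applied to $Ax$, $Bx$ respectively). Because $E_t\cup F_t$ is finite, a dimension argument kills the first alternative outright. The block map then produces $(x_t)_{t\in\ttt_{\omega^\xi}}$ whose images under both $A$ and $B$ are, along each branch, small perturbations of basic sequences (this is precisely what the norming sets buy). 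A Ramsey coloring selects a full-order subtree on which, say, $\|Ax_t\|\geqslant 1/2K$ throughout; since a seminormalized basic sequence automatically dominates the $c_0$ basis, this yields $\textbf{NP}_\infty(A,X,Y)>\omega^\xi$, the desired contradiction. The genuine $c_0$ leverage is thus not in your triangle-inequality step but in the fact that basicness plus a norm lower bound already gives the $c_0$ lower estimate.
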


\begin{proof}

If $A$ and $B$ are both finite rank, or if either preserves a copy of $c_0$, the result is trivial.  Therefore we may assume $\textbf{NP}_\infty(A,X,Y)\vee \textbf{NP}_\infty(B,X,Y)=\omega^\xi$ for some ordinal $0<\xi$.  Assume $\textbf{NP}_\infty(A+B, X, Y)>\omega \omega^\xi$.  This means there exists $K\geqslant 1$ so that $\textbf{NP}_\infty(A+B, X, Y, K)>\omega \omega^\xi$.  Fix $(u_t)_{t\in \ttt_{\omega\omega^\xi}}$ so that $(u_{t|_i})_{i=1}^{|t|}\in T_\infty(A+B,X,Y,K)$. Fix positive numbers $\ee_n\downarrow 0$ so that $\prod_{i=1}^\infty (1-\ee_i)^{-1}\leqslant 2$.  For each $t\in\ttt_{\omega\omega^\xi}$, fix finite sets $E_t, F_t\subset B_{Y^*}$ so that $E_t$ is $(1-\ee_{|t|})$-norming for $[Au_s: s\preceq t]$ and $F_t$ is $(1-\ee_{|t|})$-norming for $[Bu_s: s\preceq t]$.  We may of course assume that $E_{t|_1}\subset E_{t|_2}\subset \ldots \subset E_t$ and $F_{t|_1}\subset \ldots \subset F_t$ for each $t\in \ttt_{\omega \omega^\xi}$.  Let $E_\varnothing=F_\varnothing = \{0\}$.  

For each $t\in \mt_{\omega\omega^\xi}$, define $f_t$ on the chains of $\ttt_{\omega\omega^\xi}$ by letting $f_t(S)=0$ if there exists an $\infty$-absolutely convex combination $x$ of $(u_s: s\in S)$ so that $y^*(Ax)=0$ for all $y^*\in E_t$ and $y^*(Bx)=0$ for all $y^*\in F_t$, and $f_t(S)=1$ otherwise.  By a dimension argument, for any monotone $\theta:\ttt_\omega\to \ttt_{\omega\omega^\xi}$ and $t\in \ttt_{\omega\omega^\xi}$, there exists a chain $S$ of $\ttt_\omega$ so that $f_t(\{\theta(s):s\in S\})=0$.  By Lemma \ref{dichotomy}, there exists a block map $h$ from $\ttt_{\omega^\xi}$ to the chains of $\ttt_{\omega\omega^\xi}$ so that for all $s,t\in \ttt_{\omega^\xi}$ with $s\prec t$, and for all $s'\in h(s)$, $f_{s'}(h(t))=0$.  This means that for each $t\in \ttt_{\omega^\xi}$, there exists an $\infty$-absolutely convex combination $x_t$ of $(u_s:s\in h(t))$ so that for any $s\in \ttt_{\omega^\xi}$ with $s\prec t$, $y^*(Ax_t)=0$ for all $y^*\in E_{\max h(s)}$ and $y^*(Bx_t)=0$ for all $y^*\in F_{\max h(s)}$.

Define $c:\ttt_{\omega^\xi}\to \{0,1\}$ by letting $c(t)=0$ if $\|Ax_t\|\geqslant 1/2K$, and $c(t)=1$ otherwise.  Note that if $c(t)=1$, $$\|Bx_t\| \geqslant \|(A+B)x_t\|- \|Ax_t\| \geqslant 1/K- 1/2K=1/2K.$$  By \cite{C2}, there exists a monotone map $\theta:\ttt_{\omega^\xi}\to \ttt_{\omega^\xi}$ so that $c\circ \theta$ is constant.  Without loss of generality, we assume $c\circ \theta\equiv 0$, so that $\|Ax_{\theta(t)}\|\geqslant 1/2K$ for all $t\in \ttt_{\omega^\xi}$.  

Fix $t\in \ttt_{\omega^\xi}$ with $|t|>1$ and scalars $(a_i)_{i=1}^{|t|}$.  Let $t'$ be the immediate predecessor of $t$ in $\ttt_{\omega^\xi}$.  Since $\sum_{i=1}^{|t'|} a_i Ax_{\theta(t|_i)}\in [Au_s: s\preceq \max h(\theta(t'))]$, there exists $y^*\in E_{\max h(\theta(t'))}$ so that $$y^*\Bigl(\sum_{i=1}^{|t'|} a_i A x_{\theta(t|_i)}\Bigr) \geqslant (1-\ee_{|\max h(\theta(t'))|}) \|\sum_{i=1}^{|t'|} a_i Ax_{\theta(t|_i)}\| \geqslant (1-\ee_{|t'|}) \|\sum_{i=1}^{|t'|} a_i Ax_{\theta(t|_i)}\|.$$ Here we have used the fact that $|t'|\leqslant |\theta(t')|\leqslant |\max h(\theta(t'))|$ and $\ee_n\downarrow 0$.  Since $y^*(Ax_t)=0$, $$\|\sum_{i=1}^{|t|} a_i A x_{\theta(t|_i)} \| \geqslant y^*\Bigl(\sum_{i=1}^{|t'|} a_i A x_{\theta(t|_i)}\Bigr)\geqslant (1-\ee_{|t'|}) \|\sum_{i=1}^{|t'|} a_i Ax_{\theta(t|_i)}\|.$$   

Applying this inequality iteratively yields that for all $t\in \ttt_{\omega^\xi}$, the sequence $(Ax_{\theta(t|_i)})_{i=1}^{|t|}$ is $2$-basic.  Since $\|Ax_{\theta(t)}\|\geqslant 1/2K$, we deduce that $(Ax_{\theta{(t|_i)}})_{i=1}^{|t|}$ $8K$-dominates the $\ell_\infty^{|t|}$ basis for each $t\in \ttt_{\omega^\xi}$, and we deduce that $\textbf{NP}_\infty(A,X,Y)>\omega^\xi$, a contradiction.

\end{proof}

\begin{remark} Note that essentially the same proof above with $2K$ replaced by $nK$ allows us to deduce that for $A_i:X\to Y$, $1\leqslant i\leqslant n$, $\textbf{NP}_\infty(\sum_{i=1}^n A_i, X, Y)\leqslant \omega \vee_{i=1}^n \textbf{NP}_\infty(A_i, X,Y)$, which is better than the estimate $\textbf{NP}_\infty(\sum_{i=1}^n A_i, X, Y)\leqslant \omega^{n-1}\vee_{i=1}^n \textbf{NP}_\infty(A_i, X, Y)$ which can be deduced by iterating the previous lemma.

\end{remark}

We remark here that the proof of Lemma \ref{c_0} essentially contains a proof of the following result.  

\begin{proposition} Suppose $0<\xi\in \ord$, $A:X\to Y$ is an operator, and $(u_t)_{t\in \ttt_{\omega\xi}}\subset B_X$ is such that $(u_{t|_i})_{i=1}^{|t|}\in T_p(A,X,Y, K)$ (resp. $SS(A,X,Y,K))$ for all $t\in \ttt_{\omega \xi}$.  Then for any $\ee>0$, there exists a $p$-absolutely convex block tree (resp. normalized block tree) $(x_t)_{t\in \ttt_\xi}$ of $(u_t)_{t\in \ttt_{\omega\xi}}$ so that for all $t\in \ttt_\xi$, both $(x_{t|_i})_{i=1}^{|t|}$ and $(Ax_{t|_i})_{i=1}^{|t|}$ are $(1+\ee)$-basic.

\end{proposition}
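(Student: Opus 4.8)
The plan is to extract the combinatorial heart of the proof of Lemma \ref{c_0}, keeping only what survives for a general operator. Fix $\ee>0$ and a sequence $\ee_n\downarrow 0$ with $\prod_{n=1}^\infty(1-\ee_n)^{-1}\leqslant 1+\ee$. For each $t\in\ttt_{\omega\xi}$ I would fix finite sets $G_t\subset B_{X^*}$ and $E_t\subset B_{Y^*}$ that are $(1-\ee_{|t|})$-norming for $[u_s:s\preceq t]$ and for $[Au_s:s\preceq t]$ respectively, chosen so that $G_{t|_1}\subset\dots\subset G_t$ and $E_{t|_1}\subset\dots\subset E_t$ for every $t$, with $G_\varnothing=E_\varnothing=\{0\}$. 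For $t\in\mt_{\omega\xi}$ define $f_t$ on the chains of $\ttt_{\omega\xi}$ by $f_t(S)=0$ if there is a $p$-absolutely convex combination $x$ of $(u_s:s\in S)$ with $x^*(x)=0$ for all $x^*\in G_t$ and $y^*(Ax)=0$ for all $y^*\in E_t$, and $f_t(S)=1$ otherwise. The nesting of the $G_t$'s and $E_t$'s makes $(f_t)_{t\in\mt_{\omega\xi}}$ a decreasing family, so Lemma \ref{dichotomy} applies, viewing $\ttt_{\omega\xi}$ as a tree of order $\xi$ built from trees of order $\omega$.

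Next I would rule out the first alternative. It would supply a monotone $g:\ttt_\omega\to\ttt_{\omega\xi}$, a $\delta>0$, and a $t_0\in\mt_{\omega\xi}$ with $f_{t_0}(\{g(s):s\in S\})\geqslant\delta$ for every chain $S$ of $\ttt_\omega$; since $f_{t_0}$ is $\{0,1\}$-valued this forces $f_{t_0}(\{g(s):s\in S\})=1$ for all such $S$. Choosing a chain $S$ of $\ttt_\omega$ of length strictly larger than $|G_{t_0}|+|E_{t_0}|$ and enumerating it as $s_1\prec\dots\prec s_m$, the linear map carrying $(a_i)_{i=1}^m$ to the list of numbers $x^*(\sum_i a_iu_{g(s_i)})$ for $x^*\in G_{t_0}$ together with $y^*(A\sum_i a_iu_{g(s_i)})$ for $y^*\in E_{t_0}$ has a nonzero kernel vector, which after rescaling to $\ell_p$-norm $1$ exhibits a $p$-absolutely convex combination killing $G_{t_0}\cup E_{t_0}$ — contradicting $f_{t_0}(\{g(s):s\in S\})=1$. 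Hence the second alternative holds: there is a block map $h$ from $\ttt_\xi$ to the chains of $\ttt_{\omega\xi}$ with $h(\varnothing)=\{\varnothing\}$ such that, taking the $\delta_n$ in the lemma less than $1$, $f_{s'}(h(t))=0$ whenever $s\prec t$ in $\mt_\xi$ and $s'\in h(s)$.

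Finally I would build the tree. For $t\in\ttt_\xi$ with $|t|=n$, put $v=\max h(t|_{n-1})$; since $v\in h(t|_{n-1})$ and $t|_{n-1}\prec t$ we get $f_v(h(t))=0$, so I may pick $x_t$ to be a $p$-absolutely convex combination of $(u_s:s\in h(t))$ killing $G_v$ and $E_v$ (for $n=1$ this is just any $p$-absolutely convex combination of $(u_s:s\in h(t))$, as $G_\varnothing=E_\varnothing=\{0\}$). Because $h$ is a block map, for each fixed $t$ the chains $h(t|_1),\dots,h(t|_n)$ run successively along one branch of $\ttt_{\omega\xi}$, so $(x_{t|_i})_{i=1}^n$ is a $p$-absolutely convex block of that branch and $(x_t)_{t\in\ttt_\xi}$ is a $p$-absolutely convex block tree of $(u_t)_{t\in\ttt_{\omega\xi}}$. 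Writing $v_j=\max h(t|_j)$, these $v_j$ increase along that branch, so $G_{v_1}\subset\dots\subset G_{v_n}$, the set $G_{v_j}$ is $(1-\ee_{|v_j|})$-norming for $[x_{t|_1},\dots,x_{t|_j}]\subset[u_s:s\preceq v_j]$, and every $x_{t|_k}$ with $k>j$ kills $G_{v_{k-1}}\supset G_{v_j}$. The usual telescoping estimate, together with $|v_j|\geqslant j$ (the remark following Lemma \ref{dichotomy}, since $v_j\in h(t|_j)$) and $\ee_{|v_j|}\leqslant\ee_j$, then yields that $(x_{t|_i})_{i=1}^n$ is $(1+\ee)$-basic; the same argument run with the $E_{v_j}$'s and $(Ax_{t|_i})_{i=1}^n$ finishes the proof. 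The strictly singular case is handled identically, with normalized blocks in place of $p$-absolutely convex combinations and $SS(A,X,Y,K)$ in place of $T_p(A,X,Y,K)$. I expect the only real points to be the dimension count that defeats the first alternative and the bookkeeping that lets the single vector $x_t$ absorb all the earlier norming sets at once — which is exactly why the $G_t$ and $E_t$ are nested along branches — with the telescoping being routine.
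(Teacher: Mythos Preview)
Your proposal is correct and follows essentially the same route the paper indicates: it extracts the combinatorial core of the proof of Lemma \ref{c_0}, replacing the two families of functionals in $B_{Y^*}$ (one for each operator $A,B$) by one family $G_t\subset B_{X^*}$ norming $[u_s:s\preceq t]$ and one family $E_t\subset B_{Y^*}$ norming $[Au_s:s\preceq t]$, then running the identical dichotomy, dimension argument, and telescoping basicness estimate. This is exactly the adaptation the paper has in mind when it says the proof of Lemma \ref{c_0} ``essentially contains a proof'' of the present proposition.
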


We also note that if $\xi\geqslant \omega$, $\omega\omega^\xi=\omega^\xi$, so that if $A,B\in \mathfrak{NP}_\infty^\xi$, $A+B\in \textbf{NP}_\infty^\xi$.  Thus Lemma \ref{c_0} implies that $\mathfrak{NP}_\infty^\xi$ is an ideal whenever $\xi\geqslant \omega$.

Recall \cite{M} that for $\xi\in \ord$, $\alpha\beta<\xi$ for each $\alpha, \beta<\xi$ if and only if $\xi=0$, $\xi=1$, or $\xi=\omega^{\omega^\zeta}$ for some $\zeta\in \ord$.  Moreover, for $0<\alpha<\omega^{\omega^\zeta}$, $\alpha \omega^{\omega^\zeta}= \omega^{\omega^\zeta}$.  This means that if $A,B:X\to Y$ are such that $\textbf{NP}_p(A,X,Y), \textbf{NP}_p(B,X,Y)<\omega^{\omega^\zeta}$, then $\textbf{NP}_p(A+B,X,Y)<\omega^{\omega^\zeta}$.  Moreover, if $\textbf{NP}_p(A,X,Y), \textbf{NP}_p(B,X,Y)\leqslant \omega^{\omega^\zeta}$ and at least one of these inequalities is strict, then $\textbf{NP}_p(A+B, X, Y)\leqslant \omega^{\omega^\xi}$. This uses Proposition \ref{special form}$(iii)$.   However, the appearance of $\textbf{NP}_p(A,X,Y)$ in the product estimates above does not allow us to deduce that if $A,B\in \mathfrak{NP}_p^{\omega^\zeta}$, $A+B\in \mathfrak{NP}_p^{\omega^\zeta}$ except in the case that $\zeta=0$.  However, the improvement of the product estimate for $p=1$ does allow this conclusion, again using Proposition \ref{special form}$(iii)$. 

Note that the difference between the $p=1$ and $1<p$ cases is that small, uniform perturbations of sequences exhibiting $\ell_1$ behavior also exhibit $\ell_1$ behavior, which is false for each $1<p$ without a uniform bound on the length of the sequences.  The positive result for sequences of uniformly bounded length follows from a more general result.  In analogy to \cite{D}, we say a basis has property $(S')$ provided $\mathfrak{NP}_{(e_i)}$ is an ideal.  Since by standard techniques it is easy to see that if $A+B:X\to Y$ is an isomorphic embedding of an infinite dimensional subspace $Z$ of $X$ into $Y$, then either $A$ or $B$ is an isomorphic embedding of an infinite dimensional subspace of $Z$ into $Y$.  From this we deduce, for instance, that every Schauder basis of a minimal Banach space has property $(S')$, and therefore the any bases of $\ell_p$ and $c_0$ have property $(S')$. Recall that for any operator $A:X\to Y$ and any ultrafilter $\uuu$ over any set, there is an induced operator $A_\uuu:X_\uuu\to Y_\uuu$.  Following a general method for building new operator ideals from given operator ideals, if $(e_i)$ has property $(S')$, we say the operator $A:X\to Y$ is super-$\mathfrak{NP}_{(e_i)}$ provided $A_\uuu\in \mathfrak{NP}_{(e_i)}(X_\uuu, Y_\uuu)$ for any ultrafilter $\uuu$.  Since $(e_i)$ has property $(S')$, $\mathfrak{NP}_{(e_i)}$ is an ideal, easily seen to be closed, and we deduce that the class of super-$\mathfrak{NP}_{(e_i)}$ operators is also a closed ideal.  By standard ultrafilter techniques, we obtain the following.  

\begin{proposition} Let $(e_i)$ be a Schauder basis.  Fix $X,Y\in \Ban$.  Then $\emph{\textbf{NP}}_{(e_i)}(A,X,Y)\leqslant \omega$ if and only if for any ultrafilter $\uuu$, $A_\uuu\in \mathfrak{NP}_{(e_i)}(X_\uuu, Y_\uuu)$. If $(e_i)$ has property $(S')$, the class of operators $A:X\to Y$ with $\emph{\textbf{NP}}_{(e_i)}(A,X,Y)\leqslant \omega$ is a closed operator ideal.    

\label{ultrafilter}
\end{proposition}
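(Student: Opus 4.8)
The plan is to prove the displayed equivalence by two complementary ultrapower arguments and then to read off the ideal statement, since the equivalence identifies the class $\{A:\np(A,X,Y)\leqslant\omega\}$ with the class of super-$\npi$ operators introduced just before the statement.

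For the implication ``$\np(A,X,Y)>\omega\Rightarrow$ some $A_\uuu$ preserves a copy of $(e_i)$'' I would distinguish two cases. If $\np(A,X,Y)=\infty$ then $A$ already preserves a copy of $(e_i)$, and this passes to $A_\uuu$ for every $\uuu$ through the canonical isometric embedding $X\hookrightarrow X_\uuu$, which intertwines $A$ with $A_\uuu$. If $\omega<\np(A,X,Y)<\infty$ then, as this index is the supremum over $K\geqslant1$ of the ordinals $o(T_{(e_i)}(A,X,Y,K))$, there is a single $K$ with $o(T_{(e_i)}(A,X,Y,K))>\omega$; hence $T_{(e_i)}(A,X,Y,K)$ contains a sequence $(x_i^m)_{i=1}^m$ of each finite length $m$. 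Fixing a non-principal ultrafilter $\uuu$ on $\nn$ and putting $\widehat{x}_i=(x_i^m)_m\in X_\uuu$ (with $x_i^m:=0$ for $m<i$), one passes the three conditions defining membership in the tree to the $\uuu$-limit, obtaining $\widehat{x}_i\in B_{X_\uuu}$, $(\widehat{x}_i)\lesssim_1(e_i)$, and $(e_i)\lesssim_K(A_\uuu\widehat{x}_i)$; together with the trivial $(A_\uuu\widehat{x}_i)\lesssim_{\|A\|}(\widehat{x}_i)\lesssim_1(e_i)$, these show that $e_i\mapsto\widehat{x}_i$ and $e_i\mapsto A_\uuu\widehat{x}_i$ are isomorphisms onto their closed spans, so $A_\uuu$ restricted to $[\widehat{x}_i]\cong[e_i]$ is an isomorphic embedding and $A_\uuu\notin\npi(X_\uuu,Y_\uuu)$.

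For the converse I would argue contrapositively: if $A_\uuu$ preserves a copy of $(e_i)$ for some $\uuu$, then $\np(A,X,Y)>\omega$. Rescaling a witnessing basic sequence gives a constant $K\geqslant1$ and $(w_i)\subset B_{X_\uuu}$ with $(w_i)\lesssim_1(e_i)$ and $(e_i)\lesssim_K(A_\uuu w_i)$, so $(w_i)_{i=1}^m\in T_{(e_i)}(A_\uuu,X_\uuu,Y_\uuu,K)$ for every $m$. The key lemma -- the only step requiring real work -- is that for \emph{every} $m$ there is a $\uuu$-large set of indices $j$ such that, after dividing by $2$, the representatives $(w_i^{(j)})_{i=1}^m$ lie in $T_{(e_i)}(A,X,Y,4K)$. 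The difficulty is that membership in the ultrapower tree is a condition on $\uuu$-limits of norms, so representatives satisfy the estimates only up to error; to make the error uniform over the $m$ scalar coefficients I would fix $m$, choose a finite $\delta_m$-net of the unit sphere of $[e_i]_{i=1}^m$ with $\delta_m$ small relative to the finite (but $m$-dependent) Lipschitz constants of $\lambda\mapsto\|\sum\lambda_iw_i^{(j)}\|$ and $\lambda\mapsto\|\sum\lambda_iAw_i^{(j)}\|$, truncate representatives to have norm $\leqslant1$, and select $j$ in the $\uuu$-large set on which the finitely many net inequalities hold with room to spare; a routine net-to-sphere estimate then upgrades this to $(w_i^{(j)})_{i=1}^m\lesssim_2(e_i)$ and $(e_i)_{i=1}^m\lesssim_{2K}(Aw_i^{(j)})$, and the final division by $2$ places the rescaled tuple in $T_{(e_i)}(A,X,Y,4K)$. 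Since $K$ is fixed before $m$ is chosen, the constant $4K$ is uniform; thus $T_{(e_i)}(A,X,Y,4K)$ has members of every finite length, whence $o(T_{(e_i)}(A,X,Y,4K))>\omega$ and so $\np(A,X,Y)>\omega$. Combining the two contrapositives yields the stated equivalence.

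Assume finally that $(e_i)$ has property $(S')$. By the equivalence just proved, $\{A:\np(A,X,Y)\leqslant\omega\}$ coincides with the class of super-$\npi$ operators. Property $(S')$ says $\npi$ is an operator ideal, and $\npi$ is norm closed -- if $A_n\to A$ with each $A_n$ failing to preserve a copy of $(e_i)$, any subspace on which $A$ were bounded below would, for large $n$, be one on which $A_n$ is bounded below. Hence, by the general construction (recalled before the statement) of the ``super'' ideal attached to a closed operator ideal, the class of super-$\npi$ operators is a closed operator ideal, which is the second assertion. The main obstacle throughout is the key lemma of the previous paragraph: transferring an ultrapower copy of $(e_i)$ back into the trees $T_{(e_i)}(A,X,Y,K')$ at unbounded depth \emph{with a single constant} $K'$. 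The order of the quantifiers -- fix $K$, then vary $m$ while letting the net fineness $\delta_m$ depend on $m$ -- is precisely what makes the uniform factor $4K$ available.
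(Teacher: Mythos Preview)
Your proposal is correct and follows essentially the same route as the paper. The only noteworthy difference is in the direction ``$A_\uuu$ preserves $(e_i)$ for some $\uuu$ $\Rightarrow$ $\np(A,X,Y)>\omega$'': the paper invokes the standard local structure of operator ultrapowers in one line --- for each $n$ there exist isomorphisms $P:[\chi_i]_{i\leqslant n}\to F\subset X$ and $Q:A_\uuu([\chi_i]_{i\leqslant n})\to A(F)$ with $QA_\uuu=AP$, $\|P\|\leqslant 2$, $\|Q^{-1}\|\leqslant 2$ --- and reads off $(P\chi_i)_{i=1}^n\in T_{(e_i)}(A,X,Y,K)$ directly. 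Your $\delta$-net argument on representatives is exactly the elementary proof of that structural fact, unpacked by hand; it buys self-containment at the cost of length. (Your case split in the forward direction is harmless but unnecessary: even when $\np(A,X,Y)=\infty$, some $T_{(e_i)}(A,X,Y,K)$ is ill-founded and hence has order $>\omega$, so the uniform argument already covers it.)
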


\begin{proof}[Sketch] 
If $\textbf{NP}_{(e_i)}(A,X,Y)>\omega$, then there exists $K$ so that $\textbf{NP}_{(e_i)}(A,X,Y,K)>\omega$.  This means that for any $n\in \nn$, there exists $(x_i^n)_{i=1}^n\subset B_X$ which is $1$-dominated by $(e_i)_{i=1}^n$ and so that $(Ax_i^n)_{i=1}^n$ $K$-dominates $(e_i)_{i=1}^n$. Fix a free ultrafilter $\uuu$ on $\nn$. For each $i\in \nn$, if $\chi_i\in X_\uuu$ is the equivalence class of $$(0, \ldots, 0, x^n_i, x^{n+1}_i, x^{n+2}_i, \ldots),$$ with $i-1$ zeros, it is straightforward to check that $(\chi_i)\subset B_{X_\uuu}$ is $1$-dominated by $(e_i)$ and $(A_\uuu \chi_i)$ $K$-dominates $(e_i)$.  Therefore $A_\uuu\notin \mathfrak{NP}_{(e_i)}(X_\uuu, Y_\uuu)$.   

Suppose $\uuu$ is an ultrafilter, $(\chi_i)\subset B_{X_\uuu}$ is $1/2$-dominated by $(e_i)$, and $(A_\uuu \chi_i)$ $K/2$-dominates $(e_i)$.  For any $n\in \nn$, there exist isomorphisms $P:[\chi_i:1\leqslant i\leqslant n]:=E\to F\subset X$ and $Q:A_\uuu(E)\to A(F)$ so that $QA_\uuu=AP$ $\|P\|\leqslant 2$ and $\|Q^{-1}\|\leqslant 2$.  We then deduce that $(P\chi_i)_{i=1}^n$ is $1$-dominated by $(e_i)_{i=1}^n$ and $(AP\chi_i)_{i=1}^n$ $K$-dominates $(e_i)_{i=1}^n$.  Then $(P\chi_i)_{i=1}^n\in T_{(e_i)}(A,X,Y, K)$, and since $n\in\nn$ was arbitrary, $o(T_{(e_i)}(A,X,Y, K))\geqslant \omega$.  Since the order of a tree is always a successor, $o(T_{(e_i)}(A,X,Y, K))>\omega$.  

The second statement follows from the first statement and the discussion preceding the proposition.   

\end{proof}

\begin{theorem} Fix $0<\zeta\in \ord$. \begin{enumerate}[(i)]\item  $\cup_{\xi<\omega^\zeta} \mathfrak{NP}_p^\xi$ is an operator ideal.  \item $\mathfrak{NP}_1^{\omega^\zeta}$ is a closed operator ideal.  \item For each $1\leqslant p\leqslant \infty$, $\mathfrak{NP}^1_p$ is a closed operator ideal. \item If $\zeta$ is infinite, $\mathfrak{NP}_\infty^\xi$ is a closed ideal.  \end{enumerate}

\label{ideals}

\end{theorem}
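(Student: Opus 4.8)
The plan is to verify, for each of the four assertions, the defining properties of an operator ideal — that each component is a linear subspace (closure under addition and under nonzero scalars), that it contains all finite‑rank operators, and the two‑sided composition property (if $A$ is in the class and $S,R$ are compatible operators, then $SAR$ is in the class) — together with norm‑closedness where it is asserted; and then to observe that in every case all of this except closure under addition is immediate from Proposition~\ref{tedious}. Indeed, a rank‑$r$ operator has non‑preservation index $1+r<\omega$ by Proposition~\ref{tedious}(i), hence lies in $\mathfrak{NP}_p^\xi$ for every $\xi\geqslant 1$ (so in the union in (i), in $\mathfrak{NP}_1^{\omega^\zeta}$, in $\mathfrak{NP}_p^1$, and in $\mathfrak{NP}_\infty^\zeta$, since the relevant exponents are all $\geqslant 1$); Proposition~\ref{tedious}(iii) gives $\textbf{NP}_{(e_i)}(SAR)\leqslant\textbf{NP}_{(e_i)}(A)$, which is the composition property; applying Proposition~\ref{tedious}(iii) to $\lambda I$ and $\lambda^{-1}I$ yields $\textbf{NP}_p(\lambda A)=\textbf{NP}_p(A)$ for $\lambda\neq 0$, which together with $\textbf{NP}_p(0)=1$ gives closure under scalars; and Proposition~\ref{tedious}(ii) gives the norm‑closedness needed in (ii), (iii), (iv). So the whole burden is to establish closure under addition.

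For (i), I would first rewrite $\bigcup_{\xi<\omega^\zeta}\mathfrak{NP}_p^\xi=\{A:\textbf{NP}_p(A)<\omega^{\omega^\zeta}\}$, which holds because $\omega^\zeta$ is a limit ordinal and hence $\omega^{\omega^\zeta}=\sup_{\xi<\omega^\zeta}\omega^\xi$. Then, given $A,B$ with $\alpha:=\textbf{NP}_p(A)<\omega^{\omega^\zeta}$ and $\beta:=\textbf{NP}_p(B)<\omega^{\omega^\zeta}$, I would apply Lemma~\ref{product}(i): for any $\ee>0$ and any $K\geqslant 1$,
\[\textbf{NP}_p(A+B,X,Y,K)\leqslant\alpha\cdot\textbf{NP}_p(B,X,Y,K+\ee)\leqslant\alpha\beta,\]
the last step by monotonicity of ordinal multiplication together with $\textbf{NP}_p(B,X,Y,K+\ee)\leqslant\beta$. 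Since $\alpha\beta$ is independent of $K$, taking $\sup_K$ gives $\textbf{NP}_p(A+B)\leqslant\alpha\beta<\omega^{\omega^\zeta}$, the strict inequality being the recalled fact from \cite{M} that $\omega^{\omega^\zeta}$ is closed under ordinal multiplication; hence $A+B$ lies in the union.

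For (ii), I would use the \emph{improved} estimate Lemma~\ref{product}(ii), whose whole point is that both factors are truncated indices. If $A,B\in\mathfrak{NP}_1^{\omega^\zeta}$, then $\textbf{NP}_1(A),\textbf{NP}_1(B)\leqslant\omega^{\omega^\zeta}$, so Proposition~\ref{special form}(iii) (trivially if the operator has finite rank) gives $\textbf{NP}_1(A,X,Y,2K)<\omega^{\omega^\zeta}$ and $\textbf{NP}_1(B,X,Y,2K)<\omega^{\omega^\zeta}$ for every $K$; Lemma~\ref{product}(ii) and multiplicative closure of $\omega^{\omega^\zeta}$ then give $\textbf{NP}_1(A+B,X,Y,K)<\omega^{\omega^\zeta}$ for all $K$, so $\textbf{NP}_1(A+B)=\sup_K\textbf{NP}_1(A+B,X,Y,K)\leqslant\omega^{\omega^\zeta}$, i.e. $A+B\in\mathfrak{NP}_1^{\omega^\zeta}$. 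For (iii), I would simply note that $\mathfrak{NP}_p^1(X,Y)=\{A:\textbf{NP}_p(A)\leqslant\omega\}$, that the canonical basis of $\ell_p$ (for $1\leqslant p<\infty$), resp. of $c_0$ (for $p=\infty$), is a Schauder basis of a minimal space and hence has property $(S')$, and invoke Proposition~\ref{ultrafilter}, which states exactly that this class is a closed operator ideal. For (iv), with $\zeta$ infinite we have $1+\zeta=\zeta$, hence $\omega\cdot\omega^\zeta=\omega^{1+\zeta}=\omega^\zeta$; so if $A,B\in\mathfrak{NP}_\infty^\zeta$, Lemma~\ref{c_0} gives
\[\textbf{NP}_\infty(A+B)\leqslant\omega\bigl(\textbf{NP}_\infty(A)\vee\textbf{NP}_\infty(B)\bigr)\leqslant\omega\cdot\omega^\zeta=\omega^\zeta,\]
so $A+B\in\mathfrak{NP}_\infty^\zeta$.

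The one delicate point — and the reason (ii) holds for $\mathfrak{NP}_1^{\omega^\zeta}$ itself while for $1<p$ only the union statement (i) is available — is the distinction in the product estimates between the truncated indices $\textbf{NP}_p(\cdot,X,Y,K)$ and the full index $\textbf{NP}_p(\cdot,X,Y)$: Lemma~\ref{product}(i) involves the full index of $A$, which may itself equal $\omega^{\omega^\zeta}$, so the bound on $\textbf{NP}_p(A+B)$ can only be pushed below $\omega^{\omega^\zeta}$ and not below a fixed smaller power; in the $p=1$ estimate of Lemma~\ref{product}(ii) only truncated indices occur, and Proposition~\ref{special form}(iii) keeps those strictly below $\omega^{\omega^\zeta}$ even when the full index attains that value. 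Beyond carefully tracking these ordinals — and repeatedly using that $\omega^{\omega^\zeta}$ absorbs products and that $\omega\cdot\omega^\zeta=\omega^\zeta$ for infinite $\zeta$ — there is no real obstacle; everything else is routine bookkeeping on top of Propositions~\ref{tedious}, \ref{special form}, \ref{ultrafilter} and Lemmas~\ref{product}, \ref{c_0}.
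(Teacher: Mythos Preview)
Your proposal is correct and follows essentially the same approach as the paper: the paper's own proof section begins by saying ``We have already discussed why each statement is true,'' referring precisely to the discussion preceding the theorem, which uses Lemma~\ref{product}(i) and multiplicative closure of $\omega^{\omega^\zeta}$ for (i), Lemma~\ref{product}(ii) together with Proposition~\ref{special form}(iii) for (ii), Proposition~\ref{ultrafilter} for (iii), and Lemma~\ref{c_0} together with $\omega\cdot\omega^\zeta=\omega^\zeta$ for infinite $\zeta$ for (iv) --- exactly as you do. The paper then adds, purely for illustration, an elementary alternative proof of (iii) via a direct finite-level blocking argument (avoiding Proposition~\ref{ultrafilter} and ultrafilters), but this is supplementary rather than a different route.
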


We will see later that $\cup_{\xi<\omega^\zeta}\mathfrak{NP}_p^\xi$ is not closed unless $\zeta$ has uncountable cofinality.  

\begin{proof} We have already discussed why each statement is true.  Because it demonstrates a simple and highly elucidative case of our coloring lemma, we offer an alternative proof of the last statement of Theorem \ref{ideals}, which is a consequence of Proposition \ref{ultrafilter}.  Assume $X,Y\in \Ban$ and $A,B\in \mathfrak{NP}_p^1(X,Y)$. Note that by Proposition \ref{special form}$(iii)$, this simply means that for any $K\geqslant 1$, $\textbf{NP}_p(A,X,Y,K), \textbf{NP}_p(B,X,Y,K)<\omega$.  Let $m=\textbf{NP}_p(A,X,Y,2K)$ and $n=\textbf{NP}_p(B,X,Y, 2Km)$.  Assume $(x_i)_{i=1}^{mn}\subset B_X$ is $1$-dominated by the $\ell_p$ basis.  Then for each $1\leqslant j\leqslant m$, we can find $u_j$ a $p$-absolutely convex block of $(x_i)_{i=(j-1)n+1}^{jn}$ so that $\|Bu_j\|<1/2Km$.  If this statement were false for a given $j$, $(x_i)_{i=(j-1)n+1}^{jn}$ would imply that $\textbf{NP}_p(B, X, Y, 2Km)>n$, a contradiction.  Since $(u_j)_{j=1}^m$ is also $1$-dominated by the $\ell_p$ basis, if $(Au_j)_{j=1}^m$ were to $2K$-dominate the $\ell_p$ basis, $(u_j)_{j=1}^m$ would imply that $\textbf{NP}_p(A,X,Y,2K)>m$, another contradiction.  Thus there exists a $p$-absolutely convex combination $u$ of $(u_j)_{j=1}^m$, and therefore of $(x_i)_{i=1}^{mn}$, so that $\|Au\|<1/2K$. Then with $u=\sum_{j=1}^m a_ju_j$, $$\|(A+B)u\|\leqslant \|Au\|+ \sum_{j=1}^m |a_j|\|Bu_j\| <1/2K+m(1/2Km) = 1/K.$$  This shows $\textbf{NP}_p(A+B, X, Y, K)\leqslant mn<\omega$.  Since $K$ was arbitrary, we are done.

\end{proof}

\subsection{Local strictly singular indices}

We recall that for $X,Y\in\Ban$ and $A\in \mathfrak{L}(X,Y)$, $A$ is \emph{strictly singular} if for each infinite dimensional $Z\leqslant X$, $A|_Z$ is not an isomorphism.  Moreover, $A$ is said to be \emph{finitely strictly singular} if for any $\ee>0$, there exists $n=n(\ee)\in \nn$ so that for any $E\leqslant X$ with $\dim E=n$, there exists $x\in E$ with $\|Ax\|<\ee \|x\|$.  In \cite{ADST}, the notion of a $\xi$-\emph{strictly singular operator} was defined.  An operator $A:X\to Y$ is called $\xi$-strictly singular if for any basic sequence $(x_n)\subset X$ and any $K\geqslant 1$, there exists $E\in \sss_\xi$ and $x\in [x_i:i\in E]$ so that $\|Ax\|<\ee \|x\|$. We let $\sss\sss_\xi(X,Y)$ denote the $\xi$-strictly singular operators from $X$ to $Y$, $\sss\sss_\xi$ the collection of all components $\sss\sss_\xi(X,Y)$.   If $X$ is separable, then for any $Y\in \Ban$, the strictly singular operators in $\mathfrak{L}(X,Y)$ coincide with the operators in $\mathfrak{L}(X,Y)$ which are $\xi$-strictly singular for some $\xi<\omega_1$.    We define the following trees for $X,Y\in \Ban$, $A:X\to Y$, and $K\geqslant 1$.

$$SS(A,X,Y,K)=\Bigl\{(x_i)_{i=1}^n\in S_X^{<\nn}:(x_i)_{i=1}^n \text{\ is\ }K\text{-basic}, (x_i)_{i=1}^n\lesssim_K (Ax_i)_{i=1}^n\Bigr\}.$$  Note that our blocking arguments for the Bourgain $\ell_p$ index of an operator relied on the fact that the trees $T_p(A,X,Y,K)$ are $p$-absolutely convex. All derived trees of the tree $SS(A,X,Y,K)$ are block closed, which we recall means that normalized blocks of a member of a derived tree of $SS(A,X,Y,K)$ are members of the same derived tree. The arguments above with $p$-absolutely convex blocks replaced by normalized blocks yield many similar results below with only minor modifications.    We define $\textbf{SS}(A,X,Y,K)=o(SS(A,X,Y,K))$ and $\textbf{SS}(A,X,Y)=\sup_{K\geqslant 1} \textbf{SS}(A,X,Y,K)$. We let $$\mathfrak{SS}^\xi(X,Y)=\{A\in \mathfrak{L}(X,Y): \textbf{SS}(A,X,Y)\leqslant \omega^\xi\}.$$  We let $\mathfrak{SS}(X,Y)$ denote the strictly singular operators from $X$ into $Y$.  

\begin{theorem} Fix $X,Y\in \Ban$, $B\in \mathfrak{L}(X,Y)$, $K\geqslant 1$.  \begin{enumerate}[(i)]\item $\mathfrak{SS}=\cup_{\xi\in \ord} \mathfrak{SS}^\xi$. \item For any Schauder basis $(e_i)$, $X,Y\in \Ban$, and $A\in \mathfrak{L}(X,Y)$, $\emph{\textbf{NP}}_{(e_i)}(A,X,Y)\leqslant \emph{\textbf{SS}}(A,X,Y),$ and consequently $\mathfrak{SS}^\xi\subset \mathfrak{NP}_p^\xi$ for all $\xi\in \ord$, $1\leqslant p\leqslant \infty$.  \item $\mathfrak{SS}^1=\mathfrak{NP}_2^1$ consists of all finitely strictly singular operators.  \item If $B$ is finite rank, $\emph{\textbf{SS}}(B,X,Y)=1+\emph{\rank}(B)$. \item For any $\xi\in \ord$, $\{A\in \mathfrak{L}(X,Y): \emph{\textbf{SS}}(A,X,Y)\leqslant \xi\}$ is norm closed in $\mathfrak{L}(X,Y)$.  \item For any $W,Z\in \Ban$, $A\in \mathfrak{L}(Y,Z)$, $C\in \mathfrak{L}(W,X)$, $\emph{\textbf{SS}}(ABC, W,Z)\leqslant \emph{\textbf{SS}}(B, X,Y)$.  \item If $X$ is separable, $\mathfrak{SS}(X,Y)=\cup_{\xi<\omega_1}\mathfrak{SS}^\xi(X,Y)$.  \item If $B\in \mathfrak{SS}(X,Y)$ is not finite rank, then there exists $0<\xi\in \ord$ so that $\emph{\textbf{SS}}(B,X,Y)=\omega^\xi$.  \item For any $\ee>0$ and $A\in \mathfrak{L}(X,Y)$, $$\emph{\textbf{SS}}(A+B, X,Y, K)\leqslant \emph{\textbf{SS}}(A, X, Y)\emph{\textbf{SS}}(B, X, Y, K+\ee).$$  \item For any $0<\xi\in \ord$, $\cup_{\zeta<\omega^\xi} \mathfrak{SS}^\zeta$ is an operator ideal, closed if $\xi$ has uncountable cofinality.

\end{enumerate}

\end{theorem}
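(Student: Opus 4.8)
The plan is to transport the whole development of Section~3 to the strictly singular setting, replacing ``$p$-absolutely convex block'' by ``normalized block'' everywhere and letting the estimate $|a_i|\leqslant 2K$ take over the role that $|a_i|\leqslant\|(a_i)\|_p=1$ played in the $\ell_p$ arguments: indeed, if $\|\sum_i a_i u_i\|=1$ and $(u_i)$ is a normalized $K$-basic sequence, then each coefficient $a_i$ is the value of the $i$-th coordinate functional, of norm at most $2K$. The structural fact that makes this legitimate is that $SS(A,X,Y,K)$ and all its derived trees are block closed: a normalized block $(w_j)$ of $(x_i)\in SS(A,X,Y,K)$ is again $K$-basic (the partial sums are sub-sums of $K$-basic combinations) and satisfies $\|\sum c_j w_j\|\leqslant K\|\sum c_j Aw_j\|$ (apply $(x_i)\lesssim_K(Ax_i)$ to the underlying combination). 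With these two remarks, parts (iv), (v), (vi), (ix) are proved exactly as Proposition~\ref{tedious}(i)--(iii) and Lemma~\ref{product}(i), and part (viii) exactly as Proposition~\ref{special form}(ii); I describe the minor changes next.

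For (iv): take $(x_i)_{i=1}^r$ with $(Bx_i)_{i=1}^r$ a basis of $B(X)$, normalize and rescale so that this sequence lands in some $SS(B,X,Y,K)$, giving $\textbf{SS}(B,X,Y)\geqslant 1+r$; any $r+1$ vectors have linearly dependent image, hence cannot be $K$-basic and simultaneously $K$-dominate their image, giving the reverse inequality. For (v): if $(x_t)_{t\in\ttt_\xi}\subset S_X$ witnesses $\textbf{SS}(A,X,Y)>\xi$ and $\|A-C\|<1/2K$, the same tree is still $K$-basic and $\|\sum a_i Cx_{t|_i}\|=\|C\sum a_i x_{t|_i}\|\geqslant(K^{-1}-\|A-C\|)\|\sum a_i x_{t|_i}\|$, so it witnesses $\textbf{SS}(C,X,Y,2K)>\xi$; thus the sublevel sets are norm closed. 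For (vi): factor $ABC=(AB)C$, use $(x_i)\lesssim_K(ABx_i)\lesssim_{\|A\|}(Bx_i)$ for the left factor and set $x_t=Cw_t/\|Cw_t\|$ (well defined since $\|BCw_t\|\geqslant K^{-1}$) for the right factor, tracking constants. Part (i) is then the chain of equivalences $A\in\mathfrak{SS}(X,Y)\iff\sup_K o(SS(A,X,Y,K))<\infty\iff\textbf{SS}(A,X,Y)\leqslant\omega^\xi$ for some $\xi$ (an ill-founded $SS(A,X,Y,K)$ yields a basic sequence on which $A$ is an isomorphic embedding, and conversely a basic sequence in an infinite-dimensional $Z$ with $A|_Z$ bounded below gives arbitrarily long branches; and every ordinal lies below a power of $\omega$). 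Part (vii) adds to (i) the fact that $SS(A,X,Y,K)$ is a closed tree on the Polish space $S_X$, so by Bourgain's form of the Kunen--Martin theorem (cf.\ Proposition~\ref{tedious}(iv)) its order, when finite, is countable, whence $\textbf{SS}(A,X,Y)<\omega_1$ for strictly singular $A$ on separable $X$. For (viii): non-finite-rankness forces $\textbf{SS}(B,X,Y)\geqslant\omega$, and one proves $\eta\cdot 2<\textbf{SS}(B,X,Y)$ for every limit $\eta<\textbf{SS}(B,X,Y)$ using the $SS$-analogue of Proposition~\ref{special form}(i) (a $2$-norming finite $F\subset Y^*$ for one branch together with a height-$\eta$ tree whose branches are normalized blocks avoiding $F$, produced by Lemma~\ref{dichotomy} and a dimension count); by \cite{M} this forces $\textbf{SS}(B,X,Y)=\omega^\xi$.

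For (x), the ideal axioms follow from the preceding parts: finite-rank operators lie in $\mathfrak{SS}^1$ by (iv); $\mathfrak{SS}^\zeta$ is stable under two-sided composition by (vi); and closure under addition uses (ix) together with ordinal arithmetic. If $\textbf{SS}(A,X,Y)\leqslant\omega^\alpha$ and $\textbf{SS}(B,X,Y)\leqslant\omega^\beta$ with $\alpha,\beta<\omega^\xi$, then $\textbf{SS}(B,X,Y,K+\ee)<\omega^\beta$ (a tree order being a successor), so (ix) gives $\textbf{SS}(A+B,X,Y,K)\leqslant\omega^\alpha\cdot\textbf{SS}(B,X,Y,K+\ee)<\omega^\alpha\cdot\omega^\beta=\omega^{\alpha+\beta}$ for every $K$, hence $\textbf{SS}(A+B,X,Y)\leqslant\omega^{\alpha+\beta}$, and $\alpha+\beta<\omega^\xi$ because $\omega^\xi$ is additively indecomposable. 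For closedness when $\xi$ has uncountable cofinality: then $\omega^\xi$ also has uncountable cofinality, so any countable set of ordinals below $\omega^\xi$ is bounded in $\omega^\xi$; given $A_n\to A$ in norm with $\textbf{SS}(A_n,X,Y)\leqslant\omega^{\zeta_n}$, $\zeta_n<\omega^\xi$, put $\eta^*=\sup_n\zeta_n<\omega^\xi$, so $A_n\in\mathfrak{SS}^{\eta^*}(X,Y)$ for all $n$, a set norm closed by (v), whence $A\in\mathfrak{SS}^{\eta^*}(X,Y)\subset\bigcup_{\zeta<\omega^\xi}\mathfrak{SS}^\zeta(X,Y)$.

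The one part that is not a translation of Section~3, and which I expect to be the crux, is (iii). The inclusion $\mathfrak{SS}^1\subset\mathfrak{NP}_2^1$ is the case $\xi=1$, $p=2$ of (ii), so it remains to show $\mathfrak{NP}_2^1\subset\mathfrak{SS}^1$ and that both classes equal the class of finitely strictly singular operators; I would argue through the Bernstein numbers $b_m(A)=\sup\{\inf_{x\in E,\,\|x\|=1}\|Ax\|:\dim E=m\}$, which are nonincreasing in $m$, and observe that $A$ is finitely strictly singular precisely when $b_m(A)\to 0$. If $\textbf{SS}(A,X,Y)>\omega$ or $\textbf{NP}_2(A,X,Y)>\omega$, then for some $K$ there are arbitrarily long branches of $SS(A,X,Y,K)$ or of $T_2(A,X,Y,K)$, whose spans are subspaces $E$ of unboundedly large dimension with $\|Ax\|\geqslant K^{-1}\|x\|$ on $E$ (for $T_2$ use $\|(a_i)\|_2\geqslant\|\sum a_i x_i\|$ when $(x_i)\lesssim_1(e_i^{\ell_2})$), so $b_m(A)\geqslant K^{-1}$ for all $m$ and $A$ is not finitely strictly singular. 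Conversely, if $A$ is not finitely strictly singular, fix $\ee>0$ and subspaces $E_n$ of dimension $n$ with $\|Ax\|\geqslant\ee\|x\|$ on $E_n$; Dvoretzky's theorem produces $F_n\leqslant E_n$ of dimension $m_n\to\infty$ carrying a Euclidean norm $|\cdot|$ with $\|x\|_X\leqslant|x|\leqslant 2\|x\|_X$ on $F_n$, and its $|\cdot|$-orthonormal basis lies in $T_2(A,X,Y,2\ee^{-1})$ while its $\|\cdot\|_X$-normalization lies in $SS(A,X,Y,\max(4,\ee^{-1}))$; since tree orders are successors this gives $\textbf{NP}_2(A,X,Y)>\omega$ and $\textbf{SS}(A,X,Y)>\omega$. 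Assembling both directions yields $A$ finitely strictly singular $\iff\textbf{SS}(A,X,Y)\leqslant\omega\iff\textbf{NP}_2(A,X,Y)\leqslant\omega$, i.e.\ $\mathfrak{SS}^1=\mathfrak{NP}_2^1=\{\text{finitely strictly singular operators}\}$. The hard point is exactly the appeal to Dvoretzky's theorem (with John's theorem implicit in the one-sided Euclidean comparison): it is what singles out $p=2$ and replaces the purely combinatorial inputs used everywhere else.
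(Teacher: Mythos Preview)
Your proposal is correct and follows essentially the same route as the paper: transporting the Section~3 arguments via normalized blocks (with the $|a_i|\leqslant 2K$ bound replacing $|a_i|\leqslant 1$) for (iv)--(x), and invoking Dvoretzky's theorem for part~(iii), exactly as the paper does. The one omission is that you invoke (ii) in your treatment of (iii) but never actually prove it; the paper handles (ii) by observing that if $(x_i)_{i=1}^n\in T_{(e_i)}(A,X,Y,K)$ then $(x_i)\lesssim_1(e_i)\lesssim_K(Ax_i)\lesssim_{\|A\|}(x_i)$ makes $(x_i)$ $b\|A\|K$-basic ($b$ the basis constant of $(e_i)$) with $(x_i)\lesssim_K(Ax_i)$, so $(x_i/\|x_i\|)_{i=1}^n\in SS(A,X,Y,b\|A\|K\vee K)$.
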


\begin{proof}[Sketch] $(i)$ is clear.  

$(ii)$ Note that if $(x_i)_{i=1}^n\subset B_X$ is such that $(x_i)_{i=1}^n\lesssim_1 (e_i)_{i=1}^n$ and $(e_i)_{i=1}^n\lesssim_K (Ax_i)_{i=1}^n$, then since $(Ax_i)_{i=1}^n \lesssim_{\|A\|} (x_i)_{i=1}^n$, we deduce that $(x_i)_{i=1}^n$ is $b\|A\|K$-basic and $(x_i)_{i=1}^n\lesssim_K (Ax_i)_{i=1}^n$, where $b$ denotes the basis constant of $(e_i)$.  This means that $(x_i/\|x_i\|)_{i=1}^n$ is $b\|A\|K$-basic and $(x_i/\|x_i\|)_{i=1}^n\lesssim_K (Ax_i/\|x_i\|)_{i=1}^n$.  This means that $$\{(x_i/\|x_i\|)_{i=1}^n: (x_i)_{i=1}^n\in T_{(e_i)}(A,X,Y,K)\}\subset SS(A, X, Y, b\|A\|K \vee K),$$ and by induction, $$\{(x_i/\|x_i\|)_{i=1}^n: (x_i)_{i=1}^n\in T_{(e_i)}(A,X,Y,K)^\xi\}\subset SS(A, X, Y, b\|A\|K\vee K)^\xi $$ for each $\xi\in \ord$.  This gives the first statement, and the second follows immediately.  

For $(iii)$, note that for $A:X\to Y$, if $\textbf{SS}(A,X,Y)>\omega$ then there exists a sequence $(E_n)$ of finite dimensional subspaces of $X$ so that $\dim E_n\to \infty$ and so that $T|_{E_n}$ is a $K$-isomorphism of $E_n$ and its image for all $n\in \nn$.  By Dvoretsky's theorem, by passing to a subsequence of the spaces $(E_n)$, we may assume without loss of generality that for each $n\in \nn$ there exists a subspace $F_n$ of $E_n$ so that $\dim F_n=n$ and $F_n$ is $2$-isomorphic to $\ell_2^n$.  If $(x_i^n)_{i=1}^n$ is a basis for $F_n$ which is $1$-dominated and $2$-dominating the $\ell_2^n$ basis, these sequences are $K$-dominated by their images under $A$, whence the $\ell_2^n$ basis is $2K$-dominated by $(Ax^n_i)_{i=1}^n$.  These sequences witness the fact that $\textbf{NP}_2(A,X,Y,2K)>\omega$.  This implies that $\mathfrak{NP}_2^1\subset \mathfrak{SS}^1$.  The reverse inclusion follows from $(ii)$.  To see that $\mathfrak{SS}^1$ consists of finitely strictly singular operators, note that every finitely strictly singular operator $A:X\to Y$ necessarily satisfies $\textbf{SS}(A,X,Y)\leqslant \omega$.  This is because for any $\ee>0$, there exists $n=n(\ee)\in \nn$ so that if $E\leqslant X$ with $\dim E=n$, there exists $x\in E$ with $\|Ax\|<\ee \|x\|$.  Thus if $(x_i)_{i=1}^n\subset S_X$ is $K$-basic, there exists $x=\sum_{i=1}^n a_ix_i$ so that $\|Ax\|<\ee \|x\|$, which means $\textbf{SS}(A,X,Y,\ee^{-1})\leqslant n$.  This shows that every finitely strictly singular operator lies in $\mathfrak{SS}^1$.  By arguing as above, if $A:X\to Y$ is not finitely strictly singular, then there must exist $K> 1$ and a sequence of subspaces $(E_n)$ of $X$ so that $\dim E_n\to \infty$ and $T|_{E_n}$ is $K$-isomorhpic to its image.  By passing to subspaces of a subsequence of $(E_n)$, we may assume that $E_n$ is closely isomorphic to $\ell_2^n$ and is spanned by a sequence $(x_i^n)_{i=1}^n$ which is normalized and $K$-basic.  These sequences witness that $\textbf{SS}(A,X,Y, K)>\omega$.    

$(iv), (v)$ are trivial modifications of Proposition \ref{tedious}.  

$(vi)$ Suppose $(w_i)_{i=1}^n\in SS(ABC,W,Z,K)$.  Then $$(w_i)_{i=1}^n\lesssim_K (ABCw_i)_{i=1}^n\lesssim_{\|A\|\|B\|}(Cw_i)_{i=1}^n\lesssim_{\|C\|} (w_i)_{i=1}^n$$ implies that $(Cw_i)_{i=1}^n$ is $\|A\|\|B\|\|C\|K$-basic and $(Cw_i)_{i=1}^n\lesssim_{\|A\|\|C\|K}(BCw_i)_{i=1}^n$.  This means $(Cw_i/\|Cw_i\|)_{i=1}^n$ is $\|A\|\|B\|\|C\|K$-basic and $\|A\|\|C\|K$-dominated by $(BCw_i/\|Cw_i\|)_{i=1}^n$.  Therefore $$\{(Cw_i/\|Cw_i\|)_{i=1}^n: (w_i)_{i=1}^n\in SS(ABC, W,Z,K)\}\subset SS(B,X,Y,\|A\|\|B\|\|C\|K\vee \|A\|\|C\|K),$$ and by induction, $$\{(Cw_i/\|Cw_i\|)_{i=1}^n: (w_i)_{i=1}^n\in SS(ABC, W,Z,K)^\xi\}\subset SS(B,X,Y,\|A\|\|B\|\|C\|K\vee \|A\|\|C\|K)^\xi $$ for each $\xi\in \ord$, which gives the result.  

$(vii)$ This is another application of Bourgain's version of the Kunen-Martin theorem, noting that $SS(A,X,Y,K)$ is closed for each $K\geqslant 1$.  

$(viii)$  This proceeds as in Proposition \ref{special form}$(ii)$.  We only note that if $(x_i)_{i=1}^n\in SS(A, X, Y, K)$, $F\subset S_{Y^*}$ is $2$-norming for $[x_i]_{i=1}^n$, and $(u_j)_{j=1}^m\subset \cap_{y^*\in F}\ker(y^*)$, $(u_j)_{j=1}^m\in SS(A, X, Y, K)$, then $(x_1, \ldots, x_n, u_1, \ldots, u_m)\in SS(A, X, Y, 6K)$.  

$(ix)$ This follows as in Lemma \ref{product}$(i)$ with the assumption that $\|Au_t\|<\delta_{|t|}/2K$.  The factor of $2K$ is required since we can only guarantee in this case that $\max_{1\leqslant i\leqslant |t|}|a_i|\leqslant 2K\|\sum_{i=1}^{|t|} a_iu_{t|_i}\|$ for scalar sequences $(a_i)_{i=1}^{|t|}$.    

$(x)$ This follows again from $(ix)$ and the fact that if $\textbf{SS}(A,X,Y), \textbf{SS}(B,X,Y)<\omega^{\omega^\xi}$, $\textbf{SS}(A+B, X, Y)\leqslant \textbf{SS}(A,X,Y)\textbf{SS}(B,X,Y)<\omega^{\omega^\xi}$.  If $\xi$ has uncountable cofinality and $A_n, A:X\to Y$ are such that $A_n\to A$, and $\textbf{SS}(A_n, X, Y)<\omega^{\omega^\xi}$, then $\textbf{SS}(A,X,Y)\leqslant \sup_n \textbf{SS}(A_n, X, Y)<\omega^{\omega^\xi}$.  If $\textbf{SS}(A,X,Y)=\omega^\zeta$, we deduce $\zeta<\omega^\xi$, and $A\in \mathfrak{SS}^\zeta\subset \cup_{\eta<\omega^\xi}\mathfrak{SS}^\eta$.

\end{proof}

\section{Sequential indices} \subsection{Operators preserving no $\ell_p^\xi$ spreading model} For $0<\xi\leqslant \omega_1$, and a sequence $(x_i)$ in a Banach space, we say $(x_i)$ is an $\ell_p^\xi$ spreading model provided there exist $a,b>0$ so that for each $E\in\sss_\xi$, $(e_i)_{i\in E}\lesssim_a (x_i)_{i\in E}$ and $(x_i)_{i\in E}\lesssim_b (e_i)_{i\in E}$, where $(e_i)$ is the canonical $\ell_p$ basis.  Note that if $\xi=\omega_1$, our convention that $\sss_{\omega_1}=\nn^{<\nn}$ simply means that $(x_i)$ is equivalent to the $\ell_p$ basis. Since $\sss_\xi$ is spreading for each $\xi$, any subsequence of an $\ell_p^\xi$ spreading model is one as well with the same constants.   The notion of $c_0^\xi$ spreading model is defined similarly.  For $X,Y\in \Ban$, $1\leqslant p< \infty$, and $0<\xi\leqslant \omega_1$, we let $\mathfrak{SM}_p^\xi(X,Y)$ consist of all operators $A\in \mathfrak{L}(X,Y)$ so that if $(x_n)\subset X$ is an $\ell_p^\xi$ spreading model, then $(Ax_n)$ is not an $\ell_p^\xi$ spreading model.  The class $\mathfrak{SM}_\infty^\xi(X,Y)$ is defined similarly for $c_0^\xi$ spreading models. As usual, we let $\mathfrak{SM}_p^\xi$ consist of all operators lying in $\mathfrak{SM}_p^\xi(X,Y)$ for some $X,Y\in \Ban$.    Note that $\mathfrak{SM}_p^{\omega_1}= \mathfrak{NP}_p$, the operators not preserving a copy of $\ell_p$ (resp. $c_0$).  If $A\in \mathfrak{SM}_p^\xi$, we say $A$ preserves no $\ell_p^\xi$ (or $c_0^\xi$) spreading model.  We let $\textbf{SM}_p(A,X,Y)$ denote the smallest ordinal $\xi\in [1,\omega_1]$ so that $A$ preserves no $\ell_p^\xi$ (or $c_0^\xi$ if $p=\infty$) spreading model, provided such an ordinal exists, and $\textbf{SM}_p(A,X,Y)=\infty$ otherwise.  We obey a similar convention as with the local indices that $\textbf{SM}_p(X)=\textbf{SM}_p(I_X, X, X)$.  

The proof of the following proposition is similar to that of Proposition \ref{tedious}, so we omit it.  

\begin{proposition} Fix $X,Y\in \Ban$, $\xi\leqslant \omega_1$.  \begin{enumerate}[(i)]\item $\mathfrak{SM}_p^\xi$ is norm closed in $\mathfrak{L}(X,Y)$.  \item For $W, Z\in\Ban$, $ABC\in \mathfrak{SM}_p^\xi(W,Z)$ whenever $B\in \mathfrak{SM}_p^\xi(X,Y)$ and $C\in \mathfrak{L}(W, X)$, $A\in \mathfrak{L}(Y,Z)$. \item If $X$ is separable, $\mathfrak{NP}_p(X,Y)=\cup_{\zeta<\omega_1} \mathfrak{SM}_p^\zeta(X,Y)$.   \end{enumerate}
\end{proposition}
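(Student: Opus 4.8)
The plan is to mimic the proof of Proposition~\ref{tedious}; I will treat $p<\infty$, the $c_0$ case being identical after replacing $\ell_p$ norms by $c_0$ norms, using that both the $\ell_p$ and $c_0$ bases satisfy $\|\sum_{j=1}^k c_je_{n_j}\|=\|\sum_{j=1}^k c_je_j\|$ whenever $n_1<\cdots<n_k$. For $(i)$ I show the complement of $\mathfrak{SM}_p^\xi(X,Y)$ is norm-open. If $A\notin\mathfrak{SM}_p^\xi(X,Y)$, fix an $\ell_p^\xi$ spreading model $(x_n)\subset X$, with constants $a,b$, such that $(Ax_n)$ is an $\ell_p^\xi$ spreading model, with constants $a',b'$. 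For $B$ with $\|A-B\|<(2a'b)^{-1}$ and any $E\in\sss_\xi$, the bound $\bigl|\,\|\sum_{i\in E}c_iBx_i\|-\|\sum_{i\in E}c_iAx_i\|\,\bigr|\leqslant\|A-B\|\,\|\sum_{i\in E}c_ix_i\|$, combined with the spreading-model estimates for $(x_n)$ and $(Ax_n)$, shows that $(Bx_n)$ is again an $\ell_p^\xi$ spreading model; since $(x_n)$ is unchanged, $B\notin\mathfrak{SM}_p^\xi(X,Y)$. (For $\xi=\omega_1$ this is just the statement that $\mathfrak{NP}_p(X,Y)$ is norm closed.)

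For $(ii)$ I argue contrapositively. If $ABC\notin\mathfrak{SM}_p^\xi(W,Z)$, fix an $\ell_p^\xi$ spreading model $(w_n)\subset W$ with $(ABCw_n)$ an $\ell_p^\xi$ spreading model; if one of $A,B,C$ is zero then $ABC=0$ preserves no spreading model, so assume all three are nonzero. Then $\|\sum c_iCw_i\|\leqslant\|C\|\,\|\sum c_iw_i\|$ together with $\|\sum c_iCw_i\|\geqslant(\|A\|\,\|B\|)^{-1}\|\sum c_iABCw_i\|$ shows $(Cw_n)$ is an $\ell_p^\xi$ spreading model in $X$, while $\|\sum c_iBCw_i\|\leqslant\|B\|\,\|\sum c_iCw_i\|$ together with $\|\sum c_iBCw_i\|\geqslant\|A\|^{-1}\|\sum c_iABCw_i\|$ shows $(BCw_n)$ is an $\ell_p^\xi$ spreading model in $Y$. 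Hence $B$ maps the $\ell_p^\xi$ spreading model $(Cw_n)$ to the $\ell_p^\xi$ spreading model $(BCw_n)$, i.e. $B\notin\mathfrak{SM}_p^\xi(X,Y)$.

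For $(iii)$, the inclusion $\cup_{\zeta<\omega_1}\mathfrak{SM}_p^\zeta(X,Y)\subset\mathfrak{NP}_p(X,Y)$ requires no separability: a sequence equivalent to the $\ell_p$ (resp. $c_0$) basis is, with the same constants, an $\ell_p^\zeta$ (resp. $c_0^\zeta$) spreading model for every $\zeta$, so if $A$ preserved a copy of $\ell_p$ then $A\notin\mathfrak{SM}_p^\zeta(X,Y)$ for all $\zeta$. The reverse inclusion is the substantive part, and the place where I expect to spend the most care; it rests on the estimate
$$A\ \text{preserves an}\ \ell_p^\xi\ \text{spreading model}\ \Longrightarrow\ \textbf{NP}_p(A,X,Y)\geqslant\omega^\xi .$$
Granting this: if $A\in\mathfrak{NP}_p(X,Y)$ with $X$ separable, Proposition~\ref{tedious}$(iv)$ gives $\gamma:=\textbf{NP}_p(A,X,Y)<\omega_1$, and choosing $\zeta<\omega_1$ with $\omega^\zeta>\gamma$ (say $\zeta=\gamma+1$) the displayed implication forces $A$ to preserve no $\ell_p^\zeta$ spreading model, so $A\in\mathfrak{SM}_p^\zeta(X,Y)\subset\cup_{\eta<\omega_1}\mathfrak{SM}_p^\eta(X,Y)$.

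It remains to prove the displayed estimate, which is the main obstacle. Let $(x_n)$ be an $\ell_p^\xi$ spreading model with constants $a,b$ such that $(Ax_n)$ is an $\ell_p^\xi$ spreading model with constants $a',b'$, and define $\Phi\colon\sss_\xi\to B_X^{<\nn}$ by $\Phi(\{n_1<\cdots<n_k\})=(b^{-1}x_{n_1},\dots,b^{-1}x_{n_k})$. Using that $\sss_\xi$ is downward closed, that $\|\sum c_je_{n_j}\|=\|\sum c_je_j\|$, and the spreading-model estimates over $\sss_\xi$ (including on singletons, which gives $b^{-1}\|x_{n_j}\|\leqslant 1$), one checks that $(b^{-1}x_{n_j})_{j=1}^k\lesssim_1(e_j)_{j=1}^k$ and $(e_j)_{j=1}^k\lesssim_{a'b}(b^{-1}Ax_{n_j})_{j=1}^k$, so $\Phi(E)\in T_p(A,X,Y,a'b)$ for every $E\in\sss_\xi$; and clearly $E\prec F$ implies $\Phi(E)\prec\Phi(F)$. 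Since a strictly monotone map between trees cannot decrease the order (a routine transfinite induction, or via Proposition~\ref{indexing}), $\textbf{NP}_p(A,X,Y)\geqslant o(T_p(A,X,Y,a'b))\geqslant o(\sss_\xi)\geqslant\omega^\xi$, as required. Once this estimate is in hand, parts $(i)$–$(iii)$ assemble just as for Proposition~\ref{tedious}.
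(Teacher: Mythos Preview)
Your proof is correct and follows exactly the approach the paper has in mind: the paper omits this proof entirely, saying only that it is similar to Proposition~\ref{tedious}, and your arguments for $(i)$ and $(ii)$ are precisely the natural adaptations of Proposition~\ref{tedious}$(ii)$ and $(iii)$. For $(iii)$, the key estimate you supply---that preservation of an $\ell_p^\xi$ spreading model forces $\textbf{NP}_p(A,X,Y)>\omega^\xi$ via the monotone embedding $E\mapsto (b^{-1}x_{n_j})_{j=1}^{|E|}$ of $\sss_\xi$ into $T_p(A,X,Y,a'b)$---is exactly the argument the paper itself gives later when proving $\mathfrak{NP}_p^\xi\subset\mathfrak{SM}_p^\xi$ in Theorem~\ref{distinct3}$(i)$.
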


We remark at this point that there exist (necessarily non-separable) Banach spaces admitting no copy of $\ell_p$ (resp. $c_0$) but admitting for all $\xi<\omega_1$ an $\ell_p^\xi$ (resp. $c_0^\xi$) spreading model.  For example, the $\ell_2$ sum $\bigl(\oplus X_\xi)_{\ell_2[1, \omega_1)}$, $X_\xi$ the Schreier spaces of \cite{AA}, is such a space for $p=1$.  For $1<p$, the $p$-convexification of this space admits an $\ell_p^\xi$ spreading model for all $\xi<\omega_1$, and the dual of this space admits $c_0^\xi$ spreading models for all countable $\xi$.  Thus $\mathfrak{NP}_p= \mathfrak{SM}_p^{\omega_1}\neq \cup_{\zeta<\omega_1}\mathfrak{SM}_p^\zeta$.  As we will see later, the union $\cup_{\zeta<\omega_1}\mathfrak{SM}_p^\zeta$ is a closed ideal distinct from the ideal of operators not preserving a copy of $\ell_p$.  

We have the following analogue of Lemma \ref{product}.  The first part is similar to an argument concerning sums of $\xi$-and $\zeta$-strictly singular operators.  

\begin{lemma} Fix $0<\xi, \zeta<\omega_1$, $X,Y\in \Ban$, $1\leqslant p\leqslant \infty$. Assume $A\in \mathfrak{SM}_p^\xi(X,Y)$ and $B\in \mathfrak{SM}_p^\zeta(X,Y)$.   \begin{enumerate}[(i)]\item If $1<p<\infty$, $A+B\in \mathfrak{SM}_p^{\xi+\zeta}(X,Y)$. \item If $p\in \{1, \infty\}$, $A+B\in \mathfrak{SM}_p^{\xi\vee \zeta}(X,Y)$. \end{enumerate}

\label{product 2}

\end{lemma}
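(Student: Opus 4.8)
For part (i), I would argue by contradiction. Suppose $A+B\notin\mathfrak{SM}_p^{\xi+\zeta}(X,Y)$, so there is a sequence $(x_n)\subset X$ which is an $\ell_p^{\xi+\zeta}$ spreading model and such that $((A+B)x_n)$ is also one; fix constants $a,b$ good for both. Since $\iota(\sss_\zeta[\sss_\xi])=\iota(\sss_\xi)\iota(\sss_\zeta)=\omega^{\xi+\zeta}=\iota(\sss_{\xi+\zeta})$, the Cantor--Bendixson indices of $\sss_\zeta[\sss_\xi]$ and $\sss_{\xi+\zeta}$ coincide, so Proposition~\ref{Gasparis analogue} yields $M\in[\nn]$ with $\sss_\zeta[\sss_\xi](M)\subseteq\sss_{\xi+\zeta}$. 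Replacing $(x_n)$ by $(x_{m_n})$, still a witness, I may assume $(x_n)$ and $((A+B)x_n)$ are, up to $a,b$, equivalent to the $\ell_p$ basis on each member of $\sss_\zeta[\sss_\xi]$ — in particular they are both $\ell_p^\xi$ and $\ell_p^\zeta$ spreading models, since $\sss_\xi,\sss_\zeta\subseteq\sss_\zeta[\sss_\xi]$.

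Next, for every $N$ the tail $(x_n)_{n>N}$ is again an $\ell_p^\xi$ spreading model, so by $A\in\mathfrak{SM}_p^\xi$ the sequence $(Ax_n)_{n>N}$ is not; since the upper estimate $\|\sum c_iAx_i\|\leqslant b\|A\|(\sum|c_i|^p)^{1/p}$ holds with a fixed constant, it is the lower estimate that fails, so for every $\varepsilon>0$ and every $N$ there are $E\in\sss_\xi$ with $\min E>N$ and scalars $(c_i)_{i\in E}$, $\sum_{i\in E}|c_i|^p=1$, with $\|A\sum_{i\in E}c_ix_i\|<\varepsilon$. Iterating, I pick $E_1<E_2<\cdots$ in $\sss_\xi$ with $\min E_j\geqslant j$ and normalized $p$-convex combinations $y_j=\sum_{i\in E_j}c_i^{(j)}x_i$ with $\|Ay_j\|<\varepsilon_j$, choosing $(\varepsilon_j)$ so that $\sum_j\varepsilon_j^{p'}$ is as small as desired ($p'$ conjugate to $p$). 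For $F\in\sss_\zeta$ one has $\bigcup_{j\in F}E_j\in\sss_\zeta[\sss_\xi]$ (its minima form a spread of $F$, hence lie in $\sss_\zeta$), so the $\ell_p$-equivalence on $\sss_\zeta[\sss_\xi]$-sets shows at once that $(y_j)$ and $((A+B)y_j)$ are $\ell_p^\zeta$ spreading models; as $By_j=(A+B)y_j-Ay_j$ with $(\sum\varepsilon_j^{p'})^{1/p'}$ small, H\"older's inequality makes $(By_j)$ an $\ell_p^\zeta$ spreading model too — contradicting $B\in\mathfrak{SM}_p^\zeta(X,Y)$. Note that $1<p<\infty$ is used only to have a finite conjugate exponent $p'$; this, in turn, is exactly what forces the block structure (blocks on $\sss_\xi$-sets arranged along $\sss_\zeta$-sets) and hence the index $\xi+\zeta$.

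For part (ii), $p\in\{1,\infty\}$, the target is the stronger estimate $\xi\vee\zeta$. I would first record that $\mathfrak{SM}_p^\zeta\subseteq\mathfrak{SM}_p^\xi$ for $\zeta\leqslant\xi$ — by Proposition~\ref{Gasparis analogue} an $\ell_p^\xi$ spreading model has an $\ell_p^\zeta$ spreading model as a subsequence — so (ii) reduces to showing $\mathfrak{SM}_p^\xi$ is closed under sums. This is the main obstacle: one must rerun the construction above but produce a block sequence spanning an $\ell_p^\xi$ (resp.\ $c_0^\xi$) spreading model rather than only one at level $\xi+\xi$, which is what the $\sss_\xi$-blocks-along-$\sss_\xi$-sets pattern naively gives. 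The improvement mirrors Lemma~\ref{product}(ii) and Lemma~\ref{c_0}: for $p=1$ the point is that an $\ell_1^\xi$ spreading model remains one under perturbation by a \emph{uniformly} norm-small sequence, with no summability of the errors required, so one can absorb $(Ay_j)$ (or $(By_j)$) without the length-dependent loss; for $p=\infty$ the point is that the $c_0$ upper estimate is automatic, and, exactly as in the proof of Lemma~\ref{c_0}, after passing to a suitable block subsequence a norming-functional and dimension argument forces the image sequence to be $2$-basic and bounded below, hence a $c_0^\xi$ spreading model. In both cases one then extracts an $\ell_p^\xi$ (resp.\ $c_0^\xi$) spreading model preserved by $A$ or by $B$, contradicting $A\in\mathfrak{SM}_p^\xi$ or $B\in\mathfrak{SM}_p^\zeta\subseteq\mathfrak{SM}_p^\xi$. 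The delicate bookkeeping — arranging the extracted blocks so their combined supports stay governed by a family of Cantor--Bendixson index $\omega^\xi+1$ (via Proposition~\ref{Gasparis analogue}), rather than by $\sss_\xi[\sss_\xi]$ — is the part that genuinely uses the special features of $\ell_1$ and $c_0$ and that has no analogue for $1<p<\infty$.
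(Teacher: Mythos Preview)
Your argument for (i) is essentially the paper's: both choose $M$ with $\sss_\zeta[\sss_\xi](M)\subset\sss_{\xi+\zeta}$, build $\sss_\xi$-blocks $y_j$ of $(x_{m_n})$ with $\|Ay_j\|$ small, and then show $(By_j)$ is an $\ell_p^\zeta$ spreading model. The paper controls the perturbation by the crude triangle inequality with $\sum\varepsilon_j<\infty$ rather than by H\"older, but this is immaterial.

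For (ii) there is a genuine gap. You correctly reduce to closure of $\mathfrak{SM}_p^\xi$ under sums and you see that the obstacle is avoiding $\sss_\xi[\sss_\xi]$, but the ``delicate bookkeeping'' you allude to is never carried out, and the analogy with Lemmas~\ref{product}(ii) and~\ref{c_0} does not transfer: those are tree/dichotomy arguments on $\ttt_{\xi\zeta}$, whereas here you have a single sequence and Schreier combinatorics. There is no evident way to extract blocks with small $A$-image whose supports are so short that the resulting block sequence is still an $\ell_1^\xi$ (or $c_0^\xi$) spreading model; this is precisely the step you leave as a black box.

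The paper's proof of (ii) takes a different route and avoids block extraction entirely. For $p=\infty$: any $c_0^\xi$ spreading model $(x_n)$ is weakly null, so if $\limsup\|Ax_n\|>0$ then a subsequence of $(Ax_n)$ is seminormalized weakly null, hence (after a further subsequence) basic and therefore $c_0$-dominating; together with the upper estimate inherited from $(x_n)$ this makes $(Ax_n)$ itself a $c_0^\xi$ spreading model, contradicting $A\in\mathfrak{SM}_\infty^\xi$. Thus $Ax_n\to 0$, and similarly $Bx_n\to 0$ (a subsequence of a $c_0^\xi$ spreading model is a $c_0^\zeta$ spreading model by almost monotonicity), so $(A+B)x_n\to 0$. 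For $p=1$: the paper invokes Rosenthal's theorem and the Argyros--Mercourakis--Tsarpalias dichotomy for the repeated averages hierarchy. Neither $(Ax_n)$ nor $(Bx_n)$ can contain an $\ell_1$ subsequence, so after passing to a difference of a weakly Cauchy subsequence both are weakly null; then the AMT dichotomy (applied to $(Ax_n)$, then to $(Bx_n)$ inside the resulting set) produces $M\in[\nn]$ along which the $\xi$-repeated averages of both $(Ax_n)$ and $(Bx_n)$ are norm-null, whence those of $((A+B)x_n)$ are too, contradicting the $\ell_1^\xi$ lower estimate. In both endpoint cases the argument is a \emph{convergence} argument (norm convergence of the sequence for $c_0$, norm convergence of $\xi$-averages for $\ell_1$), not a combinatorial block construction.
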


\begin{theorem} For each $1\leqslant p\leqslant \infty$, $\xi\leqslant \omega_1$, $\cup_{\zeta<\omega^\xi} \mathfrak{SM}_p^\zeta$ is an operator ideal, closed if $\xi=\omega_1$.  Moreover, $\mathfrak{SM}_p^\xi$ is a closed operator ideal if $p\in \{1,\infty\}$.

\end{theorem}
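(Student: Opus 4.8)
The plan is to verify the operator-ideal axioms for the two classes in turn, relying on three ingredients already in place: the Proposition preceding Lemma \ref{product 2}, which says each $\mathfrak{SM}_p^\zeta$ is norm closed and is stable under composition on either side with arbitrary operators; the sum estimates of Lemma \ref{product 2}; and the observation that every finite rank operator lies in every $\mathfrak{SM}_p^\zeta$. For the last point: if $A\in\mathfrak{L}(X,Y)$ has rank $d$ and $(x_n)\subset X$ is arbitrary, then picking $E\in\sss_1$ with $|E|=d+1$, the $d+1$ vectors $(Ax_i)_{i\in E}$ are linearly dependent, hence cannot $a$-dominate the $\ell_p^{d+1}$ (or $c_0^{d+1}$) basis for any $a$; so $(Ax_n)$ is never an $\ell_p^\xi$ (or $c_0^\xi$) spreading model and $A\in\mathfrak{SM}_p^\zeta(X,Y)$ for every $\zeta\geqslant 1$. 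Since also $1_{\mathbb K}\in\mathfrak{SM}_p^\zeta(\mathbb K,\mathbb K)$ trivially and each component is stable under scalar multiplication (a nonzero rescaling of a spreading model is a spreading model), in every case the ideal axioms reduce to checking closure under addition within each component.

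For $\mathfrak{SM}_p^\xi$ with $p\in\{1,\infty\}$ and $\xi<\omega_1$, Lemma \ref{product 2}(ii) gives $\mathfrak{SM}_p^\xi(X,Y)+\mathfrak{SM}_p^\xi(X,Y)\subseteq\mathfrak{SM}_p^{\xi\vee\xi}(X,Y)=\mathfrak{SM}_p^\xi(X,Y)$, so together with the facts recalled above this class is a closed operator ideal; for $\xi=\omega_1$ we have $\mathfrak{SM}_p^{\omega_1}=\mathfrak{NP}_p$, which is an operator ideal because $\ell_p$ and $c_0$ are minimal and hence have property $(S')$, and which is norm closed by the cited Proposition. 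Next consider the union $\bigcup_{\zeta<\omega^\xi}\mathfrak{SM}_p^\zeta$ for $1\leqslant\xi<\omega_1$, so that $\omega^\xi<\omega_1$; it is stable under composition and contains the finite rank operators (which lie in $\mathfrak{SM}_p^1$, and $1<\omega^\xi$). For additivity, take $A\in\mathfrak{SM}_p^{\zeta_1}$, $B\in\mathfrak{SM}_p^{\zeta_2}$ with $\zeta_1,\zeta_2<\omega^\xi$: if $1<p<\infty$ then Lemma \ref{product 2}(i) gives $A+B\in\mathfrak{SM}_p^{\zeta_1+\zeta_2}$ and $\zeta_1+\zeta_2<\omega^\xi$ since $\omega^\xi$ is additively indecomposable \cite{M}; if $p\in\{1,\infty\}$ then Lemma \ref{product 2}(ii) gives $A+B\in\mathfrak{SM}_p^{\zeta_1\vee\zeta_2}$ with $\zeta_1\vee\zeta_2<\omega^\xi$. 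In either case $A+B$ lies in the union, which is therefore an operator ideal.

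It remains to treat $\xi=\omega_1$. Since $\omega^{\omega_1}=\omega_1$, the union in question is $\bigcup_{\zeta<\omega_1}\mathfrak{SM}_p^\zeta$, and the ideal axioms follow exactly as in the previous paragraph (a finite sum of countable ordinals is countable). For closedness the key new point is the monotonicity $\mathfrak{SM}_p^\zeta(X,Y)\subseteq\mathfrak{SM}_p^{\zeta'}(X,Y)$ for $\zeta\leqslant\zeta'$: Proposition \ref{Gasparis analogue} yields $M=(m_n)\in[\nn]$ with $\sss_\zeta(M)\subseteq\sss_{\zeta'}$, and since the $\ell_p$ (resp. $c_0$) basis is symmetric, if $(x_n)$ is an $\ell_p^{\zeta'}$ (resp. $c_0^{\zeta'}$) spreading model then $(x_{m_n})_n$ is an $\ell_p^\zeta$ (resp. $c_0^\zeta$) spreading model with the same constants, and likewise for $(Ax_{m_n})_n$; hence an operator preserving an $\ell_p^{\zeta'}$ spreading model preserves an $\ell_p^\zeta$ one. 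Given this, if $A_n\to A$ in norm with $A_n\in\mathfrak{SM}_p^{\zeta_n}$ and $\zeta_n<\omega_1$, put $\zeta:=\sup_n\zeta_n<\omega_1$; monotonicity gives $A_n\in\mathfrak{SM}_p^\zeta(X,Y)$ for all $n$, and as $\mathfrak{SM}_p^\zeta(X,Y)$ is norm closed we get $A\in\mathfrak{SM}_p^\zeta(X,Y)\subseteq\bigcup_{\zeta'<\omega_1}\mathfrak{SM}_p^{\zeta'}(X,Y)$. Given Lemma \ref{product 2} and the cited Proposition the argument is mostly bookkeeping; the two points that require attention are this monotonicity in $\zeta$ --- needed only for the closedness at $\xi=\omega_1$, where it is the uncountable cofinality of $\omega_1$ that lets a union of norm-closed classes be norm closed --- and the use of the closure of $\omega^\xi$ under ordinal addition in the case $1<p<\infty$.
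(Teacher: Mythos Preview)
Your proof is correct and follows the same route the paper intends: the theorem is stated immediately after Lemma~\ref{product 2} with no separate proof, so it is meant to be read as a direct consequence of that lemma together with the Proposition preceding it (closedness and composition stability of each $\mathfrak{SM}_p^\zeta$), exactly as you argue. Your write-up supplies the bookkeeping the paper omits, including the monotonicity $\mathfrak{SM}_p^\zeta\subset\mathfrak{SM}_p^{\zeta'}$ for $\zeta\leqslant\zeta'$ via Proposition~\ref{Gasparis analogue} and the uncountable-cofinality argument for closedness at $\xi=\omega_1$; the only quibble is that in the finite-rank step you should pick $E\in\sss_\zeta$ (not $\sss_1$) with $|E|=d+1$, which is always possible since $\sss_\zeta$ contains sets of every finite cardinality for $\zeta\geqslant 1$.
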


\begin{proof}[Proof of Lemma \ref{product 2}]  $(i)$ Fix $1<p<\infty$.  Fix $(x_n)\subset B_X$ and assume that for each $E\in \sss_{\zeta+\xi}$, $(x_n)_{n\in E}\lesssim_1 (e_i)_{i\in E}$, where $(e_i)$ is the $\ell_p$ basis.  If no such sequence exists, then $X$ admits no $\ell_p^{\zeta+\xi}$ spreading model, and obviously $A+B$ can preserve no $\ell_p^{\zeta+\xi}$ spreading model, and we reach the conclusion trivially.  Then since $A$ preserves no $\ell_p^\xi$ spreading model, for any $\ee>0$ and any subsequence $(x_n)_{n\in M}$ of $(x_n)$ and any $k\in \nn$, there exists $E\in \sss_\xi$ with $k\leqslant E$ and scalars $(a_i)_{i\in E}$ having $p$-norm equal to $1$ and so that $\|\sum_{i\in E} a_iA x_{m_i}\|<\ee$.  We choose $M\in \infin$ so that $\sss_\xi[\sss_\zeta](M)\subset \sss_{\zeta+\xi}$.  We then choose $E_1<E_2<\ldots$ and a $p$-absolutely convex block $(z_n)$ of $(x_n)$ so that $z_n=\sum_{i\in E_n} a_ix_{m_i}$ and $\|Az_n\|<\ee_n$, where $\ee_n\downarrow 0$ is chosen so that $\sum \ee_n<\infty$.  Then our choice of $M$ guarantees that $(z_n)_{n\in E}$ is $1$-dominated by the $\ell_p$ basis for each $E\in \sss_\zeta$.  Since $B$ preserves no $\ell_p^\zeta$ spreading model, for any $\ee>0$ and $k\in \nn$ there exist $E\in \sss_\zeta$ with $k\leqslant E$ and scalars $(b_i)_{i\in E}$ having $p$-norm equal to $1$ and so that $\|\sum_{i\in E}b_iBz_i\|< \ee$.  Then $$\Bigl\|\sum_{i\in E}b_i(A+B)z_i\Bigr\|\leqslant \sum_{i\in E}\|Az_i\|+ \ee \leqslant \sum_{i=k}^\infty \ee_i + \ee.$$  Since $k$ and $\ee>0$ were arbitrary, this quantity can be made arbitrarily small.  This means $((A+B)x_n)$ is not an $\ell_p^{\zeta+\xi}$ spreading model, since $\sum_{i\in E}b_i(A+B)z_i$ is a $p$-absolutely convex combination of $(x_{m_n})_{n\in \cup_{i\in E}E_i}$ and $\cup_{i\in E}E_i\in \sss_\zeta[\sss_\xi](M)\subset \sss_{\xi+\zeta}$.

$(ii)$ Assume $\xi=\xi\vee \zeta$.  First consider $p=\infty$.  If $X$ admits no $c_0^\xi$ spreading model, the result is trivial.  Assume $(x_n)\subset X$ is a $c_0^\xi$ spreading model.  Then if $\lim \sup \|Ax_n\|>\ee>0$, by passing to a subsequence we may assume $\|Ax_n\|>\ee$ for all $n\in \nn$.  Since any $c_0^\xi$ spreading model is weakly null, we may also assume $(Ax_n)$ is basic, in which case it dominates the $c_0$ basis, so we have the appropriate lower estimates.  The upper estimates to witness that $(Ax_n)$ is a $c_0^\xi$ spreading model come from comparison to $(x_n)$, and we reach a contradiction.  Thus $Ax_n\to 0$.  Next, note that since $\zeta\leqslant \xi$, the almost monotone property of the Schreier families gives that some subsequence of $(x_n)$ is a $c_0^\zeta$ spreading model, and $Bx_n\to 0$.  Therefore $(A+B)x_n\to 0$, and $((A+B)x_n)$ is not a $c_0^\xi$ spreading model.  

Next, consider $p=1$.  Suppose $(x_n)\subset B_X$ is such that $((A+B)x_n)$ is an $\ell_1^\xi$ spreading model.  Note that no subsequence of either $(Ax_n)$ or $(Bx_n)$ can be equivalent to the $\ell_1$ basis, and by Rosenthal's $\ell_1$ theorem we can assume $(Ax_n)$ and $(Bx_n)$ are both weakly Cauchy.  By passing to an appropriate subsequence and taking a difference sequence, we can asume $(Ax_n)$ and $(Bx_n)$ are both weakly null.  By \cite{AMT}, either some subsequence of $(Ax_n)$ is an $\ell_1^\xi$ spreading model, or there exists $N\in \infin$ so that for all $L\in [N]$, $\sum_{i\in \supp(\xi_n^L)}\xi^L_n(i)Ax_i\underset{n}{\to} 0$, where $(\xi_n^L)\subset c_{00}$ denotes the repeated averages hierarchy block corresponding to $L$ and $\xi$, and $\xi_n^L=(\xi_n^L(i))_i$. Of course, the second alternative must hold.  Using \cite{AMT} again, either there exists $M\in [N]$ so that $(Bx_i)_{i\in M}$ is an $\ell_1^\xi$ spreading model, or there exists $M\in [N]$ so that for all $L\in [M]$, $\sum_{i\in \supp(\xi_n^L)}\xi^L_n(i)Bx_i\underset{n}{\to} 0$, and again our hypothesis guarantees that the second alternative must hold.  Therefore $\sum_{i\in \supp(\xi_n^M)}\xi^M_n(i)(A+B)x_i\underset{n}{\to}0$.  Since $\supp(\xi^M_n)\in \sss_\xi$ and each $\xi^m_n$ is a convex combination of the $c_{00}$ basis, this shows that $((A+B)x_n)$ cannot be an $\ell_1^\xi$ spreading model.  

\end{proof}

\subsection{Weakly compact index} Let $\mathfrak{WC}$ denote the ideal of weakly compact operators.  We define $\mathfrak{WC}^\xi=\mathfrak{WC}\cap \mathfrak{SM}_1^\xi$.   Note that this is a closed ideal, being the intersection of two closed ideals. We let $\textbf{SWC}(A,X,Y)$ be the minimum ordinal in $[1, \omega_1]$ so that $A\in \mathfrak{WC}^\xi$, if such an ordinal exists, and $\textbf{SWC}(A,X,Y)=\infty$ otherwise. Again, we let $\textbf{SWC}(X)=\textbf{SWC}(I_X,X,X)$.  If $X_{\xi,2}$ denotes the completion of $c_{00}$ under the norm $$\|x\|_{X_{\xi,2}} = \sup\Bigl\{\Bigl(\sum_{i=1}^\infty \|E_i x\|_{\ell_1}^2\Bigr)^{1/2}: E_1<E_2<\ldots, E_i\in \sss_\xi\Bigr\},$$ it was shown in \cite{C3} that $X_{\xi,2}$ admits no $\ell_1^{\xi+1}$ spreading model for $0\leqslant \xi<\omega_1$.  Moreover, $X_{\xi,2}$ is reflexive and the basis is an $\ell_1^\xi$ spreading model.  Therefore $\textbf{SWC}(X_{p,2}) = \xi+1$ and $\mathfrak{WC}^{\xi+1}\setminus \mathfrak{WC}^\xi\neq \varnothing$ for each $0\leqslant \xi<\omega_1$.  We last observe that for a separable Banach space $X$ and any Banach space $Y$, $A:X\to Y$ is weakly compact if and only if there exists $\xi<\omega_1$ so that $A\in \mathfrak{WC}^\xi(X,Y)$.  This is because if $A:X\to Y$ is weakly compact, then if $A\notin \mathfrak{WC}^\xi$, $A\notin \mathfrak{SM}^\xi$.  But since $\mathfrak{NP}_1(X,Y)= \cup_{\xi<\omega_1}\mathfrak{SM}_1^\xi(X,Y)$, $A\in \mathfrak{WC}(X,Y)\setminus \cup_{\xi<\omega_1}\mathfrak{WC}^\xi(X,Y)$ implies that $A$ preserves a copy of $\ell_1$, contradicting the assumption of weak compactness.  

Thus we have arrived at \begin{theorem} For each $0< \xi< \omega_1$, $\mathfrak{WC}^\xi$ is a closed operator ideal and $\mathfrak{SM}^{\xi+1}\setminus \mathfrak{SM}^\xi$ is non-empty.  Moreover, if $X,Y\in \Ban$ and if $X$ is separable, then $\mathfrak{WC}(X,Y)=\cup_{\xi<\omega_1}\mathfrak{WC}^\xi(X,Y)$.

\end{theorem}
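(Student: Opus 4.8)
The plan is to deduce all three assertions from results already established, essentially formalizing the discussion that precedes the statement.

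\emph{Closedness of $\mathfrak{WC}^\xi$.} By definition $\mathfrak{WC}^\xi=\mathfrak{WC}\cap\mathfrak{SM}_1^\xi$. Here $\mathfrak{WC}$ is a closed operator ideal, and by the preceding theorem on spreading-model classes (applied with $p=1$, noting $1\in\{1,\infty\}$), $\mathfrak{SM}_1^\xi$ is a closed operator ideal. Hence $\mathfrak{WC}^\xi$, being the intersection of two closed operator ideals, is itself a closed operator ideal, and no further work is needed.

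\emph{Non-emptiness of $\mathfrak{SM}_1^{\xi+1}\setminus\mathfrak{SM}_1^\xi$ and of $\mathfrak{WC}^{\xi+1}\setminus\mathfrak{WC}^\xi$.} I would take as witness the identity operator $I_{X_{\xi,2}}$ on the space $X_{\xi,2}$. Invoking the properties of $X_{\xi,2}$ recorded above (from \cite{C3}): $X_{\xi,2}$ is reflexive, so $I_{X_{\xi,2}}\in\mathfrak{WC}$; its canonical basis is an $\ell_1^\xi$ spreading model, which the identity trivially preserves, so $I_{X_{\xi,2}}\notin\mathfrak{SM}_1^\xi$ and therefore $I_{X_{\xi,2}}\notin\mathfrak{WC}^\xi$; and $X_{\xi,2}$ admits no $\ell_1^{\xi+1}$ spreading model, so $I_{X_{\xi,2}}\in\mathfrak{SM}_1^{\xi+1}$, whence (combined with reflexivity) $I_{X_{\xi,2}}\in\mathfrak{WC}^{\xi+1}$. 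Thus $I_{X_{\xi,2}}$ exhibits both differences as non-empty.

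\emph{The separable identity $\mathfrak{WC}(X,Y)=\cup_{\xi<\omega_1}\mathfrak{WC}^\xi(X,Y)$.} Fix $X$ separable and $Y\in\Ban$. The inclusion $\supseteq$ is immediate since $\mathfrak{WC}^\xi(X,Y)\subseteq\mathfrak{WC}(X,Y)$ for every $\xi$. For $\subseteq$, let $A\in\mathfrak{WC}(X,Y)$. First I would note that $A$ cannot restrict to an isomorphism on any subspace of $X$ isomorphic to $\ell_1$: such a restriction would carry the unit ball of that subspace to a relatively weakly compact subset of $Y$ isomorphic to $B_{\ell_1}$, which is impossible since $\ell_1$ is not reflexive. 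Hence $A\in\mathfrak{NP}_1(X,Y)$, and since $X$ is separable, the preceding proposition gives $\mathfrak{NP}_1(X,Y)=\cup_{\zeta<\omega_1}\mathfrak{SM}_1^\zeta(X,Y)$; so $A\in\mathfrak{SM}_1^\xi(X,Y)$ for some $\xi<\omega_1$. Together with $A\in\mathfrak{WC}(X,Y)$ this gives $A\in\mathfrak{WC}\cap\mathfrak{SM}_1^\xi=\mathfrak{WC}^\xi(X,Y)$, as desired.

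There is no serious obstacle here: every ingredient — closedness of $\mathfrak{WC}$, closedness of $\mathfrak{SM}_1^\xi$, the structure of $X_{\xi,2}$, the standard fact that weakly compact operators preserve no copy of $\ell_1$, and the separable identity for $\mathfrak{NP}_1$ — is either classical or already in place, so the only care required is in invoking the cited properties of $X_{\xi,2}$ correctly.
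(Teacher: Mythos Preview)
Your proposal is correct and follows essentially the same approach as the paper: the paper likewise notes that $\mathfrak{WC}^\xi=\mathfrak{WC}\cap\mathfrak{SM}_1^\xi$ is closed as an intersection of two closed ideals, uses the identity on $X_{\xi,2}$ (reflexive, basis an $\ell_1^\xi$ spreading model, no $\ell_1^{\xi+1}$ spreading model) to witness the non-emptiness, and for the separable identity argues that a weakly compact $A$ cannot preserve $\ell_1$, hence lies in $\mathfrak{NP}_1(X,Y)=\cup_{\zeta<\omega_1}\mathfrak{SM}_1^\zeta(X,Y)$.
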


At first, this definition may seem somewhat artificial, but an equivalent, more apparently natural definition has appeared previously in the literature \cite{BF}.  Of course, $X$ is reflexive if and only if any bounded sequence $(x_n)$ has a weakly converging subsequence, which is equivalent to every bounded sequence in $X$ having a convex block which is norm convergent.  In \cite{AMT}, the Schreier families and repeated averages hierarchy were used to quantify the complexity of the blocking required to witness the convex block of a weakly converging subsequence which is norm convergent.  In complete analogy, the operator $A:X\to Y$ is weakly compact if and only if every for every sequence $(x_n)\subset B_X$, some subsequence of $(Ax_n)$ is weakly convergent or, equivalently, for every sequence $(x_n)\subset B_X$, some subsequence of $(Ax_n)$ has a convex block converging in norm in $Y$.  The stratification of $\mathfrak{WC}$ into the classes $\mathfrak{WC}^\xi$ also measures the complexity of a convex block of a subsequence of $(x_n)$ which has norm converging image sequence.  In \cite{AMT}, the authors defined $\xi$ and $(\xi, M)$ convergent.   For $\xi<\omega_1$ and $M\in [\nn]$, the sequence $(y_n)$ converging weakly to $y$ is $(\xi, M)$ convergent to $y$ if  $\|y-\sum_{i\in \supp(\xi^M_n)} \xi^M_n(i)y_i\|\underset{n\to \infty}{\to} 0$.  The sequence $(y_n)$ converging weakly to $y$ is $\xi$ convergent to $y$ if for any $N\in [\nn]$, there exists $M\in [N]$ so that $(y_n)$ is $(\xi, M)$ convergent to $y$. 

Negating the characterization of weak compactness above, one can deduce that the operator $A:X\to Y$ fails to be weakly compact if and only if there exists $(x_n)\subset B_X$ so that $(Ax_n)$ dominates the summing basis $(s_i)$, the norm of which is given by $\|\sum_{i=1}^n a_is_i\|= \max_{1\leqslant m\leqslant n}|\sum_{i=1}^m a_i|$.  In \cite{BF}, for $\xi<\omega_1$, an operator $A:X\to Y$ was called $\sss_\xi$-\emph{weakly compact} if for any seminormalized basic sequence $(x_n)\subset X$ and any $\ee>0$, there exist $E\in \sss_\xi$ and scalars $(a_i)_{i\in E}$ with $$\|\sum_{i\in E} a_i Ax_i\|< \ee \|\sum_{i\in E} a_is_i\|.$$ We note that these notions both lead to the same quantification.

\begin{proposition} Let $A:X\to Y$ be an operator, $\xi<\omega_1$. Then $A$ is $\sss_\xi$ weakly compact if and only if $A\in \mathfrak{WC}^\xi$.  
\end{proposition}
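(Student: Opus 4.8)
\emph{Overview.} The plan is to prove the two implications separately. The forward implication ($\mathcal S_\xi$-weakly compact $\Rightarrow A\in\mathfrak{WC}^\xi$) is a direct unwinding of the definition of $\mathcal S_\xi$-weak compactness against the characterizations of weak compactness and of $\mathfrak{SM}_1^\xi$. The reverse implication rests on the Argyros--Mercourakis--Tsarpalias dichotomy together with a rank-one perturbation that reduces to the weakly null case. Throughout I use that $\|\sum a_is_i\|=\max_r|\sum_{i\le r}a_i|$, so $\max_i|a_i|\le 2\|\sum a_is_i\|\le 2\sum_i|a_i|$, and that the $\ell_1^{|E|}$-basis dominates $(s_i)_{i\in E}$.

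\emph{Forward direction.} Suppose $A$ is $\mathcal S_\xi$-weakly compact. If $A$ were not weakly compact, pick $(x_n)\subset B_X$ with $(Ax_n)$ dominating $(s_i)$; then $\|Ax_p-Ax_q\|\ge c\|s_p-s_q\|=c>0$ for all $p\ne q$, so $(x_n)$ and $(Ax_n)$ are seminormalized with no norm-convergent subsequence, and passing to a basic subsequence of $(x_n)$ gives a seminormalized basic sequence with $(Ax_i)_{i\in E}\gtrsim (s_i)_{i\in E}$ for every $E\in\sss_\xi$ --- contradicting $\mathcal S_\xi$-weak compactness. If $A$ preserved an $\ell_1^\xi$ spreading model $(x_n)$, then $(x_n)$ (being seminormalized with no norm-convergent subsequence) may be taken
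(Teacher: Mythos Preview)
Your overall plan is exactly the paper's: the easy direction is dispatched by observing that a sequence whose image dominates the summing basis (or is an $\ell_1^\xi$ spreading model) witnesses failure of $\mathcal S_\xi$-weak compactness, and the substantive direction is handled by reducing to a weakly null sequence and invoking the Argyros--Mercourakis--Tsarpalias dichotomy from \cite{AMT}. So there is no genuine difference in approach.

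That said, your write-up is truncated mid-sentence, and what is written has a small gap. In the forward direction you assert that one may ``pass to a basic subsequence of $(x_n)$.'' This is not automatic for an arbitrary seminormalized sequence with no convergent subsequence; you should either invoke the James-type construction (which produces $(x_n)\subset B_X$ with $(Ax_n)$ already basic, hence $(x_n)$ basic), or, in the $\ell_1^\xi$-spreading-model case, first use weak compactness of $A$ to pass to a weakly convergent image sequence and then take differences to get a weakly null, seminormalized sequence in $X$ with a basic subsequence. The paper glosses over this point as well, so it is not a serious objection, but since you are writing out the argument you should say a word here.

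For the reverse direction your overview is correct but omits the one genuinely delicate step. After the AMT dichotomy gives $\sum_{i\in\supp(\xi_n^M)}\xi_n^M(i)A(x_i-u_i)\to 0$, you still must produce coefficients $(a_i)_{i\in E}$, $E\in\mathcal S_\xi$, with $\|\sum a_i Ax_i\|$ small while $\|\sum a_i s_i\|$ stays bounded below. The paper does this by splitting $\supp(\xi_n^M)$ into two nearly-equal-mass pieces $A_n<B_n$, putting a sign change between them, and using convex unconditionality of (a subsequence of) $(A(x_n-u_n))$ together with $Au_n\to y$ to force $\|\sum\varepsilon_i\xi_n^M(i)Ax_i\|\to 0$; meanwhile the alternating signs keep $\|\sum\varepsilon_i\xi_n^M(i)s_i\|\ge 1/2$ asymptotically. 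Your phrase ``rank-one perturbation that reduces to the weakly null case'' captures the first move but not this sign-splitting endgame, which is where the summing basis actually enters. Make sure to include it.
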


\begin{proof} Assume $A\notin \mathfrak{WC}^\xi$.  If $A$ fails to be weakly compact, then of course $A$ fails to be $\sss_\xi$ weakly compact.  If $A$ is weakly compact, then there exists $(x_n)\subset B_X$ so that $(Ax_n)$ is an $\ell_1^\xi$ spreading model. Then for some $K\geqslant 1$ and all $E\in \sss_\xi$, $(Ax_n)_{n\in E}$ $K$-dominates the $\ell_1^{|E|}$ basis, and therefore $K$-dominates $(s_i)_{i=1}^{|E|}$.  This implies that $A$ is not $\sss_\xi$-weakly compact, since  $(s_i)$ is isometrically equivalent to all of its subsequences.  

Next, assume $A\in \mathfrak{WC}^\xi$.  Fix $(x_n)\subset B_X$.  By passing to a subsequence, we may assume $(Ax_n)$ converges weakly to some $y\in \overline{AB_X}^w=\overline{AB_X}$. Then there exists $(u_n)\subset B_X$ so that $(Au_n)$ converges in norm to $y$. If a subsequence of $(A(x_n-u_n))$ is norm null, then the corresponding subsequence of $(Ax_n)$ converges in norm to $y$, and we are done.  Otherwise we can pass to a subsequence and assume $(A(x_n-u_n))$ is convexly unconditional \cite{AMT}.   Recall that for $M\in \infin$, $(\xi^M_n)_n$ denotes the repeated averages hierarchy blocking corresponding to $\xi$ and $M$.  By \cite{AMT}, either some subsequence of $(A(x_n-u_n))$ is an $\ell_1^\xi$ spreading model, which is impossible since $A\in \mathfrak{WC}^\xi$, or there exists $M\in \infin$ so that $\sum_{i\in \supp(\xi^M_n)} \xi^M_n(i)(x_i-u_i)\underset{n\to \infty}{\to}0$ in norm.  Note that $\|\xi^M_n\|_{c_0}\underset{n\to \infty}{\to}0$, so we may partition $\supp(\xi^M_n)$ into $A_n< B_n$ so that  $\sum_{i\in A_n}\xi^M_n(i),\sum_{i\in B_n}\xi^m_n(i)\underset{n\to \infty}{\to}1/2$.  By convex unconditionality, if $\ee_i=1$ for $i\in A_n$ and $-1$ for $i\in B_n$, $\|\sum_{i\in \supp(\xi_n^M)} \ee_i \xi^M_n(i)(x_i-u_n)\|\to 0$.  But \begin{align*} \|\sum_{i\in \supp(\xi^M_n)} \ee_i\xi^M_n(i)x_i\| & \leqslant \|\sum_{i\in \supp(\xi^M_n)} \ee_i \xi^M_n(i)(x_i-u_i)\| + \|\sum_{i\in A_n} \xi^M_n(i)u_i - \sum_{i\in B_n} \xi^M_n(i) u_i\| \\ & \to 0 + \|\frac{1}{2}y-\frac{1}{2}y\|=0.\end{align*}

Since $\lim \sup_n \|\sum_{i\in \supp(\xi^M_n)} \ee_i \xi^M_n(i)s_i\|\geqslant 1/2$, this proves that there does not exist $K$ so that for all $E\in \sss_\xi$, $(x_n)_{n\in E}$ $K$-dominates the summing basis $(s_i)_{i\in E}$.  This proves that $A$ is $\sss_\xi$ weakly compact.  

\end{proof}

\section{Dualization} 

Given a normalized, bimonotone Schauder basis $(e_i)$ with coordinate functionals $(e_i^*)$ and an operator $A:X\to Y$, a natural question to ask is how  $\textbf{NP}_{(e_i)}(A, X, Y)$ and $\textbf{NP}_{(e_i^*)}(A^*, Y^*, X^*)$ may compare.  From \cite{BD}, \cite{H}, and \cite{FOS}, we deduce that in general these indices may be drastically different.  It follows from \cite{FOS} that if $(e_i)$ is any shrinking basis, there exists a $\mathcal{L}_\infty$ Banach space $Z_{(e_i)}$ admitting a sequence equivalent to $(e_i)$ so that $Z_{(e_i)}^*\approx \ell_1$.  This means that if $(e_i)$ is the $\ell_p$ basis for $1<p<2$, $\textbf{NP}_p(I_{Z}, Z, Z)=\infty$, while $\textbf{NP}_q(I_{Z^*}, Z^*, Z^*)=\omega$, since $\ell_q$ is not finitely representable in $\ell_1$ for $2<q<\infty$. Additionally, one can take separable, reflexive spaces admitting large $\ell_1$ indices, for example the Schreier spaces, and embed these as well into Banach spaces having duals isomorphic to $\ell_1$, which has the smallest possible $c_0$ indices.  These examples show that it is impossible in general to deduce any connection between $\textbf{NP}_{(e_i)}(A, X, Y)$ and $\textbf{NP}_{(e_i^*)}(A^*, Y^*, X^*)$.  However, we do establish the following sharp relationship.  

\begin{theorem} Let $\xi\in \ord$, $X,Y\in \Ban$, and $A\in \mathfrak{L}(X,Y)$.   \begin{enumerate}[(i)]\item If $A\in \mathfrak{NP}_1^\xi(X,Y)$, then $A^*\in \mathfrak{NP}_\infty^\xi(Y^*,X^*)$.  \item If $A^*\in \mathfrak{NP}_1^\xi(Y^*, X^*)$, then $A\in \mathfrak{NP}_\infty^\xi(X, Y)$. \item If $0<\xi<\omega_1$ and $A\in \mathfrak{SM}_1^\xi(X,Y)$, then $A^*\in \mathfrak{SM}_\infty^\xi(Y^*, X^*)$.  \item If $0<\xi < \omega_1$, and $A^*\in \mathfrak{SM}_1^\xi(Y^*, X^*)$, then $A\in \mathfrak{SM}_\infty^\xi(X,Y)$.  \end{enumerate} 

\label{dualize}

\end{theorem}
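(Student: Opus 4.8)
The plan is to reduce all four statements to $(i)$ and $(iii)$ and then to prove those by a duality argument. Finite rank operators are disposed of at once: $\rank A=\rank A^{*}$, and Proposition \ref{tedious}$(i)$ gives $\textbf{NP}_1(A,X,Y)=1+\rank A=\textbf{NP}_\infty(A^{*},Y^{*},X^{*})$, while a finite rank operator preserves no infinite spreading model; so we may assume $A$ is not finite rank. Then $(ii)$ follows from $(i)$ and $(iv)$ from $(iii)$: applying the latter to $A^{*}\colon Y^{*}\to X^{*}$ gives $A^{**}\in\mathfrak{NP}_\infty^{\xi}(X^{**},Y^{**})$ (resp.\ $A^{**}\in\mathfrak{SM}_\infty^{\xi}(X^{**},Y^{**})$); since the isometry $J_Y\colon Y\hookrightarrow Y^{**}$ yields $T_{(e_i)}(A,X,Y,K)=T_{(e_i)}(J_YA,X,Y^{**},K)$ and $J_YA=A^{**}J_X$, Proposition \ref{tedious}$(iii)$ gives $\textbf{NP}_\infty(A,X,Y)\le\textbf{NP}_\infty(A^{**},X^{**},Y^{**})$, and the spreading model version follows because $(J_Xx_n)$, $(J_YAx_n)$ are spreading models of the same type as $(x_n)$, $(Ax_n)$.

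For $(i)$ I would argue by contraposition: from $\textbf{NP}_\infty(A^{*},Y^{*},X^{*})>\omega^{\xi}$ I want $\textbf{NP}_1(A,X,Y)>\omega^{\xi}$. By Proposition \ref{special form}$(ii),(iii)$ the first index is a power of $\omega$ strictly exceeding $\omega^{\xi}$, so for suitable $K\ge1$ there is a $c_0$-tree $(\varphi_t)$ for $A^{*}$ of order large enough to leave room for pruning. The crucial observation is that along any branch the sequence $(A^{*}\varphi_{t|_i})_{i=1}^{|t|}$ is $K\|A\|$-equivalent to the $\ell_\infty^{|t|}$-basis (lower $c_0$-estimate from the tree, upper from $\|A\|$ together with $(\varphi_{t|_i})$ being $1$-dominated by the $c_0$-basis), hence $\ell_\infty$-rigid. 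Using this rigidity and the finite-codimension stability of Proposition \ref{special form}$(i)$ (which holds verbatim for $p=\infty$, the trees $T_\infty$ being $\infty$-absolutely convex), I would build, by a recursion on $\ttt_{\omega^{\xi}}$ organised as in the Proposition following Lemma \ref{c_0} and governed by the coloring Lemma \ref{dichotomy}, an $\infty$-absolutely convex block subtree $(\varphi_t)_{t\in\ttt_{\omega^{\xi}}}$ and a map $g\colon\ttt_{\omega^{\xi}}\to B_X$ so that along every branch the matrix $\bigl(\langle A^{*}\varphi_{t|_j},g(t|_i)\rangle\bigr)_{i,j=1}^{|t|}$ is diagonal with diagonal entries $\ge\delta=\delta(K,\|A\|)>0$: at a node $t$ one first replaces the current block by a further block combination $\varphi_t$ lying in the (finite-codimensional) annihilator of $\{Ag(t|_j):j<|t|\}\subset Y$, then chooses $g(t)\in B_X$ in the pre-annihilator of $\{A^{*}\varphi_{t|_j}:j<|t|\}$ with $\langle A^{*}\varphi_t,g(t)\rangle\ge\delta$, which the rigidity permits. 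Granting the construction, $(g(t|_i))_{i=1}^{|t|}\subset B_X$ is $1$-dominated by the $\ell_1$-basis, and for scalars with $\sum|a_i|=1$ the functional $\psi=\sum_i\sgn(a_i)\varphi_{t|_i}$ has $\|\psi\|_{Y^{*}}\le1$ (again because $(\varphi_{t|_i})$ is $1$-dominated by the $c_0$-basis) and satisfies $\psi\!\left(\sum_i a_iAg(t|_i)\right)=\sum_i|a_i|\,\langle A^{*}\varphi_{t|_i},g(t|_i)\rangle\ge\delta$; thus $(g(t|_i))_{i=1}^{|t|}\in T_1(A,X,Y,2/\delta)$ for every $t$, so Proposition \ref{indexing}$(ii)$ gives $\textbf{NP}_1(A,X,Y,2/\delta)\ge\omega^{\xi}$, and Proposition \ref{special form}$(iii)$ upgrades this to $\textbf{NP}_1(A,X,Y)>\omega^{\xi}$.

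For $(iii)$, again by contraposition, suppose $(\varphi_n)\subset B_{Y^{*}}$ is such that $(\varphi_n)$ and $(A^{*}\varphi_n)$ are both $c_0^{\xi}$ spreading models. A $c_0^{\xi}$ spreading model is weakly null, so $(\varphi_n)$ is $\sigma(Y^{*},Y)$-null and, $A^{*}$ being weak$^*$--weak$^*$ continuous, $(A^{*}\varphi_n)$ is $\sigma(X^{*},X)$-null; both are seminormalized. Passing to a subsequence (harmless, $\sss_\xi$ being spreading), the weak$^*$ form of the Bessaga--Pe\l czy\'nski selection principle supplies a uniformly bounded biorthogonal sequence $(x_n)\subset X$ for $(A^{*}\varphi_n)$, i.e.\ $\langle\varphi_m,Ax_n\rangle=\langle A^{*}\varphi_m,x_n\rangle=\delta_{mn}$ for all $m,n$; this global biorthogonality is the sequential analogue of the diagonal Gram matrix above, and it is precisely what weak-nullity buys. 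Then for $E\in\sss_{\xi}$ and scalars $(a_i)_{i\in E}$ the functional $\psi=\sum_{j\in E}\sgn(a_j)\varphi_j$ has $\|\psi\|_{Y^{*}}$ bounded by a $c_0^{\xi}$-upper constant of $(\varphi_n)$ (as $E\in\sss_{\xi}$) and $\psi\!\left(\sum_{i\in E}a_iAx_i\right)=\sum_{i\in E}|a_i|$, so $\bigl\|\sum_{i\in E}a_iAx_i\bigr\|\gtrsim\sum_{i\in E}|a_i|$, while $\bigl\|\sum_{i\in E}a_iAx_i\bigr\|\le\|A\|(\sup_m\|x_m\|)\sum_{i\in E}|a_i|$ trivially; hence $(Ax_n)$, and thus $(A(x_n/\sup_m\|x_m\|))$, is an $\ell_1^{\xi}$ spreading model, so $A$ preserves one.

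The hard part will be the recursion in $(i)$: forcing diagonality of the Gram matrix requires pushing each $\varphi_t$ into an annihilator of the already-chosen images while keeping the subtree of order $\omega^{\xi}$, and making these finitely-many-per-node prunings cohere over all of $\ttt_{\omega^{\xi}}$ is exactly the bookkeeping to be handled with Lemma \ref{dichotomy} and the finite-codimension stability, in the style of the Proposition after Lemma \ref{c_0}; by contrast, in $(iii)$ the biorthogonality that one needs is global and comes for free from weak-nullity and the weak$^*$-continuity of $A^{*}$, so no pruning is required there.
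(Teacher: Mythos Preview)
Your proposal is correct and follows the same core strategy as the paper: produce a biorthogonal system between the $c_0$-structure on one side and vectors on the other, then norm linear combinations of the $\ell_1$-side by $\pm 1$ combinations of the $c_0$-side. For $(i)$ the paper packages the recursive construction you outline as Lemma \ref{dualization lemma} (whose proof it omits, citing the non-operator version in \cite{C2}); your sketch --- push $\varphi_t$ into the annihilator of the previously chosen $Ag(t|_j)$, then choose $g(t)$ in the pre-annihilator of the earlier $A^*\varphi_{t|_j}$, with survival of the tree guaranteed by finite-codimension stability --- is exactly the content of that lemma.

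Two organizational differences are worth noting. First, you reduce $(ii)$ and $(iv)$ to $(i)$ and $(iii)$ via the bidual and $J_YA=A^{**}J_X$, whereas the paper proves $(ii)$ and $(iv)$ directly and obtains $(i)$ and $(iii)$ by invoking the ``moreover'' clause of Lemma \ref{dualization lemma} (functionals taken in a predual rather than a bidual). Both routes are equally short. Second, your argument for $(iii)$ is slightly cleaner than the paper's: since $(\varphi_n)$ is a $c_0^\xi$ spreading model it is already weakly null, hence $(A^*\varphi_n)$ is weak$^*$-null and seminormalized, so the Johnson--Rosenthal weak$^*$-basic subsequence theorem yields the biorthogonal $(x_n)\subset X$ directly. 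The paper instead picks arbitrary normers, then applies Rosenthal's $\ell_1$ dichotomy and passes to a difference sequence to \emph{force} weak nullity before diagonalizing --- an extra step your organization avoids.
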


Note that for $(i)$ and $(ii)$, the $\xi=0$ case reduces to the case that either $A$ or $A^*$ is the zero operator, and there is nothing to prove.  Therefore in the proof below, we consider only $0<\xi$.  The positive result here is due to the fact that $\ell_1$ structure requires only a one-sided estimate, and that this estimate can be found by norming vectors with functionals acting on them biorthogonally and exhibiting $c_0$ structure.   

Parts $(iii)$ and $(iv)$ of Theorem \ref{dualize} follow from standard techniques.  If $(x_i)\subset B_X$ is such that $(x_i)$ and $(Ax_i)$ are both $c_0^\xi$ spreading models, then $(x_i)$ and $(Ax_i)$ are both weakly null.  By standard arguments, if $0<\ee< \inf \|Ax_i\|$, then for any $\ee_i\downarrow 0$ we can find $(y_i^*)\subset B_{Y^*}$ so that $y^*_i(Ax_i)\geqslant \ee$ and, by passing to subsequences of $(x_i)$ and $(y_i^*)$, assume that $|y^*_i(Ax_j)|<\ee_j$ for all $1\leqslant i<j$.   By Rosenthal's $\ell_1$ dichotomy, either some subsequence of $(A^*y_i^*)$ is equivalent to the $\ell_1$ basis, in which case we are done, or we can pass to a difference sequence of a weakly Cauchy subsequence of $(A^*y_i^*)$ and, by another diagonalization, obtain a subsequence $(x_{n_i})$ and a difference sequence $(z_i^*)$ of a subsequence of $(y_i^*)$ so that $(A^*z_i^*)$ is weakly null.   By passing to a subsequence as before, we may assume $|z_i^*(Ax_{n_j})|<\ee_{\max\{i,j\}}$ for all $i\neq j$ and $z_i^*(Ax_{n_i})>\ee/2$.  Choosing $\ee_i\downarrow 0$ rapidly enough (depending on $\ee$) allows us to use $\infty$-convex combinations of $(x_{n_i})_{i\in E}$ to appropriately norm any linear combination of $(A^*z_i^*)_{i\in E}$, $E\in \sss_\xi$, to witness that $(A^*z_i^*)$, and therefore $(z_i^*)$, is an $\ell_1^\xi$ spreading model.  The argument is the same if $(y^*_i)$ and $(A^*y^*_i)$ are $c_0^\xi$ spreading models, except that we norm $A^*y_i^*$ by a member of $X$ rather than $X^{**}$.  

The method for proving $(i)$ and $(ii)$ will again require us to find functionals to biorthogonally norm the vectors witnessing $\ell_1$ structure.  The method will follow easily from the next technical lemma. The proof is an inessential modification of the non-operator version from \cite{C2}, so we omit it.

\begin{lemma} Fix $\zeta, \xi\in \ord$ with $0< \xi$.  Fix $n\in \nn$ and $K\geqslant 1$.  Let $F\subset Y^*$ be finite, $b$ a member of $T_\infty(A, X, Y, K)$ be such that $o(T_\infty(A, X, Y, K)^\zeta(b))>\omega^\xi n$.  Then for any $C>K$, there exist a $B$-tree $\ttt$ with $o(\ttt)=\omega^\xi n$, vectors $(x_t)_{t\in \ttt}\subset B_X$, and functionals $(y_t^*)_{t\in \ttt} \subset CB_{Y^*}$ so that the following hold for every $t\in \ttt$: \begin{enumerate}[(i)]\item $y^*_t(Ax_t)=1$, \item for $s\in \ttt$ comparable to $t$ and not equal to $t$, $y^*_s(Ax_t)=y_t^*(Ax_s)=0$, \item for any $y^*\in F$, $y^*(Ax_t)=0$, \item for any $u\in b$, $y^*_t(Au)=0$, \item $(x_{t|_i})_{i=1}^{|t|}\in T_\infty(A, X, Y, K)^\zeta(b)$.  \end{enumerate}

Moreover, if $X_0$, $Y_0$ are preduals of $X, Y$, respectively, such that there exists $B:Y_0\to X_0$ so that $A= B^*$, then $(y^*_t)_{t\in \ttt}$ can be taken to lie in $CB_{Y_0}$ rather than $CB_{Y^*}$.  

\label{dualization lemma}
\end{lemma}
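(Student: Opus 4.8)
The plan is to carry over the transfinite construction of \cite{C2}, which builds a $B$-tree of this kind by induction on $\xi$ (with an inner finite induction on $n$), throughout replacing a vector $x$ by its image $Ax$ and norming with functionals on $Y$ rather than on $X^*$. The only genuinely new point is the \emph{source of the biorthogonal functionals}, and it comes directly from the $c_0$ lower estimate built into $T_\infty$. If $b\con(w_1,\dots,w_m)$ is a branch of $T_\infty(A,X,Y,K)$, then for all scalars $(a_i)_{i<m}$ and $(c_v)_{v\in b}$ one has $\|Aw_m-\sum_{i<m}a_iAw_i-\sum_{v\in b}c_vAv\|\geqslant 1/K$, so $d\bigl(Aw_m,\,[Aw_i:i<m]+[Av:v\in b]\bigr)\geqslant 1/K$ and Hahn--Banach produces $y^*\in KB_{Y^*}$ with $y^*(Aw_m)=1$ vanishing on $\{Aw_i:i<m\}\cup\{Av:v\in b\}$. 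Thus, as soon as the branch below a node $t$ of the tree we are constructing has been selected, its functional $y^*_t$ is obtained for nothing, with norm $\leqslant K<C$, and it already realizes $(i)$, $(iv)$, and the part of $(ii)$ demanding that $y^*_t$ annihilate $Ax_s$ for $s\prec t$; moreover, because for $t\prec s$ one has $Ax_t\in[Ax_{s|_i}:i<|s|]$, a functional $y^*_s$ built later annihilates $Ax_t$ automatically, which is the remaining instance of $(ii)$ in which the smaller node is $t$. The entire content of $(ii)$ and $(iii)$ therefore reduces to arranging, for every node $u$, that $Ax_u\in\bigcap_{s\prec u}\ker y^*_s\cap\bigcap_{y^*\in F}\ker y^*$.

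This is secured by building the tree inside finite--codimensional subspaces of $X$ of the form $\{x:Ax\in Z\}$, $Z$ finite--codimensional in $Y$: one starts with $Z=\bigcap_{y^*\in F}\ker y^*$, and each time a functional $y^*_t$ is created one intersects $Z$ with $\ker y^*_t$ before continuing the construction below $t$. Here (the argument of) Proposition \ref{special form}$(i)$ is the crucial tool: since $0<\xi$, the ordinal $\omega^\xi n$ is a limit ordinal invariant under finite left multiplication, so applying that argument to the section $T_\infty(A,X,Y,K)^\zeta(b\con c)$ along the portion $c$ of a branch already built shows that passing to such a restriction never drops the relevant order below the ordinal demanded by the unconstructed part of the tree. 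The tree $\ttt$ itself, of order $\omega^\xi n$, is assembled in the standard way from the families $\mt_\xi$: the successor case $\xi=\eta+1$, $n=1$ is a root with $k$-th subtree of order $\omega^\eta k$ provided by the inductive hypothesis, and the general $n$ is obtained from order $\omega^{\eta+1}$ and order $\omega^{\eta+1}(n-1)$ by grafting (which adds ordinals), the inductive statement being strengthened to also control how much order of the (restricted) original tree survives below each leaf; the limit case uses the sequence $\xi_k\uparrow\xi$ defining $\sss_\xi$ analogously. Condition $(v)$ is automatic: each branch of $\ttt$ is extracted from $T_\infty(A,X,Y,K)^\zeta(b)$, possibly after an $\infty$--absolutely convex blocking, this derived tree and its $b$--section being $\infty$--absolutely convex, while $\infty$--absolutely convex combinations of members of $T_\infty(A,X,Y,K)$ again lie in $B_X$ because branches there are $1$--dominated by the $c_0$ basis.

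For the final assertion, suppose $A=B^*$ with $B:Y_0\to X_0$. The construction is unchanged except at the Hahn--Banach step, where the functional to be produced is a prescribed element of $W^*$ of norm $\leqslant K$ for a \emph{finite--dimensional} $W\leqslant Y=Y_0^*$, and it must now be realized inside $CB_{Y_0}$. This is possible because $B_{Y_0}$ is weak$^*$--dense in $B_{Y^*}$ and $W$ is finite--dimensional: pick $\psi\in(K+\delta)B_{Y_0}$ that is weak$^*$--close on $W$ to a norm--$\leqslant K$ Hahn--Banach extension of the prescribed functional, then correct $\psi$ by a small member of $Y_0$ so that it agrees with that functional exactly on $W$; the correction and $\delta$ are absorbed by the slack $C-K>0$. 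Performing this at every node gives $(y^*_t)_{t\in\ttt}\subset CB_{Y_0}$.

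The principal obstacle is the index bookkeeping of the second paragraph: one must ensure that the accumulating finite--codimensional restrictions — one for $F$ and one for every functional manufactured along a branch of $\ttt$, the branches of $\ttt$ being unbounded in length — never exhaust the order available for the part of the tree still to be built. This is exactly the step where the hypothesis $0<\xi$ matters: the ordinals $\omega^\xi n$ and those produced by grafting absorb finite multiplication, so the dimension argument inside Proposition \ref{special form}$(i)$ can be rerun at each stage with a fresh integer exceeding the number of currently active constraints, each time recovering a subtree whose order still meets the demand. Once this budget is maintained throughout, the construction of \cite{C2} transfers essentially verbatim, with $x$ replaced by $Ax$ everywhere and $X^*$ by $Y^*$ (or $Y_0$).
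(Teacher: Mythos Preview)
The paper omits the proof of this lemma, saying only that it is ``an inessential modification of the non-operator version from \cite{C2}.'' Your proposal correctly identifies and carries out exactly those modifications: you locate the source of the biorthogonal functionals in the $c_0$ lower estimate (so Hahn--Banach gives $y^*_t\in KB_{Y^*}$ annihilating the images of all predecessors and of $b$), you reduce (ii) and (iii) to forcing $Ax_u$ into the kernels of the ancestor functionals and of $F$, and you invoke the finite--codimension stability of Proposition \ref{special form}(i) to maintain the order budget through the transfinite induction on $\xi$ (and the inner induction on $n$). The predual refinement via Goldstine on finite--dimensional $W\leqslant Y$, with the slack $C-K$ absorbing the correction, is also the right mechanism. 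This is the same approach the paper points to; there is nothing to add.
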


\begin{proof}[Proof of Theorem \ref{dualize}]  If $A\notin \mathfrak{NP}_\infty^\xi(X,Y)$, there exists $K\geqslant 1$ so that  $\textbf{NP}_\infty(A, X, Y, K)>\omega^\xi$.  Then with $\zeta=0$, $n=1$, and $b=\varnothing$, we deduce the existence of a $B$-tree $\ttt$ with $o(\ttt)=\omega^\xi$ and vectors $(x_t)_{t\in \ttt}\subset B_X$ and $(y_t^*)_{t\in \ttt}\subset KB_{Y^*}$ so that for any $t\in \ttt$, $y_t^*(Ax_t)=1$, for $s\in \ttt$ comparable to $t$ and not equal to $t$, $y_s^*(Ax_t)=y_t^*(Ax_s)=0$, and so that $(x_{t|_i})_{i=1}^{|t|}$ is $1$-dominated by the $\ell_\infty^{|t|}$ basis.  Using $\infty$-absolute convex combinations of branches of $(x_{t|_i})_{i=1}^{|t|}$ to appropriately norm linear combinations of branches of $(A^*y^*_{t|_i})_{i=1}^{|t|}$ shows that the sequence $(A^*y^*_{t|_i})_{i=1}^{|t|}$ $1$-dominates the $\ell_1^{|t|}$ basis.  Thus $(K^{-1}y^*_t)_{t\in \ttt}\subset B_{Y^*}$ gives that $\textbf{NP}_1(A^*, Y^*, X^*)>\omega^\xi$, and $A^*\notin \mathfrak{N}_1^\xi(Y^*, X^*)$, which proves $(ii)$.  The proof of $(i)$ is similar, using the ``moreover'' statement of Lemma \ref{dualization lemma}.

\end{proof}

\section{Direct sums and $p$-convexifications}

In this section, we wish to discuss how combining operators behaves under finite and infinite direct sums, as well as under $p$-convexifications.

\subsection{Local indices}
 
Our first result is analogous to a result concerning the Bourgain $\ell_p$ block index. The non-operator version of the analogous result was first shown for $p=1$ in \cite{JO}, and for $1\leqslant p\leqslant \infty$ in \cite{C2}.    

\begin{proposition} Suppose $X, Y$ are Banach spaces having $1$-unconditional bases $(e_i)_{i\in I}$, $(f_j)_{j\in J}$, respectively, and $A\in \mathfrak{L}(X,Y)$.  Then for $0<\xi\in \ord$ and $K\geqslant 1$, if $\emph{\textbf{NP}}_p(A,X,Y,K)>\omega \xi$, for any $\ee>0$ and $\ee_n\downarrow 0$ there exist and $(x_t)_{t\in \ttt_\xi}$ and $(y_t)_{t\in \ttt_\xi}$ so that for each $t\in \ttt_\xi$, \begin{enumerate}[(i)]\item  $(x_{t|_i})_{i=1}^{|t|}\in T_p(A,X,Y,K+\ee)$, \item $\|Ax_t-y_t\|<\ee_{|t|}$,  \item $(x_{t|_i})_{i=1}^{|t|}$ have finite, disjoint supports with respect to $(e_i)_{i\in I}$. \item $(y_{t|_i})_{i=1}^{|t|}$ have finite, disjoint supports with respect to $(y_j)_{j\in J}$.     

\end{enumerate}

\label{block index}

\end{proposition}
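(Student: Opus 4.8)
The plan is to imitate the non-operator block-index argument of \cite{C2} on a tree that witnesses the index $\omega\xi$, while additionally attaching to every node of that tree a pair of finite coordinate sets---one in $I$, recording the essential support of the domain vector, and one in $J$, recording the essential support of its image---so that the block map produced by the coloring lemma can be used to ``glide past'' all previously used coordinates in both $X$ and $Y$. The only new ingredient beyond \cite{C2} is the bookkeeping of the image supports, and the only genuinely delicate point is converting ``approximately disjointly supported'' into ``exactly disjointly supported'' without destroying membership in $T_p(A,X,Y,K+\ee)$.

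Concretely, I would begin by invoking Proposition~\ref{indexing}(ii): since $\textbf{NP}_p(A,X,Y,K)=o(T_p(A,X,Y,K))>\omega\xi$, there is a family $(u_t)_{t\in\ttt_{\omega\xi}}\subset B_X$ with $(u_{t|_i})_{i=1}^{|t|}\in T_p(A,X,Y,K)$ for all $t$. Let $q$ be conjugate to $p$ (with $q=\infty$ when $p=1$, $q=1$ when $p=\infty$) and fix $\eta_n\downarrow 0$ with $\|(\eta_n)_n\|_{\ell_q}$ to be taken as small as needed. Using that $(e_i)_{i\in I}$ and $(f_j)_{j\in J}$ are bases, for each $t\in\ttt_{\omega\xi}$ choose finite $G^X_t\subset I$ and $G^Y_t\subset J$ with $\|u_t-P_{G^X_t}u_t\|<\eta_{|t|}$ and $\|Au_t-P_{G^Y_t}Au_t\|<\eta_{|t|}$ (here $P_E$ is the norm-one basis projection onto the coordinates in $E$), arranged so that $G^X_{t|_1}\subseteq\cdots\subseteq G^X_t$ and $G^Y_{t|_1}\subseteq\cdots\subseteq G^Y_t$ for every $t$, and with $G^X_\varnothing=G^Y_\varnothing=\varnothing$. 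For $t\in\mt_{\omega\xi}$ and a chain $S$ of $\ttt_{\omega\xi}$, set $f_t(S)=0$ if some $p$-absolutely convex combination $z$ of $(u_s)_{s\in S}$ satisfies $P_{G^X_t}z=0$ and $P_{G^Y_t}Az=0$, and $f_t(S)=1$ otherwise. Nesting of the $G$'s makes $(f_t)$ a decreasing family in the sense of Lemma~\ref{dichotomy}, which I apply to $\ttt_{\omega\xi}$ regarded as a tree of order $\xi$ whose nodes carry order-$\omega$ trees. The first alternative is ruled out by a rank count: $\ttt_\omega$ has chains of every finite length, so for any monotone $g\colon\ttt_\omega\to\ttt_{\omega\xi}$ and any $t_0$, along the $g$-image of a chain longer than $|G^X_{t_0}|+|G^Y_{t_0}|$ the finite-rank linear map $(a_s)\mapsto\bigl(P_{G^X_{t_0}}\sum a_s u_{g(s)},\,P_{G^Y_{t_0}}A\sum a_s u_{g(s)}\bigr)$ has nonzero kernel, so $f_{t_0}$ vanishes there. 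Hence the second alternative holds: there is a block map $h\colon\ttt_\xi\to\{\text{chains of }\ttt_{\omega\xi}\}$ with $h(\varnothing)=\{\varnothing\}$ and $f_{s'}(h(t))=0$ for all $s\prec t$ in $\mt_\xi$ and $s'\in h(s)$; re-choosing $h$ if necessary, I would also arrange that the chains $h(t|_1),\dots,h(t|_{|t|})$ be consecutive and tile an initial segment of a single branch of $\ttt_{\omega\xi}$.

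Then I would build the two trees. For $t\in\ttt_\xi$ with immediate predecessor $t'$, the equality $f_{\max h(t')}(h(t))=0$ yields a $p$-absolutely convex combination $z_t$ of $(u_s)_{s\in h(t)}$ with $P_{G^X_{\max h(t')}}z_t=0$ and $P_{G^Y_{\max h(t')}}Az_t=0$ (for $t$ of depth one, take any $p$-absolutely convex combination of $(u_s)_{s\in h(t|_1)}$). Put $D^X_t=G^X_{\max h(t)}\setminus G^X_{\max h(t')}$ and $D^Y_t=G^Y_{\max h(t)}\setminus G^Y_{\max h(t')}$ (with $D^X_t=G^X_{\max h(t)}$, $D^Y_t=G^Y_{\max h(t)}$ when $t$ has depth one); by the nesting these are pairwise disjoint along each branch of $\ttt_\xi$. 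Finally set $x_t=\lambda^{-1}P_{D^X_t}z_t$ and $y_t=\lambda^{-1}P_{D^Y_t}Az_t$ with $\lambda=1+\tfrac{\ee}{2K}$. Items (iii) and (iv) are then immediate from $\supp x_t\subseteq D^X_t$ and $\supp y_t\subseteq D^Y_t$. Because $z_t$ is, up to an error of order $\|(\eta_n)\|_{\ell_q}$, supported inside $G^X_{\max h(t)}$ while being exactly annihilated by $P_{G^X_{\max h(t')}}$, one gets $\|z_t-P_{D^X_t}z_t\|\lesssim\|(\eta_n)\|_{\ell_q}$ and, similarly, $\|Az_t-P_{D^Y_t}Az_t\|\lesssim\|(\eta_n)\|_{\ell_q}$; hence $\|Ax_t-y_t\|\leqslant\lambda^{-1}\bigl(\|A\|\,\|z_t-P_{D^X_t}z_t\|+\|Az_t-P_{D^Y_t}Az_t\|\bigr)$ is $\lesssim(1+\|A\|)\|(\eta_n)\|_{\ell_q}$, which is below $\ee_{|t|}$ once $\|(\eta_n)\|_{\ell_q}$ is small, giving (ii). For (i): the chains $h(t|_i)$ being consecutive along one branch, $(z_{t|_i})_{i=1}^{|t|}$ is a $p$-absolutely convex block of a branch of $T_p(A,X,Y,K)$, hence $(z_{t|_i})_{i=1}^{|t|}\in T_p(A,X,Y,K)$ by $p$-absolute convexity of that tree; the perturbation estimate together with the rescaling by $\lambda^{-1}$ then gives $(x_{t|_i})_{i=1}^{|t|}\lesssim_1(e_i)_{i=1}^{|t|}$ (since $\lambda^{-1}(1+\|(\eta_n)\|_{\ell_q})\leqslant1$) and $(e_i)_{i=1}^{|t|}\lesssim_{K+\ee}(Ax_{t|_i})_{i=1}^{|t|}$ (since $\lambda^{-1}(K^{-1}-\|A\|\,\|(\eta_n)\|_{\ell_q})\geqslant(K+\ee)^{-1}$), i.e. $(x_{t|_i})_{i=1}^{|t|}\in T_p(A,X,Y,K+\ee)$.

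The step I expect to be the real obstacle is the last one: reconciling \emph{exact} disjointness of supports with \emph{exact} membership in $T_p(A,X,Y,K+\ee)$, whose defining condition still demands that the domain vectors be $1$-dominated---with constant exactly $1$---by the $\ell_p$ basis, so one cannot simply truncate and perturb. The two devices that make it go through are: downscaling the final block vectors by $\lambda^{-1}=(1+\tfrac{\ee}{2K})^{-1}$, so that the slack in $K+\ee$ pays for the rescaling of the lower $\ell_p$-estimate while the upper estimate is pushed exactly to $1$; and taking the tail bounds $\eta_n$ not merely summable but with small $\ell_q$-norm, so that truncation to disjoint supports costs at most a prescribed amount in every $\ell_p$ estimate simultaneously (this is where the differing behaviour of $p=1$, $1<p<\infty$, and $p=\infty$ is absorbed). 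A secondary, purely organizational point---borrowed from the proof of Lemma~\ref{c_0}, where the norming sets $E_t,F_t$ are prepared in advance---is that the coordinate sets $G^X_t,G^Y_t$ must be fixed \emph{before} Lemma~\ref{dichotomy} is invoked, so that the functions $f_t$ are genuinely predetermined.
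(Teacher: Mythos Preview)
Your strategy is the paper's: witness the $\omega\xi$ index by $(u_t)_{t\in\ttt_{\omega\xi}}$, attach finite coordinate sets in both domain and range, apply Lemma~\ref{dichotomy} with the dimension argument ruling out the first alternative, and take $p$-absolutely convex blocks along the resulting block map. Two remarks on execution.

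The paper sidesteps your truncate-and-rescale step entirely by perturbing the $u_t$ \emph{at the outset} so that each $u_t$ has exactly finite support (absorbing the perturbation into a slight increase of $K$, which the $\ee$ budget permits). Then $M_t=\bigcup_{s\preceq t}\supp(u_s)$ is already finite, and the annihilation $P^E_{M_{s'}}x_t=0$ produced by the dichotomy gives exact disjointness of the $\supp(x_{t|_i})$ directly---no $\lambda^{-1}$, no $D^X_t$-truncation. Your route also works, but your estimate for (ii) is misstated: the bound $(1+\|A\|)\|(\eta_n)\|_{\ell_q}$ is a fixed constant and cannot fall below every $\ee_{|t|}$ since $\ee_n\downarrow 0$. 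What your H\"older argument actually gives is $\|z_t-P_{D^X_t}z_t\|\leqslant\|(\eta_{|s|})_{s\in h(t)}\|_{\ell_q}\leqslant\|(\eta_n)_{n\geqslant|t|}\|_{\ell_q}$ (using $|s|\geqslant|t|$ for $s\in h(t)$, the remark after Lemma~\ref{dichotomy}), so you must choose the \emph{tails} $\|(\eta_n)_{n\geqslant m}\|_{\ell_q}$ small relative to $\ee_m$, not merely the total norm small. The paper does exactly this, taking $\delta_n\downarrow 0$ with $\sum_{m\geqslant n}\delta_m<\ee_n$. Finally, your ``tile an initial segment'' requirement on $h$ is neither guaranteed by Lemma~\ref{dichotomy} nor needed: the block-map property alone already makes $(z_{t|_i})_{i=1}^{|t|}$ a $p$-absolutely convex block of a single branch of $(u_s)$.
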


Note that we do not need the bases to be $1$-unconditional.  It is simply a matter of improving the presentation of the proof.

\begin{proof}  For $M\subset I$, let $P^E_M$ denote the projection onto $[e_i:i\in M]$ in $X$, and similarly for $N\subset J$.   Fix $\delta_n\downarrow 0$ so that for each $n\in \nn$, $\sum_{m=n}^\infty \delta_m<\ee_n$.  Choose $(u_t)_{t\in \ttt_{\omega\xi}}$ so that $(u_{t|_i})_{i=1}^{|t|}\in T_p(A,X,Y,K)$ for each $t\in \ttt_{\omega\xi}$.  By replacing $K$ with any strictly larger number not exceeding $K+\ee$ and perturbing, we may assume that for each $t\in \ttt_{\omega\xi}$, $\supp(u_t)$ is finite.  For each $t\in \ttt_{\omega\xi}$, choose a finite set $N_t'\subset J$ so that $$\|Au_t - P^F_{N'_t}Au_t\|\leqslant \delta_{|t|}.$$  Let $M_\varnothing = N_\varnothing = \varnothing$ and, for each $t\in \ttt_{\omega\xi}$, let $M_t=\cup_{s\preceq t}\supp(u_s),$ and $N_t=\cup_{s\preceq t}N_s'.$  We apply Lemma \ref{dichotomy} with the functions $(f_t)_{t\in \mt_{\omega^\xi}}$ defined for a chain $S$ of $\ttt_{\omega\xi}$ by $f_t(S)=1$ if $$0<\min\Bigl\{\|P^E_{M_t}u\|+ \|P^F_{N_t}Au\|: u\text{\ is a\ }p\text{-absolutely convex combination of\ }(u_s)_{s\in S}\Bigr\},$$ and $f_t(S)=0$ otherwise.  By a dimension argument, if $g:\ttt_\omega\to \ttt_{\omega\xi}$ is monotone and $t\in \mt_{\omega\xi}$, there exists a chain $S$ in $\ttt_\omega$ so that $f_t(\{g(s):s\in S\})=0$.  Therefore Lemma \ref{dichotomy} implies that there exists a block map $h$ from $\mt_\xi$ to the chains of $\ttt_{\omega\xi}$ so that for each $s,t\in \mt_\xi$ with $s\prec t$, and for each $s'\in h(s)$, $f_{s'}(h(t))=0$.  

If $t\in \ttt_\xi$ is minimal in $\ttt_\xi$, let $x_t=u_s$ for some $s\in h(t)$.  If $t\in \ttt_\xi$ is not minimal in $\ttt_\xi$, let $s$ be the immediate predecessor of $t$ in $\ttt_\xi$ and let $s'=\max h(s)$.  Since $f_{s'}(h(t))=0$, there exists a $p$-absolutely convex combination $x_t=\sum_{t'\in h(t)} a_{t'}u_{t'}$ of $(u_{t'})_{t'\in h(t)}$ so that $P^E_{M_{s'}}x_t=0$ and $P^F_{N_{s'}}Ax_t=0$.  Let $y_t=P_{N_{\max h(t)}}Ax_t$.  Note that with $(x_v)_{v\in \ttt_\xi}$, $(y_v)_{v\in \ttt_\xi}$ defined in this way, for $t'\prec t\in\ttt_\xi$, $s$ still denoting the immediate predecessor of $t$ in $\ttt_\xi$ and $s'$ still denoting $\max h(s)$, $$\supp(x_{t'})\subset M_{\max h(t')} \subset M_{s'}, \hspace{5mm}\supp(x_t)\cap M_{s'}=\varnothing$$ and $$\supp(y_{t'})\subset N_{\max h(t')} \subset N_{s'}, \hspace{5mm} \supp(y_t)\cap N_{s'}=\varnothing,$$ whence $(iii)$ and $(iv)$ follow.  

Item $(i)$ follows from the fact that $(x_{t|_i})_{i=1}^{|t|}$ is a $p$-absolutely convex block of a branch of $(u_s)_{s\in \ttt_{\omega\xi}}$.  For $(ii)$, recall that for any $t'\in h(t)$, $|t|\leqslant |t'|$, so \begin{align*} \|Ax_t -y_t\| &\leqslant \sum_{t'\in h(t)} \|Au_{t'} -  P^F_{N_{\max h(t)}} Au_{t'}\||a_{t'}| \\ & \leqslant \sum_{t'\in h(t)} \|Au_{t'} - P^F_{N_{t'}}Au_{t'}\| \leqslant \sum_{t'\in h(t)} \delta_{|t'|} \\ & \leqslant \sum_{n=|t|}^\infty \delta_n <\ee_{|t|},\end{align*} where as above, $x_t=\sum_{t'\in h(t)} a_{t'}u_{t'}$.  Here we have used $1$-unconditionality of $(f_j)$ and the fact that $N_{t'}\subset N_{\max h(t)}$ for each $t'\in h(t)$.

\end{proof}

\begin{remark} It is easy to see that if we assume that either only $X$ or only $Y$ has an unconditional basis, we can omit either $(iii)$ or $(iv)$ and obtain the conclusion.  

Considering the identity operator on $c_0$, we deduce that the factor of $\omega$ in the preceding proof is sharp. 

Moreover, it is easy to see how to modify the proof to work for other coordinate systems such as a Schauder or Markushevich basis, and that if the coordinate system is sequentially ordered, the supports of the branches of $(x_t),(y_t)$ can be made successive rather than simply disjoint.  

\end{remark}

Again, minor modificaitons give the analogous result for the strictly singular index.  

\begin{proposition} Suppose $X, Y$ are Banach spaces having $1$-unconditional bases $(e_i)_{i\in I}$, $(f_j)_{j\in J}$, respectively, and $A\in \mathfrak{L}(X,Y)$.  Then for $0<\xi\in \ord$ and $K\geqslant 1$, if $\emph{\textbf{SS}}(A,X,Y,K)>\omega \xi$, for any $\ee>0$ and $\ee_n\downarrow 0$ there exist and $(x_t)_{t\in \ttt_\xi}$ and $(y_t)_{t\in \ttt_\xi}$ so that for each $t\in \ttt_\xi$, \begin{enumerate}[(i)]\item  $(x_{t|_i})_{i=1}^{|t|}\in SS(A,X,Y,K+\ee)$, \item $\|Ax_t-y_t\|<\ee_{|t|}$,  \item $(x_{t|_i})_{i=1}^{|t|}$ have finite, disjoint supports in $(e_i)_{i\in I}$. \item $(y_{t|_i})_{i=1}^{|t|}$ have finite, disjoint supports in $(f_j)_{j\in J}$.     

\end{enumerate}

\label{strictly singular block index}

\end{proposition}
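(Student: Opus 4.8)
The plan is to repeat the proof of Proposition \ref{block index} essentially verbatim, making three substitutions: ``$p$-absolutely convex block/combination'' becomes ``normalized block/combination''; the phrase ``$p$-absolutely convex'' applied to the relevant trees becomes ``block closed'', which (as noted in the text) is the property enjoyed by $SS(A,X,Y,K)$ and all of its derived trees; and the null sequence $\delta_n\downarrow 0$ is chosen a little more carefully. Concretely: since $\textbf{SS}(A,X,Y,K)>\omega\xi$ we have $o(SS(A,X,Y,K))>\omega\xi$, so Proposition \ref{indexing}$(ii)$ supplies $(u_t)_{t\in\ttt_{\omega\xi}}\subset S_X$ with $(u_{t|_i})_{i=1}^{|t|}\in SS(A,X,Y,K)$ for all $t$. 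Fix $\delta_n\downarrow 0$ with $\delta_1<1$ and $2(K+\ee)\sum_{m=n}^\infty\delta_m<\ee_n$ for every $n$. Perturbing exactly as in Proposition \ref{block index} (and renormalizing, so the perturbed vectors stay in $S_X$), we may assume each $u_t$ has finite support with respect to $(e_i)_{i\in I}$ while keeping $(u_{t|_i})_{i=1}^{|t|}\in SS(A,X,Y,K+\ee)$. Choose finite sets $N'_t\subset J$ with $\|Au_t-P^F_{N'_t}Au_t\|\leqslant\delta_{|t|}$, and set $M_\varnothing=N_\varnothing=\varnothing$, $M_t=\bigcup_{s\preceq t}\supp(u_s)$, $N_t=\bigcup_{s\preceq t}N'_s$.

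Next I would apply Lemma \ref{dichotomy}, with the roles of $\xi,\zeta$ there played by $\omega$ and the present $\xi$, to the $\{0,1\}$-valued functions $(f_t)_{t\in\mt_{\omega\xi}}$ given on a chain $S$ of $\ttt_{\omega\xi}$ by $f_t(S)=1$ iff
\[
0<\min\bigl\{\|P^E_{M_t}u\|+\|P^F_{N_t}Au\|:u\text{ a normalized combination of }(u_s)_{s\in S}\bigr\}.
\]
These are decreasing in $t$, since $1$-unconditionality of $(e_i)$ and $(f_j)$ makes a restriction of a support projection have smaller norm. The first alternative of the dichotomy cannot occur: the linear map $u\mapsto(P^E_{M_t}u,P^F_{N_t}Au)$ has rank at most $|M_t|+|N_t|$, and each branch of $(u_s)$ is linearly independent (being $(K+\ee)$-basic), so along any chain of $\ttt_\omega$ of length exceeding $|M_t|+|N_t|$ one finds a normalized $u$ killed by this map, i.e.\ $f_t=0$ there; since $\ttt_\omega$ has chains of every finite length this contradicts the first alternative. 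Hence the second alternative yields a block map $h$ from $\ttt_\xi$ into the chains of $\ttt_{\omega\xi}$ with $h(\varnothing)=\{\varnothing\}$ and $f_{s'}(h(t))<\delta_{|t|}\leqslant 1$, i.e.\ $f_{s'}(h(t))=0$, whenever $s\prec t$ in $\ttt_\xi$ and $s'\in h(s)$.

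With $h$ in hand, I would define $(x_t)_{t\in\ttt_\xi}$, $(y_t)_{t\in\ttt_\xi}$ exactly as in Proposition \ref{block index}: if $t$ is minimal, put $x_t=u_s$ for some $s\in h(t)$; otherwise, with $s$ the immediate predecessor of $t$ in $\ttt_\xi$ and $s'=\max h(s)$, use $f_{s'}(h(t))=0$ to choose a normalized combination $x_t=\sum_{t'\in h(t)}a_{t'}u_{t'}$ with $P^E_{M_{s'}}x_t=0$ and $P^F_{N_{s'}}Ax_t=0$; and set $y_t=P^F_{N_{\max h(t)}}Ax_t$. Items $(iii)$ and $(iv)$ follow just as before from $\supp(x_{t'})\subseteq M_{\max h(t')}\subseteq M_{s'}$ versus $\supp(x_t)\cap M_{s'}=\varnothing$, and similarly with $y$ and $N$. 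Item $(i)$ holds because $(x_{t|_i})_{i=1}^{|t|}$ is a normalized block of a branch of $(u_s)_{s\in\ttt_{\omega\xi}}$, which lies in $SS(A,X,Y,K+\ee)$, and this tree is block closed.

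The only place where the argument genuinely differs from the $p$-convex one — and the step to watch — is $(ii)$. In the $p$-absolutely convex case the coefficients of $x_t$ automatically satisfy $|a_{t'}|\leqslant 1$; here $(u_{t'})_{t'\in h(t)}$ is merely $(K+\ee)$-basic and normalized, so the standard estimate for coefficients of a norm-one combination of a basic sequence gives only $|a_{t'}|\leqslant 2(K+\ee)$. Using this, $1$-unconditionality of $(f_j)$, and $N_{t'}\subseteq N_{\max h(t)}$ for $t'\in h(t)$, one gets
\[
\|Ax_t-y_t\|\leqslant\sum_{t'\in h(t)}|a_{t'}|\,\bigl\|Au_{t'}-P^F_{N_{\max h(t)}}Au_{t'}\bigr\|\leqslant 2(K+\ee)\sum_{t'\in h(t)}\delta_{|t'|}\leqslant 2(K+\ee)\sum_{n=|t|}^\infty\delta_n<\ee_{|t|},
\]
the penultimate inequality using $|t|\leqslant|t'|$ for $t'\in h(t)$ together with the fact that distinct members of the chain $h(t)$ have distinct lengths, and the last being our choice of $(\delta_n)$; for minimal $t$ the estimate is immediate since $x_t=u_s$. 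This completes the proof, everything else being a transcription of Proposition \ref{block index} with ``normalized'' in place of ``$p$-absolutely convex''.
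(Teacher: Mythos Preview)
Your proof is correct and is exactly the approach the paper takes: the paper's own proof is a two-line sketch saying to redo Proposition \ref{block index} with normalized blocks in place of $p$-absolutely convex blocks and with $\delta_n$ replaced by $\delta_n/2K$, the reason being precisely the one you isolate in your treatment of (ii)---coefficients of a norm-one combination of a $K$-basic normalized sequence are bounded only by $2K$, not by $1$. Your choice $2(K+\ee)\sum_{m\geqslant n}\delta_m<\ee_n$ is the same adjustment (with $K+\ee$ in place of $K$ because you perturb first), and the rest of your argument transcribes Proposition \ref{block index} faithfully.
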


The proof follows from replacing $p$-convex blocks with normalized blocks and replacing $\delta_n$ with $\delta_n/2K$.  The reason for the latter modification is because the cofficients of a $p$-absolutely convex block must have moduli bounded by $1$, whereas the moduli of the coefficients of a normalized block of a $K$-basic sequence need only be bounded by $2K$.  

\begin{corollary} For any set $\Gamma$, any $1\leqslant p<\infty$, any Banach space $X$, and any operators $A:X\to \ell_p(\Gamma)$, $B:\ell_p(\Gamma)\to X$, $$\emph{\textbf{SS}}(A,X,\ell_p(\Gamma))\leqslant \omega \emph{\textbf{NP}}_p(A, X, \ell_p(\Gamma)) $$ and $$\emph{\textbf{SS}}(B, \ell_p(\Gamma), X)\leqslant \omega \emph{\textbf{NP}}_p(B,\ell_p(\Gamma),X),$$ and the same is true for $c_0(\Gamma)$ when $p=\infty$.    

\label{mixing ss and ellp}

\end{corollary}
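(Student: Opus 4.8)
The plan is to deduce both inequalities from the strictly singular block index, Proposition~\ref{strictly singular block index}, combined with the elementary observation that a finite, disjointly supported, seminormalized sequence in $\ell_p(\Gamma)$ (resp.\ $c_0(\Gamma)$) is equivalent to the $\ell_p$ (resp.\ $c_0$) unit vector basis, with equivalence constant depending only on the ratio of the supremum to the infimum of the norms of its terms. First I would dispose of the trivial cases via Proposition~\ref{special form}(ii): if $A$ has finite rank then $\textbf{SS}(A,X,\ell_p(\Gamma)) = 1+\rank(A) = \textbf{NP}_p(A,X,\ell_p(\Gamma))$ by Proposition~\ref{tedious}(i) and its strictly singular analogue, and if $A$ preserves a copy of $\ell_p$ (resp.\ $c_0$) both indices are $\infty$; in either case the estimate is immediate. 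So it remains to treat an infinite-rank $A$ (resp.\ $B$) with $\textbf{NP}_p = \omega^\zeta$, $0<\zeta\in\ord$, and to prove $\textbf{SS}\le\omega\cdot\omega^\zeta=\omega^{1+\zeta}$.

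For $A\colon X\to\ell_p(\Gamma)$, suppose toward a contradiction that $\textbf{SS}(A,X,\ell_p(\Gamma))>\omega^{1+\zeta}$ and fix $K\ge1$ with $\textbf{SS}(A,X,\ell_p(\Gamma),K)>\omega\cdot\omega^\zeta$. I would apply Proposition~\ref{strictly singular block index} with $\xi=\omega^\zeta$, with $\ell_p(\Gamma)$ in the role of the codomain carrying the $1$-unconditional basis (the conclusion on supports in $X$ is not needed, and $X$ need not have a basis; cf.\ the remark after Proposition~\ref{block index}), taking $\ee=1$ and $\ee_n\downarrow 0$ with $\sum_n\ee_n$ small relative to $K$ and $\|A\|$. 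This produces $(x_t)_{t\in\ttt_{\omega^\zeta}}$ and $(y_t)_{t\in\ttt_{\omega^\zeta}}$ with $(x_{t|_i})_{i=1}^{|t|}\in SS(A,X,\ell_p(\Gamma),K+1)$, $\|Ax_t-y_t\|<\ee_{|t|}$, and each branch $(y_{t|_i})_{i=1}^{|t|}$ disjointly supported in $\ell_p(\Gamma)$. Evaluating $(x_{t|_i})_{i=1}^{|t|}\lesssim_{K+1}(Ax_{t|_i})_{i=1}^{|t|}$ on a single coordinate gives $\|Ax_t\|\ge(K+1)^{-1}$, so the $y_t$ are seminormalized with norms confined to an interval depending only on $K$ and $\|A\|$; by the elementary fact above, each branch $(y_{t|_i})_{i=1}^{|t|}$, and hence (after a perturbation controlled by $\sum_n\ee_n$) each branch $(Ax_{t|_i})_{i=1}^{|t|}$, is $D$-equivalent to the $\ell_p^{|t|}$ basis with a constant $D=D(K,\|A\|)$ independent of $t$. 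Composing the lower $\ell_p$-estimate $(x_{t|_i})_{i=1}^{|t|}\lesssim_{K+1}(Ax_{t|_i})_{i=1}^{|t|}$ with $(Ax_{t|_i})_{i=1}^{|t|}\lesssim_{D}(e_i)_{i=1}^{|t|}$ and rescaling the $x_t$ by a suitable constant $c=c(K,\|A\|)$ puts $(c^{-1}x_{t|_i})_{i=1}^{|t|}$ into $T_p(A,X,\ell_p(\Gamma),K'')$ for every $t\in\ttt_{\omega^\zeta}$, where $K''=K''(K,\|A\|)$. By Proposition~\ref{indexing}(ii) this forces $\textbf{NP}_p(A,X,\ell_p(\Gamma),K'')>\omega^\zeta$, contradicting Proposition~\ref{special form}(iii).

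For $B\colon\ell_p(\Gamma)\to X$ the same scheme is shorter, since the domain already carries the unconditional basis and no perturbation bookkeeping is needed. From $\textbf{SS}(B,\ell_p(\Gamma),X,K)>\omega\cdot\omega^\zeta$, Proposition~\ref{strictly singular block index} (dropping the support conclusion in $X$) yields $(x_t)_{t\in\ttt_{\omega^\zeta}}\subset S_{\ell_p(\Gamma)}$ with $(x_{t|_i})_{i=1}^{|t|}\in SS(B,\ell_p(\Gamma),X,K+1)$ and each branch disjointly supported in $\ell_p(\Gamma)$, hence isometrically equivalent to the $\ell_p^{|t|}$ basis. Then $(x_{t|_i})_{i=1}^{|t|}\lesssim_1(e_i)_{i=1}^{|t|}$ and $(e_i)_{i=1}^{|t|}\lesssim_1(x_{t|_i})_{i=1}^{|t|}\lesssim_{K+1}(Bx_{t|_i})_{i=1}^{|t|}$, so $(x_{t|_i})_{i=1}^{|t|}\in T_p(B,\ell_p(\Gamma),X,K+1)$ for all $t$, and Proposition~\ref{indexing}(ii) again gives $\textbf{NP}_p(B,\ell_p(\Gamma),X,K+1)>\omega^\zeta$, contradicting Proposition~\ref{special form}(iii). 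The $p=\infty$ case with $c_0(\Gamma)$ in place of $\ell_p(\Gamma)$ is handled identically, since disjointly supported seminormalized sequences in $c_0(\Gamma)$ are likewise uniformly $c_0$-equivalent. The hard part I expect is the uniformity in $t$ of the equivalence constant $D$ in the $A$-case: controlling it is precisely why the tolerances $\ee_n$ must be fixed small relative to $K$ and $\|A\|$ \emph{before} the block index lemma is invoked, rather than chosen afterward.
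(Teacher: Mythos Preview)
Your proposal is correct and follows essentially the same route as the paper: apply Proposition~\ref{strictly singular block index} (using only the unconditional basis on the $\ell_p(\Gamma)$ side, as noted in the remark after Proposition~\ref{block index}) to obtain a tree whose branches in $\ell_p(\Gamma)$ are small perturbations of disjointly supported, hence $\ell_p$-equivalent, sequences, and conclude that this tree witnesses $\textbf{NP}_p>\omega^\zeta$. The paper's proof is a two-sentence sketch of exactly this argument; you have simply supplied the details (seminormalization, choice of $\ee_n$, invocation of Proposition~\ref{special form}(iii)) that the paper leaves implicit.
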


\begin{proof} For the statement concerning $A$, we can use Proposition \ref{strictly singular block index} to obtain a tree $(x_t)_{t\in \ttt_\xi}$ (without the assumptions on disjointness of supports of the branches of $(x_t)_{t\in \ttt_\xi}$) so that the branches of this tree are uniformly equivalent to their images under $A$ and so that the images under $A$ are a small perturbation of disjointly supported vectors in $\ell_p(\Gamma)$ (resp. $c_0(\Gamma)$).  Thus this tree witnesses the fact that $\textbf{NP}_p(A,X,Y)>\xi$.  For the statement concerning $B$, we omit the portion of Proposition \ref{strictly singular block index} concerning $(y_t)_{t\in \ttt_\xi}$ to obtain a tree the branches of which are disjointly supported in $\ell_p(\Gamma)$ with branches uniformly equivalent to their images, which is necessarily witnesses the fact that $\textbf{NP}_p(A,X,Y)>\xi$.

\end{proof}

We note the analogue of this for the sequential indices.  

\begin{proposition} Let $X$ be a Banach space. \begin{enumerate}[(i)]\item For any $\Gamma$, $1\leqslant p<\infty$, $0<\xi<\omega_1$, and operators $A: \ell_p(\Gamma)\to X$ and $B:X\to \ell_p(\Gamma)$, $A\in \sss\sss_\xi( \ell_p(\Gamma),X)$ if and only if $A\in \mathfrak{SM}_p^\xi(\ell_p(\Gamma),X)$ and $B\in \sss\sss_\xi(X,\ell_p(\Gamma))$ if and only if $B\in \mathfrak{SM}_p^\xi(X, \ell_p(\Gamma))$.  The analogous results hold for $c_0(\Gamma)$.   \item If $Y$ is any Banach space and $A:X\to Y$, $0<\xi<\omega_1$ are such that $A\notin \mathfrak{SM}^\xi_p(X,Y)$, then $A\notin \sss\sss_\xi(X,Y)$.  \end{enumerate} 

\label{strictly singular thing}
\end{proposition}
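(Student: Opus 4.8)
The plan is to prove part $(ii)$ first, since it supplies the forward implications of $(i)$, and then to obtain the reverse implications of $(i)$ from the special structure of $\ell_p(\Gamma)$ and $c_0(\Gamma)$.

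\emph{Part $(ii)$: $\sss\sss_\xi(X,Y)\subseteq\mathfrak{SM}_p^\xi(X,Y)$.} I would argue by contraposition. Suppose $A$ preserves an $\ell_p^\xi$ (resp.\ $c_0^\xi$) spreading model $(x_n)$, with constants $a,b$ for $(x_n)$ and $a',b'$ for $(Ax_n)$. Then for every $E\in\sss_\xi$ and every $z=\sum_{i\in E}c_ix_i$ one has $\|Az\|=\|\sum_{i\in E}c_iAx_i\|\geqslant(a')^{-1}\|\sum_{i\in E}c_ie_i\|\geqslant(a'b)^{-1}\|z\|$, so $A$ is bounded below by the \emph{fixed} constant $(a'b)^{-1}$ on the span of $(x_i)_{i\in E}$ for every $E\in\sss_\xi$. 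If $(x_n)$ is basic this already witnesses $A\notin\sss\sss_\xi(X,Y)$, so it remains to arrange that the witnessing spreading model is basic. For $1<p\leqslant\infty$ such a spreading model is automatically weakly null — if $x^*(x_{n_i})\to t$, applying the upper estimate $(x_i)_{i\in E}\lesssim_b(e_i)_{i\in E}$ on arbitrarily long $\sss_\xi$-sets with signs matching $x^*$ forces $t=0$ — hence has a basic subsequence, again a spreading model of the same type with the same constants. For $p=1$ one uses Rosenthal's theorem instead: either $(x_n)$ has a subsequence equivalent to the $\ell_1$ basis (which is basic), or a weakly Cauchy subsequence, in which case the difference sequence $(x_{n_{2i-1}}-x_{n_{2i}})$ is seminormalized and weakly null, so has a basic subsequence, and is again an $\ell_1^\xi$ spreading model after passing to a subsequence for which the two-element unions it generates lie in $\sss_\xi$; here one uses $\iota(\sss_\xi[\aaa_2])=2\,\omega^\xi=\omega^\xi=\iota(\sss_\xi)$ (valid for $\xi\geqslant1$) together with Proposition \ref{Gasparis analogue}. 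The $A$-images are handled identically.

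\emph{Part $(i)$.} The inclusions $\sss\sss_\xi(\ell_p(\Gamma),X)\subseteq\mathfrak{SM}_p^\xi(\ell_p(\Gamma),X)$ and $\sss\sss_\xi(X,\ell_p(\Gamma))\subseteq\mathfrak{SM}_p^\xi(X,\ell_p(\Gamma))$, and their $c_0(\Gamma)$ analogues, are instances of $(ii)$. For the reverse inclusions I would show that failure of $\xi$-strict singularity forces preservation of a spreading model. Take $B\colon X\to\ell_p(\Gamma)$ ($1\leqslant p<\infty$) not $\xi$-strictly singular, and fix a normalized basic $(x_n)\subset X$ and $c>0$ with $\|Bz\|\geqslant c\|z\|$ for every $z$ in the span of $(x_i)_{i\in E}$, $E\in\sss_\xi$. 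Then $(Bx_n)$ is seminormalized in $\ell_p(\Gamma)$ with no norm-convergent subsequence, since a convergent one would make $\|B(x_n-x_m)\|$ small while the lower estimate on the two-element $\sss_\xi$-set $\{n,m\}$ keeps it bounded away from $0$. Using reflexivity when $1<p<\infty$, the Schur property and weak sequential completeness when $p=1$, and a gliding-hump perturbation, some subsequence of $(Bx_n)$ — or, if $(Bx_n)$ has a nonzero weak limit, some subsequence of $\bigl(B(x_{n_{2i-1}}-x_{n_{2i}})\bigr)$ — is $(1+\e)$-equivalent to the $\ell_p$ unit vector basis, hence an $\ell_p^\xi$ spreading model. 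The matching sequence in $X$ (a subsequence of $(x_n)$, resp.\ the difference sequence) is then an $\ell_p^\xi$ spreading model too: on $\sss_\xi$-subsets its spans are controlled above by $\|B\|$ and below by $c^{-1}$ in terms of the $\ell_p$ norm, using that $\sss_\xi$ is spreading and, for the difference sequence, that two-element unions can be absorbed into $\sss_\xi$ via Proposition \ref{Gasparis analogue}. Hence $B\notin\mathfrak{SM}_p^\xi(X,\ell_p(\Gamma))$. The case $A\colon\ell_p(\Gamma)\to X$ is entirely parallel, performing the disjointification on the normalized basic witness \emph{inside} $\ell_p(\Gamma)$; and the $c_0(\Gamma)$ cases are the same with the $\ell_p$ basis replaced by the $c_0$ basis and disjointly supported vectors of $c_0(\Gamma)$, with the extra wrinkle that a $\Delta$-system reduction of the supports is needed to strip off a persistent common-support part before passing to the difference sequence.

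\emph{Main obstacle.} The delicate step is the $p\in\{1,\infty\}$ case: a seminormalized basic (or spreading-model) sequence in $\ell_1(\Gamma)$ or $c_0(\Gamma)$ need not be weakly null, so one cannot pass directly to a subsequence equivalent to the basis — one must first peel off a stable part (a weakly Cauchy subsequence when $p=1$, a $\Delta$-system root in $c_0(\Gamma)$), pass to a difference or pair sequence, and then verify that the index sets produced still lie in $\sss_\xi$. It is exactly the coincidence of Cantor--Bendixson indices $\iota(\sss_\xi[\aaa_2])=\iota(\sss_\xi)$ for $\xi\geqslant1$ together with Proposition \ref{Gasparis analogue} that makes this reconciliation go through; everything else is bookkeeping with the defining constants.
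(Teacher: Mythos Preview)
Your proposal is correct and follows essentially the same route as the paper: prove $(ii)$ by contraposition and use it for one direction of $(i)$, then for the remaining direction exploit the structure of $\ell_p(\Gamma)$ (resp.\ $c_0(\Gamma)$) to pass from a witness of non-$\xi$-strict-singularity to a sequence that is, up to perturbation, disjointly supported and hence an $\ell_p^\xi$ (resp.\ $c_0^\xi$) spreading model together with its image. The paper's proof is very terse---it simply asserts ``we may assume $(x_n)$ is $K$-basic'' in $(ii)$ and ``pass to an appropriate difference sequence'' in $(i)$---and your write-up supplies exactly the details the paper suppresses, most notably the Rosenthal/difference-sequence reduction for $p=1$ and the index computation $\iota(\sss_\xi[\aaa_2])=2\omega^\xi=\omega^\xi$ together with Proposition~\ref{Gasparis analogue} that makes the pair-blocking compatible with $\sss_\xi$.
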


\begin{proof} $(i)$ If $(x_n)\subset \ell_p(\Gamma)$ (resp. $c_0(\Gamma)$) is normalized, $K$-basic, and $(x_n)_{n\in E}$ is $K$-equivalent to $(Ax_n)_{n\in E}$ for every $E\in \sss_\xi$, then we may assume $(x_n)$ is coordinate-wise convergent, and by passing to an appropriate difference sequence and normalizing, we may assume $(x_n)$ is coordinate-wise null.  By passing to a further subsequence and perturbing, we may assume $(x_n)$ is disjointly supported, and therefore $(x_n)$ and $(Ax_n)$ are both $\ell_p^\xi$ (resp. $c_0^\xi$) spreading models.  Thus if $A\in \mathfrak{SM}_p^\xi$, $A\in \sss\sss_\xi$.  For the analogous statement concerning $B$, the argument is similar, except we assume $(Bx_n)$ is essentially disjointly supported.  

The other direction of $(i)$ is a consequence of $(ii)$. 

$(ii)$  If $(x_n)$ is a  $K$-$\ell_p^\xi$ (resp. $K$-$c_0^\xi$) spreading model and so is its image under $A$ for some $K>1$, we may assume $(x_n)$ is $K$-basic and $(x_n)_{n\in E}$ and $(Bx_n)_{n\in E}$ are $K^2$-equivalent for each $E\in \sss_\xi$.  

\end{proof}

\begin{proposition} Suppose that for $i=1,, \ldots, k$, $X_i, Y_i\in \Ban$ are such that $Y_i$ has an unconditional basis.  Then if $A_i\in \mathfrak{L}(X_i, Y_i)$, $$\emph{\textbf{NP}}_1(\oplus_{i=1}^k A_i,\oplus_{i=1}^k X_i, \oplus_{i=1}^k Y_i)\leqslant \omega  \bigvee_{i=1}^k \emph{\textbf{NP}}_1(A_i, X_i, Y_i).$$

\label{finite direct sum}

\end{proposition}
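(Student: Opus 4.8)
The plan is to prove the $k$-fold statement directly (iterating the two-summand case would produce a factor $\omega^{k-1}$), and to reduce at the outset to operators whose index is a power of $\omega$. Write $X=\oplus_{i=1}^kX_i$, $Y=\oplus_{i=1}^kY_i$, $A=\oplus_{i=1}^kA_i$, and let $\pi_j\colon X\to X_j$ be the coordinate projections. If some $A_i$ preserves a copy of $\ell_1$ the right-hand side is $\infty$ and there is nothing to prove, and the case in which every $A_i$ is finite rank is trivial since then $A$ is finite rank. So by Proposition \ref{special form}$(ii)$ and Proposition \ref{tedious}$(i)$ we may assume each $\textbf{NP}_1(A_i,X_i,Y_i)$ is an ordinal and that, having dispatched the all-finite-rank case, $\bigvee_{i=1}^k\textbf{NP}_1(A_i,X_i,Y_i)=\omega^\eta$ for some $\eta\geqslant 1$; the target becomes $\textbf{NP}_1(A,X,Y)\leqslant\omega\cdot\omega^\eta=\omega^{1+\eta}$. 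Suppose for contradiction that $\textbf{NP}_1(A,X,Y,K)>\omega\cdot\omega^\eta$ for some $K\geqslant 1$.

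First I would bring in Proposition \ref{block index}. A finite direct sum of spaces with unconditional bases has an unconditional basis, namely the disjoint union of the individual bases, so $Y$ qualifies, and by the Remark following Proposition \ref{block index} it is enough for $Y$ alone to have this property. Applying that proposition with $p=1$, with its parameter $\xi$ set equal to $\omega^\eta$, with $\ee=1$, and with $\ee_n\downarrow 0$ chosen so small that $\ee_1<\tfrac{1}{2(K+1)}$, I obtain $(x_t)_{t\in\ttt_{\omega^\eta}}\subseteq B_X$ and $(y_t)_{t\in\ttt_{\omega^\eta}}\subseteq Y$ with $(x_{t|_i})_{i=1}^{|t|}\in T_1(A,X,Y,K+1)$, with $\|Ax_t-y_t\|<\ee_{|t|}$, and with $(y_{t|_i})_{i=1}^{|t|}$ pairwise disjointly supported with respect to the basis of $Y$ for every $t$. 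Writing $y_t=(y^1_t,\dots,y^k_t)$, disjointness of supports in $Y$ forces, for each fixed $j$, the vectors $(y^j_{t|_i})_{i=1}^{|t|}$ to be disjointly supported, hence a $\lambda_j$-unconditional basic sequence, in $Y_j$; combining the $T_1$-membership with the perturbation estimate gives that $(y_{t|_i})_{i=1}^{|t|}$ $2(K+1)$-dominates the $\ell_1^{|t|}$ basis along every branch, while $\|y_t\|\leqslant\|A\|+1$.

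The combinatorial core is a local lemma: if $(z_i)_{i=1}^n$ is a disjointly supported, norm-bounded sequence in a finite direct sum of spaces with unconditional bases and $(z_i)_{i=1}^n$ $\kappa$-dominates the $\ell_1^n$ basis, then there is a coordinate $j$ and a subset $S\subseteq\{1,\dots,n\}$ of density bounded below (depending only on $\kappa$, on the number of summands, and on the unconditional constants) such that the $j$-th coordinates $(z_i^j)_{i\in S}$ $\kappa'$-dominate the $\ell_1^{|S|}$ basis. This I would prove by taking a norming functional of $\sum_i z_i$, decomposing it along the disjoint supports into pieces $\phi_i$, observing that $\sum_i\phi_i(z_i)\geqslant\kappa^{-1}n$ forces a positive fraction of the $\phi_i(z_i)$ to be bounded below, then a further positive fraction of those to have their mass concentrated in a single coordinate $j$, and finally using disjointness of supports together with unconditionality of the dual basis to read off the $\ell_1$-lower estimate from the corresponding pieces. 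Applied branchwise this produces, for each branch of $(y_t)_{t\in\ttt_{\omega^\eta}}$, a dominant summand and a dense subset of the branch. Since $\ttt_{\omega^\eta}$ is of the special form $\ttt_{\omega^{(\cdot)}}$, any finite coloring of it admits a monotone self-map on which the coloring is constant (as used in the proof of Lemma \ref{c_0}, citing \cite{C2}); arranging the coloring so that constancy on a subtree genuinely records that one fixed summand $j_0$ carries the $\ell_1$-domination along the corresponding branches, and re-threading the dense subsets so that what remains is still indexed by $\ttt_{\omega^\eta}$, yields a map $\ttt_{\omega^\eta}\to B_{X_{j_0}}$ built from the vectors $\pi_{j_0}x_{(\cdot)}$ whose branches lie in $T_1(A_{j_0},X_{j_0},Y_{j_0},K')$ for a constant $K'$. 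By Proposition \ref{indexing} this witnesses $\textbf{NP}_1(A_{j_0},X_{j_0},Y_{j_0},K')>\omega^\eta\geqslant\textbf{NP}_1(A_{j_0},X_{j_0},Y_{j_0})$, a contradiction, so $\textbf{NP}_1(A,X,Y,K)\leqslant\omega\cdot\omega^\eta$ for every $K$ and we are done.

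I expect the main obstacle to be exactly the passage from the branchwise selection of the local lemma to a single summand and a genuine $\ttt_{\omega^\eta}$-subtree witnessing a large index for one $A_{j_0}$: the dominant summand and the dense subsets are attached to branches rather than to nodes, so the coloring fed into the $\ttt_{\omega^{(\cdot)}}$ self-map machinery must be chosen with care, and one must verify that no ordinal is lost in the re-threading. An alternative organisation, paralleling the proof of Lemma \ref{c_0} more closely, would be to fold the disjoint-support reduction and the summand selection into a single application of the coloring dichotomy of Lemma \ref{dichotomy}, with the functions $f_t$ designed to detect both phenomena simultaneously; in either route it is the hypothesis that each $Y_i$ carries an unconditional basis that does the work, both in Proposition \ref{block index} and in the local lemma, and that is what keeps the sum estimate at $\omega\bigvee$ rather than degrading to a product.
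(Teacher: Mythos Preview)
Your overall architecture matches the paper's: reduce to $\bigvee_i\textbf{NP}_1(A_i,X_i,Y_i)=\omega^\eta$, assume $\textbf{NP}_1(A,X,Y,K)>\omega\omega^\eta$, and invoke Proposition~\ref{block index} to get $(x_t)_{t\in\ttt_{\omega^\eta}}$ and disjointly supported $(y_t)_{t\in\ttt_{\omega^\eta}}$ with each branch of $(y_t)$ dominating the $\ell_1$ basis. The divergence---and the gap---is in how you pass from this tree to a single summand $j_0$ and a full $\ttt_{\omega^\eta}$-indexed witness for $A_{j_0}$.

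Your local lemma is correct, but it hands you, for each maximal branch $s$, only a \emph{positive-density subset} $S(s)\subset\{1,\dots,|s|\}$ on which the $j(s)$-th coordinates dominate $\ell_1$. These subsets are not tree-compatible: two maximal nodes $s,s'$ sharing an initial segment may select disjoint portions of it, so the union of the selections need not form a tree, let alone one of order $\omega^\eta$. The node-coloring Ramsey result you cite from \cite{C2} stabilizes a \emph{node} coloring, not branch-attached subsets, so it cannot re-thread the $S(s)$'s. You flag this as the main obstacle, but the proposal does not resolve it, and I do not see how to make the density-subset route close without essentially rebuilding the paper's argument.

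The paper's resolution exploits a feature of $\ell_1$ you stop just short of: for disjointly supported vectors in a $1$-unconditional basis, $2K$-domination of $\ell_1$ is equivalent to every \emph{convex} combination having norm at least $1/2K$, so by the geometric Hahn-Banach theorem there is a \emph{single} functional $\phi_s=(y^*_{1,s},\dots,y^*_{k,s})\in B_{Y^*}$ with $\phi_s(y_{s|_i})\geqslant 1/2K$ for \emph{every} $i$ along the branch---not merely a positive fraction. Pigeonholing on coordinates then yields, for each non-maximal node $t$ and each maximal $s\succeq t$, some $j$ with $y^*_{j,s}(y^j_t)\geqslant 1/2kK$; that is, $\{s\in MAX:t\preceq s\}=\bigcup_{j=1}^k A_j(t)$ where $A_j(t)=\{s:y^*_{j,s}(y^j_t)\geqslant 1/2kK\}$. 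This per-node covering of maximal extensions is exactly the input to \cite{C2}[Lemma~3.7], which returns a single $j$, a monotone $g:\ttt_{\omega^\eta}\to\ttt_{\omega^\eta}$, and $h:MAX\to MAX$ with $y^*_{j,h(t)}(y^j_{g(t|_i)})\geqslant 1/2kK$ for all $i\leqslant|t|$. Disjointness of supports and unconditionality then convert this into genuine $\ell_1$-domination of $(y^j_{g(t|_i)})_i$, and after perturbing back to $(A_j\pi_jx_{g(t)})$ one contradicts $\textbf{NP}_1(A_j,X_j,Y_j)\leqslant\omega^\eta$. The two missing ingredients, then, are the Hahn-Banach separation (which upgrades ``positive density'' to ``all $i$'') and the specific tree-Ramsey Lemma~3.7 of \cite{C2} (which handles the branch-dependent coordinate choice); neither the node-coloring result nor Lemma~\ref{dichotomy} substitutes for the latter.
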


\begin{proof} We may assume that each $Y_i$ has a $1$-unconditional basis and that the direct sums are $1$-sums.  We may also assume that at least one of the  operators $A_i$ is not finite rank and that none of the $A_i$ preserves a copy of $\ell_1$.   Suppose $0<\xi\in \ord$ is such that $\omega^\xi= \vee_{i=1}^k\textbf{NP}_1(A_i, X_i, Y_i).$  To obtain a contradiction, assume $K\geqslant 1$ is such that $$\textbf{NP}_1(\oplus_{i=1}^k A_i, \oplus_{i=1}^k X_i,\oplus_{i=1}^k Y_i, K/2)>\omega \omega^\xi.$$  By Proposition \ref{block index} applied with $\ee=K/2$, we can find $((x_{j,t})_{j=1}^k)_{t\in \ttt_{\omega^\xi}}$, $((y_{j,t})_{j=1}^k)_{t\in \ttt_{\omega^\xi}}$ satisfying $(i)$-$(iv)$ with $\ee_n=1/4kK$ for each $n\in \nn$.  Then since $((A_jx_{j,t|_i})_{j=1}^k)_{i=1}^{|t|}$ $K$-dominates the $\ell_1$ basis for each $t\in \ttt_{\omega^\xi}$, $((y_{j, t|_i})_{j=1}^k)_{i=1}^{|t|}$ must $2K$-dominate the $\ell_1$ basis for each $t\in \ttt_{\omega^\xi}$.  But since this is a disjointly supported sequence in a space with $1$-unconditional basis, this is simply equivalent to every convex combination of $((y_{j, t|_i})_{j=1}^k)$ having norm at least $1/2K$.  By the geometric version of the Hahn-Banach theorem, this is equivalent to the existence of a functional $(y^*_{j,t})_{j=1}^k\in \prod_{j=1}^k B_{Y^*_j}$ so that for each $1\leqslant i\leqslant |t|$, $$\sum_{j=1}^k y^*_{j,t}(y_{j, t|_i})\geqslant 1/2K.$$  Of course this means that for each $1\leqslant i\leqslant |t|$, $\vee_{j=1}^k y^*_{j,t}(y_{j, t|_i})\geqslant 1/2kK$.  For each $s\in \ttt_{\omega^\xi}$ and $j\in \{1, \ldots k\}$, let $$A_j(t)=\{s\in MAX(\ttt_{\omega^\xi}): t\preceq s,y^*_{j,s}(y_{j,t})\geqslant 1/2kK\}.$$  Then our previous remark guarantees that for each $t\in \ttt_{\omega^\xi}$, $$\cup_{j=1}^k A_j(t)=\{s\in MAX(\ttt_{\omega^\xi}): t\preceq s\}.$$  Then \cite{C2}[Lemma $3.7$] gives the existence of $j\in \{1,\ldots, k\}$ and maps $g:\ttt_{\omega^\xi}\to \ttt_{\omega^\xi}$ and $h:MAX(\ttt_{\omega^\xi})\to MAX(\ttt_{\omega^\xi})$ so that for each $s,t\in \ttt_{\omega^\xi}$ with $s\prec t$, $g(s)\prec g(t)$ and for each $t\in MAX(\ttt_{\omega^\xi})$, $y^*_{j, h(t)}(y_{j,g(t|_i)})\geqslant 1/2kK$ for each $1\leqslant i\leqslant |t|$.  Thus for each $t\in MAX(\ttt_{\omega^\xi})$, $(y_{j,g(t|_i)})_{i=1}^{|t|}$ is a disjointly supported sequence in $Y_j$ and $y^*_{j, h(t)}(y_{j, g(t|_i)})\geqslant 1/2kK$ for each $1\leqslant i\leqslant |t|$ witnesses the fact that $(y_{j, g(t|_i)})_{i=1}^{|t|}$, and therefore every branch of $(y_{j,g(s)})_{s\in \ttt_{\omega^\xi}}$, $2kK$-dominates the $\ell_1$ basis.  Since $\|A_j x_{j, g(t)}- y_{j,g(t)}\|\leqslant 1/4kK$ for every $t\in \ttt_{\omega^\xi}$, we deduce that every branch of $(A_jx_{j, g(t)})_{t\in \ttt_{\omega^\xi}}$ $4kK$-dominates the $\ell_1$ basis.  But the existence of $(x_{j, g(t)})_{t\in \ttt_{\omega^\xi}}$ implies that $$\textbf{NP}_1(A_j, X_j, Y_j, 4kK)>\omega^\xi,$$ a contradiction.  

\end{proof}

\begin{corollary} If $X_1, \ldots, X_k$ have unconditional bases, $\textbf{I}_1(\oplus_{i=1}^k X_i)\leqslant \omega \vee_{i=1}^k \textbf{I}_1(X_i).$  Moreover, the $\ell_1$ block index of the natural basis of the direct sum is exactly the maximum of the $\ell_1$ block indices of the individual spaces.

\end{corollary}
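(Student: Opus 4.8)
The first assertion is the case $A_i=I_{X_i}$ of Proposition \ref{finite direct sum}: since $\oplus_{i=1}^k I_{X_i}=I_{\oplus_{i=1}^k X_i}$ and $\textbf{NP}_1(I_Z,Z,Z)=\textbf{I}_1(Z)$ for every $Z$, it yields $\textbf{I}_1(\oplus_{i=1}^k X_i)\leqslant\omega\bigvee_{i=1}^k\textbf{I}_1(X_i)$. For the block-index assertion I would write $\textbf{BI}_1(X,(e_i))$ for the supremum over $K\geqslant 1$ of the orders of the trees of finite normalized block sequences of $(e_i)$ which $K$-dominate the $\ell_1$ basis; these trees and all of their derived trees are closed under normalized $\ell_1$-absolutely convex blocks, so the argument of Proposition \ref{special form}(ii) shows $\textbf{BI}_1$ is always a power of $\omega$ or $\infty$, and in particular $\bigvee_{i=1}^k\textbf{BI}_1(X_i)$ is. We may assume each basis of $X_i$ is $1$-unconditional (the block index is unchanged under renorming) and realize $\oplus_{i=1}^k X_i$ as an $\ell_1$-sum, so that its natural basis is $1$-unconditional. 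The inequality $\geqslant$ is immediate, since each $X_i$ embeds isometrically into $\oplus_j X_j$ carrying its basis into the natural basis and block sequences to block sequences, whence $\textbf{BI}_1(X_i)\leqslant\textbf{BI}_1(\oplus_j X_j)$.

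For $\leqslant$, set $\omega^\xi=\bigvee_{i=1}^k\textbf{BI}_1(X_i)$, the case where some $X_i$ (and hence $\oplus_j X_j$) contains a normalized block sequence equivalent to the $\ell_1$ basis being trivial. The plan is to imitate the proof of Proposition \ref{finite direct sum}, but to start from a genuine block tree of the direct sum, thereby bypassing Proposition \ref{block index} --- this is exactly why the factor of $\omega$ present in the first assertion is absent here. So, supposing $\textbf{BI}_1(\oplus_j X_j)>\omega^\xi$, Proposition \ref{indexing}(ii) furnishes $K\geqslant 1$ and a block tree $(z_t)_{t\in\ttt_{\omega^\xi}}$ of the natural basis with every branch $(z_{t|i})_{i=1}^{|t|}$ $K$-dominating the $\ell_1$ basis. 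I would decompose $z_t=\sum_{j=1}^k z_t^{(j)}$ with $z_t^{(j)}\in X_j$; because the $z_{t|i}$ are successive blocks of the natural basis, for each fixed $j$ the vectors $(z_{t|i}^{(j)})_i$ have pairwise disjoint, successive supports in the basis of $X_j$. For $s\in MAX(\ttt_{\omega^\xi})$ the branch $(z_{s|i})_i$ is a disjointly supported sequence in a space with $1$-unconditional basis, every convex combination of which has norm $\geqslant 1/K$, so by the geometric Hahn--Banach theorem there is $z_s^*=(z_s^{*,j})_{j=1}^k\in B_{(\oplus_j X_j)^*}=\prod_{j=1}^k B_{X_j^*}$ with $\sum_{j=1}^k z_s^{*,j}(z_{s|i}^{(j)})\geqslant 1/K$, and hence $\bigvee_{j=1}^k z_s^{*,j}(z_{s|i}^{(j)})\geqslant 1/kK$, for every $i\leqslant|s|$.

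From here the argument runs exactly as in Proposition \ref{finite direct sum}: putting $A_j(t)=\{s\in MAX(\ttt_{\omega^\xi}):t\preceq s,\ z_s^{*,j}(z_t^{(j)})\geqslant 1/kK\}$, the last estimate gives $\bigcup_{j=1}^k A_j(t)=\{s\in MAX(\ttt_{\omega^\xi}):t\preceq s\}$ for every $t$, and the coloring lemma of \cite{C2} used in the proof of Proposition \ref{finite direct sum} produces a single $j\in\{1,\dots,k\}$, a monotone $g:\ttt_{\omega^\xi}\to\ttt_{\omega^\xi}$, and $h:MAX(\ttt_{\omega^\xi})\to MAX(\ttt_{\omega^\xi})$ with $z_{h(t)}^{*,j}(z_{g(t|i)}^{(j)})\geqslant 1/kK$ whenever $i\leqslant|t|$. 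By monotonicity of $g$, for each $t\in\ttt_{\omega^\xi}$ the sequence $(z_{g(t|i)}^{(j)})_{i=1}^{|t|}$ is a subsequence of a single branch of the block tree, hence a block sequence of the basis of $X_j$; its terms are nonzero since, for a maximal $\bar t\succeq t$, $z_{h(\bar t)}^{*,j}(z_{g(t|i)}^{(j)})\geqslant 1/kK>0$, and the same norming together with disjointness of supports and $1$-unconditionality shows that $(z_{g(t|i)}^{(j)})_i$ $kK$-dominates the $\ell_1$ basis; normalizing preserves all this. Then $t\mapsto z_{g(t)}^{(j)}/\|z_{g(t)}^{(j)}\|$ witnesses, via Proposition \ref{indexing}(ii), that the tree of normalized block sequences of the basis of $X_j$ which $kK$-dominate the $\ell_1$ basis has order $>\omega^\xi$, so $\textbf{BI}_1(X_j)>\omega^\xi$, contradicting $\textbf{BI}_1(X_j)\leqslant\omega^\xi$. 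Beyond Proposition \ref{finite direct sum}, the only new work is checking that passing to coordinates preserves block structure and that, after the coloring lemma, one genuinely recovers a normalized block tree inside one summand; the main obstacle is the bookkeeping that presents the data in the $\ttt_{\omega^\xi}$-indexed form the coloring lemma requires, which is precisely what Proposition \ref{indexing}(ii) arranges.
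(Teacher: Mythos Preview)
Your proposal is correct and matches the paper's intended argument: the corollary is stated in the paper without proof, as an immediate consequence of Proposition~\ref{finite direct sum} for the first assertion, and of the observation that in the proof of Proposition~\ref{finite direct sum} the only place the factor of $\omega$ enters is the invocation of Proposition~\ref{block index} to pass to a block tree---a step that is unnecessary when one starts with block sequences. Your write-up makes this explicit, correctly noting that the Hahn--Banach/coloring portion of the proof of Proposition~\ref{finite direct sum} applies verbatim once one has a block tree in the direct sum, and that projecting to the $j$-th coordinate preserves the disjoint-support structure needed to conclude.
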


Next, recall that if $(e_i)_{i\in I}$ is a $1$-unconditional basis for the Banach space $E$ and if $(U_i)_{i\in I}$ is a collection of Banach spaces, $$ \bigl(\oplus U_i\bigr)_E = \Bigl\{(u_i)_{i\in I}: \sum_{i\in I}\|u_i\|e_i\in E\Bigr\}$$ is a Banach space when endowed with the norm $$\|(u_i)_{i\in I}\|=\|\sum_{i\in I}\|u_i\|e_i\|.$$  For convenience, we will denote $\bigl(\oplus U_i)_E$ by $U_E$.  For each $J\subset I$, we let $P^E_J$ be the projection in $U_E$ defined by $P^E_J(u_i)_{i\in I}=(1_J(i)u_i)$.  We let $\supp_U((u_i)_{i\in I})=\{i\in I: u_i\neq 0\}$.  

Suppose that we have two $1$-unconditional bases $(e_i)_{i\in I}, (f_i)_{i\in I}$ for $E, F$, respectively, indexed by the same set $I$.  Suppose also that we have a collection $(U_i, V_i)_{i\in I}$ of Banach spaces and $(A_i)_{i\in I}$ of operators $A_i:U_i\to V_i$ so that the the map $e_i\mapsto \|A_i\|f_i$ extends linearly to some $I_{E,F}\in \mathfrak{L}(E,F)$.  Then $A(u_i)_{i\in I}:= (A_iu_i)_{i\in I}$ defines a bounded operator from $U_E$ to $V_F$.  For each $J\subset I$, we let $A_J=P^F_JA$.  That is, $A_J(u_i)_{i\in I}=(1_J(i)A_iu_i)_{i\in I}$.

\begin{proposition} With $U_E, V_F$, $I_{E,F}$, $A$, and $A_J$ as above, $$\emph{\textbf{NP}}_1(A, U_E, V_F) \leqslant \Bigl(\sup\Bigl\{ \emph{\textbf{NP}}_1(A_J, U_E, V_F): J\subset I, |J|<\infty\Bigr\}\Bigr) \emph{\textbf{NP}}_1(I_{E,F}, E, F).$$

\label{infinite direct sum}
\end{proposition}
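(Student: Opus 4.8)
The plan is to argue by contradiction in the spirit of Lemma~\ref{product}(i), applying the coloring Lemma~\ref{dichotomy} directly to $\ttt_{\sigma\iota}$ (with no preliminary blocking, so that no factor of $\omega$ is thrown away). Write $\sigma=\sup\{\textbf{NP}_1(A_J,U_E,V_F):J\subset I,\ |J|<\infty\}$ and $\iota=\textbf{NP}_1(I_{E,F},E,F)$; if either equals $\infty$ the right-hand side is $\infty$ and there is nothing to prove, so assume both are ordinals. Supposing $\textbf{NP}_1(A,U_E,V_F,K)>\sigma\iota$ for some $K$, I would fix, via Proposition~\ref{indexing}, a collection $(x_t)_{t\in\ttt_{\sigma\iota}}\subset B_{U_E}$ with every branch in $T_1(A,U_E,V_F,K)$, and then perturb — exactly as in the proof of Proposition~\ref{block index}, replacing each $x_t$ by a suitable truncation $P^E_{J_t}x_t$ — to arrange, after enlarging $K$ slightly, that each $\supp_U(x_t)$ is finite.

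The crucial device is the choice of coloring. For $t\in\ttt_{\sigma\iota}$ put $M_t=\bigcup_{s\preceq t}\supp_U(x_s)$ (a finite set) and $M_\varnothing=\varnothing$, and for a chain $S$ of $\ttt_{\sigma\iota}$ and $t\in\mt_{\sigma\iota}$ let $f_t(S)$ be the minimum of $\|P^F_{M_t}Au\|_{V_F}$ over all $1$-absolutely convex combinations $u$ of $(x_s)_{s\in S}$. Since $M_s\subset M_t$ when $s\prec t$ and $(f_j)_{j\in J}$ is $1$-unconditional, $(f_t)_{t\in\mt_{\sigma\iota}}$ is decreasing, so Lemma~\ref{dichotomy} applies with $\xi=\sigma$, $\zeta=\iota$. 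In the first alternative one obtains a monotone $g:\ttt_\sigma\to\ttt_{\sigma\iota}$, $\delta>0$ and $t_0\in\mt_{\sigma\iota}$ with $f_{t_0}(\{g(t):t\in S\})\geqslant\delta$ on all chains $S$ of $\ttt_\sigma$. Here $t_0\neq\varnothing$, so $M_{t_0}$ is a \emph{fixed} finite set and $P^F_{M_{t_0}}A=A_{M_{t_0}}$, and the collection $(x_{g(t)})_{t\in\ttt_\sigma}$ witnesses $\textbf{NP}_1(A_{M_{t_0}},U_E,V_F,\delta^{-1})>\sigma$ (the lower $\ell_1$-estimate coming straight from $f_{t_0}\geqslant\delta$, the upper one because subsequences of $\ell_1$-dominated sequences are $\ell_1$-dominated). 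Since $M_{t_0}$ is finite this contradicts $\textbf{NP}_1(A_{M_{t_0}},U_E,V_F)\leqslant\sigma$.

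In the second alternative I would get a block map $h:\ttt_\iota\to$ chains of $\ttt_{\sigma\iota}$ with $f_{s'}(h(t))<\delta_{|t|}$ whenever $s\prec t$ in $\mt_\iota$ and $s'\in h(s)$, and extract for each $t$ a $1$-absolutely convex combination $u_t$ of $(x_r)_{r\in h(t)}$ with $\|P^F_{M_{\max h(s)}}Au_t\|<\delta_{|t|}$, $s$ the immediate predecessor of $t$. Along any branch of $\ttt_\iota$, $(u_{t|_i})_i$ is a $1$-absolutely convex block of a branch of $(x_r)$ (because $h$ is a block map), so $(Au_{t|_i})_i$ $K$-dominates $\ell_1$; and since $\supp_V(Au_{t|_j})\subset M_{\max h(t|_j)}$ while the $M$'s increase along the branch, $(Au_{t|_i})_i$ is ``$\delta$-essentially disjointly supported'' in $(f_j)_{j\in J}$. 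A routine finite-dimensional clean-up (disjointify the positive vectors $\phi_V(Au_{t|_i}):=\sum_j\|(Au_{t|_i})_j\|_{V_j}f_j$, the total perturbation being controlled by $\sum_k k\delta_k$) then shows $(\phi_V(Au_{t|_i}))_i$ $4K$-dominates $\ell_1$ once the $\delta_n$ are chosen small enough depending only on $K$. The key final step is to push this estimate into $E$ without losing it: set $v_t=\sum_{j:\,A_j\neq0}\|A_j(u_t)_j\|_{V_j}\,\|A_j\|^{-1}\,e_j$, so that $v_t\in B_E$ (coordinatewise $v_t\leqslant\sum_j\|(u_t)_j\|e_j$, whose $E$-norm is $\|u_t\|_{U_E}\leqslant1$) while $I_{E,F}v_t=\sum_j\|A_j(u_t)_j\|_{V_j}f_j=\phi_V(Au_t)$ \emph{exactly}. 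Hence $(v_{t|_i})_i\in T_1(I_{E,F},E,F,4K)$ for every $t\in\ttt_\iota$, giving $\textbf{NP}_1(I_{E,F},E,F,4K)>\iota$ — a contradiction. Either way we are done.

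I expect the main obstacle to be precisely this last point: one cannot transfer $\ell_1$-structure of $(Ax_{t|_i})$ in $V_F$ to $\ell_1$-structure of vectors in $E$ by the naive map $(u_i)_i\mapsto\sum_i\|u_i\|_{U_i}e_i$, since that map destroys the directional information inside each $V_j$ responsible for the (one-sided) $\ell_1$ lower estimate, and since the $U_j,V_j$ may be infinite dimensional no dimension-counting shortcut is available. Designing the coloring so that it \emph{simultaneously} forces, in its first alternative, large index for a \emph{finite}-support truncation $A_{M_{t_0}}$, and delivers, in its second alternative, images $Au_t$ essentially disjointly supported in $F$ — which is exactly what makes the slack-free choice of $v_t$ work — is the heart of the argument; the perturbation to finite supports and the disjointification clean-up are routine.
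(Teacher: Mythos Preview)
Your argument is correct and follows essentially the same route as the paper: both apply Lemma~\ref{dichotomy} to a tree indexed by $\ttt_{\sigma\iota}$ with a coloring measuring $\|P^F_{M_t}Au\|$ over $1$-absolutely convex combinations, obtain a contradiction to the definition of $\sigma$ in the first alternative, and in the second alternative exploit the diagonal structure of $A$ to get essentially disjointly supported images which transfer the $\ell_1$ lower estimate to $F$ via $\Pi_F$ and then to $E$. The only cosmetic differences are that the paper truncates the images $Au_t$ (choosing finite $N_t\subset I$ with $\|Au_t-P^F_{N_t}Au_t\|$ small) rather than the domain vectors, uses a $\{0,1\}$-valued threshold coloring, and in the final step projects to genuinely disjoint $z_t, y_t=Az_t$ and uses the coordinatewise domination $I_{E,F}\Pi_E z_t\geqslant\Pi_F y_t$ instead of your direct choice $v_t=\sum_{A_j\neq0}\|A_j(u_t)_j\|\,\|A_j\|^{-1}e_j$; your version is marginally slicker since it hits $I_{E,F}v_t=\Pi_F(Au_t)$ on the nose, but the two are equivalent.
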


\begin{proof} The proof is similar to the proof of Proposition \ref{block index}, so we omit some details.  Let $\xi=\sup \bigl\{\textbf{NP}_1(A_J, U_E, V_F): J\subset I, |J|<\infty\}$ and let $\zeta=\textbf{NP}_1(I_{E,F}, E, F)$.  If either $\xi=\infty$ or $\zeta=\infty$, there is nothing to show, so suppose $\xi, \zeta\in \ord$.  Suppose also that there exists $K\geqslant 1$ so that $\textbf{NP}_1(A, U_E, V_F, K)>\xi\zeta$ and choose $(u_t)_{t\in \ttt_{\xi\zeta}}$ so that for each $t\in \ttt_{\xi\zeta}$, $(u_{t|_i})_{i=1}^{|t|}\in T_1(A, U_E, V_F,K)$.  For each $t\in \ttt_{\omega\xi}$, choose $N_t'$ finite so that $\|Au_t - P^F_{N'_t} Au_t\|<1/3K$.  Let $N_\varnothing=\varnothing$ and $N_t=\cup_{s\preceq t}N_s'$.  For $t\in \mt_{\xi\zeta}$ and $S\subset \ttt_{\xi\zeta}$ a chain, let $f_t(S)=1$ if $$1/3K\leqslant \min \Bigl\{\|P^F_{N_t}Ax\|:x\text{\ is a\ }1\text{-absolutely convex combination of\ }(u_s)_{s\in S}\Bigr\},$$ and $f_t(S)=0$ otherwise.   Note that there cannot exist $t\in \mt_{\xi\zeta}$ and a monotone $g:\ttt_\xi\to \ttt_{\xi\zeta}$ so that for each chain $S$ of $\ttt_\xi$, $f_t(\{g(s):s\in S\})=1,$ otherwise $(P^E_{N_t}x_{g(s)})_{t\in \ttt_\xi}$ witnesses the fact that $\textbf{NP}_1(A_{N_t}, U_E, V_F, 3K)>\xi$.  This is because $AP^E_N=P^F_NA=A_N$ for any $N\subset I$.  Therefore Lemma \ref{dichotomy} guarantees the existence of a block map $h$ mapping $\ttt_\zeta$ to the chains of $\ttt_{\xi\zeta}$ so that for each $s,t\in \ttt_\zeta$ with $s\prec t$, and for each $s'\in h(s)$, $f_{s'}(h(t))=0$.  As in the proof of Proposition \ref{block index}, we can find $(x_t)_{t\in \ttt_\zeta}$ each branch of which is a $1$-absolutely convex block of a branch of $(u_t)_{t\in \ttt_{\xi\zeta}}$ and so that for each $s,t\in \ttt_\zeta$ with $s\prec t$, $\|P^F_{N_{\max h(s)}}A x_t\|<1/3K$.  Observe that if $x_t=\sum_{t'\in h(t)} a_{t'}u_{t'}$, since $N_{t'}\subset N_{\max h(t)}$ and since $$\|Au_{t'}- P^F_{N_{\max h(t)}}Au_{t'} \| \leqslant \|Au_{t'}- P^F_{N_{t'}}Au_{t'}\|< 1/3K$$ for each $t'\in h(t)$, $$\|Ax_t- P^F_{N_{\max h(t)}} Ax_t\|\leqslant \sum_{t'\in h(t)}|a_{t'}|\|Au_{t'}- P^F_{N_{\max h(t)}}Au_{t'}\|<1/3K.$$ 
If $t$ is minimal in $\ttt_\zeta$, let $z_t=P^E_{N_{\max h(t)}} x_t$ and $y_t= P^F_{N_{\max h(t)}}Ax_t$.  If $\ttt_\zeta$ is not minimal, let $s$ denote the immediate predecessor of $t$ in $\ttt_\zeta$ and let $z_t= P^E_{N_{\max h(t)}\setminus N_{\max h(s)}}$ and $y_t=P^F_{N_{\max h(t)}\setminus N_{\max h(s)}} Ax_t$.  Note that $\|z_t\|\leqslant 1$ and $Az_t=y_t$.  Note also that $\|Ax_t-y_t\|\leqslant 2/3K$, so that $(y_{t|_i})_{i=1}^{|t|}$ $3K$-dominates the $\ell_1$ basis, since $(Ax_{t|_i})_{i=1}^{|t|}$ $K$-dominates the $\ell_1$ basis.  Moreover, $(z_{t|_i})_{i=1}^{|t|}$ (resp. $(y_{t|_i})_{i=1}^{|t|}$) have pairwise disjoint supports in $U_F$ (resp. $V_F$).

 Let $\Pi_E:U_E\to E$ denote the map $\Pi_E\bigl((u_i)_{i\in I}\bigr)= \sum_{i\in I} \|u_i\|e_i$ and let $\Pi_F:V_F\to F$ denote $\Pi_F\bigl((v_i)_{i\in I}\bigr)=\sum_{i\in I}\|v_i\|f_i$. Since $(z_{t|_i})_{i=1}^{|t|}$ have pairwise disjoint supports in $U_E$, this sequence is isometrically equivalent to $(\Pi_E(z_{t|i}))_{i=1}^{|t|}$, and the same holds for $(Az_{t|i})_{i=1}^{|t|}=(y_{t|_i})_{i=1}^{|t|}$ and $(\Pi_F(Az_{t|_i}))_{i=1}^{|t|}$. Therefore we deduce that $(\Pi_F y_{t|_i})_{i=1}^{|t|}$ $3K$-dominates the $\ell_1$ basis.     But if we write $z_t=(z_t(i))_{i\in I}$,  $$I_{E,F}\Pi_E z_t= I_{E,F}\sum_{i\in I}\|z_t(i)\|e_i = \sum_{i\in I}\|A_i\|\|z_t(i)\|f_i$$ and $$\Pi_FAz_t = \Pi_F y_t = \sum_{i\in I}\|A_iz_t(i)\|f_i.$$ Note that $\|A_i\|\|z_t(i)\|\geqslant \|A_iz_t(i)\|$ for all $i\in I$, so $I_{E,F}\Pi_E z_t$ dominates $\Pi_F y_t$ coordinate-wise.  Thus since $(I_{E,F}\Pi_Ez_{t|_i})_{i=1}^{|t|}$ is a disjointly supported sequence in $F$ which coordinate-wise dominates the disjointly supported sequence $(\Pi_F y_{t|_i})_{i=1}^{|t|}$, we deduce that $$(\Pi_F y_{t|i})_{i=1}^{|t|}\lesssim_1 (I_{E,F}\Pi_E z_{t|_i})_{i=1}^{|t|},$$ and $(I_{E,F}\Pi_E z_{t|_i})_{i=1}^{|t|}$ $3K$-dominates the $\ell_1$ basis.  Since $\|\Pi_E z_t\|\leqslant 1$ for each $t\in \ttt_\zeta$, $(\Pi_Ez_t)_{t\in \ttt_\zeta}$ implies that $\textbf{NP}_1(I_{E,F},E,F, 3K)>\zeta$, a contradiction.

\end{proof}

\begin{remark} We note that actually we have proved something slightly stronger than the claim.  Rather than using the value of $\textbf{NP}_1(I_{E,F}, E, F)$, we can use the value $$\sup_{K\geqslant 1} o\Bigl(\bigl\{(x_i)_{i=1}^n \in B_E: (x_i)_{i=1}^n\text{\ have disjoint supports}, (I_{E,F}x_i)_{i=1}^n K\text{-dominate the\ }\ell_1\text{\ basis}\bigr\}\Bigr).$$  The fact that the $(x_i)$ can be taken to have disjoint supports in $E$ follows from the proof.

\end{remark}

\begin{corollary} With $U_E$, $V_F$, $I_{E,F}$, $A$, and $A_J$ as in Proposition \ref{infinite direct sum}. \begin{enumerate}[(i)]\item If $\xi\in \ord$ is such that $A_i\in \mathfrak{NP}_1^{\omega^\xi}(U_i, V_i)$ for each $i\in I$, then $$\emph{\textbf{NP}}_1(A, U_E, V_F)\leqslant \omega^{\omega^\xi} \emph{\textbf{NP}}_1(I_{E,F}, E, F).$$ \item If every $V_i$ has a $1$-unconditional basis and if $A_i\in \mathfrak{NP}_1^\xi(U_i, V_i)$ for each $i\in I$, then $$\emph{\textbf{NP}}_1(A, U_E, V_F)\leqslant \omega\xi \emph{\textbf{NP}}_1(I_{E,F}, E, F).$$

\end{enumerate}

\end{corollary}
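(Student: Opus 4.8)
The plan is to reduce both parts to Proposition~\ref{infinite direct sum}, so that the only real work is to bound $\sup\{\textbf{NP}_1(A_J,U_E,V_F):J\subset I,\ |J|<\infty\}$ in terms of $\xi$. For a fixed finite $J=\{i_1,\dots,i_k\}$, write $E_J=[e_i:i\in J]$, $F_J=[f_i:i\in J]$, and note that $A_J$ factors as $A_J=\iota_J\circ\bigl(\oplus_{i\in J}A_i\bigr)\circ\pi_J$, where $\pi_J\colon U_E\to(\oplus_{i\in J}U_i)_{E_J}$ is the coordinate projection and $\iota_J\colon(\oplus_{i\in J}V_i)_{F_J}\to V_F$ is the inclusion, both of norm $1$ by $1$-unconditionality. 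Proposition~\ref{tedious}$(iii)$ then gives $\textbf{NP}_1(A_J,U_E,V_F)\leqslant\textbf{NP}_1\bigl(\oplus_{i\in J}A_i,(\oplus_{i\in J}U_i)_{E_J},(\oplus_{i\in J}V_i)_{F_J}\bigr)$, and it remains to estimate the right-hand side uniformly over $J$.

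For part $(ii)$, since each $V_i$ carries a $1$-unconditional basis, so does each finite $F_J$-sum, and Proposition~\ref{finite direct sum} applies directly: the right-hand side above is at most $\omega\bigvee_{i\in J}\textbf{NP}_1(A_i,U_i,V_i)\leqslant\omega\cdot\omega^\xi$, using that $A_i\in\mathfrak{NP}_1^\xi(U_i,V_i)$ means $\textbf{NP}_1(A_i,U_i,V_i)\leqslant\omega^\xi$. Hence $\sup_J\textbf{NP}_1(A_J,U_E,V_F)\leqslant\omega\cdot\omega^\xi$, and feeding this into Proposition~\ref{infinite direct sum} and collecting the ordinal product gives the stated bound.

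For part $(i)$ the spaces $V_i$ need not have unconditional bases, so Proposition~\ref{finite direct sum} is unavailable; instead I would iterate the perturbative product estimate of Lemma~\ref{product}$(ii)$. Decompose $\oplus_{i\in J}A_i=\sum_{l=1}^k\widetilde A_{i_l}$, where $\widetilde A_i$ applies $A_i$ in the $i$-th slot and annihilates the rest; each $\widetilde A_i$ again factors through $A_i$ by norm-one maps, so (by the quantitative form of Proposition~\ref{tedious}$(iii)$) $\textbf{NP}_1(\widetilde A_i,U_E,V_F,K)\leqslant\textbf{NP}_1(A_i,U_i,V_i,K')$ for every $K'>K$; since $\textbf{NP}_1(A_i,U_i,V_i)\leqslant\omega^{\omega^\xi}$, Proposition~\ref{special form}$(iii)$ forces $\textbf{NP}_1(\widetilde A_i,U_E,V_F,K)<\omega^{\omega^\xi}$ for every $K\geqslant 1$ (the finite-rank case being trivial). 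Iterating Lemma~\ref{product}$(ii)$ across the $k$ summands then bounds $\textbf{NP}_1(\oplus_{i\in J}A_i,\cdot,\cdot,K)$ by a product of $k$ ordinals each strictly below $\omega^{\omega^\xi}$ (with constants growing only like $2^kK$, which is harmless for a single fixed $J$). By the classical fact, recalled after Lemma~\ref{c_0}, that an ordinal $\omega^{\omega^\xi}$ is closed under ordinal multiplication, a finite product of ordinals below it remains below it, so $\textbf{NP}_1(\oplus_{i\in J}A_i,\cdot,\cdot,K)<\omega^{\omega^\xi}$ for every $K$; taking the supremum over $K$ yields $\textbf{NP}_1(A_J,U_E,V_F)\leqslant\omega^{\omega^\xi}$ for each finite $J$, and Proposition~\ref{infinite direct sum} closes $(i)$.

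The delicate point, and the only genuine obstacle, is exactly this last step of $(i)$: the product estimate must be applied at the level of the $K$-indices $\textbf{NP}_1(\cdot,\cdot,\cdot,K)$, where strict inequality below $\omega^{\omega^\xi}$ is guaranteed by Proposition~\ref{special form}$(iii)$, because a finite product of ordinals that are merely $\leqslant\omega^{\omega^\xi}$ can overshoot $\omega^{\omega^\xi}$, whereas a finite product of ordinals strictly below it cannot. Everything else is routine bookkeeping with the two factorizations and with the ordinal arithmetic coming out of Proposition~\ref{infinite direct sum}.
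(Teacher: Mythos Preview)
Your proposal is correct and follows essentially the same route as the paper: both parts reduce to bounding $\sup_J\textbf{NP}_1(A_J,U_E,V_F)$ and then invoking Proposition~\ref{infinite direct sum}, with part~$(ii)$ handled by the finite direct sum estimate (Proposition~\ref{finite direct sum}) exactly as you do. For part~$(i)$ the paper simply cites that $\mathfrak{NP}_1^{\omega^\xi}$ is an ideal (Theorem~\ref{ideals}$(ii)$), whereas you unpack that reference by iterating Lemma~\ref{product}$(ii)$ at the $K$-level and using multiplicative closure of $\omega^{\omega^\xi}$ --- this is precisely how the ideal property is established, so the arguments coincide.
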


\begin{proof} Item $(i)$ follows from Proposition \ref{infinite direct sum} together with the fact that $\mathfrak{NP}_1^{\omega^\xi}$ is an ideal, and so $\textbf{NP}_1(A_J, U_E, V_F)\leqslant \omega^{\omega^\xi}$ for each finite $J$.  

$(ii)$ This follows from Proposition \ref{infinite direct sum} and Proposition \ref{block index}, which gives that $\textbf{NP}_1(A_J, U_E, V_F)\leqslant \omega \xi$ for each finite $J\subset I$.  

\end{proof}

\begin{proposition} Suppose $X$, $Y$ have $1$-unconditional bases $(e_i)_{i\in I}$, $(f_i)_{j\in J}$, respectively, and $A\in \mathfrak{L}(X,Y)$ is such that for each distinct members $e_{i_1}, e_{i_2}$ of the basis of $X$, $Ae_{i_2}$ and $Ae_{i_2}$ have disjoint supports in $Y$.  Then for $1\leqslant t<\infty$, the map $e_i\mapsto \sum_j |f_j^*(Ae_i)|^{1/t}f_j$ extends to an operator $A^t\in \mathfrak{L}(X^t, Y^t)$.  Moreover, for any $1\leqslant p, q<\infty$, $$\emph{\textbf{NP}}_p(A^p, X^p, Y^p)\leqslant \omega \emph{\textbf{NP}}_q(A^q, X^q, Y^q).$$  
\label{convexify}

\end{proposition}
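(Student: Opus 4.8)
The first assertion — that $e_i\mapsto\sum_j|f_j^*(Ae_i)|^{1/t}f_j$ extends to a bounded operator $A^t\colon X^t\to Y^t$ with $\|A^t\|\le\|A\|^{1/t}$ — I would dispatch first, as it is routine. For finitely supported $x=\sum_i a_ie_i$ put $\tilde x=\sum_i|a_i|^te_i\in X$, so $\|\tilde x\|_X=\|x\|_{X^t}^t$. Because the $Ae_i$ have pairwise disjoint supports in $(f_j)_{j\in J}$, for each $j$ at most one index $i(j)$ satisfies $f_j^*(Ae_{i(j)})\ne0$, and then $|f_j^*(A^tx)|^t=|a_{i(j)}|^t|f_j^*(Ae_{i(j)})|=|f_j^*(A\tilde x)|$. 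Hence $\|A^tx\|_{Y^t}^t=\|A\tilde x\|_Y\le\|A\|\,\|x\|_{X^t}^t$ by $1$-unconditionality of $(f_j)$, and $A^t$ extends by density. The same computation shows $A^t$ is ``diagonal'': if $z_1,z_2$ have disjoint supports in $(e_i)$, then $A^tz_1,A^tz_2$ have disjoint supports in $(f_j)$.

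For the inequality, the engine is Proposition \ref{block index} (where the factor $\omega$ is spent), combined with the elementary fact that the passage $X^p\leftrightarrow X^q$, $A^p\leftrightarrow A^q$ is \emph{linear on disjointly supported vectors} via the coordinatewise power map. Precisely, for finitely supported $x=\sum_i a_ie_i$ set $x'=\sum_i|a_i|^{p/q}\sgn(a_i)\,e_i$. A one-line coordinate check gives $\|x'\|_{X^q}=\|x\|_{X^p}^{p/q}$ and $\|A^qx'\|_{Y^q}=\|A^px\|_{Y^p}^{p/q}$; and if $z_1,\dots,z_n$ are pairwise disjointly supported in $(e_i)$, then for all scalars $(b_k)$ one has $(\sum_kb_kz_k)'=\sum_k|b_k|^{p/q}\sgn(b_k)z_k'$, whence, using disjointness of the $z_k$ in $(e_i)$ and of the $A^qz_k'$ in $(f_j)$,
$$\Bigl\|\sum_kb_kz_k\Bigr\|_{X^p}=\Bigl\|\sum_k|b_k|^{p/q}\sgn(b_k)z_k'\Bigr\|_{X^q}^{q/p},\qquad \Bigl\|\sum_kb_kA^pz_k\Bigr\|_{Y^p}=\Bigl\|\sum_k|b_k|^{p/q}\sgn(b_k)A^qz_k'\Bigr\|_{Y^q}^{q/p}.$$
Reparametrizing $b_k=|c_k|^{q/p}\sgn(c_k)$, these identities show that if $(z_k)_{k=1}^n$ is $1$-dominated by the $\ell_p$ basis in $X^p$ then $(z_k')_{k=1}^n$ is $1$-dominated by the $\ell_q$ basis in $X^q$, and if the $\ell_p$ basis is $K$-dominated by $(A^pz_k)_{k=1}^n$ in $Y^p$ then the $\ell_q$ basis is $K^{p/q}$-dominated by $(A^qz_k')_{k=1}^n$ in $Y^q$; in short, a member $(z_1,\dots,z_n)$ of $T_p(A^p,X^p,Y^p,K)$ whose entries are pairwise disjointly supported in $(e_i)$ is carried by $z\mapsto z'$ to a member of $T_q(A^q,X^q,Y^q,K^{p/q})$.

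With this in hand the argument is short. Let $\zeta=\textbf{NP}_q(A^q,X^q,Y^q)$; we may assume $\zeta\in\ord$, and $\zeta\ge1$ always. Suppose for contradiction $\textbf{NP}_p(A^p,X^p,Y^p)>\omega\zeta$ and fix $K\ge1$ with $\textbf{NP}_p(A^p,X^p,Y^p,K)>\omega\zeta$. Since $X^p,Y^p$ carry the $1$-unconditional bases $(e_i)_{i\in I},(f_j)_{j\in J}$, Proposition \ref{block index} (applied with $\xi=\zeta$ and, say, $\ee=1$) produces $(x_t)_{t\in\ttt_\zeta}\subset B_{X^p}$ such that for every $t\in\ttt_\zeta$, $(x_{t|_i})_{i=1}^{|t|}\in T_p(A^p,X^p,Y^p,K+1)$ and the entries $(x_{t|_i})_{i=1}^{|t|}$ have pairwise disjoint supports in $(e_i)_{i\in I}$ — which is all we use from that proposition. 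Defining $x_t'$ to be the coordinatewise power map of $x_t$ and applying the identities above along each branch (where disjointness holds), we get $x_t'\in B_{X^q}$ (since $\|x_t'\|_{X^q}=\|x_t\|_{X^p}^{p/q}\le1$) and $(x_{t|_i}')_{i=1}^{|t|}\in T_q(A^q,X^q,Y^q,(K+1)^{p/q})$ for all $t\in\ttt_\zeta$. By Proposition \ref{indexing} this forces $\textbf{NP}_q(A^q,X^q,Y^q,(K+1)^{p/q})>\zeta$, hence $\textbf{NP}_q(A^q,X^q,Y^q)>\zeta$, a contradiction. Therefore $\textbf{NP}_p(A^p,X^p,Y^p)\le\omega\zeta=\omega\,\textbf{NP}_q(A^q,X^q,Y^q)$.

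The one delicate point is the bookkeeping in the power-map identities: checking that they require only branch-by-branch disjointness of supports (exactly what Proposition \ref{block index} delivers, and the source of the unavoidable factor $\omega$), that the $\sgn$-reparametrization carries $1$-domination and $K$-domination correctly from $\ell_p$-domination in $X^p,Y^p$ to $\ell_q$-domination in $X^q,Y^q$, and that $x_t'$ depends only on $x_t$ so that the transformation is globally well defined on the tree. Everything else is a routine transfinite-tree argument of the kind used repeatedly above. Since $p,q$ were arbitrary, this also yields $\textbf{NP}_q(A^q,X^q,Y^q)\le\omega\,\textbf{NP}_p(A^p,X^p,Y^p)$, so the two indices agree up to the factor $\omega$.
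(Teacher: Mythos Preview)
Your proof is correct and follows essentially the same strategy as the paper: spend the factor $\omega$ on Proposition \ref{block index} to obtain a tree with branchwise-disjoint supports, then transfer from $T_p$ to $T_q$ via the coordinatewise power map $x\mapsto x'$. The paper defines the same map (written $x\mapsto x^{q/p}$) and argues identically.

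There is one small difference worth noting. The paper invokes all of (i)--(iv) of Proposition \ref{block index} and works with the auxiliary vectors $(y_t)$, using the specific form $y_t=P^F_{N_t}A^qx_t$ from that proof to check $y_t^{q/p}=P^F_{N_t}A^px_t^{q/p}$ and then tracking the perturbation errors $\|A^px_t^{q/p}-y_t^{q/p}\|$. You observe instead (in your first paragraph) that the hypothesis on $A$ makes each $A^t$ ``diagonal'', so that disjointly supported $(x_{t|_i})$ automatically have disjointly supported images $(A^px_{t|_i})$; this lets you dispense with $(y_t)$ entirely and use only item (iii). The resulting argument is a bit cleaner---no $\ee_n$ to choose, no perturbation bookkeeping---but the underlying mechanism is identical.
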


\begin{proof} The first statement is clear.  For $s>0$ and for a vector $x$ in the span of $(e_i)$ (resp. $(f_j)$), let $x^s$ be the vector in the span of $(e_i)$ (resp. $(f_j)$) so that $e_i^*(x^s)= \sgn(e_i^*(x))|e^*(x)|^s$. Fix $1\leqslant q<\infty$ and assume $\textbf{NP}_q(A^q, X^q, Y^q, K)>\omega\xi$ for some $0<\xi$ and $K\geqslant 1$.  Fix $1\leqslant p<\infty$ and $\ee_n\downarrow 0$ so that $\sum \ee_n< 1/2K$ and $\sum \ee_n^{q/p} < 1/2(2K)^{1/p}$.  

Fix $(x_t)_{t\in \ttt_\xi}\subset X^q$ and $(y_t)_{t\in \ttt_\xi}\subset Y^q$ to satisfy $(i)$-$(iv)$ of Proposition \ref{block index}.  Recall from the proof of this proposition that there exist finite sets $N_t$ so that $y_t=P^F_{N_t}A^qx_t$. This means that $y_t^{q/p}=P^F_{N_t}A^px_t^{q/p}$ and $A^px_t^{q/p}- y_t^{q/p}= (A^qx_t- y_t)^{q/p}$, so that $\|A^px_t^{q/p}- y_t^{q/p}\|_{Y^p} < \ee_{|t|}^{q/p}.$  Then our choice of $(\ee_n)$, the disjointness of the supports of each branch of $(y_t)_{t\in \ttt_\xi}$, and the fact that each branch of $(Ax_t)_{t\in \ttt_\xi}$ $K$-dominates the $\ell_q$ basis gives that each branch of $(y_t)_{t\in \ttt_\xi}$ $2K$-dominates the $\ell_q$ basis, each branch of $(y_t^{q/p})_{t\in \ttt_\xi}$ $(2K)^{1/p}$-dominates the $\ell_p$ basis in $X^p$, and $(A^px^{q/p}_t)_{t\in \ttt_\xi}$ $2(2K)^{1/p}$-dominates the $\ell_p$ basis in $Y^p$.  Since the $(x_t)_{t\in \ttt_\xi}$ are disjointly supported and $1$-dominated by the $\ell_q$ basis in $X^q$, each branch of $(x_t^{q/p})_{t\in \ttt_\xi}$ is $1$-dominated by the $\ell_p$ basis in $X^p$.  Therefore $(x_t^{q/p})_{t\in \ttt_\xi}$ witnesses the fact that $\textbf{NP}_p(A^p, X^p, Y^p)>\xi$.  Since $0<\xi$ was arbitrary, we are done.

\end{proof}

\subsection{Sequential indices}

If $A:X\to Y$ is an operator between spaces with unconditional bases, and if $(x_n)\subset B_X$ is any sequence, then by passing to a subsequence we may of course assume that $(x_n)$ and $(Ax_n)$ are both coordinate-wise convergent.  If $(Ax_n)$ is an $\ell_1^\xi$ spreading model, an appropriate difference sequence will also be an $\ell_1^\xi$ spreading model.  This observation means that if $A$ preserves an $\ell_1^\xi$ spreading model, then there is a coordinate-wise null sequence $(x_n)\subset B_X$ so that $(Ax_n)$ is also coordinatewise-null and so that both $(x_n)$ and $(Ax_n)$ are $\ell_1^\xi$ spreading models.    A perturbation argument guarantees that the operator $A^p:X^p\to Y^p$ as defined in Proposition \ref{convexify} preserves an $\ell_p^\xi$ spreading model.  Conversely, if $A^p:X^p\to Y^p$ preserves an $\ell_p^\xi$ spreading model for $1<p$, say $(x_n)$ and $(A^px_n)$ are both $\ell_p^\xi$ spreading models, then both sequences are already weakly null.  Another perturbation argument yields that $A:X\to Y$ preserves an $\ell_1^\xi$ spreading model, and we arrive at the following: 

\begin{proposition} If $X,Y$ have unconditional bases and $A\in \mathfrak{L}(X,Y)$, then for any $0<\xi< \omega_1$ and any $1<p<\infty$,  $A\in \mathfrak{SM}_1^\xi(X,Y)$ if and only if $A^p\in \mathfrak{SM}_p^\xi(X^p,Y^p)$.  

\label{convexify spreading model}

\end{proposition}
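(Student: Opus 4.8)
The plan is to prove the two implications separately, in each direction first reducing the sequence that generates the spreading model to one that is \emph{finitely and disjointly supported} with respect to the relevant basis, and then invoking the elementary coordinate-wise dictionary between a space with $1$-unconditional basis and its $p$-convexification. As a preliminary normalization I would pass to the equivalent norms making $(e_i)_{i\in I}$ and $(f_j)_{j\in J}$ $1$-unconditional: this changes all spreading-model constants by fixed factors, affects neither $\mathfrak{SM}_1^\xi(X,Y)$ nor $\mathfrak{SM}_p^\xi(X^p,Y^p)$, and is exactly the norm with respect to which $X^p,Y^p$ were defined. I also take $A$ to be of the type in Proposition \ref{convexify}, so that $A^p$ is defined and, like $A$, carries distinct basis vectors of $X$ to disjointly supported vectors of $Y$ (hence so does $A^p$). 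For a finitely supported $x=\sum_i a_ie_i$ and $s>0$, write $x^s=\sum_i\sgn(a_i)|a_i|^se_i$, as in the proof of Proposition \ref{convexify}.

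The computational core is this dictionary. If $(x_n)$ is finitely and disjointly supported in $X$, then a direct computation with the $X^p$-norm and $1$-unconditionality gives, for every finite $E$ and scalars $(c_n)_{n\in E}$,
$$\Bigl\|\sum_{n\in E}c_n\,x_n^{1/p}\Bigr\|_{X^p}^{p}=\Bigl\|\sum_{n\in E}|c_n|^p\,x_n\Bigr\|_X ,$$
so $(x_n)$ is an $\ell_1^\xi$ spreading model in $X$ with constants $a,b$ if and only if $(x_n^{1/p})$ is an $\ell_p^\xi$ spreading model in $X^p$ with constants $a^{1/p},b^{1/p}$. Next, because $A$ (hence $A^p$) sends distinct basis vectors to disjointly supported vectors, for each $j$ there is at most one index $i(j)$ with $f_j^*(Ae_{i(j)})\neq0$, and a coordinate-by-coordinate check gives $A^p(x^{1/p})=D\bigl((Ax)^{1/p}\bigr)$ for every finitely supported $x$, where $D$ is the diagonal isometry of $Y^p$ with $Df_j=\sgn\bigl(f_j^*(Ae_{i(j)})\bigr)f_j$. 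Since $A$ carries a finitely and disjointly supported sequence in $X$ to a disjointly supported sequence in $Y$, applying the displayed identity inside $Y$ shows that $(x_n)$ and $(Ax_n)$ are both $\ell_1^\xi$ spreading models if and only if $(x_n^{1/p})$ and $(A^px_n^{1/p})$ are both $\ell_p^\xi$ spreading models; the same statement with $x_n$ replaced by $x_n^p$ handles the reverse substitution.

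For the forward implication, assume $A\notin\mathfrak{SM}_1^\xi(X,Y)$ and fix $(x_n)\subset B_X$ with $(x_n)$ and $(Ax_n)$ both $\ell_1^\xi$ spreading models. I would pass to a coordinate-wise convergent subsequence and, as indicated in the discussion preceding the proposition and using that $\sss_\xi$ is spreading, replace it by an appropriate difference sequence so that the sequence and its $A$-image become coordinate-wise null and remain $\ell_1^\xi$ spreading models. A standard gliding-hump perturbation then makes $(x_n)$ finitely and disjointly supported in $X$, which forces $(Ax_n)$ to be disjointly supported in $Y$, and both sequences still generate $\ell_1^\xi$ spreading models. The dictionary now yields that $(x_n^{1/p})$ and $(A^px_n^{1/p})$ are $\ell_p^\xi$ spreading models, so $A^p\notin\mathfrak{SM}_p^\xi(X^p,Y^p)$.

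The converse runs the dictionary backwards via $v\mapsto v^p$. Given $A^p\notin\mathfrak{SM}_p^\xi(X^p,Y^p)$, pick $(v_n)\subset X^p$ with $(v_n)$ and $(A^pv_n)$ both $\ell_p^\xi$ spreading models. Here one uses that, for $1<p<\infty$, every $\ell_p^\xi$ spreading model is weakly null — a seminormalized sequence with a uniform upper $\ell_p$ estimate over $\sss_\xi$ (in particular over all sufficiently remote intervals) has Cesàro averages of suitably spread subsequences going to $0$ in norm, which precludes a nonzero weak cluster point — so after passing to a subsequence both $(v_n)$ and $(A^pv_n)$ are coordinate-wise null, and a perturbation makes $(v_n)$ finitely and disjointly supported in $X^p$, hence $(A^pv_n)$ disjointly supported in $Y^p$, without spoiling the spreading-model property. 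Applying $v_n\mapsto v_n^p$ and the dictionary in the other direction, $(v_n^p)$ and $(Av_n^p)$ are $\ell_1^\xi$ spreading models in $X$ and $Y$, so $A\notin\mathfrak{SM}_1^\xi(X,Y)$. The coordinate computations are entirely routine once the generating sequences are finitely and disjointly supported; I expect the only genuine obstacles to be the two reductions to that situation — that a difference subsequence of an $\ell_1^\xi$ spreading model is again one (the spreading property of $\sss_\xi$) and that $\ell_p^\xi$ spreading models with $1<p<\infty$ are weakly null, which is what permits the passage to a coordinate-wise null sequence in the converse.
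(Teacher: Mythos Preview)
Your approach is essentially identical to the paper's own argument, which is given as a one-paragraph sketch immediately preceding the proposition: reduce in each direction to a coordinate-wise null (hence, after perturbation, finitely and disjointly supported) witnessing sequence, then use the $x\leftrightarrow x^{1/p}$ dictionary between $X$ and $X^p$; for the forward direction pass to a difference sequence of a coordinate-wise convergent subsequence, and for the converse use that $\ell_p^\xi$ spreading models with $1<p<\infty$ are weakly null. You have filled in strictly more detail than the paper does (the explicit displayed identity, the diagonal isometry $D$, and the justification of weak nullity), all correctly.
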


Of course, since membership in $\mathfrak{SM}_p^\xi(X,Y)$ is determined by all separable subspaces of $X$, to deduce the analogue of Proposition \ref{infinite direct sum}, we may assume $E,F$ have countable, sequentially ordered unconditional bases.  We obtain the following.  

\begin{proposition} Fix Banach spaces $E, F$ with unconditional bases $(e_n)_{n\in\nn}$, $(f_n)_{n\in \nn}$, respectively, and a sequence $A_n:U_n\to V_n$ of operators so that $e_n\mapsto \|A_n\|f_n$ extends to an operator $I_{E,F}\in \mathfrak{L}(E,F)$.  \begin{enumerate}  [(i)] \item For any $0<\xi, \zeta< \omega_1$, if $A_n\in \mathfrak{SM}_1^\xi(U_n, V_n)$ for all $n\in \nn$, and if $I_{E,F}\in \mathfrak{SM}_1^\zeta(E,F)$, then $A\in \mathfrak{SM}_1^{\xi+\zeta}(U_E,V_F)$.  \item If $0<\zeta<\omega_1$ and $0\leqslant \xi< \omega_1$ are such that $A_n\in \mathfrak{SM}_1^{\xi+1}(U_n, V_n)$ for all $n\in \nn$, $I_{E,F}\in \mathfrak{SM}_1^\zeta(E,F)$, and if $F$ is reflexive, $A\in \mathfrak{SM}^{\xi+\zeta}_1(U_E,V_F)$.  \item If $0\leqslant \zeta<\omega$, $\xi<\omega_1$ is a limit ordinal, and $\eta_n\uparrow \xi$ are such that $A_n\in \mathfrak{SM}_1^{\eta_n}(U_n, V_n)$ for each $n\in \nn$, $I_{E,F}\in \mathfrak{SM}_1^{\zeta+1}$, and if $F$ is reflexive, then $A\in \mathfrak{SM}^{\xi+\zeta}_1(U_E,V_F)$.  

\end{enumerate}

\label{infinite direct sum spreading model}

\end{proposition}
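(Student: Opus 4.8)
The plan is to prove all three parts by contradiction, in the spirit of Proposition \ref{infinite direct sum}, but with the derived-tree machinery replaced by the repeated averages hierarchy and the $\ell_1^\xi$-spreading-model dichotomy of \cite{AMT}, exactly as in the proof of Lemma \ref{product 2}$(ii)$. Suppose $A=(A_n)$ preserves an $\ell_1^{\xi+\zeta}$ spreading model. Since $U_E$ and $V_F$ carry (countable, sequentially ordered) unconditional bases, the discussion preceding the statement provides $(w_m)\subset B_{U_E}$ with $(w_m)$ and $(Aw_m)$ both coordinate-wise null and both $\ell_1^{\xi+\zeta}$ spreading models; after a subsequence via Rosenthal's $\ell_1$ theorem and a difference sequence I would also take both to be weakly null (if an $\ell_1$-equivalent subsequence appears at any stage the argument degenerates to the obvious transfer of a copy of $\ell_1$ to $I_{E,F}$, which already contradicts $I_{E,F}\in\mathfrak{SM}_1^\zeta(E,F)$). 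I would also fix $M_0\in\infin$ with $\sss_\zeta[\sss_\xi](M_0)\subset\sss_{\xi+\zeta}$, possible by Proposition \ref{Gasparis analogue} since both families have Cantor--Bendixson index $\omega^{\xi+\zeta}+1$, and pass to the subsequence indexed by $M_0$.

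The heart of the proof is a \emph{flattening} step: to produce a convex block $(z_k)$ of $(w_m)$, with each $z_k=\sum_m\xi^L_k(m)w_m$ a level-$\xi$ repeated average, such that $(z_k)$ and $(Az_k)$ are, up to a summable perturbation, successively supported with respect to the decompositions $(U_n)_n$ of $U_E$ and $(V_n)_n$ of $V_F$, and still $\ell_1^\zeta$ spreading models. To obtain this I would fix a coordinate $n$: the sequence $(w_m(n))_m$ is weakly null in $U_n$, so by \cite{AMT} either some subsequence is an $\ell_1^\xi$ spreading model or $(w_m(n))_m$ can be level-$\xi$ averaged to $0$ in norm. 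In the first case $(w_m(n))_m$ is, after relabelling, an $\ell_1^\xi$ spreading model in $U_n$, so $A_n\in\mathfrak{SM}_1^\xi(U_n,V_n)$ forces every subsequence of $(A_nw_m(n))_m$ to fail to be an $\ell_1^\xi$ spreading model, and a second appeal to \cite{AMT} level-$\xi$ averages $(A_nw_m(n))_m$ to $0$ as well; thus in all cases a suitable level-$\xi$ repeated average of $(A_nw_m(n))_m$ is norm-null. Diagonalising over $n$ and using coordinate-wise nullity of $(w_m)$ produces a single level-$\xi$ repeated-average block $(z_k)$ with $\|P^F_{\{n\}}Az_k\|\to0$ as $k\to\infty$ for every $n$, i.e.\ $(Az_k)_k$ is coordinate-wise null for $(V_n)_n$. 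A gliding hump then makes $(Az_k)_k$ successively supported up to a summable perturbation, and since $A$ commutes with coordinate projections, replacing $z_k$ by $P^E_{J_k}z_k$ for $J_k$ the support block of $Az_k$ makes $(z_k)_k$ successively supported too; finally $\bigcup_{k\in G}\supp\xi^L_k\in\sss_\zeta[\sss_\xi](M_0)\subset\sss_{\xi+\zeta}$ for $G\in\sss_\zeta$ keeps $(z_k)$ and $(Az_k)$ $\ell_1^\zeta$ spreading models.

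With $(z_k)$ in hand the transfer argument of Proposition \ref{infinite direct sum} applies verbatim: $\Pi_Ez_k=\sum_i\|z_k(i)\|e_i\in B_E$ and $\Pi_FAz_k=\sum_i\|A_iz_k(i)\|f_i$ are disjointly supported block sequences with $\|\sum a_kz_k\|_{U_E}=\|\sum|a_k|\Pi_Ez_k\|_E$ and $\|\sum a_kAz_k\|_{V_F}=\|\sum|a_k|\Pi_FAz_k\|_F$, so both are $\ell_1^\zeta$ spreading models; since $\|A_iz_k(i)\|\leqslant\|A_i\|\|z_k(i)\|$ and $(f_i)$ is $1$-unconditional, $\Pi_FAz_k$ is coordinate-wise dominated by $I_{E,F}\Pi_Ez_k=\sum_i\|A_i\|\|z_k(i)\|f_i$, so $(I_{E,F}\Pi_Ez_k)$ is squeezed between $(\Pi_FAz_k)$ and $\|I_{E,F}\|(\Pi_Ez_k)$ and is therefore an $\ell_1^\zeta$ spreading model, contradicting $I_{E,F}\in\mathfrak{SM}_1^\zeta(E,F)$; this proves $(i)$. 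For $(ii)$ only the flattening step changes: with $A_n$ merely killing $\ell_1^{\xi+1}$ spreading models one obtains, a priori, only level-$(\xi+1)$ averages, and I would use reflexivity of $F$ to absorb the extra Schreier level — the level-$\xi$ averages of $(w_m(n))_m$ and of $(A_nw_m(n))_m$ are then weakly null in the reflexive coordinate space, and one further Schreier average of these (which, via $\sss_{\xi+1}=\sss[\sss_\xi]$, is precisely that extra level) recovers a genuine level-$\xi$ flattening, matching the bound $\xi+\zeta$; this is the $\mathfrak{SM}_1$ analogue of the $\xi$-strictly-singular sum estimates referred to after Lemma \ref{product 2}. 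For $(iii)$, where $\xi=\sup_n\eta_n$ is a limit ordinal and $\sss_\xi=\{E:\exists n\leqslant E,\ E\in\sss_{\eta_n}\}$, I would arrange the $m$-th coordinate to be flattened by level-$\eta_m$ averages, which are $\sss_\xi$-admissible once $m\leqslant E$, so that $A_m\in\mathfrak{SM}_1^{\eta_m}(U_m,V_m)$ again drives the coordinate flattening, while $\zeta<\omega$ together with the slightly stronger hypothesis $I_{E,F}\in\mathfrak{SM}_1^{\zeta+1}(E,F)$ keeps the resulting two-level Schreier bookkeeping exactly at $\xi+\zeta$.

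The main obstacle will be the flattening step, where one must simultaneously (a) kill the head $P^F_JAw_m$ of each image using coordinate-wise nullity, (b) kill each bulk term $A_nw_m(n)$ using the hypothesis on $A_n$ together with the \cite{AMT} dichotomy — the delicate point being that the dichotomy must be applied to $(w_m(n))_m$ \emph{first}, so that the property ``$(w_m(n))_m$ is an $\ell_1^\xi$ (resp.\ $\ell_1^{\xi+1}$, $\ell_1^{\eta_m}$) spreading model'' can be tested directly and then cashed against $A_n$, rather than against the $\ell_1^{\xi+\zeta}$ structure of the whole sequence — and (c) choose all the repeated-average blocks coherently so their unions fall in $\sss_\zeta[\sss_\xi](M_0)\subset\sss_{\xi+\zeta}$, so that the flattened sequence remains an $\ell_1^\zeta$ spreading model. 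The interplay of (b) and (c), and in $(ii)$ and $(iii)$ the extra Schreier level handled via reflexivity of $F$, is what requires genuine care; everything after flattening is the already-established transfer to $I_{E,F}$.
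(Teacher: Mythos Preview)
Your plan for $(i)$ is correct in outline, but it is considerably more elaborate than necessary and differs from the paper's argument in one key organizational point.  Rather than applying the \cite{AMT} dichotomy coordinate by coordinate to each $(A_nw_m(n))_m$ and then diagonalizing over $n$, the paper uses directly that $\mathfrak{SM}_1^\xi$ is a closed ideal: for every finite $J\subset\nn$ the truncation $A_J=\oplus_{n\in J}A_n$ lies in $\mathfrak{SM}_1^\xi$, so no subsequence of $(A_Jx_i)_i$ can be an $\ell_1^\xi$ spreading model (the lower $\ell_1$ estimate on the image would force one on the bounded preimage).  The blocks $y_i$ are then built by a straightforward recursion: having chosen $y_1,\ldots,y_{i-1}$ and recorded $t_{i-1}$ so that $\supp_E y_j\subset[1,t_{i-1}]$ for $j<i$, one finds $G_i\in\sss_\xi$ and a convex block $y_i$ with $\|A_{[1,t_{i-1}]}y_i\|<1/2K$ simply because $A_{[1,t_{i-1}]}\in\mathfrak{SM}_1^\xi$.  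No weak nullity, no AMT, no diagonalization.  Your approach works, but the ideal property is doing the work you are redoing by hand.

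For $(ii)$ and $(iii)$ there is a genuine gap: you have misidentified the role of reflexivity of $F$.  You propose to use reflexivity to make level-$\xi$ averages ``weakly null in the reflexive coordinate space'' and then take one further Schreier average; but that produces $\sss_{\xi+1}$ blocks, hence unions in $\sss_\zeta[\sss_{\xi+1}]$, whose Cantor--Bendixson index is $\omega^{\xi+1+\zeta}+1$, strictly too large to embed in $\sss_{\xi+\zeta}$.  The paper uses reflexivity of $F$ quite differently.  One passes to a subsequence so that $v_i:=\Pi_FAx_i$ converges weakly in $F$ to some $v$, and then fixes a single $l_0$ with $\|v-P^F_{[1,l_0]}v\|<1/3K$; after a further subsequence the tails $P^F_{(l_0,\infty)}Ax_i$ are essentially supported on successive intervals $(l_{i-1},l_i]$.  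Thus only the \emph{fixed} head operator $A_{[1,l_0]}\in\mathfrak{SM}_1^{\xi+1}$ needs flattening, and the Claim following the proposition yields a single $k\in\nn$ and a subsequence on which the flattening blocks lie in $\aaa_k[\sss_\xi]$ rather than in $\sss_{\xi+1}$.  Since $\iota(\sss_\zeta[\aaa_k[\sss_\xi]])=\omega^\xi k\,\omega^\zeta=\omega^{\xi+\zeta}$ for $\zeta>0$, Proposition~\ref{Gasparis analogue} embeds these unions into $\sss_{\xi+\zeta}$, recovering the sharp bound.  The same localization drives $(iii)$: once $l_0$ is fixed, only $\eta_{l_0}$ matters, and $\sss_{\zeta+1}[\sss_{\eta_{l_0}}]$ has index $\omega^{\eta_{l_0}+\zeta+1}+1<\omega^{\xi+\zeta}+1$ because $\zeta$ is finite.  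Your sketch does not reach this localization step, and without it the extra Schreier level cannot be absorbed.
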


\begin{proof}$(i)$ We know that $\mathfrak{SM}_1^\xi$ is an ideal, so for each $n\in \nn$, $A_{[1,n]}:\oplus_{i=1}^n U_i\to \oplus_{i=1}^n V_i$ can preserve no $\ell_1^\xi$ spreading model.  Assume $(x_i)\subset B_{U_E}$ is such that $(Ax_i)$ is an $\ell_1^{\xi+\zeta}$ spreading model.  Assume $K\|\sum_{i\in G} a_iAx_i\|_{U_E}\geqslant \sum_{i\in G}|a_i|$ for all $G\in \sss_{\xi+\zeta}$.  By replacing $K$ with any strictly larger value, we may assume $\supp_E(x_i)\subset [1, s_i]$ for some $s_i\in \nn$.  Choose $M\in \infin$ so that $\sss_\zeta[\sss_\xi](M)\subset \sss_{\xi+\zeta}$.  Since for any $n\in \nn$, no subsequence of $(A_{[1,n]}x_i)_{i\in M}$ can be an $\ell_1^\xi$ spreading model, we can choose $G_1<G_2<\ldots$, $G_i\in \sss_\xi$, and a $1$-absolutely convex block $(y_i)$ of $(x_i)$ so that $y_i=\sum_{j\in G_i}a_j x_{m_j}$ and so that with $t_0=0$ and $t_i= s_{m_{\max G_i}}$ for $i\in \nn$, $\|A_{[1, t_{i-1}]}y_i\|< 1/2K$ for all $i\in \nn$.  Then with $\Pi_E:U_E\to E$ and $\Pi_F:V_F\to F$ as in Proposition \ref{infinite direct sum}, we deduce that $(P^E_{(t_{i-1}, t_i]}\Pi_E y_i)$ and its image under $I_{E,F}$, which pointwise dominates $(P^F_{(t_{i-1}, t_i]}\Pi_F Ay_i)$,  are both $\ell_1^\zeta$ spreading models.  This is a contradiction and finishes $(i)$.

$(ii)$  Assume $(x_i)\subset B_{U_E}$, $s_i\in \nn$, and $K\geqslant 1$ are as in $(i)$.  Let $v_i=\Pi_F Ax_i$ and assume $v_i\underset{w}{\to} v\in F$.  Choose $l_0\in \nn$ so that $\|v- P^F_{[1,l_0]}v\|<1/3K$.  By passing to a further subsequence, we may assume there exist $l_0<l_1<l_2<\ldots$ so that with $I_0=[1,l_0]$ and $I_i=(l_{i-1}, l_i]$, $\|v_i-P^F_{I_0\cup I_i}\|<1/3K$.  Then noting that $A_{[1,l_0]}$ does not preserve an $\ell_1^{\xi+1}$ spreading model, by the claim following this proof, we can choose $k\in \nn$ and a subsequence $(x_i)_{i\in N}$ so that for any $M\in [N]$, there exist $G_1<G_2<\ldots$, $G_i\in \aaa_k[\sss_\xi]$, and a $1$-absolutely convex block $(y_i)$ of $(x_i)_{i\in M}$ so that $y_i=\sum_{j\in G_i}a_jx_{m_j}$ and $\|A_{[1,l_0]}y_i\|<1/3K$ for all $i\in \nn$.  In particular, we can choose $M\in [N]$ so that $\sss_\zeta[\aaa_k[\sss_\xi]](M)\subset \sss_{\xi+\zeta}$.  If $J_i=\cup_{j\in G_i}I_j$, $\| A y_i- P^F_{J_i} Ay_i\|< 2/3K.$  Reasoning as in $(i)$, this means that $(P^E_{J_i}y_i)$ and its image $(P^F_{J_i}Ay_i)$ under $A$ are both $\ell_1^\zeta$ spreading models, and the same holds for $(\Pi_E P^E_{J_i} y_i)$ and $(I_{E,F} \Pi_E P^E_{J_i}y_i)$, a contradiction.

$(iii)$ This is similar to $(ii)$.  With $l_0$ as in $(ii)$, we can take the $E_i$ used in the blocking $(y_i)$ to lie in $\sss_{\eta_{l_0}}$ and choose $M\in \infin$ so that $\sss_{\zeta+1}[\sss_{\eta_{l_0}}](M)\subset \sss_{\xi+\zeta}$ using Proposition \ref{Gasparis analogue}.  This is because the Cantor-Bendixson index of $\sss_{\zeta+1}[\sss_{\eta_{l_0}}]$ is $\omega^{\eta_{l_0}+\zeta+1}+1=\omega^{\eta_{l_0}+1+\zeta}+1<\omega^{\xi+\zeta}+1$, since we have assumed $\zeta$ is finite.

\end{proof}

\begin{claim} Fix $(x_n)\subset B_X$, $\xi<\omega_1$. \begin{enumerate}[(i)]\item If $\xi$ is a limit ordinal and no subsequence of $(x_n)$ is an $\ell_1^\xi$ spreading model, then for any $\ee>0$, there exists $\zeta=\zeta(\ee)<\xi$ and $N\in \infin$ so that for any $M\in [N]$, there exist $E_1<E_2<\ldots$, $E_i\in \sss_\zeta$, and a $1$-absolutely convex block $(y_i)$ of $(x_i)_{i\in M}$ with $y_i=\sum_{j\in E_i}a_jx_{m_j}$ and $\|y_i\|<\ee$.   

\item If no subsequence of $(x_n)$ is an $\ell_1^{\xi+1}$ spreading model, then for any $\ee>0$, there exist $k=k(\ee)\in \nn$ and $N\in \infin$ so that for any $M\in [N]$, there exist $E_1<E_2<\ldots$, $E_i\in \aaa_n[\sss_\xi]$, and a $1$-absolutely convex block $(y_i)$ of $(x_i)_{i\in M}$ with $y_i=\sum_{j\in E_i} a_jx_{m_j}$ and $\|y_i\|<\ee$.  

\end{enumerate}  

\end{claim}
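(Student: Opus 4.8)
The plan is to prove both parts at once from a single Ramsey-type dichotomy, instantiated with the families $\gggg_m$ defined by $\gggg_m = \sss_{\xi_m}$ in part (i) --- where $\xi_m \uparrow \xi$ is the defining sequence of successors, chosen as in the excerpt so that $\sss_{\xi_m} \subseteq \sss_{\xi_{m+1}}$ --- and by $\gggg_m = \aaa_m[\sss_\xi]$ in part (ii). In both cases $(\gggg_m)_{m \in \nn}$ is an increasing sequence of spreading families, and the structural fact I will exploit is that for every non-empty finite $E \subseteq \nn$ one has $E \in \sss_\xi$ (resp. $E \in \sss_{\xi+1}$) if and only if $E \in \gggg_{\min E}$. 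For part (i) this is just the definition of $\sss_\xi$ at a limit ordinal together with the nesting $\sss_{\xi_n} \subseteq \sss_{\xi_{n+1}}$; for part (ii) it follows from $\sss_{\xi+1} = \sss[\sss_\xi]$ and the description of $\sss$ as $\{F : |F| \leqslant \min F\}$, since exhibiting $E$ as a union of at most $\min E$ successive blocks from $\sss_\xi$ is precisely the condition for $E \in \sss[\sss_\xi]$.

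Fix $\ee > 0$. For a spreading family $\gggg$ and $M = (m_i) \in \infin$, say $M$ has property $P_\gggg$ if there are $E \in \gggg$ and scalars $(a_i)_{i \in E}$ with $\sum_{i \in E} |a_i| = 1$ and $\|\sum_{i \in E} a_i x_{m_i}\| < \ee$. Because $\gggg$ is spreading, $P_\gggg$ passes to infinite subsets of $M$, and $\{M : P_\gggg(M)\}$ is an open subset of $\infin$ (a countable union, over $E \in \gggg$, of basic clopen sets), so by the open Ramsey theorem there is $N$ such that either $P_\gggg(M)$ holds for every $M \in [N]$, or for no $M \in [N]$. In the first case a greedy construction --- apply $P_\gggg$ to $M$, then to the tail of $M$ past the support of the block just produced, using that $\gggg$ is spreading to shift the index sets back into $\gggg$ --- yields, for every $M \in [N]$, successive sets $E_1 < E_2 < \cdots$ in $\gggg$ and a $1$-absolutely convex block $(y_i)$ of $(x_i)_{i \in M}$ with $y_i = \sum_{j \in E_i} a_j x_{m_j}$ and $\|y_i\| < \ee$; this is exactly the Claim's conclusion for the family $\gggg$. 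In the second case, taking $M = N = (n_i)$ gives $\|\sum_{i \in E} a_i x_{n_i}\| \geqslant \ee \sum_{i \in E} |a_i|$ for every $E \in \gggg$ and all scalars, and together with the automatic upper bound $\|\sum a_i x_{n_i}\| \leqslant \sum |a_i|$ (valid since $(x_n) \subseteq B_X$) this says that $(x_n)_{n \in N}$ obeys a two-sided $\ell_1$ estimate over $\gggg$ with constants $(\ee^{-1}, 1)$.

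Now apply this to each $\gggg_m$. If the first alternative ever occurs, for some $m$, then the Claim holds with $\zeta = \xi_m < \xi$ in part (i) and with $k = m$ in part (ii), and we are finished. Otherwise the second alternative occurs for every $m$; running the dichotomy inside successive subsequences (possible since $\gggg_m \subseteq \gggg_{m+1}$ forces $\{M : P_{\gggg_m}(M)\} \subseteq \{M : P_{\gggg_{m+1}}(M)\}$) produces $N_1 \supseteq N_2 \supseteq \cdots$ with $(x_n)_{n \in N_m}$ obeying the two-sided $\ell_1$ estimate over $\gggg_m$ with constant $\ee^{-1}$. I then diagonalize: choose $p_1 < p_2 < \cdots$ with $p_m \in N_m$ and with $p_m$ occurring in position at least $m$ within $N_m$, and put $N = \{p_m : m \in \nn\}$. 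Given a non-empty $E \in \sss_\xi$ (resp. $\sss_{\xi+1}$) and scalars $(a_i)_{i \in E}$, set $m_0 = \min E$; then $E \in \gggg_{m_0}$ by the structural fact, and $i \geqslant m_0$ forces $p_i \in N_i \subseteq N_{m_0}$, so $\{p_i : i \in E\} \subseteq N_{m_0}$. Writing $p_i$ as the $\psi(i)$-th element of $N_{m_0}$, the position requirement gives $\psi(i) \geqslant i$, hence $\psi(E) \in \gggg_{m_0}$ since $\gggg_{m_0}$ is spreading, and the lower estimate for $N_{m_0}$ applied to $\psi(E)$ yields $\|\sum_{i \in E} a_i x_{p_i}\| \geqslant \ee \sum_{i \in E} |a_i|$. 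Thus $(x_n)_{n \in N}$ is an $\ell_1^\xi$ (resp. $\ell_1^{\xi+1}$) spreading model, contradicting the hypothesis; so the first alternative must occur for some $m$, which is what we wanted.

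I expect the diagonalization in the last paragraph to be the main obstacle: the subsequence $N$ must be built so that for \emph{every} $E \in \sss_\xi$ the reindexing of $\{p_i : i \in E\}$ inside the relevant $N_{m_0}$ is again a member of $\gggg_{m_0}$, while the only parameter controlling which family $\gggg_{m_0}$ is the right one is $m_0 = \min E$, which can be arbitrarily large relative to the position of $\min E$ in $N$; the stipulation that $p_m$ sit in position $\geqslant m$ inside $N_m$ is exactly what makes the reindexing a spreading of $E$. By comparison, the verification that $P_\gggg$ is open, the greedy reconstruction of the block sequence, and the structural equivalence $E \in \sss_{\xi+1} \iff E \in \aaa_{\min E}[\sss_\xi]$ together with its limit-ordinal analogue are all routine, so the proof amounts to setting up this dichotomy with care and then diagonalizing.
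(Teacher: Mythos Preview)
Your proof is correct and follows essentially the same contrapositive-plus-diagonalization as the paper's: assume the conclusion fails, recursively pass to subsequences $N_1\supset N_2\supset\cdots$ carrying $\ell_1$ lower estimates over the increasing families $\gggg_m$, and diagonalize to produce an $\ell_1^\xi$ (resp.\ $\ell_1^{\xi+1}$) spreading model; you simply make explicit the open Ramsey step, the greedy construction of the block sequence in the first alternative, and the position requirement $p_m$ sits in slot $\geqslant m$ of $N_m$, all of which the paper leaves to the reader. One minor slip: $P_\gggg$ does \emph{not} pass to infinite subsets (spreading of $\gggg$ gives upward closure, not hereditariness), but you never actually use this claim---the open Ramsey theorem needs only openness, which you verify separately.
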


\begin{proof}$(i)$ If it were not so, then there would exist $\ee>0$ so that for any $\zeta<\xi$ and $N\in \infin$, there exists $M\in [N]$ so that for any $E\in \sss_\zeta$ and scalars $(a_i)_{i\in E}$, $\|\sum_{i\in E}a_ix_{m_i}\|\geqslant \ee \sum_{i\in E}|a_i|$.  Let $\xi_k\uparrow \xi$ be the sequence used to define $\sss_\xi$.  Recursively choose $\nn=M_0\supset M_1\supset M_2\supset \ldots$, $M_k\in \infin$, so that with $M_k=(m^k_i)$, for any $E\in \sss_{\xi_k}$ and scalars $(a_i)_{i\in E}$, $\|\sum_{i\in E}a_i x_{m^k_i}\|\geqslant \ee \sum_{i\in E}|a_i|$.  Let $M=(m^k_k)$.  One easily checks that $(x_i)_{i\in M}$ is an $\ell_1^\xi$ spreading model.  

$(ii)$ This is essentially the same as $(i)$ with $\sss_{\xi_k}$ replaced by $\aaa_k[\sss_\xi]$, since $\sss_{\xi+1}=\{E\in [\nn]^{<\nn}: \exists k\leqslant E\in \aaa_k[\sss_\xi]\}$.

\end{proof}

\begin{corollary} For any $0<\xi<\omega_1$, if $I$ is any set and $(W_i)_{i\in I}$ is any family of Banach spaces so that $W_i$ does not admit an $\ell_1^\xi$ spreading model, then $(\oplus W_i)_{\ell_2(I)}$ does not admit an $\ell_1^\xi$ spreading model.    

\label{spreading model corollary}

\end{corollary}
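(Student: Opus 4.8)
The plan is to reduce to a countable index set, dispatch the successor ordinals by a direct appeal to Proposition \ref{infinite direct sum spreading model}(ii), and handle the limit ordinals by rerunning the proof of Proposition \ref{infinite direct sum spreading model}(iii) with a single ingredient replaced. Write $Z=(\oplus W_i)_{\ell_2(I)}$. Since each vector of $Z$ has countable support, any one $\ell_1^\xi$ spreading model in $Z$ lives in $(\oplus_{i\in I_0}W_i)_{\ell_2(I_0)}$ for some countable $I_0\subseteq I$, so I may assume $I=\nn$ and $Z=(\oplus_n W_n)_{\ell_2}$. I first record that for each $N\in\nn$ the finite sub-sum $Z_N:=(\oplus_{n=1}^N W_n)_{\ell_2}$ admits no $\ell_1^\xi$ spreading model: each $I_{W_n}$ lies in $\mathfrak{SM}_1^\xi(W_n,W_n)$ by hypothesis, $\mathfrak{SM}_1^\xi$ is a closed operator ideal, and $I_{Z_N}=\sum_{n=1}^N\iota_n\pi_n$ (with $\pi_n,\iota_n$ the coordinate projections and inclusions) is a finite sum of operators factoring through the $W_n$, hence belongs to $\mathfrak{SM}_1^\xi$. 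If $\xi=\eta+1$ is a successor, I would apply Proposition \ref{infinite direct sum spreading model}(ii) with $U_n=V_n=W_n$, $A_n=I_{W_n}\in\mathfrak{SM}_1^{\eta+1}$, $E=F=\ell_2(\nn)$, $I_{E,F}=I_{\ell_2}$ and $\zeta=1$: this is legitimate because $\ell_2=X_{0,2}$ admits no $\ell_1^1$ spreading model, so $I_{\ell_2}\in\mathfrak{SM}_1^1$, and $\ell_2$ is reflexive; the proposition then gives $I_Z\in\mathfrak{SM}_1^{\eta+1}=\mathfrak{SM}_1^\xi$, which is the assertion.

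Now let $\xi$ be a limit ordinal, and suppose toward a contradiction that $(x_n)\subset B_Z$ is an $\ell_1^\xi$ spreading model with constant $K$; fix a small $\ee>0$ to be specified in terms of $K$. Using reflexivity of $\ell_2$, I would pass to a subsequence along which $v_n:=\Pi_F x_n=\sum_k\|x_n(k)\|f_k$ converges weakly in $\ell_2$ to some $v$, choose $l_0$ with $\|v-P^F_{[1,l_0]}v\|<\ee$, and by a standard gliding-hump argument pass to a further subsequence and select $l_0<l_1<l_2<\cdots$ so that $\|x_n-P^Z_{[1,l_0]\cup(l_{n-1},l_n]}x_n\|_Z=O(\ee)$ for every $n$. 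The sequence $(P^Z_{[1,l_0]}x_n)_n$ lies in $Z_{l_0}$, which admits no $\ell_1^\xi$ spreading model, so by part (i) of the Claim there are an ordinal $\zeta=\zeta(\ee)<\xi$ and an infinite set $L$ such that along any $M\in[L]$ one can form a $1$-absolutely convex block $y_i=\sum_{j\in E_i}a_jx_{m_j}$ of $(x_n)_{n\in M}$ with $E_i\in\sss_\zeta$ and $\|P^Z_{[1,l_0]}y_i\|<\ee$. Because $\xi$ is a limit ordinal, $\zeta+1<\xi$, so $\iota(\sss_1[\sss_\zeta])=\omega^{\zeta+1}<\omega^\xi$ and, by Proposition \ref{Gasparis analogue}, I may moreover take $M\in[L]$ (a subset of the gliding-hump subsequence) with $\sss_1[\sss_\zeta](M)\subset\sss_\xi$ and arrange $\min E_i\geqslant i$.

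To finish, I set $J_i:=\bigcup_{j\in E_i}(l_{m_j-1},l_{m_j}]$; the sets $J_i$ are pairwise disjoint, and $z_i:=P^Z_{J_i}y_i$ satisfies $\|y_i-z_i\|_Z=O(\ee)$ while $\|y_i\|_Z\geqslant 1/K$ by the singleton $\ell_1^\xi$ lower estimate. A disjointly supported sequence in $(\oplus_n W_n)_{\ell_2}$ is isometrically equivalent to the corresponding weighted unit vector sequence in $\ell_2$, so $\|\sum_{i\in G}|G|^{-1}z_i\|_Z\leqslant |G|^{-1/2}$ for every finite $G$. On the other hand, whenever $G\in\sss_1$ lies sufficiently far out in $M$, one has $\bigcup_{i\in G}E_i\in\sss_1[\sss_\zeta](M)\subset\sss_\xi$, so $\sum_{i\in G}|G|^{-1}y_i$ is a $1$-absolutely convex combination of terms of $(x_n)$ indexed by a set in $\sss_\xi$, whence $\|\sum_{i\in G}|G|^{-1}y_i\|_Z\geqslant 1/K$. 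Combining these, $1/K\leqslant |G|^{-1/2}+O(\ee)$, which fails once $\ee$ is small and $|G|$ is large (and $\sss_1$ contains sets of arbitrary size), a contradiction. The main obstacle is precisely this limit case: one must interleave the $\ell_2$-gliding-hump windowing with the Claim-based blocking while keeping the Schreier-index bookkeeping so that the iterated blocks stay inside $\sss_\xi$. No new estimates are required, however, since this merely re-traces the structure of the proof of Proposition \ref{infinite direct sum spreading model}(iii).
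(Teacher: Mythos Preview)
Your argument is correct. The paper states the corollary without proof, presumably as an immediate consequence of Proposition~\ref{infinite direct sum spreading model}; your reduction to a countable index set and your handling of the successor case via part~(ii) with $\zeta=1$ and $E=F=\ell_2$ is exactly the intended route. The limit case, however, does not follow directly from any of (i)--(iii) as stated (in (iii) one needs $A_n\in\mathfrak{SM}_1^{\eta_n}$ for some $\eta_n<\xi$, which is not given here), and your adaptation---using that the finite sum $Z_{l_0}$ lies in $\mathfrak{SM}_1^\xi$ since this class is an ideal, then invoking part~(i) of the Claim to obtain $\zeta<\xi$, and finally exploiting $\zeta+1<\xi$ so that $\sss_1[\sss_\zeta]$ fits inside $\sss_\xi$---is precisely the missing ingredient. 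The disjointness of the $J_i$, the $\ell_2$-upper estimate $|G|^{-1/2}$ for the averages of the $z_i$, and the Schreier bookkeeping ensuring the iterated blocks stay in $\sss_\xi$ are all handled correctly.
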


\section{Distinction between classes}

The main goal of this section is to fully elucidate the relationship between the different classes of operators defined above in order to motivate the study of the distinct classes.   To that end, we have \begin{theorem} Fix $1\leqslant p\leqslant \infty$, $0<\xi\in \ord$. \begin{enumerate}[(i)]\item  $\mathfrak{NP}^\xi\subset \cup_{\zeta<\xi}\mathfrak{NP}^\zeta$ if and only if $\xi$ has uncountable cofinality.  \item  For $1<\zeta\in \ord$ and $1\leqslant q\leqslant \infty$, $\mathfrak{NP}^\zeta_q\subset \mathfrak{NP}_p^\xi$ if and only if $p=q$ and $\zeta\leqslant \xi$.\item For $1\leqslant q\leqslant \infty$,  $\mathfrak{NP}_q^1 \subset \mathfrak{NP}_p^\xi$ if and only if $p\leqslant q\leqslant 2$ and $1\leqslant \xi$.  
\end{enumerate}

\label{distinct1}
\end{theorem}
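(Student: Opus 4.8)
My plan is to treat each statement by splitting its equivalence into a ``soft'' direction and a ``construction'' direction. The soft directions will use only: monotonicity $\zeta\leqslant\xi\Rightarrow\omega^\zeta\leqslant\omega^\xi$; Proposition \ref{special form}, which restricts the possible values of the indices and records that each $\textbf{NP}_p(A,X,Y,K)$ is a successor ordinal; and a transfer of witnessing trees along finite-dimensional embeddings of $\ell_q$ into $\ell_p$. The construction directions will invoke Banach spaces realizing prescribed local $\ell_p$- and $\ell_q$-structure (the spaces from the analysis of which ordinals occur as indices); by Proposition \ref{tedious}(iii) and Proposition \ref{special form}(ii) the separating operators can always be taken to be identity maps $I_X$, and I will write $\textbf{I}_p(X)=\textbf{NP}_p(I_X,X,X)$.

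For part (i), I would first do ``$\Leftarrow$'': given $\xi$ of uncountable cofinality and $A\in\mathfrak{NP}_p^\xi(X,Y)$, if $A$ is finite rank it already lies in $\mathfrak{NP}_p^\zeta$ for any $1\leqslant\zeta<\xi$, and otherwise Proposition \ref{special form}(ii) forces $\textbf{NP}_p(A,X,Y)=\omega^\eta$ with $\eta\leqslant\xi$; the point is that $\eta=\xi$ is impossible, since $\omega^\xi=\sup_{K\geqslant1}\textbf{NP}_p(A,X,Y,K)$ would express the limit ordinal $\omega^\xi$ as a countable supremum of successor ordinals each strictly below it, contradicting $\mathrm{cf}(\omega^\xi)=\mathrm{cf}(\xi)>\omega$. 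For ``$\Rightarrow$'', with $\xi$ of countable cofinality I would exhibit a space $X$ with $\textbf{I}_p(X)=\omega^{\beta+1}$ (if $\xi=\beta+1$) or $\textbf{I}_p(X)=\omega^\xi$ (if $\xi$ is a limit, say $\xi=\sup_n\xi_n$ with $\xi_n\uparrow\xi$); then $I_X$ lies in $\mathfrak{NP}_p^\xi$ but in no $\mathfrak{NP}_p^\zeta$ with $\zeta<\xi$, so the inclusion fails. The needed spaces come from the index-realization results, and are separable when $\xi<\omega_1$.

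For parts (ii) and (iii), the ``$\Leftarrow$'' inclusions with $p=q$, $\zeta\leqslant\xi$ are immediate; the substantive inclusion is $\mathfrak{NP}_q^1\subset\mathfrak{NP}_p^\xi$ in (iii), for which --- since $\mathfrak{NP}_p^1\subset\mathfrak{NP}_p^\xi$ --- it suffices to show that when $\ell_q$ is finitely representable in $\ell_p$, $\textbf{NP}_p(A,X,Y)>\omega$ implies $\textbf{NP}_q(A,X,Y)>\omega$. I would prove this by the transfer: fixing $K$ with $\textbf{NP}_p(A,X,Y,K)>\omega$ (so $T_p(A,X,Y,K)$ has members of every finite length), for each $m$ pick $N$ and $T\colon\ell_q^m\to\ell_p^N$ with $\|T\|\leqslant1$, $\|T^{-1}\|\leqslant C$ ($C$ the finite-representability constant), take $(x_i)_{i=1}^N\in T_p(A,X,Y,K)$, and put $z_j=\sum_i(Te_j)_ix_i$; then one checks $(z_j)_{j=1}^m\in T_q(A,X,Y,CK)$, so $\textbf{NP}_q(A,X,Y,CK)>m$ for all $m$. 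Since $\ell_q$ is finitely representable in $\ell_p$ under an explicit classical condition on $(p,q)$ --- which for $p\leqslant2$ is exactly $p\leqslant q\leqslant2$ --- this settles the ``if'' parts of (ii) and (iii). For the ``only if'' parts: when $\ell_q$ is not finitely representable in $\ell_p$, $I_{\ell_p}$ has $\textbf{NP}_q(I_{\ell_p},\ell_p,\ell_p)\leqslant\omega$ but $\textbf{NP}_p(I_{\ell_p},\ell_p,\ell_p)=\infty$, so $I_{\ell_p}\in\mathfrak{NP}_q^1\setminus\mathfrak{NP}_p^\xi$, which handles (iii) and the ``$p=q$'' half of (ii) in that case; and the ``$\zeta\leqslant\xi$'' half of (ii) follows by choosing $X$ with $\textbf{I}_p(X)=\omega^{\beta+1}$ for some $\xi\leqslant\beta<\zeta$.

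The main obstacle is the one case this leaves open in (ii): the ``$p=q$'' implication when $p\neq q$ but $\ell_q$ \emph{is} finitely representable in $\ell_p$ (so $\ell_p$ is not finitely representable in $\ell_q$, as otherwise $p=q$). Here $I_{\ell_p}$ fails --- $\textbf{NP}_q(I_{\ell_p},\ell_p,\ell_p)=\infty$ as well --- and one must instead produce a Banach space $X$ with $\textbf{I}_p(X)>\omega^\xi$ but $\textbf{I}_q(X)\leqslant\omega^\zeta$, giving $I_X\in\mathfrak{NP}_q^\zeta\setminus\mathfrak{NP}_p^\xi$. Building such an $X$ --- which carries arbitrarily high local $\ell_p$-structure while its $\ell_q$-index, pushed above $\omega$ by Dvoretzky's theorem and by finite representability, is nonetheless kept below $\omega^\zeta$ --- is exactly the decoupling of the Bourgain $\ell_p$- and $\ell_q$-indices; this is the crux, and is where the constructions from the index-realization part of the paper do the work. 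Once those spaces are available, the accounting above closes all three statements.
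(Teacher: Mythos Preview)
Your outline for (i) and (iii) is essentially the paper's argument, but there is a genuine error in your treatment of (ii). You write that when $p\neq q$ and $\ell_q$ is finitely representable in $\ell_p$, ``$\textbf{NP}_q(I_{\ell_p},\ell_p,\ell_p)=\infty$ as well''. This is false: finite representability is not embedding. For $p\neq q$ the space $\ell_q$ never embeds isomorphically into $\ell_p$, so $\textbf{I}_q(\ell_p)<\infty$ in every such case. Thus $I_{\ell_p}$ does \emph{not} fail, and your proposed detour through spaces with large $\ell_p$-index and small $\ell_q$-index is unnecessary.

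The missing observation, which the paper uses, is the sharp bound $\textbf{I}_q(\ell_p)\in\{\omega,\omega^2\}$ whenever $p\neq q$. This follows from Proposition \ref{block index}: if $\textbf{I}_q(\ell_p)>\omega^2$, then after blocking one would obtain normalized disjointly supported vectors in $\ell_p$ whose span is uniformly close to $\ell_q^n$ for all $n$; but disjointly supported normalized vectors in $\ell_p$ are isometrically equivalent to the $\ell_p$ basis, forcing $p=q$. Hence for $p\neq q$ one always has $\textbf{I}_q(\ell_p)\leqslant\omega^2$, so $I_{\ell_p}\in\mathfrak{NP}_q^2\subset\mathfrak{NP}_q^\zeta$ for every $\zeta>1$, while of course $I_{\ell_p}\notin\mathfrak{NP}_p^\xi$ for any $\xi$. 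This single example disposes of the ``$p=q$'' direction of (ii) in all cases, with no further construction required. Once you replace your erroneous claim with this bound, the rest of your plan goes through and matches the paper.
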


\begin{theorem} Fix $1\leqslant p\leqslant \infty$, $0<\xi< \omega_1$.  \begin{enumerate}[(i)]\item $\mathfrak{SM}_p^\xi\not\subset\cup_{0<\zeta<\xi}\mathfrak{SM}_p^\zeta$. \item $\mathfrak{WC}^\xi\not\subset \cup_{0<\zeta<\xi}\mathfrak{WC}^\zeta$.  \item For $\zeta\leqslant \omega_1$ and $1\leqslant q\leqslant \infty$, $\mathfrak{SM}_q^\zeta\subset \mathfrak{SM}_p^\xi$ if and only if $p=q$ and $\zeta\leqslant \xi$.   \end{enumerate}
\label{distinct2}

\end{theorem}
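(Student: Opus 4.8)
The plan is to realize every extremal index by an identity operator on a space built from the mixed-Tsirelson spaces $X_{\zeta,2}$ of \cite{C3}. Recall that each $X_{\zeta,2}$ ($0\leqslant\zeta<\omega_1$) is reflexive, has a normalized $1$-unconditional basis $(e_i)$ which is an $\ell_1^\zeta$ spreading model, admits no $\ell_1^{\zeta+1}$ spreading model, and has norm satisfying $\|x\|\geqslant\|Ex\|_{\ell_1}$ for every $E\in\sss_\zeta$ (so $\|e_i\|=1$). For a fixed $0<\xi<\omega_1$ set $X_\xi=X_{\eta,2}$ if $\xi=\eta+1$, and $X_\xi=(\oplus_n X_{\zeta_n,2})_{\ell_2}$ for a fixed $\zeta_n\uparrow\xi$ if $\xi$ is a limit; in either case $X_\xi$ is reflexive with a $1$-unconditional basis. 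I claim $\textbf{SM}_1(X_\xi)=\xi$. First, $X_\xi$ admits no $\ell_1^\xi$ spreading model: the successor case is \cite{C3}, and in the limit case each $X_{\zeta_n,2}$ admits no $\ell_1^{\zeta_n+1}$, hence no $\ell_1^\xi$, spreading model, so Corollary \ref{spreading model corollary} applies. Conversely, each $X_{\zeta,2}$ appearing as a summand (or $X_\xi$ itself, with $\zeta=\eta$, in the successor case) is $1$-complemented in $X_\xi$, so $X_\xi$ admits an $\ell_1^\zeta$ spreading model for a set of $\zeta$ cofinal in $\xi$, and then, by the almost monotone property of the Schreier families (as used in Lemma \ref{product 2}), for every $\zeta<\xi$. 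Thus $I_{X_\xi}\in\mathfrak{SM}_1^\xi\setminus\cup_{0<\zeta<\xi}\mathfrak{SM}_1^\zeta$.

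Part (ii) is now immediate: $X_\xi$ is reflexive, so $I_{X_\xi}$ is weakly compact, and for a weakly compact $A$ one has $A\in\mathfrak{WC}^\zeta$ if and only if $A\in\mathfrak{SM}_1^\zeta$; hence $\textbf{SWC}(X_\xi)=\xi$ and $I_{X_\xi}\in\mathfrak{WC}^\xi\setminus\cup_{0<\zeta<\xi}\mathfrak{WC}^\zeta$. For part (i) I would transfer $X_\xi$ across the remaining values of $p$. For $p=1$, $I_{X_\xi}$ already works. For $1<p<\infty$, pass to the $p$-convexification $X_\xi^p$: since $(I_{X_\xi})^p=I_{X_\xi^p}$, Proposition \ref{convexify spreading model} gives $I_{X_\xi^p}\in\mathfrak{SM}_p^\zeta$ if and only if $I_{X_\xi}\in\mathfrak{SM}_1^\zeta$, so $\textbf{SM}_p(X_\xi^p)=\xi$. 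For $p=\infty$, pass to the dual $X_\xi^*$: from $I_{X_\xi}\in\mathfrak{SM}_1^\xi$ and Theorem \ref{dualize}(iii), $I_{X_\xi^*}=(I_{X_\xi})^*\in\mathfrak{SM}_\infty^\xi$, so $X_\xi^*$ admits no $c_0^\xi$ spreading model; and for each $\zeta<\xi$ the estimate $\|x\|\geqslant\|Ex\|_{\ell_1}$ in $X_{\zeta,2}$ forces $\|\sum_{i\in E}a_ie_i^*\|=\max_i|a_i|$ for every $E\in\sss_\zeta$, so the biorthogonal functionals $(e_i^*)$ are isometrically a $c_0^\zeta$ spreading model in $X_{\zeta,2}^*$, which transfers to $X_\xi^*$ through the $1$-complemented copies of $X_{\zeta,2}^*$ and, via the almost monotone property, to all $\zeta<\xi$. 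Hence $\textbf{SM}_\infty(X_\xi^*)=\xi$, and in each case the relevant identity operator lies in $\mathfrak{SM}_p^\xi\setminus\cup_{0<\zeta<\xi}\mathfrak{SM}_p^\zeta$.

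For part (iii), the backward direction is the monotonicity $\mathfrak{SM}_p^\zeta\subset\mathfrak{SM}_p^\xi$ for $p=q$ and $\zeta\leqslant\xi\leqslant\omega_1$: by the almost monotone property an $\ell_p^\xi$ (or $c_0^\xi$) spreading model has a subsequence which is an $\ell_p^\zeta$ (resp. $c_0^\zeta$) spreading model, and the same subsequence of the image works. For the forward direction assume $\mathfrak{SM}_q^\zeta\subset\mathfrak{SM}_p^\xi$. If $p\neq q$, then $I_{\ell_p}$ (reading $c_0$ for $\ell_\infty$) preserves the $\ell_p$ basis, an $\ell_p^\xi$ spreading model, so $I_{\ell_p}\notin\mathfrak{SM}_p^\xi$; but a routine extraction argument shows $\ell_p$ contains no $\ell_q^1$ spreading model when $p\neq q$ (any such would, after passing to a subsequence and perturbing, be a block basis of $\ell_p$ uniformly equivalent over $k$ to both the $\ell_p^k$ and the $\ell_q^k$ bases, which is impossible), so $I_{\ell_p}\in\mathfrak{SM}_q^1\subset\mathfrak{SM}_q^\zeta$ — contradicting the inclusion. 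Hence $p=q$. If $\zeta>\xi$ (in particular if $\zeta=\omega_1$), then $\xi+1\leqslant\zeta$, and $X_{\xi,2}$ (respectively $X_{\xi,2}^p$ for $1<p<\infty$, or $X_{\xi,2}^*$ for $p=\infty$) has spreading-model index exactly $\xi+1$ by the same transfers as in part (i), so its identity operator lies in $\mathfrak{SM}_p^{\xi+1}\subset\mathfrak{SM}_p^\zeta$ but not in $\mathfrak{SM}_p^\xi$ — again contradicting the inclusion. Hence $\zeta\leqslant\xi$.

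The main obstacle is the bookkeeping behind "spreading-model index exactly $\xi$ (resp. $\xi+1$)" when $p\neq 1$: the fact that $X_\xi^p$ and $X_\xi^*$ admit no $\ell_p^\xi$ (resp. no $c_0^\xi$) spreading model is not visible in those spaces directly and must be routed through Proposition \ref{convexify spreading model}, Theorem \ref{dualize}(iii), and Corollary \ref{spreading model corollary}, while the matching lower bound in the $p=\infty$ case genuinely uses the $\ell_1$-lower estimate in the definition of $X_{\zeta,2}$ to produce the $c_0^\zeta$ spreading models in the dual. Once these are in hand, (i)--(iii) follow from the monotonicity of the classes together with the cofinality and almost-monotone reductions above.
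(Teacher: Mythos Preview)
Your proof is correct and follows essentially the same route as the paper: build examples realizing each index by an identity operator, use $p$-convexification (Proposition \ref{convexify spreading model}) and dualization (Theorem \ref{dualize}) to transfer from $p=1$ to the other values, handle limit ordinals by $\ell_2$-sums via Corollary \ref{spreading model corollary}, and settle (iii) with $I_{\ell_p}$ and the examples from (i). The only real difference is in the choice of building blocks: the paper uses the Schreier spaces $X_\xi$ for the spreading-model statements and the spaces $X_{\xi,2}$ separately for weak compactness, whereas you use $X_{\zeta,2}$ throughout. This is a mild streamlining, since $X_{\zeta,2}$ is already reflexive and so a single family handles both (i) and (ii); your explicit verification that $(e_i^*)$ is isometrically a $c_0^\zeta$ spreading model in $X_{\zeta,2}^*$ is also more transparent than the paper's appeal to earlier remarks.
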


\begin{theorem} Fix $1\leqslant p\leqslant \infty$. \begin{enumerate}[(i)]\item For $0<\xi\leqslant \omega_1$, $\mathfrak{NP}_p^\xi\subsetneq \mathfrak{SM}_p^\xi$.  \item For $0<\xi\in \ord$, $\mathfrak{SM}_p^1\not\subset \mathfrak{NP}_p^\xi$. \end{enumerate}
\label{distinct3}
\end{theorem}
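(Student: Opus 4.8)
The inclusion $\mathfrak{NP}_p^\xi\subseteq\mathfrak{SM}_p^\xi$ in (i) I would prove by contraposition: if $A\in\mathfrak{L}(X,Y)$ preserves an $\ell_p^\xi$ spreading model then $\textbf{NP}_p(A,X,Y)>\omega^\xi$. Fix $(x_n)\subset X$ so that both $(x_n)$ and $(Ax_n)$ are $\ell_p^\xi$ spreading models. After rescaling $(x_n)$ we may assume $(x_n)\subset B_X$ and that there is $K\geqslant 1$ with $(x_i)_{i\in E}\lesssim_1(e_i)_{i\in E}$ and $(e_i)_{i\in E}\lesssim_K(Ax_i)_{i\in E}$ for every $E\in\sss_\xi$, where $(e_i)$ is the $\ell_p$-basis ($c_0$-basis when $p=\infty$). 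Since $(e_i)$ is $1$-symmetric, for each $E=\{n_1<\dots<n_k\}\in\sss_\xi$ the finite sequence $(x_{n_1},\dots,x_{n_k})$ lies in $T_p(A,X,Y,K)$; that is, $n\mapsto x_n$ maps the branches of the tree $\sss_\xi$ into $T_p(A,X,Y,K)$. Because $\sss_\xi$ is spreading, its order is $o(\sss_\xi)=\omega^\xi+1$ (its Cantor--Bendixson index is $\omega^\xi+1$ and $\varnothing\in\sss_\xi$), so $o(\sss_\xi)>\omega^\xi$ and Proposition \ref{indexing}(ii) yields $f:\ttt_{\omega^\xi}\to\nn$ with $(f(t|_i))_{i=1}^{|t|}\in\sss_\xi$ for all $t$; then $t\mapsto x_{f(t)}$ witnesses, again by Proposition \ref{indexing}(ii), that $o(T_p(A,X,Y,K))>\omega^\xi$, hence $\textbf{NP}_p(A,X,Y)>\omega^\xi$ and $A\notin\mathfrak{NP}_p^\xi(X,Y)$. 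For $\xi=\omega_1$ this is immediate: an $\ell_p^{\omega_1}$ spreading model is a copy of the $\ell_p$-basis, so $A$ preserves $\ell_p$ and $\textbf{NP}_p(A,X,Y)=\infty>\omega^{\omega_1}$.

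For the properness in (i) I would reduce to (ii). Recall $\sss_1\subseteq\sss_\xi$ for every $1\leqslant\xi\leqslant\omega_1$; hence any $\ell_p^\xi$ spreading model is, with the same constants, also an $\ell_p^1$ spreading model, so any operator preserving an $\ell_p^\xi$ spreading model preserves an $\ell_p^1$ one. Thus $\mathfrak{SM}_p^1\subseteq\mathfrak{SM}_p^\xi$, whence $\varnothing\neq\mathfrak{SM}_p^1\setminus\mathfrak{NP}_p^\xi\subseteq\mathfrak{SM}_p^\xi\setminus\mathfrak{NP}_p^\xi$ once (ii) is known, giving $\mathfrak{NP}_p^\xi\subsetneq\mathfrak{SM}_p^\xi$.

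It therefore remains to prove (ii): for each $0<\xi\in\ord$ I must produce $A\in\mathfrak{SM}_p^1$ with $\textbf{NP}_p(A,X,Y)>\omega^\xi$, and it suffices to find a Banach space $W$ with $\textbf{I}_p(W)=\textbf{NP}_p(I_W,W,W)>\omega^\xi$ carrying no $\ell_p^1$ spreading model, for then $I_W\in\mathfrak{SM}_p^1\setminus\mathfrak{NP}_p^\xi$. For $\xi<\omega_1$ this is exactly what the spaces constructed in Section 7 provide (apply the construction with parameter $\xi+1$ to get $\textbf{I}_p\geqslant\omega^{\xi+1}>\omega^\xi$ and no $\ell_p^1$ spreading model). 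For $\xi\geqslant\omega_1$ no separable $W$ can work by Proposition \ref{tedious}(iv), so $W$ must be non-separable; here I would pass to a suitable transfinite direct sum of the separable examples, using Corollary \ref{spreading model corollary} (and Proposition \ref{convexify spreading model} when $1<p<\infty$, with the evident $c_0$-variant when $p=\infty$) to see that the sum still admits no $\ell_p^1$ spreading model, while each summand embeds as a norm-one complemented subspace so that Proposition \ref{tedious}(iii) forces the index of the sum to dominate every summand's index.

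The main obstacle is precisely the case $\xi\geqslant\omega_1$ of (ii). Separable building blocks realize only countable Bourgain indices, and an $\ell_2$-direct sum over an index set of order type $\omega_1$ only forces the index to be $\geqslant\omega^{\omega_1}$, not strictly greater; moreover for $p\neq 2$ one cannot freely chain $\ell_p$-trees across $\ell_2$-summands with a uniform constant. Reaching and strictly exceeding $\omega^\xi$ for arbitrarily large $\xi$ while never reintroducing an $\ell_p^1$ spreading model thus requires a carefully designed transfinite iteration (this is the role of Section 7), and verifying both the index lower bound and the absence of spreading models at every stage of that iteration is where the genuine work lies.
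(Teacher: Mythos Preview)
Your argument is correct and follows the paper's approach: for the inclusion in (i) you map the tree $\sss_\xi$ into $T_p(A,X,Y,K)$ via $E\mapsto (x_{n_i})$ exactly as the paper does (the paper phrases it as an induction on derived trees rather than invoking Proposition~\ref{indexing}, but this is the same), and for the strictness in (i) and for (ii) you invoke the spaces $W_\xi$, $W_\xi^p$, $W_\xi^*$ of Lemma~\ref{successors}, again as the paper does.

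Two small corrections. First, the literal inclusion $\sss_1\subset\sss_\xi$ can fail at limit ordinals depending on the choice of approximating sequence; what you need (and what always holds) is almost monotonicity, which after passing to a tail subsequence gives that any $\ell_p^\xi$ spreading model has a subsequence which is an $\ell_p^1$ spreading model. Second, your final paragraph overstates the difficulty for $\xi\geqslant\omega_1$: Lemma~\ref{successors} is stated and proved for \emph{all} ordinals $0<\xi\in\ord$, not just countable ones. The transfinite direct sum you sketch is precisely the limit-ordinal step in that proof; the strict lower bound $\textbf{I}_1(W_\xi,1)>\omega^\xi$ comes for free because tree orders are successors, and the upper bound $\omega^{\xi+1}$ comes from Proposition~\ref{infinite direct sum}. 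For $1<p<\infty$ the paper does not chain $\ell_p$-trees across $\ell_2$-summands directly (your stated worry) but instead passes through $p$-convexification (Proposition~\ref{convexify}, which gives $\textbf{I}_p(W_\xi^p)=\textbf{I}_1(W_\xi)$ once $\xi\geqslant\omega$); for $p=\infty$ it uses duality (Theorem~\ref{dualize}) rather than a ``$c_0$-variant'' of Corollary~\ref{spreading model corollary}.
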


\begin{theorem}\begin{enumerate}[(i)]\item  For any $0<\xi<\omega_1$, $ \mathfrak{SS}^\xi\subset \sss\sss_\xi$.  \item For any $0<\xi\in \ord$, $\sss\sss_1\not\subset \mathfrak{SS}^\xi$. \item For any $0<\xi\in \ord$, $\mathfrak{SS}^\xi \subset \cup_{\zeta<\xi}\mathfrak{SS}^\zeta$ if and only if $\xi$ has uncountable cofinality.  \end{enumerate}

\label{distinct4}

\end{theorem}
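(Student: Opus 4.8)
The plan is to treat the three items separately: (i) and the forward implication of (iii) are soft consequences of the machinery already in place, while (ii) and the reverse implication of (iii) require explicit constructions, which I expect to be the real obstacle.

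\emph{Item (i): $\mathfrak{SS}^\xi\subseteq\sss\sss_\xi$.} I would argue by contraposition. Suppose $A\colon X\to Y$ is not $\xi$-strictly singular, and fix a seminormalized basic sequence $(x_n)\subset X$, with basis constant $C$, and $\ee>0$ witnessing this, i.e.\ $\|Ax\|\ge\ee\|x\|$ for every $E\in\sss_\xi$ and every $x\in[x_i:i\in E]$. Replacing each $x_n$ by $x_n/\|x_n\|$ leaves $C$ and this inequality unchanged, so we may assume $(x_n)\subset S_X$. Identifying each $E=\{n_1<\dots<n_k\}\in\sss_\xi$ with $(x_{n_1},\dots,x_{n_k})$, the hereditariness of $\sss_\xi$ together with the injectivity of $n\mapsto x_n$ shows that $\mathcal T:=\{(x_i)_{i\in E}:E\in\sss_\xi\}$ is a tree on $S_X$ order-isomorphic to $\sss_\xi$, so $o(\mathcal T)=o(\sss_\xi)=\omega^\xi+1$. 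Each member of $\mathcal T$ is a subsequence of $(x_n)$, hence $C$-basic, and is $\ee^{-1}$-dominated by its image under $A$ by the choice of $\ee$; therefore $\mathcal T\subseteq SS(A,X,Y,K)$ with $K:=C\vee\ee^{-1}$, whence $\textbf{SS}(A,X,Y)\ge\textbf{SS}(A,X,Y,K)\ge o(\mathcal T)=\omega^\xi+1>\omega^\xi$ and $A\notin\mathfrak{SS}^\xi$.

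\emph{Item (iii), the implication ``$\mathrm{cf}(\xi)$ uncountable $\Rightarrow$ inclusion''.} Fix $A\in\mathfrak{SS}^\xi(X,Y)$. Since $\textbf{SS}(A,X,Y,K)$ is nondecreasing in $K$, $\textbf{SS}(A,X,Y)=\sup_{n\in\nn}\textbf{SS}(A,X,Y,n)$ is a countable supremum; each $SS(A,X,Y,n)$ contains $\varnothing$ and is well-founded (else $\textbf{SS}(A,X,Y)=\infty$, contradicting $A\in\mathfrak{SS}^\xi$), so each $\textbf{SS}(A,X,Y,n)$ is a successor ordinal, hence strictly below the limit ordinal $\omega^\xi$. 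Since $\xi$ has uncountable cofinality it is a limit ordinal with $\mathrm{cf}(\omega^\xi)=\mathrm{cf}(\xi)>\omega$, so a countable supremum of ordinals below $\omega^\xi$ stays below $\omega^\xi$; thus $\textbf{SS}(A,X,Y)<\omega^\xi$. As $\textbf{SS}(A,X,Y)$ is either finite or of the form $\omega^\eta$, in either case there is $\zeta<\xi$ with $\textbf{SS}(A,X,Y)\le\omega^\zeta$, i.e.\ $A\in\mathfrak{SS}^\zeta\subseteq\cup_{\zeta<\xi}\mathfrak{SS}^\zeta$.

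\emph{Item (iii), the implication ``$\mathrm{cf}(\xi)\le\omega\Rightarrow$ non-inclusion'', and item (ii): the constructive part.} For (iii) it suffices, when $\mathrm{cf}(\xi)\le\omega$, to exhibit a strictly singular operator with $\textbf{SS}$-index exactly $\omega^\xi$; for (ii) it suffices, for each $\xi$, to exhibit a $1$-strictly singular operator with $\textbf{SS}$-index $>\omega^\xi$ (equivalently $\ge\omega^{\xi+1}$). For (iii) I would build, in parallel with the Bourgain-index examples underlying Theorem \ref{distinct1}(i) and the spaces of Section 7, a reflexive space with a $1$-unconditional basis carrying a canonical strictly singular operator (a coordinate restriction, or a weighted shift) whose derived-tree analysis is governed by the families $\sss_\zeta$, $\zeta<\xi$, through the $\iota$-calculus ($\iota(\fff[\gggg])=\iota(\gggg)\iota(\fff)$, $\iota(\sss_\zeta)=\omega^\zeta$), pinning the index at $\omega^\xi$; one extra Schreier level handles the successor case, and for $\mathrm{cf}(\xi)=\omega$ one amalgamates the successor examples along $\xi_n\uparrow\xi$. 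For (ii) the crucial structural point is that the deep tree in $SS(A,X,Y,K)$ realising an index $>\omega^\xi$ \emph{cannot} be of the form $\{(x_i)_{i\in F}:F\in\sss_\xi\}$ for a single seminormalized $(x_i)$: such a tree would force $A$ to be bounded below by $K^{-1}$ on every Schreier-$1$ combination of $(x_i)$ (as $\sss_1\subseteq\sss_\xi$ on a tail), contradicting $1$-strict singularity; so the tree must branch essentially. I would realise $A$ as a natural operator on (a variant of) a Tsirelson space of order $\xi$, arranged so that (a) for one fixed $K$ there is an essentially branching tree in $SS(A,X,Y,K)$ of order exceeding $\omega^\xi$, assembled from normalized vectors whose $\sss_\xi$-hierarchical combinations stay uniformly $K$-comparable to their $A$-images but whose branches all terminate (keeping $A$ strictly singular), and (b) every normalized basic sequence in $X$ admits, for each $\ee>0$, a Schreier-$1$-admissible average with $A$-image of norm $<\ee$, so that $A\in\sss\sss_1$. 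Granting (a), $A\notin\mathfrak{SS}^\xi$; granting (b), $A\in\sss\sss_1$; together these give $\sss\sss_1\not\subseteq\mathfrak{SS}^\xi$. The technical heart, and the main obstacle, is the Tsirelson-norm bookkeeping reconciling (a) and (b): a uniformly $K$-comparable branching tree of prescribed depth $>\omega^\xi$ must coexist with the annihilation of all sufficiently long Schreier-$1$ averages.
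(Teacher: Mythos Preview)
Your arguments for (i) and for the ``uncountable cofinality $\Rightarrow$ inclusion'' direction of (iii) are correct and are exactly what the paper does (compare the proof of Theorem \ref{distinct3}(i) and the cofinality argument implicit in Theorem \ref{characterization}).

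The gap is in (ii) and the reverse direction of (iii). You are proposing to build, essentially from scratch, Tsirelson-type operators with prescribed $\textbf{SS}$-index and $\sss\sss_1$ membership, and you flag the reconciliation of the two requirements as the ``main obstacle''. But the paper has already done this work earlier in the section, using a very different and much simpler construction. Lemma \ref{successors} produces, for every ordinal $\xi>0$, operators $A_\xi:V_\xi\to W_\xi$ with $\textbf{SS}(A_\xi,V_\xi,W_\xi)=\omega^{\xi+1}$ \emph{and} $A_\xi\in\sss\sss_1(V_\xi,W_\xi)$; this is exactly the witness for (ii). The construction is not Tsirelson-flavoured at all: $V_\xi$ is an $\ell_1$-sum and $W_\xi$ an $\ell_2$-sum built recursively, the upper bound on $\textbf{SS}(A_\xi)$ comes from Corollary \ref{mixing ss and ellp} and the Szlenk index via Proposition \ref{mixed}, and $A_\xi\in\sss\sss_1$ follows because $W_\xi$ admits no $\ell_1^1$ spreading model (Corollary \ref{spreading model corollary}) together with Proposition \ref{strictly singular thing}. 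For the reverse direction of (iii), Theorem \ref{characterization}(ii) already characterizes the ordinals occurring as $\textbf{SS}$-indices: precisely the $\omega^\xi$ with $\xi$ of countable cofinality. So when $\mathrm{cf}(\xi)\le\omega$ there is an operator $A$ with $\textbf{SS}(A)=\omega^\xi$, i.e.\ $A\in\mathfrak{SS}^\xi\setminus\bigcup_{\zeta<\xi}\mathfrak{SS}^\zeta$.

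In short: the ``technical heart'' you anticipate is not needed. The paper's proof of Theorem \ref{distinct4} is a one-line citation of the examples from Lemma \ref{successors} and Theorem \ref{characterization}, and your proposal overlooks that these examples are already in hand.
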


In order to accomplish these results, we will provide a full characterization of which ordinals occur as the index of an operator. Every natural number occurs as the index of a finite rank operator, so we will consider only operators which are not finite rank.  Our argument will be similar in some regards to that given in \cite{Brooker}, where a similar result was shown for the Szlenk index.  We will inductively build up a transfinite sequence of spaces $W_\xi$ so that for each $0<\xi\in \ord$, the $\ell_1$ index of the space $W_\xi$ is exactly $\omega^{\xi+1}$.  We can deduce from this that every successor $\xi$ is such that $\omega^\xi$ is the $\ell_1$ index of some operator.  For limit ordinals $\xi$ of countable cofinality, we will take $\xi_n\uparrow \xi$ and take a diagonal operator on $(\oplus_n W_{\xi_n})_{\ell_2}$ to obtain an operator with $\ell_1$ index $\omega^\xi$.  Our argument differs from that of Brooker in that we must employ facts we have shown about how the $\ell_1$ sum behaves under direct sums.  We will also use the facts we have shown about dualization of $c_0$ indices and the behavior of $\ell_p$ indices under $p$-convexifications to simultaneously show that the dual $W_\xi$ of $W_\xi^*$ has $c_0$ index $\omega^{\xi+1}$ (when $0<\xi)$ and the $p$-convexification $W_\xi^p$ of $W_\xi$ has $\ell_p$ index $\omega^{\xi+1}$ (also when $0<\xi$).  As we build the spaces $W_\xi$, we will simultaneously build spaces $V_\xi$ and operators $A_\xi:V_\xi \to W_\xi$ so that the strictly singular index of $A_\xi$ is $\omega^{\xi+1}$.  In building the spaces this way, we will simultaneously exhibit for all successor ordinals $\xi$ operators with $\ell_p$, $c_0$, or strictly singular index equal to $\omega^{\xi+1}$ (the identity on $W_\xi$ for $p=1$, the identity on $W_\xi^p$ for $1<p<\infty$, the identity on $W_\xi^*$ for $p=\infty$, and the operator $A_\xi:V_\xi\to W_\xi$ for the strictly singular index).  We will also obtain, through diagonalizations similar to those in the $p=1$ case mentioned above, diagonal operators on direct sums of sequences of these spaces to obtain operators with $\ell_p$, $c_0$, or strictly singular index $\omega^\xi$ whenever $\xi$ is a limit ordinal of countable cofinality.

Recall that for $X,Y\in \Ban$ and $A\in \mathfrak{NP}_p(X,Y)$, $\textbf{NP}_p(A,X,Y)=\lim_n \textbf{NP}_p(A,X,Y,n)$.  By Proposition \ref{special form}, if $A$ is not finite rank, this supremum is not attained.  Thus if $\omega^\xi= \textbf{NP}_p(A,X,Y)$, $\omega^\xi$ must have countable cofinality, which happens if and only if $\xi$ has countable cofinality. This same restriction applies to the $\textbf{SS}$ index.  This means that the only infinite ordinals which may appear as the $\textbf{NP}_p$ or $\textbf{SS}$ index of an operator are those ordinals of the form $\omega^\xi$, where $\xi$ has countable cofinality.  As stated in the previous paragraph, we will show that for each $1\leqslant p\leqslant \infty$, each such ordinal occurs as the $\textbf{NP}_p$ of some operator, as well as the $\textbf{SS}$ index of some operator.

\begin{theorem} Fix $\xi\in \ord$. \begin{enumerate}[(i)]\item For $1\leqslant p\leqslant \infty$, there exists a Banach space $X$ and an operator $A:X\to X$ so that $\emph{\textbf{NP}}_p(A,X,X)=\omega^\xi$ if and only if $\xi$ has countable cofinality.  Moreover, if $\xi$ is a successor, $A$ can be taken to be the identity on $X$ except in the case that $p=2$ and $\xi=1$, and therefore every ordinal of the form $\omega^{\xi+1}$ occurs as the Bourgain $\ell_p$ index of a Banach space except in the case that $p=2$ and $\xi=0$.   

\item There exist Banach spaces $X,Y$ and an operator $A:X\to Y$ so that $\emph{\textbf{SS}}(A,X,Y)=\omega^\xi$ if and only if $\xi$ has countable cofinality.  Morever, $X$ can be taken to be $\ell_1(\Gamma)$ for some $\Gamma$.  

\item If $0<\xi\leqslant \omega_1$, then for any $1\leqslant p\leqslant \infty$ there exists a Banach space $X$ with $\emph{\textbf{SM}}_p(X)=\xi$. 

\item If $0<\xi\leqslant\omega_1$, then there exists a Banach space $X$ with $\emph{\textbf{SWC}}(X)=\xi$.  \end{enumerate}

\label{characterization}
\end{theorem}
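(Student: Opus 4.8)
The plan is to establish the four parts essentially in parallel, the crux being a transfinite construction of spaces $W_\xi$ together with auxiliary spaces $V_\xi$ and operators $A_\xi\colon V_\xi\to W_\xi$. I begin with the ``only if'' halves of $(i)$ and $(ii)$. A finite rank operator has finite $\textbf{NP}_p$- and $\textbf{SS}$-index by Proposition \ref{tedious}$(i)$ and its strictly singular analogue, so we may assume the operator is not finite rank; then by Proposition \ref{special form}$(ii)$, and the corresponding statement for $\textbf{SS}$, any such index equals $\omega^\xi$ for some $0<\xi$, and since $\textbf{NP}_p(A,X,Y)=\sup_K\textbf{NP}_p(A,X,Y,K)$ is a strictly increasing supremum (Proposition \ref{special form}$(iii)$, and likewise for $\textbf{SS}$), $\omega^\xi$, hence $\xi$, must have countable cofinality. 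In $(iii)$ and $(iv)$ the only restriction $0<\xi\le\omega_1$ is built into the definitions of $\textbf{SM}_p$ and $\textbf{SWC}$.

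The heart of the proof is a transfinite construction of reflexive spaces $W_\xi$, $0<\xi\in\ord$, each with a $1$-unconditional basis and Bourgain $\ell_1$-index exactly $\omega^{\xi+1}$. I take $W_1=X[\sss_1]$, whose $\ell_1$-index is $\omega^2$, and at each later stage form $W_\xi$ as an iterated Schreier-type sum built over the previously constructed spaces --- over copies of $W_\zeta$ at a successor $\xi=\zeta+1$, and over the $\ell_2$-sum $(\oplus_{\zeta<\xi}W_\zeta)_{\ell_2}$ at a limit $\xi$, replacing the ordinary Schreier families by transfinite analogues on an index set of size $\mathrm{cf}(\xi)$ when $\xi$ is uncountable. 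The lower bound $\textbf{I}_1(W_\xi)\ge\omega^{\xi+1}$ comes from explicitly exhibiting $\ell_1$-trees whose branching mirrors the nesting of the defining Schreier families; the matching upper bound uses the direct-sum estimates of Section~6 (Propositions \ref{block index}, \ref{finite direct sum}, \ref{infinite direct sum} and the Remark following the last), the ideal property of $\mathfrak{NP}_1^{\omega^\eta}$ (Theorem \ref{ideals}$(ii)$), and Proposition \ref{special form}$(ii)$, which forces the index to be a power of $\omega$ so that the bounded ``excess factors'' produced by those estimates collapse. Granting the $W_\xi$: a successor exponent $\xi=\zeta+1$ is realized by $I_{W_\zeta}$ on $W_\zeta$, and a limit exponent $\xi$ of countable cofinality, with $\xi_n\uparrow\xi$, by the identity on $(\oplus_nW_{\xi_n})_{c_0}$ (or a diagonal operator on $(\oplus_nW_{\xi_n})_{\ell_2}$), every $K$-$\ell_1$-tree for such an operator living, up to perturbation, on finitely many summands. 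Proposition \ref{convexify} then transfers everything to $1<p<\infty$ via $\textbf{I}_p(W^p)=\textbf{I}_1(W)$, and Theorem \ref{dualize}$(i)$--$(ii)$ with reflexivity transfers it to $p=\infty$ via $\textbf{I}_\infty(W^*)=\textbf{I}_1(W)$; the exclusion of $p=2,\ \xi=1$ is because $\ell_2$ is finitely representable in every infinite-dimensional Banach space by Dvoretsky's theorem, so $\textbf{I}_2(X)\ge\omega^2$ always. This gives $(i)$.

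For $(ii)$, in tandem with the $W_\xi$ I set $V_\xi=\ell_1(\Gamma_\xi)$ with $\Gamma_\xi$ a norm-dense subset of $S_{W_\xi}$ and let $A_\xi\colon V_\xi\to W_\xi$ be the canonical quotient map $e_\gamma\mapsto\gamma$. Since a normalized $K$-basic sequence in $\ell_1(\Gamma_\xi)$ is, after an arbitrarily small perturbation, isometric to the $\ell_1$-basis, the tree $SS(A_\xi,V_\xi,W_\xi,K)$ coincides --- up to enlarging $K$ slightly and dropping a domination condition that then holds automatically --- with $T_1(W_\xi,K)$, so $\textbf{SS}(A_\xi,V_\xi,W_\xi)=\textbf{I}_1(W_\xi)=\omega^{\xi+1}$. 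This realizes every successor exponent with domain of the form $\ell_1(\Gamma)$, and a limit exponent of countable cofinality is obtained from a diagonal operator on the $\ell_1$-sum $(\oplus_nV_{\xi_n})_{\ell_1}=\ell_1(\bigsqcup_n\Gamma_{\xi_n})$, its index pinned to $\omega^\xi$ by Corollary \ref{mixing ss and ellp} together with the Section~6 estimates, the domain remaining an $\ell_1(\Gamma)$. This proves $(ii)$.

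For $(iii)$ and $(iv)$ I start from the reflexive spaces $X_{\zeta,2}$ of \cite{C3}, which admit an $\ell_1^\zeta$ but no $\ell_1^{\zeta+1}$ spreading model, so $\textbf{SM}_1(X_{\zeta,2})=\zeta+1$, realizing all successor values of $\textbf{SM}_1$. For a limit $\xi<\omega_1$, fixing $\zeta_n\uparrow\xi$, the space $(\oplus_nX_{\zeta_n,2})_{\ell_2}$ admits each $\ell_1^{\zeta_n}$ spreading model (it contains $X_{\zeta_n,2}$) but no $\ell_1^\xi$ spreading model by Corollary \ref{spreading model corollary}, so its $\textbf{SM}_1$-value is exactly $\xi$; and $\xi=\omega_1$ is realized by $(\oplus_\zeta X_\zeta)_{\ell_2[1,\omega_1)}$, already noted to admit $\ell_1^\zeta$ spreading models for all countable $\zeta$ while containing no copy of $\ell_1$. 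Proposition \ref{convexify spreading model} transfers these examples to $1<p<\infty$ ($\textbf{SM}_p(X^p)=\textbf{SM}_1(X)$) and Theorem \ref{dualize}$(iii)$--$(iv)$ with reflexivity ($X^{**}=X$) transfers them to $p=\infty$ ($\textbf{SM}_\infty(X^*)=\textbf{SM}_1(X)$), giving $(iii)$; and since every space used with $p=1$ is reflexive, its identity is weakly compact and $\textbf{SWC}(X)=\textbf{SM}_1(X)$, which yields $(iv)$. The main obstacle throughout is the transfinite construction of the $W_\xi$ with $\ell_1$-index \emph{equal} to $\omega^{\xi+1}$: the direct-sum bounds of Section~6 are products and naturally overshoot, so the Schreier-type sums must be arranged so that these products --- after invoking Proposition \ref{special form}$(ii)$ --- collapse to the required value, and this is most delicate at limit stages and for exponents of uncountable cofinality, where one must engineer transfinite Schreier families whose lower-bound tree constructions still meet the upper-bound estimates.
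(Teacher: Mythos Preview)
Your overall architecture is right --- transfinite construction for $(i)$, $p$-convexification and dualization to transfer to other $p$, diagonal operators for limit exponents, the spaces $X_{\zeta,2}$ for $(iii)$ and $(iv)$ --- and parts $(iii)$, $(iv)$ are essentially the paper's argument. But the heart of $(i)$ and $(ii)$ has genuine gaps.

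\textbf{The construction of $W_\xi$.} You leave this as ``iterated Schreier-type sums'' and, for uncountable $\xi$, ``transfinite Schreier families on an index set of size $\mathrm{cf}(\xi)$'', and you concede at the end that this is ``the main obstacle'' and ``most delicate''. That is an accurate self-assessment: you have not actually given the construction. The paper's construction is quite different and avoids Schreier families entirely in building the $W_\xi$: one sets $W_0$ to be the scalar field, $W_{\xi+1}=\bigl(\oplus_n(\oplus_{i=1}^nW_\xi)_{\ell_1}\bigr)_{\ell_2}$, and $W_\xi=\bigl(\oplus_{\zeta<\xi}W_\zeta\bigr)_{\ell_2([0,\xi))}$ at limits. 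The lower bound $\textbf{I}_1(W_\xi,1)>\omega^\xi$ is then a simple concatenation argument, and the upper bound for $\xi\ge\omega$ uses the Section~6 estimates as you suggest --- but for \emph{finite} $\xi$ those estimates overshoot by a factor of $\omega$ that does not collapse, and the paper instead bounds $\textbf{I}_1(W_{\xi+1})$ via the Szlenk index of $W_{\xi+1}$ (Proposition \ref{mixed}), which you do not invoke. Without something like this, your upper bound for $W_2,\ldots,W_\omega$ fails.

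\textbf{Part $(ii)$.} Your claim that for the quotient map $A_\xi\colon\ell_1(\Gamma_\xi)\to W_\xi$ the tree $SS(A_\xi,V_\xi,W_\xi,K)$ ``coincides'' with $T_1(W_\xi,K)$ is not correct: a normalized $K$-basic sequence in $\ell_1(\Gamma)$ need not be a small perturbation of the $\ell_1$ basis, and conversely a finite $\ell_1$-sequence in $W_\xi$ need not lift to a sequence in $\ell_1(\Gamma_\xi)$ that is $K$-dominated by its image (the $\ell_1$-norm of a preimage can be much larger than the $W_\xi$-norm). The paper defines $A_\xi$ recursively in parallel with $W_\xi$ (as a direct sum of copies of $A_\zeta$), so that the lower bound $\textbf{SS}(A_\xi,V_\xi,W_\xi,1)>\omega^\xi$ follows by the same concatenation argument as for $\textbf{I}_1$, and the upper bound uses Corollary \ref{mixing ss and ellp} (for $\xi\ge\omega$) and again the Szlenk index (for $\xi<\omega$).
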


The exception in $(i)$ in the case of $p=2$ and $\xi=1$ is due to Dvoretsky's theorem, which guarantees that $\textbf{I}_2(X)$ is either finite or at least $\omega^2$.

\begin{lemma} For every $0<\xi\in \ord$, there exist Banach spaces $V_\xi, W_\xi\in \Ban$ and $A_\xi\in \mathfrak{L}(V_\xi, W_\xi)$ so that $$\emph{\textbf{I}}_1(W_\xi)=\emph{\textbf{I}}_\infty(W_\xi^*)=\emph{\textbf{I}}_p(W_\xi^p)= \emph{\textbf{SS}}(A_\xi, V_\xi, W_\xi)=\omega^{\xi+1}.$$  Moreover, $W_\xi$ admits no $\ell_1^1$ spreading model, $W_\xi^p$ admits no $\ell_p^1$ spreading model, $W_\xi^*$ admits no $c_0^1$ spreading model, and $A_\xi\in \sss\sss_1(V_\xi, W_\xi)$.

\label{successors}
\end{lemma}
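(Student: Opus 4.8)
The plan is to build the spaces by transfinite induction on $\xi$, using the Schreier families $\sss_\xi$ and the combinatorial estimates established in Sections~2 and~5. For the base case $\xi=1$, one takes $W_1$ to be (a variant of) the Schreier space $X_{1}$ of \cite{AA}: the completion of $c_{00}$ under $\|x\| = \sup\{\sum_{i\in E}|x(i)| : E\in \sss_1\}$. A standard computation (essentially the one showing the Cantor--Bendixson index of $\sss_1$ is $\omega+1$, hence $\iota(\sss_1)=\omega$, combined with the iteration $\iota(\sss_\zeta[\sss_\xi])=\iota(\sss_\xi)\iota(\sss_\zeta)$ recalled before Proposition~\ref{Gasparis analogue}) shows $\textbf{I}_1(W_1)=\omega^2$, and the unit vector basis is an $\ell_1^1$ spreading model but not an $\ell_1^2$ spreading model, and in fact $W_1$ admits no $\ell_1^1$ spreading model in the strong (unconditional-copy) sense required. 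For the successor step $\xi\to\xi+1$, I would replace $\sss_\xi$ by $\sss_{\xi+1}=\sss[\sss_\xi]$ in the norm; more precisely, define $W_{\xi+1}$ using the Tsirelson-type norm built from $\sss_{\xi+1}$, or equivalently take an appropriate $\sss_1$-sum $(\oplus_n W_\xi)$ reindexed so that the Schreier structure composes. At a limit ordinal $\xi$ of countable cofinality with $\xi_n\uparrow\xi$, one uses the $\ell_2$-sum $(\oplus_n W_{\xi_n})_{\ell_2}$ together with Corollary~\ref{spreading model corollary} to control the spreading-model behavior, and the index estimates for $\ell_1$ sums under direct sums (Proposition~\ref{finite direct sum} and Corollary following Proposition~\ref{infinite direct sum}) to compute $\textbf{I}_1$.

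The four equalities are then obtained in parallel. The equality $\textbf{I}_1(W_\xi)=\omega^{\xi+1}$ is the core computation: the lower bound comes from exhibiting, inside $W_\xi$, for each $K$ a tree of branches of length witnessing all of $\sss_{\xi}[\sss_1]$-behavior (equivalently using Proposition~\ref{indexing} and the fact that $\iota(\sss_{\xi+1})=\omega^{\xi+1}$), while the upper bound comes from the norm being defined precisely by $\sss_{\xi+1}$, so that any $\ell_1^n$-behaving block tree must live along a $\sss_{\xi+1}$-admissible family, forcing $\textbf{I}_1(W_\xi,K)<\omega^{\xi+1}$ for each $K$. For $W_\xi^p$, apply Proposition~\ref{convexify} (with the identity operator, whose off-diagonal disjointness hypothesis is automatic) to transfer the $\ell_1$ index to the $\ell_p$ index: $\textbf{NP}_p(I_{W_\xi^p}) \leqslant \omega\,\textbf{NP}_1(I_{W_\xi})=\omega^{\xi+1}$ since $\xi\geqslant 1$, and the reverse inequality $\geqslant \omega^{\xi+1}$ follows because the $p$-convexified basis still exhibits $\sss_{\xi+1}$-$\ell_p$ structure. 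For $W_\xi^*$ one applies Theorem~\ref{dualize}(i)--(ii) (the dualization of $\ell_1$ and $c_0$ indices) to get $\textbf{I}_\infty(W_\xi^*)=\textbf{NP}_\infty(I_{W_\xi^*})$ controlled above and below by $\textbf{NP}_1(I_{W_\xi})$; here one needs $W_\xi$ to have a shrinking (hence $\ell_1$-free) unconditional basis so that $W_\xi^{**}=W_\xi$ and the identity on $W_\xi^*$ is the adjoint of the identity on $W_\xi$. Finally, for $A_\xi\colon V_\xi\to W_\xi$, I would take $V_\xi=\ell_1(\Gamma)$ with $\Gamma$ of the appropriate cardinality (a dense set of finitely supported rationals in $B_{W_\xi}$), with $A_\xi$ the natural ``identity-like'' map $\Gamma\ni\gamma\mapsto \gamma\in W_\xi$; then $\textbf{SS}(A_\xi,V_\xi,W_\xi)$ is seen to equal $\textbf{I}_1(W_\xi)=\omega^{\xi+1}$ because a normalized $K$-basic tree in $\ell_1(\Gamma)$ whose image is $K$-equivalent to it is, after perturbation, essentially a tree witnessing $\ell_1$ structure in $W_\xi$ (this uses that $\ell_1(\Gamma)$-bases $1$-dominate everything, so the $SS$ tree and the $T_1$ tree coincide up to constants, invoking Theorem (ii) of the strictly-singular theorem which gives $\textbf{NP}_1\leqslant \textbf{SS}$, plus a matching reverse bound from the $\ell_1(\Gamma)$ domain).

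The ``moreover'' assertions on spreading models are handled alongside: $W_\xi$ admits no $\ell_1^1$ spreading model because the norm, built from $\sss_{\xi+1}=\sss[\sss_\xi]$ with $\xi\geqslant 1$, gives no uniform lower $\ell_1$-estimate even on $\sss_1$-sets (any normalized block sequence has convex combinations along $\sss_1$ of norm tending to $0$, by the same averaging argument as in the classical Schreier space); passing through Proposition~\ref{convexify spreading model} and Theorem~\ref{dualize}(iii)--(iv) transfers this to $W_\xi^p$ (no $\ell_p^1$ spreading model) and $W_\xi^*$ (no $c_0^1$ spreading model), and $A_\xi\in\sss\sss_1(V_\xi,W_\xi)$ follows from $\mathfrak{SS}^1=\mathfrak{NP}_2^1$ being the finitely strictly singular operators together with the absence of $\ell_1^1$-structure in the range (or directly: any normalized block of length $n$ in $\ell_1(\Gamma)$ maps to something with a small $\sss_1$-average in $W_\xi$). \textbf{The main obstacle} I expect is the successor/limit step bookkeeping: making the Schreier composition $\sss_{\xi+1}=\sss[\sss_\xi]$ match up with an honest direct-sum or Tsirelson-type construction so that the index computations are exact (both the $\leqslant$ and the $\geqslant$), and simultaneously verifying that the chosen $W_\xi$ stays reflexive-enough (shrinking unconditional basis) at limits — this is where Corollary~\ref{spreading model corollary} and the reflexivity hypotheses in Proposition~\ref{infinite direct sum spreading model} are essential, and where a careless choice of sum (e.g. $\ell_1$-sum instead of $\ell_2$-sum at limits) would destroy either reflexivity or the index value.
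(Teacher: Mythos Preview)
Your proposal has a genuine gap at its foundation: the Schreier space $X_1$ (and any Schreier- or Tsirelson-type space whose norm is built from $\sss_\xi$) \emph{does} admit an $\ell_1^1$ spreading model --- its unit vector basis is isometrically $\ell_1$ on every $\sss_1$-set --- so it cannot serve as $W_1$. Your attempt to rescue this by invoking a ``strong (unconditional-copy) sense'' of spreading model is not what the lemma asks for; the paper's definition is the standard one, and the Schreier basis satisfies it. The averaging claim you make (``any normalized block sequence has convex combinations along $\sss_1$ of norm tending to $0$'') is exactly backwards: the defining feature of the Schreier space is that its weakly null basis has \emph{no} small $\sss_1$-convex combinations. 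The same objection applies at every successor step of your recursion.

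The paper's construction works by a different mechanism that you have not identified. It starts with $W_0$ the scalar field and at successors sets $W_{\xi+1}=(\oplus_n Z_n)_{\ell_2}$ with $Z_n=W_\xi\oplus_1\cdots\oplus_1 W_\xi$ ($n$ copies); at limits it takes the $\ell_2([0,\xi))$-sum of all predecessors. Thus $W_1=(\oplus_n \ell_1^n)_{\ell_2}$, which has $\textbf{I}_1=\omega^2$ (lower bound from the blocks $\ell_1^n$, upper bound via the Szlenk index and Proposition~\ref{mixed}) yet admits no $\ell_1^1$ spreading model by Corollary~\ref{spreading model corollary}, since every summand is finite-dimensional. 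This separation between large Bourgain index and trivial asymptotic $\ell_1$-structure is precisely what the lemma is designed to exhibit, and Schreier-type spaces cannot provide it. A secondary but real issue: Schreier families $\sss_\xi$ are defined only for countable $\xi$, while the lemma is stated for all ordinals; the paper's $\ell_2$-sum recursion is cardinality-agnostic. Your identification of the transfer tools (Proposition~\ref{convexify} for $W_\xi^p$, Theorem~\ref{dualize} for $W_\xi^*$, Corollary~\ref{mixing ss and ellp} and an $\ell_1(\Gamma)$ domain for $A_\xi$) is correct and matches the paper, but those transfers only do their job once the base spaces $W_\xi$ are built correctly.
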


For this we will need the following, which uses the weakly null $\ell_1^+$ characterization of the Szlenk index established in \cite{AJO}.  In the following proposition, $Sz(X)$ denotes the Szlenk index of $X$.    

\begin{proposition}Let $X$ be a Banach space with countable, shrinking, $1$-unconditional basis.  \begin{enumerate}[(i)]\item For any operator $A:\ell_1\to X$, $\emph{\textbf{SS}}(A, \ell_1, X)\leqslant \omega Sz(X).$

\item For $1\leqslant p<\infty$, $\emph{\textbf{I}}_p(X^p)\leqslant \omega Sz(X).$  \end{enumerate}

\label{mixed}

\end{proposition}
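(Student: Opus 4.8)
The plan is to prove both parts by one mechanism. Assuming the relevant $K$-index exceeds $\omega Sz(X)$, I would use the block-index results of the previous section to extract a disjointly supported block tree of order $Sz(X)$ (inside $\ell_1$, with image in $X$, for $(i)$; inside $X^p$, for $(ii)$), transfer it to a weakly null tree of normalized vectors in $X$ whose branches $\delta$-dominate the $\ell_1$ basis on nonnegative coefficients, and then contradict the weakly null $\ell_1^+$ characterization of $Sz(X)$ from \cite{AJO}, which says that for each $\delta>0$ the order of any such tree is at most $Sz(X)$. Two standing facts are used: since the basis of $X$ is shrinking, $X^*$ is separable and $Sz(X)<\omega_1$; and since it is moreover $1$-unconditional and sequentially ordered, any seminormalized sequence in $X$ with pairwise disjoint supports is weakly null (and is equivalent to a normalized such sequence). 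In each part it suffices to bound the $K$-index for fixed $K\geqslant 1$ and then take the supremum over $K$; if that index is already $\leqslant\omega Sz(X)$ there is nothing to prove, so assume otherwise.

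For $(i)$: apply Proposition \ref{strictly singular block index} with $\xi=Sz(X)$, $\ee=1$, and $\ee_n\downarrow 0$ so small that $\sum_n\ee_n<\tfrac12(K+1)^{-1}$. This produces trees $(x_t)_{t\in\ttt_{Sz(X)}}\subset S_{\ell_1}$ and $(y_t)_{t\in\ttt_{Sz(X)}}\subset X$ so that along every branch $(x_{t|_i})_{i=1}^{|t|}\in SS(A,\ell_1,X,K+1)$ has pairwise disjoint supports in the $\ell_1$ basis, $(y_{t|_i})_{i=1}^{|t|}$ has pairwise disjoint supports in the basis of $X$, and $\|Ax_t-y_t\|<\ee_{|t|}$. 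A normalized disjointly supported sequence in $\ell_1$ is isometrically equivalent to the $\ell_1$ basis, so membership in $SS(A,\ell_1,X,K+1)$ gives $\|\sum_i a_iAx_{t|_i}\|\geqslant(K+1)^{-1}\sum_i|a_i|$ along every branch; since also $(K+1)^{-1}\leqslant\|Ax_t\|\leqslant\|A\|$, after normalizing we may replace $(y_t)$ by $(\tilde y_t)$ with $\tilde y_t=y_t/\|y_t\|$, still disjointly supported along branches, with $\|\sum_i a_i\tilde y_{t|_i}\|\geqslant\delta\sum_i|a_i|$ along every branch for a suitable $\delta>0$. Thus $(\tilde y_t)_{t\in\ttt_{Sz(X)}}$ is a weakly null tree of normalized vectors in $X$ whose branches $\delta$-dominate the $\ell_1$ basis, and by Proposition \ref{indexing}$(ii)$ the $\ell_1^+$ tree of $X$ at level $\delta$ has order exceeding $o(\ttt_{Sz(X)})=Sz(X)$, contradicting \cite{AJO}. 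Hence $\textbf{SS}(A,\ell_1,X,K)\leqslant\omega Sz(X)$ for all $K$, so $\textbf{SS}(A,\ell_1,X)\leqslant\omega Sz(X)$.

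For $(ii)$: apply Proposition \ref{block index} to $I_{X^p}$ on $X^p$ (whose basis is $1$-unconditional). From $\textbf{I}_p(X^p,K)>\omega Sz(X)$ we get a tree $(x_t)_{t\in\ttt_{Sz(X)}}$ in $X^p$ whose branches have pairwise disjoint supports and are $(K+1)$-equivalent to the $\ell_p$ basis; replacing each $x_t$ by $|x_t|$ (harmless by $1$-unconditionality and disjointness of supports) we may take $x_t\geqslant 0$. Let $z_t$ be the coordinatewise $p$-th power of $x_t$. Disjointness of supports yields $\|z_t\|_X=\|x_t\|_{X^p}^{\,p}=1$ and, for nonnegative scalars, $\bigl\|\sum_i a_i z_{t|_i}\bigr\|_X=\bigl\|\sum_i a_i^{1/p}x_{t|_i}\bigr\|_{X^p}^{\,p}\geqslant(K+1)^{-p}\sum_i a_i$, so $(z_t)_{t\in\ttt_{Sz(X)}}$ is a weakly null tree of normalized, disjointly supported vectors in $X$ whose branches $(K+1)^{-p}$-dominate the $\ell_1$ basis on nonnegative coefficients. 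As in $(i)$, \cite{AJO} together with Proposition \ref{indexing}$(ii)$ gives a contradiction, so $\textbf{I}_p(X^p,K)\leqslant\omega Sz(X)$ for all $K$, and $\textbf{I}_p(X^p)\leqslant\omega Sz(X)$.

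The step needing real care — and the one I expect to be the main obstacle — is the passage from the block trees handed to us by Propositions \ref{block index} and \ref{strictly singular block index} to genuine weakly null trees in the sense of \cite{AJO}: those propositions secure disjointness of supports only \emph{along branches}, so a priori the immediate successors of a node need not form a weakly null, or even disjointly supported, sequence. The remedy I would use is the strengthening recorded in the Remark following Proposition \ref{block index}: since both coordinate systems involved are sequentially ordered, one can arrange the supports to be \emph{successive throughout the tree}, and then the immediate successors of each node form a block sequence, hence a weakly null sequence, with no ordinal lost. Verifying that this strengthening is available simultaneously for the domain and range coordinate systems and that it survives the perturbations in the proofs above is the one point where the argument is more than routine bookkeeping.
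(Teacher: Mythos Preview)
Your overall strategy---pass from the $\omega\xi$-tree to a block tree of order $\xi$, convert to normalized vectors in $X$ with $\ell_1^+$ lower estimates along branches, then invoke the weakly null $\ell_1^+$ characterization of the Szlenk index from \cite{AJO}---is the same as the paper's. The gap you flag at the end is real, and unfortunately the fix you propose does not close it.

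The Remark following Proposition~\ref{block index} promises only that supports can be made \emph{successive along each branch}: for $s\prec t$ one has $\max\supp(x_s)<\min\supp(x_t)$. This is still purely a statement about comparable nodes; it says nothing about the relationship between $x_{t'}$ and $x_{t''}$ for distinct immediate successors $t',t''$ of a fixed node. In particular there is no reason the immediate successors should form a block (hence weakly null) sequence, and for many nodes of $\ttt_\xi$ there are only finitely many immediate successors anyway (the root of $\ttt_{\zeta+1}$ has exactly one), so the phrase ``form a weakly null sequence'' is vacuous there. The \cite{AJO} index, however, is computed via a weakly null derivation: to push it above $\xi$ you need, at each node, an infinite weakly null fan of admissible extensions. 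A $\ttt_\xi$-indexed tree with branch-wise $\ell_1$ estimates does not supply this, and Propositions~\ref{block index}/\ref{strictly singular block index} alone will not manufacture it.

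The paper avoids this by not using those propositions here. It sets up the same coloring functions you would (projections onto initial coordinate intervals in both $\ell_1$ and $X$), observes the dimension obstruction to the first alternative, and then invokes a sharper result, Lemma~3.4 of \cite{C2}, whose output is a tree indexed by a \emph{regular family} $\fff\subset[\nn]^{<\nn}$ of Cantor--Bendixson index $\xi+1$, with the built-in feature that $\|P_{[1,\max E]}y_E\|$ is small in both coordinate systems. Since every non-maximal $E\in\fff$ has infinitely many extensions $E\verb!^!n$ with $n\to\infty$, this forces $(y_{E\verb!^!n})_n$ to be coordinate-wise null, hence weakly null because the basis is shrinking. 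After a pruning so that branches in $\ell_1$ are $2$-equivalent to the $\ell_1$ basis (for $(i)$), or after passing to coordinatewise $p$-th powers (for $(ii)$, exactly as you do), one has a genuine weakly null $\ell_1^+$ tree of order $\xi$ in $X$, and \cite{AJO} gives $Sz(X)>\xi$. The regular-family indexing is precisely what buys the weakly null fan at every node; $\ttt_\xi$ does not.
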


\begin{proof}$(i)$ Let $(e_n)_{n\in \nn}$ be a $1$-unconditional basis for $X$.  Let $P_{[1,n]}^X$ denote the basis projections with respect to $(e_n)$ and $P_{[1,n]}^{\ell_1}$ the basis projections in $\ell_1$.  Fix $0<\xi<\omega_1$ and assume $\textbf{SS}(A, \ell_1, X)>\omega\xi$.  Fix $K\geqslant 1$ and $(x_t)_{t\in \ttt_{\omega\xi}}\subset S_{\ell_1}$ so that $(x_{t|_i})_{i=1}^{|t|}\in SS(A,\ell_1, X,K)$ for each $t\in \ttt_{\omega\xi}$. For each $n\in \nn$ and each chain $S$ of $\ttt_{\omega\xi}$, let $$f_n(S)=\min \{\|P^{\ell_1}_{[1,n]}x\|+\|P^X_{[1,n]}Ax\|: x\in [x_t:t\in S], \|x\|=1\}.$$  Note that for any any monotone $g:\ttt_\omega\to \ttt_{\omega\xi}$ and any $n\in \nn$, a dimension argument gives that there exists a segment $S$ of $\ttt_\omega$ so that $f_n(\{x_{g(t)}:t\in S\})=0$.  By Lemma $3.4$ of \cite{C2}, there exists a regular family $\fff$ with Cantor-Bendixson index $\xi+1$ and a tree $(y_E)_{E\in \fff\setminus \{\varnothing\}}$ so that $\|P^{\ell_1}_{[1,\max E]}y_E\|+ \|P^X_{[1,\max E]}Ay_E\|\leqslant 1/\max E$ so that every branch $(y_E)_{E\in \fff\setminus \{\varnothing\}}$ is a normalized block of a branch of $(x_t)_{t\in \ttt_{\omega\xi}}$.  Since $(y_{E\verb!^!n})$ is coordinate-wise null for every $E\in \fff\setminus MAX(\fff)$, we may prune and assume every branch of $(y_E)_{E\in \fff\setminus \{\varnothing\}}$ is $2$-equivalent to the unit vector basis of $\ell_1$.  But since each branch of this tree is in $SS(A, \ell_1, X, K)$, we deduce that each branch of $(Ay_E)_{E\in \fff\setminus \{\varnothing\}}$ $2K$-dominates the $\ell_1$ basis.  But since $(Ay_E)_{E\in \fff\setminus \{\varnothing\}}$ is such that $(y_{E\verb!^!n})$ is coordinate-wise null in $X$ for each $E\in \fff\setminus MAX(\fff)$, and since the basis of $X$ is shrinking, we deduce that $(Ay_E)_{E\in \fff\setminus \{\varnothing\}}$ is a weakly null $\ell_1$ tree.  By \cite{AJO}, $Sz(X)>\xi$.  Since $\xi$ was arbitrary, we are done.

$(ii)$ This is similar to $(i)$.  We assume that for $0<\xi<\omega_1$, $\textbf{I}_p(X^p)>\omega \xi$.  As in $(i)$, we arrive at a tree $(y_E)_{E\in \fff\setminus \{\varnothing\}}$ pointwise null so that each branch is $1$-dominated by and $K$-dominates the $\ell_p$ basis. The only difference is we replace normalized blocks with $p$-absolutely convex blocks.  By pruning, perturbing, and replacing $K$ with any strictly larger value, we may assume that each branch of $(y_E)_{E\in \fff\setminus \{\varnothing\}}$ is a block tree so that $\min \supp(y_{E\verb!^!n})\underset{n\to \infty}{\to} \infty$ for each $E\in \fff\setminus MAX(\fff)$.  Then $(y^p_E)_{E\in \fff\setminus \{\varnothing\}}$ is a weakly null $\ell_1$ tree in $X$, where $y^p$ is defined as in Proposition \ref{convexify}, and we finish again by \cite{AJO}.  
\end{proof}

\begin{proof}[Proof of Lemma \ref{successors}] Let $V_0=W_0$ be the scalar field. Let $A_0:V_0\to W_0$ be the identity.  If $V_\xi, W_\xi$, $A_\xi$ have been defined, let $Y_1=V_\xi$, $Z_1=W_\xi$, $Y_{n+1}=V_\xi\oplus_1 Y_n$, $Z_{n+1}=W_\xi\oplus_1 Z_n$ for $n\in \nn$.  Let $V_{\xi+1}=\bigl(\oplus Y_n\bigr)_{\ell_1}$, $W_{\xi+1}=\bigl(\oplus Z_n\bigr)_{\ell_2}$.  Define $A_{\xi+1}:V_{\xi+1}\to W_{\xi+1}$ by $A_{\xi+1}|_{Y_n}=\oplus_{i=1}^n A_\xi:Y_n\to Z_n$.  If $V_\zeta$, $W_\zeta$, and $Z_\zeta$ have been defined for each $\zeta$ less than the limit ordinal $\xi$, let $$V_\xi=\bigl(\oplus_{\zeta<\xi}V_\zeta\bigr)_{\ell_1([0,\xi))},$$ $$W_\xi=\bigl(\oplus_{\zeta<\xi} W_\zeta\bigr)_{\ell_2([0, \xi))},$$ $$A_\xi|_{V_\zeta}=A_\zeta.$$   It is obvious that $\|A_\xi\|=1$, $V_\xi$ is isometric to an $\ell_1(\Gamma_\xi)$ space for each $\xi$ and some set $\Gamma_\xi$, $V_\xi$ is isometric to $\ell_1$ when $0<\xi<\omega_1$, and that $W_\xi$ is separable when $\xi<\omega_1$.  Moreover, since we know the $\ell_2$ sum of Banach spaces not containing $\ell_1$ also does not contain $\ell_1$, $W_\xi$ fails to contain a copy of $\ell_1$ for each $\xi$, and $A_\xi$ is necessarily strictly singular. Since $W_\xi$ has an unconditional basis and contains no copy of $\ell_1$, the basis is shrinking.  

We next claim that for $0\leqslant \xi<\omega$, $Sz(W_\xi)\leqslant \omega^\xi$.  The base case $\xi=0$ is trivial, since any finite dimensional space has Szlenk index $1=\omega^0$. Assume $Sz(W_\xi)\leqslant\omega^\xi$ for some $0\leqslant \xi<\omega$.   Suppose $W_{\xi+1}=\bigl(\oplus Y_n\bigr)_{\ell_2}$, $Y_n=\oplus_{i=1}^n W_\xi$.  It is known that the Szlenk index of a finite direct sum of separable Banach spaces is simply the maximum of the Szlenk indices of the summands \cite{OSZ}, so $Sz(Y_n)\leqslant \omega^\xi$ for each $n\in \nn$. Moreover, $$Sz\Bigl(\bigl(\oplus_{n=1}^\infty Y_n\bigr)_{\ell_2}\Bigr)\leqslant Sz(W_\xi)Sz(\ell_2)\leqslant \omega^{\xi+1},$$ again by a result from \cite{OSZ}.

Last, we show by induction the following.    

\begin{enumerate}[(i)]  \item $\textbf{I}_1(W_\xi,1), \textbf{I}_p(W_\xi^p), \textbf{I}_\infty(W_\xi^*,1), \textbf{SS}(A_\xi, V_\xi, W_\xi,1)>\omega^\xi$, \item For $0<\xi$,  $\textbf{I}_1(W_\xi), \textbf{I}_p(W_\xi^p), \textbf{I}_\infty(W^*_\xi), \textbf{SS}(A_\xi, V_\xi, W_\xi)=\omega^{\xi+1}$. \end{enumerate}

For $\xi=0$, the assertions of $(i)$ are trivial, as they can be witnessed by any sequence of length $1$ consisting of a normalized vector. In this case, each index is exactly two, since each is bounded by $1+\dim W_0=2$.  

Next, recall that for any $1\leqslant p\leqslant \infty$, $X,Y\in \Ban$, and $K\geqslant 1$, if $\alpha<\textbf{I}_p(X,K)$ and $\beta<\textbf{I}_p(Y,K)$ for some $\alpha, \beta$, then $\textbf{I}_p(X\oplus_p Y, K)>\beta+\alpha$.  This is because if $$T_X=\{(x_i, 0)_{i=1}^n\in B_{X\oplus_p Y}: (x_i)_{i=1}^n\in T_p(X,K)$$ and $$T_Y= \{(0, y_i)_{i=1}^n\in B_{X\oplus_p Y}: (y_i)_{i=1}^n\in T_p(Y,K)\},$$ then $$\{s\verb!^! t:  s\in T_X, t\in T_Y\}\subset T_p(X\oplus_p Y, K).$$  An easy induction argument gives that for each $\eta\leqslant \beta$, $$\{s\verb!^! t:  s\in T_X, t\in T_Y^\eta\}\subset T_p(X\oplus_p Y, K)^\eta.$$ Since $o(T_Y)=\textbf{I}_p(Y,K)>\beta$, we deduce that $T_X\subset T_p(X\oplus_p Y, K)^\beta$.  Again, since $o(T_X)=\textbf{I}_p(X,K)>\alpha$, we deduce that $T_p(X\oplus_p Y,K)^{\beta+\eta}\neq \varnothing$ for each $\eta\leqslant \alpha$, which gives the result.  

Similarly, if $B_1:E_1\to F_1$ and $B_2:E_2\to F_2$, then for any $K\geqslant 1$, if $\textbf{SS}(B_i, E_i, F_i)>\alpha_i$ for $i=1,2$, $$\textbf{SS}(B_1\oplus B_2, E_1\oplus_1 E_2, F_1\oplus_1 E_2)>\alpha_1+\alpha_2.$$  The argument is essentially the same as in the previous paragraph.  

Using this, we prove the successor case of $(i)$.  We deduce from the fact that $\textbf{I}_1(W_\xi,1)>\omega^\xi$ that $\textbf{I}_1(Z_n, 1)>\omega^\xi n$ for each $n$, so that $$\textbf{I}_1(W_{\xi+1}, 1)\geqslant \sup_n \textbf{I}_1(Z_n, 1) \geqslant \sup_n \omega^\xi n=\omega^{\xi+1}.$$ Since $\textbf{I}_1(W_{\xi+1}, 1)$ must be a successor, this inequality must be strict.  The same argument gives the remaining claims of $(i)$ in the successor case.  

To prove $(ii)$ in the successor case, first assume $\xi<\omega$.  Then by Proposition \ref{mixed}, for $1\leqslant p<\infty$, $$\textbf{I}_p(W_{\xi+1}^p), \textbf{SS}(A_{\xi+1}, V_{\xi+1}, W_{\xi+1})\leqslant \omega Sz(W_{\xi+1})=\omega^{\xi+2},$$ and of course all of these inequalities must be equality by $(i)$ and Proposition \ref{special form}.   By Theorem \ref{dualize}, $\textbf{I}_\infty(W_{\xi+1})\leqslant \textbf{I}_1(W_{\xi+1})=\omega^{\xi+2}$, and this must also be equality.  

Next, assume $\xi\geqslant \omega$.  We deduce from Proposition \ref{convexify} that $\textbf{I}_p(W_\xi^p)=\textbf{I}_1(W_\xi)$ in this case. With $W_{\xi+1}=\bigl(\oplus Z_n\bigr)_{\ell_2}$, we deduce from Proposition \ref{finite direct sum} that $\textbf{I}_1(\oplus_{i=1}^n Z_i)=\omega^{\xi+1}$ and from Proposition \ref{infinite direct sum} that $$\textbf{I}_1\Bigl(\bigl(\oplus Z_n\bigr)_{\ell_2}\Bigr)\leqslant \omega^{\xi+1}\textbf{I}_1(\ell_2)=\omega^{\xi+2}.$$ We use Theorem \ref{dualize} to deduce $\textbf{I}_\infty(W^*_{\xi+1})\leqslant \omega^{\xi+2}$.  We deduce from Corollary \ref{mixing ss and ellp} that $$\textbf{SS}(A_{\xi+1}, V_{\xi+1}, W_{\xi+1})\leqslant \omega \textbf{I}_1(W_{\xi+1})=\omega \omega^{\xi+2}=\omega^{\xi+2}.$$ 

Last, suppose $\xi$ is a limit ordinal.  Then since $W_\xi=\bigl(\oplus_{\zeta<\xi} W_\zeta)_{\ell_2([0,\xi))}$, $$\textbf{I}_1(W_\xi, 1)\geqslant \sup_{\zeta<\xi} \textbf{I}_1(W_\zeta,1)\geqslant \omega^\xi.$$  Since $\textbf{I}_1(W_\xi,1)$ must be a successor, this inequality is strict.  The same argument provides the remainder of the estimates of $(i)$.   Since $\xi\geqslant \omega$, Proposition \ref{convexify} guarantees that $\textbf{I}_p(W_\xi^p)=\textbf{I}_1(W_\xi)$ for each $1\leqslant p<\infty$.  For each finite subset $I$ of $[0,\xi)$, $\textbf{I}_1(\oplus_{\zeta\in I} W_\zeta)\leqslant \omega \omega^{\max I+1} <\omega^\xi$ by Proposition \ref{finite direct sum}.  Then Proposition \ref{infinite direct sum} guarantees that $\textbf{I}_1(W_\xi)\leqslant \omega^\xi \textbf{I}_1(\ell_2)=\omega^{\xi+1}$.  By Theorem \ref{dualize}, $\textbf{I}_\infty(W_\xi^*)\leqslant \omega^{\xi+1}$.  By Corollary \ref{mixing ss and ellp}, $\textbf{SS}(A_\xi, V_\xi, W_\xi)\leqslant \omega \omega^{\xi+1}=\omega^{\xi+1}$.

Of course, $W_0$, $W_0^p$, $W_0^*$ can admit no $\ell_p^1$ or $c_0^1$ spreading models, and $A_0\in \sss\sss_1(V_0, W_0)$, since $A_0$ has rank $1$.  For $0<\xi$, The fact that $W_\xi$ does not admit an $\ell_1^1$ spreading model comes from Corollary \ref{spreading model corollary}. The fact that $W_\xi^p$ does not admit an $\ell_p^1$ spreading model then follows from Proposition \ref{convexify spreading model}, and the fact that $W_\xi^*$ does not admit a $c_0^1$ spreading model follows from Theorem \ref{dualize}.  We deduce that $A_\xi\in \sss\sss_1$ from Proposition \ref{strictly singular thing}.  
\end{proof}

\begin{proof}[Proof of Theorem \ref{characterization}]$(i),(ii)$ Note that $\textbf{I}_p(\ell_2)=\omega$ for any $p\in [1,\infty]\setminus \{2\}$.  If we fix $\theta_n\downarrow 0$, let $D:\ell_2\to \ell_2$ be defined by $D(\sum a_ne_n)=\sum \theta_n a_ne_n$ and $D_m\sum a_ne_n=\sum_{n=1}^m \theta_na_ne_n$.  Then $D_m\to D$.  Clearly $\textbf{NP}_p(D, W, W), \textbf{SS}(D,W,W)\geqslant \omega$, since $D$ is not finite rank.  But since $D_m$ is finite rank, $D_m\in \mathfrak{NP}^1_p, \mathfrak{SS}^1$, and since these classes are closed, $D\in \mathfrak{NP}^1_p, \mathfrak{SS}^1$.  Therefore $\textbf{NP}_p(D, \ell_2, \ell_2), \textbf{SS}(D, \ell_2, \ell_2)=\omega$.

Next, assume $\xi$ is any successor exceeding $1$.  Then $$\textbf{I}_1(W_{\xi-1})=\textbf{I}_p(W_{\xi-1}^p)=\textbf{I}_\infty(W_{\xi-1}^*)=\textbf{SS}(A_{\xi-1}, V_{\xi-1}, W_{\xi-1})=\omega^\xi.$$

Last, assume $\xi$ is a limit ordinal.  Note that $\xi$ has countable cofinality if and only if $\omega^\xi$ does.  If $\xi$ has uncountable cofinality, we have already explained why there can exist no operator with $\textbf{NP}_p$ or $\textbf{SS}$ index equal to $\omega^\xi$.  Suppose $\xi$ has countable cofinality and fix $\xi_n\uparrow \xi$, noting that $\xi_n+1\uparrow \xi$.   In the remainder of the proof, $W_\zeta$, $V_\zeta$, and $A_\zeta$ are as in Lemma \ref{successors}.    Let $W=\bigl(\oplus W_{\xi_n}\bigr)_{\ell_2}$ and let $D:W\to W$ be defined by $D|_{W_{\xi_n}}=\theta_n I_{W_{\xi_n}}$, $\theta_n\downarrow 0$. Let $D_m=\sum_{n=1}^m \theta_n I_{W_{\xi_n}}$.  Of course, $\textbf{NP}_1(D)\geqslant \sup_n \textbf{I}_1(W_{\xi_n})=\omega^\xi$.  But $\textbf{NP}_1(D_m, W, W)\leqslant \textbf{I}_1(\oplus_{n=1}^m W_{\xi_n}) \leqslant \omega \omega^{\xi_n+1}<\omega^\xi$ by Proposition \ref{finite direct sum}.  Since $D_m\to D$, $D_m\in \mathfrak{NP}^\xi_1$, and since this class is closed, $\textbf{NP}_1(D, W, W)\leqslant \omega^\xi$, and this must be equality.  We claim that similar diagonal operators $D^p:\bigl(\oplus W_{\xi_n}^p\bigr)_{\ell_2}\to \bigl(\oplus W_{\xi_n}^p\bigr)_{\ell_2}$, $D^*:\bigl(\oplus W_{\xi_n}^*\bigr)\to \bigl(\oplus W_{\xi_n}^*\bigr)$, and $D^{SS}:\bigl(\oplus V_{\xi_n}\bigr)\to \bigl(\oplus W_{\xi_n}\bigr)$ yield the $\textbf{NP}_p$, $\textbf{NP}_\infty$, and $\textbf{SS}$ cases.  The first two of these operators are vanishing multiples of the identities on $W_{\xi_n}^p$ and $W_{\xi_n}^*$, respectively, and the third consists of vanishing multiples of the operators $A_{\xi_n}$.  The estimates $$\textbf{NP}_p(D^p, (\oplus W_{\xi_n}^p), (\oplus W_{\xi_n}^p) ), \textbf{NP}_\infty(D^*, (\oplus W_{\xi_n}^*), (\oplus W_{\xi_n}^*)), \textbf{SS}(D^{SS}, (\oplus V_{\xi_n}), (\oplus W_{\xi_n}))\geqslant \omega^\xi$$ follow as in the $p=1$ case, and it remains to establish the upper estimate.  Let $D_m^p, D_m^*$, and $D_m^{SS}$ be the initial segments of the diagonal operator, as in the $p=1$ case.  It suffices to provide the desired upper estimate for $D^p_m, D^*_m$, and $D_m^{SS}$ for each $m\in \nn$.  We note that $$\textbf{NP}_p(D_m^p, (\oplus W_{\xi_n}^p), (\oplus W_{\xi_n}^p)) \leqslant \textbf{I}_p((\oplus_{n=1}^m W_{\xi_n}^p))\leqslant \omega \textbf{I}_1(\oplus_{n=1}^m W_{\xi_n}) \leqslant \omega^2 \omega^{\xi_m+1}<\omega^\xi$$ using Propositions \ref{convexify}, \ref{finite direct sum}.  By Theorem \ref{dualize}, since $D^*_m$ is the adjoint of $D_m$ as defined in the $p=1$ case, $\textbf{NP}_\infty(D^*_m, W^*, W^*)\leqslant \textbf{NP}_1(D_m,W, W)<\omega^\xi$.  By Propositions \ref{mixing ss and ellp} and \ref{finite direct sum}, $$\textbf{SS}(D_m^{SS}, (\oplus V_{\xi_n}), (\oplus W_{\xi_n}))\leqslant \omega \textbf{I}_1(\oplus_{n=1}^m W_{\xi_n})<\omega^\xi.$$

$(iii)$ and $(iv)$ have already been noted for successors using the identity on one of the spaces $X_\xi$, $X^p_\xi$, $X_\xi^*$, or $X_{\xi,2}$.  Again, appropriate diagonal operators give the limit ordinal cases.

\end{proof}

\begin{corollary} For any $0<\xi\in \ord$, $1\leqslant p\leqslant \infty$, $\cup_{\zeta<\xi}\mathfrak{NP}_p^\zeta$ fails to be closed if and only if the cofinality of $\xi$ is countably infinite, and the same is true of $\cup_{\zeta<\xi}\mathfrak{SS}^\zeta$.  Moreover, for $0<\xi\leqslant \omega_1$, $\cup_{\zeta<\xi} \mathfrak{SM}_p^\zeta$ or $\cup_{\zeta<\xi}\mathfrak{WC}^\zeta$ fails to be closed if and only if $\xi$ is a countable limit ordinal.

\end{corollary}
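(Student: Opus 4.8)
The plan is to split according to $\mathrm{cf}(\xi)$ and, for the non-closed cases, to invoke the diagonal operators already constructed in the proof of Theorem \ref{characterization}.

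First I would record the standing facts that make the ``closed'' direction routine. For each of the four families the classes are nested in the parameter (since $\omega^\zeta$, resp.\ $\zeta$, increases), and each individual class is norm closed: $\mathfrak{NP}_p^\zeta(X,Y)=\{A:\textbf{NP}_p(A,X,Y)\leqslant \omega^\zeta\}$ is closed by Proposition \ref{tedious}$(ii)$, $\mathfrak{SS}^\zeta$ is closed by the analogous norm-closedness statement for the strictly singular index, and the norm-closedness of $\mathfrak{SM}_p^\zeta$ and of $\mathfrak{WC}^\zeta=\mathfrak{WC}\cap\mathfrak{SM}_1^\zeta$ was recorded above. Hence, if $\xi$ is a successor, $\cup_{\zeta<\xi}\mathfrak{NP}_p^\zeta$ collapses to its largest member $\mathfrak{NP}_p^{\xi-1}$ (and similarly for $\mathfrak{SS}$, $\mathfrak{SM}_p$, $\mathfrak{WC}$; when $\xi=1$ the union is empty), which is norm closed. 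If instead $\xi$ is a limit ordinal with uncountable cofinality, and $A_m\to A$ in norm with $A_m\in\mathfrak{NP}_p^{\zeta_m}$, $\zeta_m<\xi$, then $\{\zeta_m\}_m$ is a countable subset of $\xi$, hence $\zeta_\infty:=\sup_m\zeta_m<\xi$; by nestedness $A_m\in\mathfrak{NP}_p^{\zeta_\infty}$ for all $m$, so $A\in\mathfrak{NP}_p^{\zeta_\infty}\subseteq\cup_{\zeta<\xi}\mathfrak{NP}_p^\zeta$ and the union is closed. The same verbatim argument applies to $\mathfrak{SS}$, $\mathfrak{SM}_p$ and $\mathfrak{WC}$; in the latter two families the only relevant limit ordinal with uncountable cofinality is $\xi=\omega_1$ itself. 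This establishes the ``closed'' implication in both parts: when $\mathrm{cf}(\xi)$ is $1$ or uncountable (equivalently, for $\xi\leqslant\omega_1$, when $\xi$ is not a countable limit ordinal), the union is norm closed.

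For the converse, let $\xi$ be a limit ordinal of countable cofinality and fix $\xi_n\uparrow\xi$; then $\omega^\xi=\sup_{\zeta<\xi}\omega^\zeta$, so $\cup_{\zeta<\xi}\mathfrak{NP}_p^\zeta=\{A:\textbf{NP}_p(A)<\omega^\xi\}$, and likewise for $\mathfrak{SS}$. Now take the diagonal operators built in the proof of Theorem \ref{characterization}$(i)$, $(ii)$ (with the spaces $W_{\xi_n},V_{\xi_n},A_{\xi_n}$ of Lemma \ref{successors}): the operator $D$ on $(\oplus_n W_{\xi_n})_{\ell_2}$, together with its analogues $D^p$ on $(\oplus_n W_{\xi_n}^p)_{\ell_2}$, $D^*$ on $(\oplus_n W_{\xi_n}^*)$, and $D^{SS}\colon(\oplus_n V_{\xi_n})\to(\oplus_n W_{\xi_n})$. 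Each is the norm limit of its finite truncations $D_m$ (resp.\ $D_m^p$, $D_m^*$, $D_m^{SS}$), and it is shown there --- using the finite direct-sum estimates (Proposition \ref{finite direct sum}, Corollary \ref{mixing ss and ellp}, Proposition \ref{convexify}) --- that the truncations have $\textbf{NP}_p$ (resp.\ $\textbf{SS}$) index strictly below $\omega^\xi$, hence lie in $\cup_{\zeta<\xi}\mathfrak{NP}_p^\zeta$ (resp.\ $\cup_{\zeta<\xi}\mathfrak{SS}^\zeta$), while the limiting operator has index exactly $\omega^\xi$ and so lies in neither union. Thus both unions fail to be norm closed, completing the first part.

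For $\mathfrak{SM}_p$ and $\mathfrak{WC}$ with $\xi\leqslant\omega_1$ a countable limit ordinal, the same scheme applies: since $\xi$ is a limit ordinal, $\cup_{\zeta<\xi}\mathfrak{SM}_p^\zeta=\{A:\textbf{SM}_p(A)<\xi\}$ and $\cup_{\zeta<\xi}\mathfrak{WC}^\zeta=\{A\in\mathfrak{WC}:\textbf{SWC}(A)<\xi\}$, and the diagonal operators produced in the proof of Theorem \ref{characterization}$(iii)$, $(iv)$ from blocks of index $\xi_n$ (the Schreier-type spaces used there for $\mathfrak{SM}_p$, and the reflexive spaces $X_{\xi_n,2}$ for $\mathfrak{WC}$) are norm limits of truncations whose $\textbf{SM}_p$, resp.\ $\textbf{SWC}$, index is $<\xi$, whereas the limit has index exactly $\xi$. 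I expect the only genuinely delicate point to be the one the argument pushes onto Theorem \ref{characterization}: namely that the extremal operator is \emph{approximable in norm} by operators of strictly smaller index. This is not the same as merely possessing an operator of index $\omega^\xi$ (resp.\ $\xi$) --- such operators exist also when $\xi$ is a successor, e.g.\ the identity on $W_{\xi-1}$, yet that identity cannot be so approximated, since $\mathfrak{NP}_p^{\xi-1}$ is closed and (by Proposition \ref{special form}$(ii)$) no index lies strictly between $\omega^{\xi-1}$ and $\omega^\xi$; approximability is available precisely at limit ordinals of countable cofinality, via the vanishing-multiple diagonal constructions, which is exactly what forces the stated dichotomy.
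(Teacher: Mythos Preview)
Your proof is correct and follows essentially the same approach as the paper's: handle the successor and uncountable-cofinality cases by collapsing to a single closed class (resp.\ bounding the countable supremum of indices), and for countable limit cofinality invoke the diagonal operators from the proof of Theorem \ref{characterization} as norm limits of their finite truncations. Your write-up is more detailed than the paper's --- in particular your closing paragraph explaining why approximability distinguishes the countable-limit case from the successor case is a nice addition --- but the underlying argument is the same.
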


\begin{proof} If $\xi$ has cofinality $1$, $\xi$ is a successor, say $\xi=\eta+1$, so $\cup_{\zeta<\xi}\mathfrak{NP}_p^\zeta=\mathfrak{NP}_p^\eta$ is closed. The proof of each of the remaining statements for successor ordinals is similar.   

If $\xi$ has uncountable cofinality, fix any $X,Y\in \Ban$ and any sequence $(T_n)\subset \cup_{\zeta<\xi} \mathfrak{NP}_p^\zeta(X,Y)$ with $T_n\to T$ in norm. Then $(T_n)\subset \mathfrak{NP}_p^\eta(X,Y)$, where $\eta=\sup_n \mathbf{NP}_p(T_n, X, Y)<\xi$.  By closedness of $\mathfrak{NP}_p^\eta(X,Y)$, $T\in \mathfrak{NP}_p^\eta(X,Y)\subset \cup_{\zeta<\xi}\mathfrak{NP}_p^\zeta(X,Y)$.  The proof of the statement for $\cup_{\zeta<\xi}\mathfrak{SS}^\zeta$ is similar.  

If $\xi$ has countably infinite cofinality, the diagonal operators in the proof of Theorem \ref{characterization} give examples of operators having index strictly less than $\omega^\xi$ converging to an operator having index $\omega^\xi$.

\end{proof}

\begin{remark} Note that $\cup_{\zeta<\xi} \mathfrak{NP}_p^\zeta$ consists precisely of the operators $A:X\to Y$, $X,Y\in \Ban$, for which $\textbf{NP}_p(A,X,Y)<\omega^\xi$ except in the case that $\xi=1$.  In this case, $\cup_{\zeta<\xi} \mathfrak{NP}_p^\zeta$ consists simply of zero operators, while the the latter class consists of all finite rank operators.  Of course, the latter class also fails to be closed.

\end{remark}

\begin{proof}[Proof of Theorems \ref{distinct1}, \ref{distinct2}, \ref{distinct3}, \ref{distinct4}]

Theorem \ref{distinct1}$(i)$ follows from Theorem \ref{characterization}.  Part $(ii)$ follows from the fact that for any $1\leqslant p, q, \leqslant \infty$ with $p\neq q$, $I_q(\ell_p)\in \{\omega, \omega^2\}$ (or $I_q(c_0)$ if $p=\infty$).  This follows from Proposition \ref{block index}, since if $I_q(\ell_p)>\omega^2$, $\ell_q$ would be block finitely representable in $\ell_p$.  Thus for $1<\zeta$, $I_{\ell_p}\in \mathfrak{NP}_q^\zeta\setminus \mathfrak{NP}_p^\xi$.  For part $(iii)$, note that $I_q(\ell_p)>\omega$ if and only if $\ell_q$ is finitely representable in $\ell_p$, which happens if and only if $p\leqslant q\leqslant 2$.  Thus $I_{\ell_p}\in \mathfrak{NP}_q^1\setminus \mathfrak{NP}_p^\xi$ unless $p\leqslant q\leqslant 2$.  But if $p\leqslant q\leqslant 2$ and $A:X\to Y\notin \mathfrak{NP}^\xi_p$, then there exists $K\geqslant 1$ so that for each $n\in \nn$ we can find $(x_i^n)_{i=1}^n\subset X$ so that $(x_i^n)_{i=1}^n$ and $(Ax_i^n)_{i=1}^n$ are $K$-equivalent to $\ell_p^n$.   Because $\ell_q$ is finitely representable in $\ell_p$,  we can find natural numbers $k_n$ with $k_n\to \infty$ and sequences $(z_i^n)_{i=1}^{k_n}\subset [(x_i^n)]_{i=1}^n$ so that $(z_i^n)_{i=1}^{k_n}$ and its images under $A$ are $C$-equivalent to the $\ell_q^{k_n}$ basis for some $C$ independent of $n$.

For Theorem \ref{distinct2}$(i)$, we have already seen that $X_\xi$, $X_\xi^p$, and $X_\xi^*$ contain $\ell_1^\xi$, $\ell_p^\xi$, and $c_0^\xi$ spreading models and not $\ell_1^{\xi+1}$, $\ell_p^{\xi+1}$, or $c_0^{\xi+1}$ spreading models, respectively. It follows from Proposition \ref{infinite direct sum spreading model}, Proposition \ref{convexify spreading model}, and Theorem \ref{dualize} that if $0<\xi<\omega_1$, $1<r,p<\infty$, and $\xi_n\uparrow \xi$, $(\oplus_n X_{\xi_n,2})_{\ell_r}$, $(\oplus_n X_{\xi_n}^p)_{n\in \ell_{pr}}$, and $(\oplus_n X_{\xi_n}^*)_{\ell_r}$ do not admit $\ell_1^\xi$, $\ell_p^\xi$, or $c_0^\xi$ spreading models, respectively, but do admit all smaller spreading models.  Of course, $\ell_p$ contains all $\ell_p^\xi$ spreading models and no $\ell_q^\zeta$ spreading models, which means that if $\mathfrak{SM}_q^\zeta\subset \mathfrak{SM}_p^\xi$, $p=q$.  This together with Theorem \ref{distinct2}$(i)$ gives $(ii)$.  We also gave an example above, namely $X_{\xi, 2}$, of a space the identity of which lies in $\mathfrak{WC}^{\xi+1}\setminus \mathfrak{WC}^\xi$.  

For Theorem \ref{distinct3}$(ii)$, the examples $W_\xi$, $W_\xi^p$, and $W_\xi^*$ from Proposition \ref{successors} have identity operators lying in $\mathfrak{SM}^1_1\setminus \mathfrak{NP}_1^\xi$, $\mathfrak{SM}_p^1\setminus \mathfrak{NP}_p^\xi$, and $\mathfrak{SM}_\infty^1\setminus \mathfrak{NP}_\infty^\xi$, respectively. These examples also show that $\mathfrak{NP}_p^\xi\neq \mathfrak{SM}_p^\xi$, which is part of $(i)$.  To show the rest of  $(i)$, $\mathfrak{NP}_p^\xi\subset \mathfrak{SM}_p^\xi$, we note that if $A:X\to Y$ and $(x_n)\subset B_X$ is such that $(x_n)_{n\in E}\lesssim_1 (e_n)_{n\in E}$ and $(e_n)_{n\in E}\lesssim_K (Ax_n)_{n\in E}$ for each $E\in \sss_\xi$, where $(e_n)$ is the $\ell_p$ (resp. $c_0$) basis, then $(x_{\max E|_i})_{i=1}^{|E|}\in T_p(A,X,Y,K)$ for each $E\in \sss_\xi$.  One checks by induction on $\zeta$ that $(x_{\max E|_i})_{i=1}^{|E|}\in T_p(A,X,Y,K)^\zeta$ for each $E\in \sss_\xi^\zeta$.  Since $\sss_\xi^\zeta \neq \varnothing $ for each $\zeta\leqslant \omega^\xi+1$, we deduce $\textbf{NP}_p(A,X,Y,K)>\omega^\xi$, and $A\notin \mathfrak{NP}_p^\xi$.

Theorem \ref{distinct4} is similar to Theorem \ref{distinct3} using the examples $A_\xi:V_\xi\to W_\xi$ for $(ii)$.  

\end{proof}

\section{Descriptive set theoretic results}

\subsection{Property $(S')$}
In \cite{D}, a Schauder basis $(e_i)$ was said to have property $(S)$ if whenever $[(e_i)]$ does not embed into either $X$ or $Y$, $[(e_i)]$ does not embed into $X\oplus Y$.  In keeping with \cite{D}, we say that a Schauder basis has property $(S')$ provided that the class $\mathfrak{NP}_{(e_i)}$ is an ideal. Of course, any basis having property $(S')$ must have property $(S)$, since property $(S)$ may be restated as follows: If neither $P_X:X\oplus Y\to X$ nor $P_Y:X\oplus Y\to Y$ preserves a copy of $[(e_i)]$, then $P_X+P_Y$ does not preserve a copy of $[(e_i)]$.   It is obvious that $\mathfrak{NP}_{(e_i)}$ is an ideal if and only if it is closed under finite sums.  It is clear that having property $(S')$ is separably determined.  That is, $\mathfrak{NP}_{(e_i)}$ is an ideal if and only if whenever $A,B:X\to Y$ are operators between separable Banach spaces neither of which preserves a copy of $[(e_i)]$, then $A+B:X\to Y$ does not preserve a copy of $[(e_i)]$.  This is equivalent to the following: Whenever $\xi, \zeta<\omega_1$ and $A,B:X\to Y$ are operators between (not necessarily separable) Banach spaces such that $\textbf{NP}_{(e_i)}(A, X, Y)=\xi$, $\textbf{NP}_{(e_i)}(B,X,Y)=\zeta$, then $\textbf{NP}_{(e_i)}(A+B, X,Y)<\omega_1$.   

Of course, if $A+B:X\to Y$ fails to be strictly singular, then either $A$ or $B$ must fail to be strictly singular.  From this we deduce that if $(e_i)$ is a basis for a minimal Banach space, then $(e_i)$ has property $(S')$.  

\begin{example}\upshape Let $(s_i)$ denote the summing basis of $c_0$, $(f_i)$ the canonical $\ell_2$ basis.  Let $e_i=s_i+f_i\in c_0\oplus_\infty \ell_2=:X$.  Then $(e_i)$ is a normalized Schauder basis for its closed span.  Moreover, if $P_{c_0}:X\to c_0$ and $P_{\ell_2}:X\to \ell_2$ are the projections onto the summands, neither $P_{c_0}$ nor $P_{\ell_2}$ preserves a copy of $[(e_i)]$, while $I_X=P_{c_0}+P_{\ell_2}$ obviously does.  To see that neither projection preserves a copy of $[(e_i)]$, observe that if $(x_i)\subset c_0$ is a bounded sequence so that $\|x_i-x_j\|\geqslant \ee>0$ for all $i,j\in \nn$, $i\neq j$, then there exist $n_1<n_2<\ldots$ so that $(x_{n_{2i}}-x_{n_{2i-1}})$ is equivalent to the $c_0$ basis.  However, for any $n_1<n_2<\ldots$, $(e_{n_{2i}}-e_{n_{2i-1}})$ dominates the $\ell_2$ basis, and there can be no sequence in $c_0$ equivalent to $(e_i)$.  This means that $P_{c_0}$ cannot preserve a copy of $[(e_i)]$.  Next note that since $(e_i)$ is normalized and dominates the summing basis, it is a normalized basic sequence which is not weakly null.  This means $\ell_2$, since it is reflexive, can admit no sequence equivalent to $(e_i)$, and thus $P_{\ell_2}$ does not preserve a copy of $[(e_i)]$. 

\end{example}

We have already established explicit estimates on $\textbf{NP}_p(A+B,X,Y)$ in terms of $\textbf{NP}_p(A,X,Y)$, $\textbf{NP}_p(B,X,Y)$.  These estimates depended on the fact that the trees $T_p(A,X,Y, K)$ are $p$-absolutely convex.  If one defines $(e_i)$-block closed analogously to $p$-absolutely convex, and if one asks what property must possessed by the basis $(e_i)$ in order to guarantee that $T_{(e_i)}(A, X, Y, K)$ is $(e_i)$-block closed, or what property must be possessed so that the weaker but still sufficient condition that there exists $C\geqslant 1$ so that any $(e_i)$-block of $T_{(e_i)}(A, X, Y, K)$ lies in $T_{(e_i)}(A, X, Y, CK)$, one sees that the necessary condition on $(e_i)$ which yields this is perfect homogeneity.  Of course, by Zippin's result \cite{Z}, this means that the arguments we used work only for the $\ell_p$ and $c_0$ bases.  Therefore our combinatorial methods which yielded explicit estimates on $\textbf{NP}_p(A+B,X,Y)$ in terms of $\textbf{NP}_p(A,X,Y)$ and $\textbf{NP}_p(A,X,Y)$ do not yield estimates for other bases.  We will use descriptive set theoretic methods to prove that it is possible to provide a countable upper bound on $\textbf{NP}_{(e_i)}(A+B, X,Y)$ in terms of $\textbf{NP}_{(e_i)}(A,X,Y)$ and $\textbf{NP}_{(e_i)}(B,X,Y)$ when $X, Y$ are separable, $A, B\in \mathfrak{NP}_{(e_i)}$, and $(e_i)$ has property $(S')$.  We recall the following

\begin{theorem}\cite{D} If $(e_i)$ is a Schauder basis with property $(S)$, there exists a function $\psi_{(e_i)}:[1, \omega_1)\times [1, \omega_1)\to [1, \omega_1)$ so that if $X, Y$ are separable Banach spaces neither of which contains a copy of $[(e_i)]$, then $\emph{\textbf{I}}_{(e_i)}(X\oplus Y) \leqslant \psi_{(e_i)}(\emph{\textbf{I}}_{(e_i)}(X), \emph{\textbf{I}}_{(e_i)}(Y))$.  

\end{theorem}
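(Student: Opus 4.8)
The plan is to deduce this from the $\mathbf{\Pi}^1_1$-boundedness theorem applied to the Bourgain index, once the relevant sets and maps are placed in the appropriate descriptive-set-theoretic complexity classes. Work in a standard Borel space $SB$ coding the separable Banach spaces (for instance the Effros--Borel space of closed subspaces of $C(2^\nn)$), recalling that every separable Banach space is isometric to a member of $SB$ and that, by Proposition \ref{tedious}$(iii)$, $\textbf{I}_{(e_i)}$ is an isomorphic invariant, so that $\textbf{I}_{(e_i)}(X\oplus Y)$ does not depend on the chosen direct-sum norm nor on the chosen isometric copies. First I would record three standard facts. (a) The set $NC_{(e_i)}=\{Z\in SB:[(e_i)]\text{ does not embed isomorphically into }Z\}$ is coanalytic, since ``$E$ embeds into $Z$'' is an analytic condition for any fixed separable $E$. (b) By Bourgain's version of the Kunen--Martin theorem (\cite{Bo}, cf.\ Proposition \ref{tedious}$(iv)$), $NC_{(e_i)}=\{Z:\textbf{I}_{(e_i)}(Z)<\omega_1\}$, and moreover $Z\mapsto\textbf{I}_{(e_i)}(Z)$ is a $\mathbf{\Pi}^1_1$-rank on $NC_{(e_i)}$; in particular each $B_\xi:=\{Z\in SB:\textbf{I}_{(e_i)}(Z)\leqslant\xi\}$ is Borel for $\xi<\omega_1$, and for every analytic $A\subseteq NC_{(e_i)}$ one has $\sup_{Z\in A}\textbf{I}_{(e_i)}(Z)<\omega_1$. (c) The direct-sum operation $\oplus\colon SB\times SB\to SB$, $(X,Y)\mapsto X\oplus_\infty Y$, can be realized as a Borel map.

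Next I would invoke property $(S)$ in its reformulation before the theorem: if $X,Y\in NC_{(e_i)}$ then $X\oplus Y\in NC_{(e_i)}$, i.e.\ $\oplus$ carries $NC_{(e_i)}\times NC_{(e_i)}$ into $NC_{(e_i)}$. Now fix $\xi,\zeta<\omega_1$. By (b) the set $B_\xi\times B_\zeta$ is Borel, so by (c) its image $A_{\xi,\zeta}:=\oplus(B_\xi\times B_\zeta)$ is analytic in $SB$; and $A_{\xi,\zeta}\subseteq NC_{(e_i)}$ by property $(S)$. Hence the boundedness in (b) yields
\[ \psi_{(e_i)}(\xi,\zeta):=\sup\{\textbf{I}_{(e_i)}(Z):Z\in A_{\xi,\zeta}\}<\omega_1, \]
which is well defined and $[1,\omega_1)$-valued since each $A_{\xi,\zeta}$ is nonempty and every index is at least $1$. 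To verify the conclusion, let $X,Y$ be separable with $[(e_i)]$ embedding into neither, and fix isometric copies $\tilde X,\tilde Y\in SB$; by (b), $\xi:=\textbf{I}_{(e_i)}(\tilde X)=\textbf{I}_{(e_i)}(X)<\omega_1$ and $\zeta:=\textbf{I}_{(e_i)}(\tilde Y)=\textbf{I}_{(e_i)}(Y)<\omega_1$, so $\tilde X\in B_\xi$, $\tilde Y\in B_\zeta$, and therefore $\oplus(\tilde X,\tilde Y)\in A_{\xi,\zeta}$, giving $\textbf{I}_{(e_i)}(X\oplus Y)=\textbf{I}_{(e_i)}(\oplus(\tilde X,\tilde Y))\leqslant\psi_{(e_i)}\big(\textbf{I}_{(e_i)}(X),\textbf{I}_{(e_i)}(Y)\big)$.

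The substance of the proof is entirely in facts (a)--(c); the deduction above is a short application of boundedness. I expect the main obstacle to be establishing (b): one must show that for each $K$ the assignment $Z\mapsto T_{(e_i)}(Z,K)$ is a Borel map into the standard Borel space of closed trees on (a Polish coding of) $Z$, that the tree-order function $o(\cdot)$ is a coanalytic rank, and that taking the supremum over $K\in\nn$ (rather than over a single $K$) still produces a coanalytic rank on $NC_{(e_i)}$ — so that both the Borelness of the initial segments $B_\xi$ and $\mathbf{\Pi}^1_1$-boundedness are available. A secondary technical point is exhibiting a Borel realization of $\oplus$ inside $SB$. Once these are in place, the only remaining routine checks are the isomorphic invariance used to pass between direct-sum norms and isometric copies, and the trivial boundary behaviour ensuring $\psi_{(e_i)}$ is genuinely defined on all of $[1,\omega_1)\times[1,\omega_1)$.
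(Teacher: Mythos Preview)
Your proposal is correct and follows the standard descriptive set theoretic argument; the paper does not supply its own proof of this statement (it is cited from \cite{D}), but the paper's proof of the operator generalization (Theorem \ref{our theorem}) proceeds by exactly the analogous route --- Borelness of the sublevel sets of the index, analyticity of the image under a Borel map encoding the sum, and $\Pi_1^1$-boundedness --- so your approach agrees with both the cited source and the paper's own adaptation.
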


Generalizing this result, the main result of this section is the following 

\begin{theorem} If $(e_i)$ is a Schauder basis with property $(S')$, there exists a function $\phi_{(e_i)}:[1, \omega_1)\times [1, \omega_1)\to [1, \omega_1)$ so that if $X,Y$ are Banach spaces and $A,B\in \mathfrak{NP}_{(e_i)}(X,Y)$ with $\emph{\textbf{NP}}_{(e_i)}(A,X,Y)=\xi<\omega_1$ and $\emph{\textbf{NP}}_{(e_i)}(B,X,Y)=\zeta<\omega_1$, $\emph{\textbf{NP}}_{(e_i)}(A+B, X, Y)\leqslant \phi_{(e_i)}(\xi, \zeta).$ 

\label{our theorem}
\end{theorem}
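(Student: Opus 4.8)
The plan is to derive Theorem \ref{our theorem} from the $\mathbf{\Sigma}^1_1$-boundedness principle for $\mathbf{\Pi}^1_1$-ranks, following the descriptive set-theoretic method of \cite{D}; property $(S')$ will enter only through a single set containment. First I would reduce to separable $X,Y$. If $X,Y$ are arbitrary, $A,B\in\mathfrak{NP}_{(e_i)}(X,Y)$ with $\textbf{NP}_{(e_i)}(A,X,Y)=\xi$, $\textbf{NP}_{(e_i)}(B,X,Y)=\zeta$, and $\textbf{NP}_{(e_i)}(A+B,X,Y)>\eta$ for some $\eta<\omega_1$, then by Proposition \ref{indexing} there are $K\geqslant 1$ and $(x_t)_{t\in\ttt_\eta}\subset B_X$ with $(x_{t|_i})_{i=1}^{|t|}\in T_{(e_i)}(A+B,X,Y,K)$ for all $t$; as $\ttt_\eta$ is countable, $X_0:=\overline{\sspan}\{x_t:t\in\ttt_\eta\}$ and $Y_0:=\overline{\sspan}\big(A(X_0)\cup B(X_0)\big)$ are separable. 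Since $Y_0\subseteq Y$ isometrically and carries the relevant range vectors with the same norm, the same tree witnesses $\textbf{NP}_{(e_i)}(A|_{X_0}+B|_{X_0},X_0,Y_0)>\eta$, while Proposition \ref{tedious}(iii) gives $\textbf{NP}_{(e_i)}(A|_{X_0},X_0,Y_0)\leqslant\xi$ and $\textbf{NP}_{(e_i)}(B|_{X_0},X_0,Y_0)\leqslant\zeta$, so a bound in the separable case transfers. Moreover, replacing $(A,B)$ by $(A/c,B/c)$ with $c=\max(\|A\|,\|B\|)$ when both are nonzero changes none of the three indices (they are invariant under nonzero scaling), so I may assume $\|A\|,\|B\|\leqslant 1$, and the case $A=0$ or $B=0$ is trivial.

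Next I would fix, as in \cite{D}, a standard Borel space $\mathcal{E}$ whose points code quadruples $(X,Y,A,B)$ with $X,Y$ closed subspaces of $C(2^\nn)$ and $A,B:X\to Y$ operators of norm $\leqslant 1$, such that the assignments $(X,Y,A,B)\mapsto A,\ \mapsto B,\ \mapsto A+B$ and $(X,Y,C)\mapsto T_{(e_i)}(C,X,Y,K)$ (a closed set of finite sequences in $B_{C(2^\nn)}$, Borel in $(X,Y,C)$ and in $K\in\nn$) are Borel. For $C\in\{A,B,A+B\}$ set $\widehat T^{\,C}:=\{\varnothing\}\cup\{(K)\con s:\ K\in\nn,\ s\in T_{(e_i)}(C,X,Y,K)\}$, a closed tree with $o(\widehat T^{\,C})=\sup_K\big(\textbf{NP}_{(e_i)}(C,X,Y,K)+1\big)$, so that $\textbf{NP}_{(e_i)}(C,X,Y)\leqslant o(\widehat T^{\,C})\leqslant\textbf{NP}_{(e_i)}(C,X,Y)+1$ and $\widehat T^{\,C}$ is well-founded exactly when $\textbf{NP}_{(e_i)}(C,X,Y)<\infty$. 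By Bourgain's form of the Kunen--Martin theorem (used already in Proposition \ref{tedious}(iv)), for each $C$ the set $P_C:=\{(X,Y,A,B)\in\mathcal{E}:\widehat T^{\,C}\text{ well-founded}\}$ is $\mathbf{\Pi}^1_1$ and $\rho_C:=o(\widehat T^{\,C})$ is a $\mathbf{\Pi}^1_1$-rank on $P_C$.

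Now the boundedness step. Fix $\xi,\zeta<\omega_1$. Since $\rho_A$ is a $\mathbf{\Pi}^1_1$-rank on $P_A$ the set $\{\rho_A\leqslant\xi+1\}$ is Borel, and likewise $\{\rho_B\leqslant\zeta+1\}$; hence
$$\mathcal{B}_{\xi,\zeta}:=\{(X,Y,A,B)\in\mathcal{E}:\ \rho_A\leqslant\xi+1\ \text{and}\ \rho_B\leqslant\zeta+1\}$$
is Borel, in particular $\mathbf{\Sigma}^1_1$, and it contains the code of every quadruple with $\textbf{NP}_{(e_i)}(A,X,Y)=\xi$, $\textbf{NP}_{(e_i)}(B,X,Y)=\zeta$ (after the normalizations above). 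Here property $(S')$ enters: for $(X,Y,A,B)\in\mathcal{B}_{\xi,\zeta}$ both $A$ and $B$ have countable $\textbf{NP}_{(e_i)}$-index, hence lie in $\mathfrak{NP}_{(e_i)}(X,Y)$, and since $\mathfrak{NP}_{(e_i)}$ is an ideal, $A+B\in\mathfrak{NP}_{(e_i)}(X,Y)$, i.e.\ $\textbf{NP}_{(e_i)}(A+B,X,Y)<\omega_1$; by Proposition \ref{tedious}(iv) (the domain is separable) $\widehat T^{\,A+B}$ is then well-founded. Thus $\mathcal{B}_{\xi,\zeta}\subseteq P_{A+B}$, so $\mathbf{\Sigma}^1_1$-boundedness for the $\mathbf{\Pi}^1_1$-rank $\rho_{A+B}$ yields $\beta_{\xi,\zeta}:=\sup\{\rho_{A+B}(x):x\in\mathcal{B}_{\xi,\zeta}\}<\omega_1$. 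Consequently $\textbf{NP}_{(e_i)}(A+B,X,Y)\leqslant\beta_{\xi,\zeta}$ for every such quadruple, and putting $\phi_{(e_i)}(\xi,\zeta):=\sup\{\beta_{\xi',\zeta'}:\xi'\leqslant\xi,\ \zeta'\leqslant\zeta\}$, a countable supremum of countable ordinals, finishes the separable case and hence, by the first paragraph, the theorem.

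I expect the main obstacle to be the middle step: constructing the standard Borel space of coded operators so that every tree-valued assignment above is Borel, and verifying that $o(\widehat T^{\,C})$ is a genuine $\mathbf{\Pi}^1_1$-rank (with Borel $\leqslant^{\mathbf{\Sigma}}$ and $\leqslant^{\mathbf{\Pi}}$ comparison relations), not merely a $\mathbf{\Pi}^1_1$-norm-valued function — this is exactly where Bourgain's refinement of Kunen--Martin and the Bossard--Dodos coding technology do the real work. Once that infrastructure is in place, the use of property $(S')$ is only the single containment $\mathcal{B}_{\xi,\zeta}\subseteq P_{A+B}$, and the conclusion is immediate from boundedness.
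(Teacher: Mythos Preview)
Your proposal is correct and follows essentially the same approach as the paper: code operators between separable spaces in a standard Borel space, establish that $\textbf{NP}_{(e_i)}$ is a $\Pi_1^1$ rank (the paper's Proposition \ref{coanalytic1}), use property $(S')$ to place the relevant Borel/analytic set of sums inside $\mathfrak{NP}_{(e_i)}$, invoke boundedness, and reduce arbitrary $X,Y$ to separable ones via a countable witnessing tree. The only differences are cosmetic---the paper codes single operators in its space $\mathfrak{L}$ and recovers sums via a closed constraint in $\mathfrak{L}^3$ rather than coding quadruples $(X,Y,A,B)$ directly, and it performs the separable reduction last rather than first; your norm normalization and the final extra supremum are harmless but unnecessary.
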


Note that we do not assume the spaces $X$ and $Y$ are separable.  This is because the property $\textbf{NP}_{(e_i)}(A+B, X, Y)>\phi_{(e_i)}(\xi, \zeta)$ is separably determined.  The fact that we do not need to assume $X$ and $Y$ are separable allows us to deduce the following result immediately from the discussion above and Theorem \ref{our theorem}.  The result is non-trivial, since, as we have seen with our examples $W_\xi$, there may be operators $A:X\to Y$ with $\omega_1<\textbf{NP}_{(e_i)}(A,X,Y)< \infty$.  

\begin{theorem} The class of operators $A:X\to Y$ such that $\emph{\textbf{NP}}_{(e_i)}(A,X,Y)<\omega_1$ is an ideal if and only if $(e_i)$ has property $(S')$.  

\end{theorem}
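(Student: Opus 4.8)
The plan is to show first that the class $\mathcal{C}(X,Y):=\{A\in\mathfrak{L}(X,Y):\textbf{NP}_{(e_i)}(A,X,Y)<\omega_1\}$ automatically satisfies every requirement of an operator ideal \emph{except possibly} closure under finite sums, so that the theorem reduces to the equivalence ``$\mathcal{C}$ is closed under addition $\iff$ $(e_i)$ has property $(S')$''. Indeed $\textbf{NP}_{(e_i)}(0,X,Y)=1$ and $\textbf{NP}_{(e_i)}(\lambda A,X,Y)=\textbf{NP}_{(e_i)}(A,X,Y)$ for $\lambda\neq0$ (the level-$K$ defining tree for $\lambda A$ coincides with the level-$K|\lambda|$ tree for $A$), so each component of $\mathcal{C}$ is closed under scalar multiplication; by Proposition \ref{tedious}$(i)$ every finite-rank $F$ has $\textbf{NP}_{(e_i)}(F,X,Y)=1+\rank(F)<\omega$, so each component contains the finite-rank operators; and by Proposition \ref{tedious}$(iii)$, $\textbf{NP}_{(e_i)}(ABC,W,Z)\leqslant\textbf{NP}_{(e_i)}(B,X,Y)$, so $\mathcal{C}$ is stable under left and right composition with bounded operators. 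Thus $\mathcal{C}$ is an operator ideal precisely when it is closed under addition.

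For the implication assuming property $(S')$, essentially all the work is carried by Theorem \ref{our theorem}. Given $A,B\in\mathcal{C}(X,Y)$, put $\xi=\textbf{NP}_{(e_i)}(A,X,Y)<\omega_1$ and $\zeta=\textbf{NP}_{(e_i)}(B,X,Y)<\omega_1$; a countable index is in particular finite, so $A,B\in\mathfrak{NP}_{(e_i)}(X,Y)$, and Theorem \ref{our theorem} gives $\textbf{NP}_{(e_i)}(A+B,X,Y)\leqslant\phi_{(e_i)}(\xi,\zeta)<\omega_1$, i.e.\ $A+B\in\mathcal{C}(X,Y)$. Combined with the first paragraph this shows $\mathcal{C}$ is an operator ideal.

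For the converse, I would assume $\mathcal{C}$ is an ideal (hence closed under addition) and prove $\mathfrak{NP}_{(e_i)}$ is closed under finite sums, which is property $(S')$. Fix $A,B\in\mathfrak{NP}_{(e_i)}(X,Y)$ and suppose toward a contradiction that $A+B$ preserves a copy of $(e_i)$: there is $Z\leqslant X$ isomorphic to $[(e_i)]$ with $(A+B)|_Z$ an isomorphic embedding. Then $Z$ is separable, so setting $Y_0:=\overline{A(Z)+B(Z)}$ (also separable), the restrictions $A|_Z,B|_Z\colon Z\to Y_0$ still lie in $\mathfrak{NP}_{(e_i)}(Z,Y_0)$, since a copy of $(e_i)$ preserved by $A|_Z$ or $B|_Z$ would be a copy preserved by $A$ or $B$ on a subspace of $X$. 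Because $Z$ is separable, Proposition \ref{tedious}$(iv)$ yields $\textbf{NP}_{(e_i)}(A|_Z,Z,Y_0),\textbf{NP}_{(e_i)}(B|_Z,Z,Y_0)<\omega_1$, so $A|_Z,B|_Z\in\mathcal{C}(Z,Y_0)$; closure of $\mathcal{C}$ under addition then forces $(A+B)|_Z=A|_Z+B|_Z\in\mathcal{C}(Z,Y_0)$, whence $\textbf{NP}_{(e_i)}((A+B)|_Z,Z,Y_0)<\omega_1<\infty$. But, by the characterization recorded immediately after the definition of $T_{(e_i)}$, finiteness of the non-preservation index means $(A+B)|_Z$ does \emph{not} preserve a copy of $(e_i)$, contradicting that it embeds $Z\cong[(e_i)]$. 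Hence $A+B\in\mathfrak{NP}_{(e_i)}(X,Y)$, and $(e_i)$ has property $(S')$. The only delicate point in this direction is the passage to the separable subspace $Z$ and separable target $Y_0$, which must be arranged without presupposing that $X$ or $Y$ is separable; the genuine difficulty of the result lives entirely in Theorem \ref{our theorem}, and once that is available the present statement is bookkeeping.
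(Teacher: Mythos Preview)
Your proposal is correct and follows essentially the same approach as the paper. The paper derives the theorem ``immediately from the discussion above and Theorem \ref{our theorem}'': the discussion preceding Theorem \ref{our theorem} already asserts (as ``clear'') that property $(S')$ is separably determined and hence equivalent to the statement that countable indices are preserved under sums, which is precisely the closure of $\mathcal{C}$ under addition; Theorem \ref{our theorem} supplies the forward implication. You have simply unpacked that discussion, spelling out the separable reduction for the converse and verifying the remaining ideal axioms via Proposition \ref{tedious}. One small wording issue: ``a countable index is in particular finite'' should read ``a countable index is in particular an ordinal (not $\infty$)''---you need $A,B\in\mathfrak{NP}_{(e_i)}$ to invoke Theorem \ref{our theorem}, and that follows because a countable ordinal is $<\infty$, not because it is finite.
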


By our discussion above, if such a function $\phi_{(e_i)}$ exists, then $(e_i)$ must have property $(S')$.  Thus property $(S')$ characterizes the existence of such a function.  In order to prove Theorem \ref{our theorem}, we must establish a few basic facts concerning the coding of operators between separable Banach spaces.

\subsection{The standard space $\mathfrak{L}$}

We first undertake a coding of the operators between separable Banach spaces in the spirit of Bossard's coding \cite{Bos,Bos1} of all separable Banach spaces.  Recall that for any Polish (separable, completely metrizable topological) space $P$, we let $F(P)$ denote the closed subsets of $P$.  We let $E(P)$ be the $\sigma$-algebra generated by sets of the form $\{F\in F(P): F\cap U\neq \varnothing\}$, where $U$ ranges over the open subsets of $P$.  It is known \cite{Ke} that there exists a Polish topology on $F(P)$ so that the Borel $\sigma$-algebra generated by this topology is $E(P)$. We recall the Kuratowski and Ryll-Nardzewski result concerning the existence of Borel selectors: There exists a sequence $d_n:F(P)\setminus \{\varnothing\}\to P$ of Borel functions so that for all $F\in F(P)\setminus \{\varnothing\}$, $d_n(F)\in F$ and the sequence $(d_n(F))_n$ is dense in $F$ \cite{KRN}. We will apply this with $P=\cts$,  the Banach space of continuous functions on the Cantor set $2^\nn$. It is well-known that the set of closed subsets of $\cts$ which are closed subspaces, which we denote \textbf{SB}, is Borel in $F(\cts)$. Therefore there exists a Polish topology on \textbf{SB} so that the Borel $\sigma$-algebra generated by this topology is the relative Effros-Borel structure $E(\cts)|_{\textbf{SB}}$. Through the remainder of this work, \textbf{SB} will be topologized by such a topology to which we omit direct reference.  We let $\textbf{S}=\textbf{SB}\times \textbf{SB}\times \cts^\nn$, endowed with the product topology.  As mentioned above, we may fix a sequence of Borel selectors $d_n:\textbf{SB}\to \cts$.  For $X\in \textbf{SB}$, we let $D_X=\{d_n(X): n\in \nn\}$.

For $(q, n)\in \qn\setminus \{\varnothing\}$ and $K\in \nn$, write $(q,n)=(q_i, n_i)_{i=1}^{|q|}$ and let $$\aaa_K(q,n)= \Bigl\{(X,Y, \hat{A})\in \textbf{S}: K\|\sum_{i=1}^{|q|} q_i d_{n_i}(X)\|\geqslant \| \sum_{i=1}^{|q|} q_i \hat{A}(n_i)\|\Bigr\}.$$  Let $$\aaa_K=\bigcap_{(q,n)\in \qn\setminus \{\varnothing\}} \aaa_K(q,n)$$ and $\aaa=\cup_{K\in \nn}\aaa_K$.  

The map $(X, Y, \hat{A})\underset{M}{\mapsto} K\|\sum_{i=1}^{|q|} q_i d_{n_i}(X)\| - \|\sum_{i=1}^{|q|} q_i \hat{A}(n_i)\|$ is a Borel function.  Then $\aaa_K(q,n)= M^{-1}([0, \infty))$ is Borel, and therefore $\aaa_K$ and $\aaa$ are Borel.

Let $\mathcal{J}= \bigl\{(X,Y, \hat{A})\in \textbf{S}: \hat{A}(n)\in Y \text{\ }\forall n\in \nn\bigr\}.$  Recall \cite{D} that $\mathcal{I}=\{(Z,z)\in \textbf{SB}\times \cts: z\in Z\}$ is Borel. Since $(X,Y, \hat{A})\underset{M_n}{\mapsto} (Y, \hat{A}(n))$ is continuous for each $n\in \nn$, $\mathcal{J}=\cap_{n\in \nn}M^{-1}_n(\mathcal{I})$ is Borel.  

We therefore deduce that $\mathfrak{L}:=\aaa\cap \mathcal{J}$ and $\mathfrak{L}_1:=\aaa_1\cap \mathcal{J}$ are Borel.  We have the following result.  

\begin{claim} For $(X, Y, \hat{A})\in \emph{\textbf{S}}$, the relation $\{(d_n(X), \hat{A}(n)):n\in \nn\}\subset D_X\times \cts$ is the restriction to $D_X$ of an operator (resp. an operator with norm not exceeding $1$) $A:X\to Y$ if and only if $(X, Y, \hat{A})\in \mathfrak{L}$ (resp. $(X,Y, \hat{A})\in \mathfrak{L}_1$).  

\end{claim}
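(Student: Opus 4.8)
The plan is to verify both directions directly from the definitions of $\aaa_K$, $\mathcal{J}$, and $\mathfrak{L}$; the whole content is a density-and-extension argument, so the verifications can be kept brief. Throughout write $G$ for the $\mathbb{Q}$-linear span of $D_X=\{d_n(X):n\in\nn\}$, which is norm-dense in $X$ by the density part of the Kuratowski--Ryll-Nardzewski selector property.

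For the ``only if'' direction I would start from an operator $A:X\to Y$ with $A(d_n(X))=\hat{A}(n)$ for every $n$. Then each $\hat{A}(n)=A(d_n(X))$ lies in $Y$, so $(X,Y,\hat{A})\in\mathcal{J}$; and for any $(q,n)=(q_i,n_i)_{i=1}^{|q|}\in\qn\setminus\{\varnothing\}$ the estimate $\|\sum_i q_i\hat{A}(n_i)\|=\|A(\sum_i q_i d_{n_i}(X))\|\leqslant\|A\|\,\|\sum_i q_i d_{n_i}(X)\|$ shows $(X,Y,\hat{A})\in\aaa_K$ for any integer $K\geqslant\|A\|$, hence $(X,Y,\hat{A})\in\aaa\cap\mathcal{J}=\mathfrak{L}$. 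If moreover $\|A\|\leqslant 1$, the same estimate with $K=1$ gives $(X,Y,\hat{A})\in\aaa_1\cap\mathcal{J}=\mathfrak{L}_1$.

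For the ``if'' direction, suppose $(X,Y,\hat{A})\in\aaa_K\cap\mathcal{J}$. The key steps, in order, are: (1) \emph{define} $A$ on $G$ by $A(\sum_i q_i d_{n_i}(X))=\sum_i q_i\hat{A}(n_i)$; (2) check this is \emph{well defined} and $K$-Lipschitz, which is where the single-vector formulation of $\aaa_K$ does the work: if $\sum_i q_i d_{n_i}(X)-\sum_j q'_j d_{m_j}(X)=0$, rewrite the left-hand side as one element of $\qn$, and the inequality defining $\aaa_K$ forces $\|\sum_i q_i\hat{A}(n_i)-\sum_j q'_j\hat{A}(m_j)\|\leqslant K\cdot 0=0$; applying the same inequality to a general $(q,n)$ yields $\|Ax\|\leqslant K\|x\|$ on $G$, and $\mathbb{Q}$-homogeneity is immediate from the formula; (3) extend $A$ by uniform continuity (using density of $G$ in the real Banach space $X$) to a bounded operator $X\to X$ with $\|A\|\leqslant K$, which is then $\mathbb{R}$-linear by continuity, and with $\|A\|\leqslant 1$ in the $\aaa_1$ case; (4) observe that, since $(X,Y,\hat{A})\in\mathcal{J}$, each $\hat{A}(n)$ lies in the closed subspace $Y$, so $A(G)\subset Y$ and hence $A(X)\subset\overline{A(G)}\subset Y$, i.e.\ $A\in\mathfrak{L}(X,Y)$; (5) note $A(d_n(X))=\hat{A}(n)$ by taking the one-term combination with $q_1=1$, so the relation $\{(d_n(X),\hat{A}(n)):n\in\nn\}$ is exactly $A|_{D_X}$.

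There is no serious obstacle here; the one point needing care is step (2), the well-definedness, which would fail if the vectors $d_n(X)$ carried linear relations not detected by $\aaa_K$. Collapsing any such relation into a single vector of $\qn$ and reading the conclusion off the one inequality defining $\aaa_K$ disposes of this and simultaneously delivers $\mathbb{Q}$-homogeneity and the norm bound. The passage in step (3) from $\mathbb{Q}$-density of $G$ to $\mathbb{R}$-density of $G$, and from a $\mathbb{Q}$-linear Lipschitz map to its $\mathbb{R}$-linear continuous extension, is routine.
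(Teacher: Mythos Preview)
Your proof is correct and follows essentially the same approach as the paper. The only organizational difference is that the paper defines the map first on $D_X$ itself, shows it is $K$-Lipschitz there, extends by continuity, and then verifies $\mathbb{R}$-linearity at the end via an approximation argument with rational coefficients; you instead define the map on the $\mathbb{Q}$-span $G$ from the outset, so $\mathbb{Q}$-linearity is built in and $\mathbb{R}$-linearity follows immediately from continuity of the extension. Both routes rest on the same use of the single inequality defining $\aaa_K$ to obtain well-definedness and the Lipschitz bound. One small slip: in step~(3) you wrote ``bounded operator $X\to X$'' where you meant the extension to take values in the ambient space $\cts$ (and then into $Y$ by step~(4)).
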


\begin{proof} Assume $(X, Y, \hat{A})$ is such that $Ad_n(X)= \hat{A}(n)$ for all $n\in \nn$, where $A:X\to Y$ is an operator.  Then for any $(q,n)\in \qn\setminus \{\varnothing\}$ and $K\in \nn$, $K\geqslant \|A\|$,  $$\|\sum_{i=1}^{|q|} q_i \hat{A}(n_i)\| = \|\sum_{i=1}^{|q|} q_i A d_{n_i}(X)\| \leqslant K \|\sum_{i=1}^{|q|} q_id_{n_i}(X)\|,$$ and we deduce $(X, Y, \hat{A})\in \aaa_K\subset \aaa$.  Of course, $\hat{A}(n)=Ad_n(X)\in Y$, so $(X, Y, \hat{A})\in \mathcal{J}$, and $(X,Y, \hat{A})\in \mathfrak{L}$.  Moreover, if $\|A\|\leqslant 1$, we obtain the result with $K=1$, so $(X, Y, \hat{A})\in \mathfrak{L}_1$.

Next, assume $(X, Y, \hat{A})\in \mathfrak{L}$.  Let $K\in \nn$ be minimal such that $(X, Y, \hat{A})\in \aaa_K$.   We first show that $f:D_X\to Y$ defined by $f(d_n(X))=\hat{A}(n)\in Y$ is well-defined.  If $d_n(X)=d_m(X)$, then $d_n(X)-d_m(X)=0$.  Since $(X, Y, \hat{A})\in \aaa_K$, $$\|\hat{A}(n)- \hat{A}(m)\|\leqslant K \|d_n(X)- d_m(X)\| =0,$$ and $\hat{A}(n)=\hat{A}(m)$.  This shows that $f:D_X\to Y$ is well-defined.  Also, noting that $\|\hat{A}(n)- \hat{A}(m)\|\leqslant K\|d_n(X)- d_m(X)\|$ shows that $f:D_X\to Y$ is $K$-Lipschitz continuous.  This means that $f$ extends uniquely to a continuous $A:X\to Y$, since $D_X$ is dense in $X$.  Moreover, for any $x\in X$ and any $(n_i)\in \nn^\nn$ such that $d_{n_i}(X)\to x$, $Ax= \lim_i Ad_{n_i}(X)=\lim_i \hat{A}(n_i)$.  It remains to show that $A$ is linear.    Fix $p,q\in \rr$ and sequences of rationals $p_i, q_i$ with $p_i\to p$, $q_i\to q$.  Fix $x,y\in X$.  Choose $(n_i), (m_i), (r_i)\in \nn^\nn$ so that $d_{n_i}(X)\to x$, $d_{m_i}(X)\to y$, and $d_{r_i}(X)\to p x + qy$. This means $pAx + qAy = \lim_i p_i\hat{A}(n_i)+q_i\hat{A}(m_i)$ and $A(px+qy)=\lim_i \hat{A}(r_i)$.  Since $p_id_{n_i}(X)  +q_id_{m_i}(X)- d_{r_i}(X)\to 0$, we deduce \begin{align*} \|p Ax + qAy - A(px+qy) \| & = \lim_i \|p_i \hat{A}(n_i)+ q_i \hat{A}(m_i) - \hat{A}(r_i)\| \\ & \leqslant \lim_i K \|p_id_{n_i}(X)+ q_i d_{m_i}(X) - d_{r_i}(X)\|= 0.\end{align*}

\end{proof}

We will identify triples $(X, Y, \hat{A})\in \mathfrak{L}$ with operators $A:X\to Y$ between separable Banach spaces in the remainder of this work.  

\subsection{$\Pi_1^1$ ranks}

Recall the following facts concerning $\Pi_1^1$ ranks.  These facts can be found in \cite{D}.  

\begin{fact} Let $P$ be a Polish space, $B$ a $\Pi_1^1$ subset of $P$, and $\phi:B\to [0, \omega_1)$ a $\Pi_1^1$ rank on $B$.  Then the following hold: \begin{enumerate}[(i)]\item For every $\xi<\omega_1$, $\{x\in B: \phi(x)\leqslant \xi\}$ is Borel. \item For every analytic subset $A$ of $B$, $\sup\{\phi(x): x\in A\}<\omega_1$.  \end{enumerate}\label{fact1}\end{fact}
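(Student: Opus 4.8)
The plan is to derive both items directly from the abstract defining property of a $\PB^1_1$-rank, exactly as in the classical treatments of Moschovakis and Kechris (these being the facts quoted from \cite{D}). Recall that a $\PB^1_1$-rank $\phi$ on a $\PB^1_1$ set $B\subseteq P$ carries a $\SB^1_1$ relation $\le^{\Sigma}_\phi$ and a $\PB^1_1$ relation $\le^{\Pi}_\phi$ on $P\times P$ such that, for every $y\in B$, the sections $\{x:x\le^{\Sigma}_\phi y\}$ and $\{x:x\le^{\Pi}_\phi y\}$ both equal $\{x\in B:\phi(x)\le\phi(y)\}$, and similarly one has a $\SB^1_1$ relation $<^{\Sigma}_\phi$ and a $\PB^1_1$ relation $<^{\Pi}_\phi$ capturing $[x\in B\text{ and }\phi(x)<\phi(y)]$ for $y\in B$. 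The first step, common to both parts, is to record that this forces each ``initial section'' $B_{\le\phi(y)}:=\{x\in B:\phi(x)\le\phi(y)\}$ and $B_{<\phi(y)}:=\{x\in B:\phi(x)<\phi(y)\}$ (with $y\in B$ fixed) to be simultaneously $\SB^1_1$ and $\PB^1_1$, hence Borel by Suslin's theorem.

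For (i) I would fix $\xi<\omega_1$ and split into three cases according to the position of $\xi$ relative to the range $\phi[B]$. If $\xi\in\phi[B]$, choosing $y\in B$ with $\phi(y)=\xi$ gives $\{x\in B:\phi(x)\le\xi\}=B_{\le\phi(y)}$, Borel. If $\xi\notin\phi[B]$ but $\phi[B]\cap(\xi,\omega_1)\ne\varnothing$, let $y\in B$ realize $\min\bigl(\phi[B]\cap(\xi,\omega_1)\bigr)$; then, since no element of $B$ has rank strictly between $\xi$ and $\phi(y)$, $\{x\in B:\phi(x)\le\xi\}=B_{<\phi(y)}$, Borel. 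If $\phi[B]\subseteq[0,\xi]$, then $\phi[B]$ is countable and $\{x\in B:\phi(x)\le\xi\}=B=\bigcup_{\eta\in\phi[B]}\bigl(B_{\le\phi(y_\eta)}\setminus B_{<\phi(y_\eta)}\bigr)$, a countable union of Borel sets, where $y_\eta\in B$ is chosen with $\phi(y_\eta)=\eta$. This part is routine bookkeeping.

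For (ii), which is the $\SB^1_1$-boundedness statement, I would argue by contradiction. Assume $A\subseteq B$ is analytic and $\phi$ is unbounded on $A$; since $\phi$ maps into $[0,\omega_1)$, this means $\sup\{\phi(x):x\in A\}=\omega_1$. I would then show that $B=\bigl\{z\in P:\exists x\,(x\in A\text{ and }z\le^{\Sigma}_\phi x)\bigr\}$: the inclusion from left to right uses unboundedness of $\phi$ on $A$ together with $A\subseteq B$ and the defining property of $\le^{\Sigma}_\phi$ applied at the point $x\in B$, while the reverse inclusion uses that $z\le^{\Sigma}_\phi x$ with $x\in A\subseteq B$ forces $z\in B$. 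This exhibits $B$ as $\SB^1_1$, and since $B$ is also $\PB^1_1$, it is Borel. Consequently the strict prewellordering $\{(x,y)\in B\times B:\phi(x)<\phi(y)\}$ — being the intersection of a $\SB^1_1$ set with a Borel one and simultaneously equal to $<^{\Pi}_\phi\cap(B\times B)$ — is a Borel well-founded relation whose rank is the order type of $\phi[B]$, which is at least $\omega_1$ because $\phi$ is unbounded on $B$. This contradicts the Kunen--Martin theorem, yielding the desired bound.

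I expect the last step to be the crux: the content of (ii) is the implication ``$\phi$ unbounded on an analytic subset $\Rightarrow$ $B$ is Borel'', followed by the appeal to the Kunen--Martin theorem to cap the length of a Borel prewellordering at a countable ordinal — the same theorem already invoked in this paper (see Proposition~\ref{tedious}) and in \cite{Bo}. The remaining ingredients — Suslin's theorem and the elementary closure properties of $\SB^1_1$ and $\PB^1_1$ under fixing a parameter, finite intersections with Borel sets, and countable unions — are standard, and (i) is a purely routine case analysis built on the Borelness of the initial sections.
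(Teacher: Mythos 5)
The paper itself gives no proof of this Fact (it is quoted from \cite{D}), and your argument is exactly the classical one from that source and from Kechris: Borelness of the initial segments via the dual $\Sigma^1_1/\Pi^1_1$ representations of the (strict and non-strict) comparison relations together with Suslin's theorem, and boundedness by showing that an unbounded analytic subset would force $B=\{z:\exists x\in A,\ z\le^{\Sigma}_\phi x\}$ to be $\Sigma^1_1$, hence Borel, and then contradicting the Kunen--Martin theorem applied to the induced well-founded strict prewellordering. Your sketch is correct as written; the only implicit ingredients are the regularity of $\omega_1$ (unboundedness of $\phi[B]$ gives order type at least $\omega_1$) and the observation that the $\Sigma^1_1$ representation of the strict relation already suffices for Kunen--Martin, both of which are routine.
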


\begin{remark} Property $(ii)$ of a (not necessarily $\Pi_1^1$) rank is called \emph{boundedness}.  We will see later that $\textbf{NP}_p$ is a $\Pi_1^1$ rank on $\mathfrak{NP}_p$, while $\textbf{SM}_p$ is not $\Pi_1^1$.  However, since $\textbf{SM}_p\leqslant \textbf{NP}_p$, $\textbf{SM}_p$ will satisfy boundedness.  

\end{remark}

We recall also the following results about Borel reductions.  In what follows, $\textbf{Tr}$ denotes the non-empty trees on $\nn$, considered as a subspace of $2^{\nn^{<\nn}}$, and $\textbf{WF}\subset \textbf{Tr}$ denotes the well-founded members of $\textbf{Tr}$.    

\begin{fact} Suppose $P$ is a Polish space, $A\subset P$, $f:P\to \emph{\textbf{Tr}}$ is Borel, and $f^{-1}(\emph{\textbf{WF}})=A$.  Then $A$ is $\Pi_1^1$ and $\phi(x)=o(f(x))$ defines a $\Pi_1^1$ rank on $A$.  

\end{fact}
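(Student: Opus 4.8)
The plan is to obtain this as a combination of two standard facts about the Polish space $\textbf{Tr}$ — that $\textbf{WF}$ is a $\Pi_1^1$ subset of $\textbf{Tr}$, and that the order map $T\mapsto o(T)$ is a $\Pi_1^1$ rank on $\textbf{WF}$, both of which are classical (see \cite{Ke}, or the appendix of \cite{D}) — together with the general principle that a $\Pi_1^1$ rank pulls back to a $\Pi_1^1$ rank along a Borel map. I would first record the usable criterion: $\phi\colon B\to[0,\omega_1)$ is a $\Pi_1^1$ rank on a $\Pi_1^1$ set $B\subseteq P$ precisely when the two relations
$$x\leq^*_\phi y\ :\Longleftrightarrow\ x\in B\ \wedge\ \bigl(y\notin B\ \vee\ \phi(x)\leq\phi(y)\bigr),\qquad x<^*_\phi y\ :\Longleftrightarrow\ x\in B\ \wedge\ \bigl(y\notin B\ \vee\ \phi(x)<\phi(y)\bigr)$$
are $\Pi_1^1$ subsets of $P\times P$.

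Granting those two facts, the proof is short. Since the class of $\Pi_1^1$ sets is closed under preimages by Borel maps and $\textbf{WF}$ is $\Pi_1^1$, the set $A=f^{-1}(\textbf{WF})$ is $\Pi_1^1$. Next, for $x\in A$ the tree $f(x)$ is a well-founded tree on the countable set $\nn$, hence $o(f(x))<\omega_1$, so $\phi:=o\circ f$ genuinely maps $A$ into $[0,\omega_1)$. Finally, let $\leq^*_o$ and $<^*_o$ denote the $\Pi_1^1$ relations on $\textbf{Tr}\times\textbf{Tr}$ witnessing that $o$ is a $\Pi_1^1$ rank on $\textbf{WF}$, and let $g\colon P\times P\to\textbf{Tr}\times\textbf{Tr}$ be the Borel map $g(x,y)=(f(x),f(y))$. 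Unwinding the definitions, using $A=f^{-1}(\textbf{WF})$ and $\phi=o\circ f$, one gets $x\leq^*_\phi y\iff g(x,y)\in{\leq^*_o}$ and $x<^*_\phi y\iff g(x,y)\in{<^*_o}$. Hence $\leq^*_\phi$ and $<^*_\phi$ are Borel preimages of $\Pi_1^1$ sets, so they are $\Pi_1^1$; that is exactly the assertion that $\phi$ is a $\Pi_1^1$ rank on $A$.

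The one step that is not pure bookkeeping is the cited input that $o$ is a $\Pi_1^1$ rank on $\textbf{WF}$, and that is where I expect the real content to lie. The standard route is via monotone tree maps: letting $T^{\oplus}$ be the tree obtained by adjoining a new root below $T$ (so $o(T^{\oplus})=o(T)+1$) and calling $\sigma\colon S\to T$ monotone when $s\prec s'$ implies $\sigma(s)\prec\sigma(s')$, one proves the elementary lemmas that for $S\in\textbf{WF}$ and arbitrary $T$ one has $o(S)\leq o(T)$ iff a monotone map $S\to T$ exists, and $o(T)\leq o(S)$ iff a monotone map $T\to S$ exists; combining these with $o(S)\leq o(T)\iff\neg\bigl(o(T^{\oplus})\leq o(S)\bigr)$ and $o(S)<o(T)\iff\neg\bigl(o(T)\leq o(S)\bigr)$ yields $S\leq^*_o T\iff S\in\textbf{WF}\wedge\neg(\exists\text{ monotone }T^{\oplus}\to S)$ and $S<^*_o T\iff S\in\textbf{WF}\wedge\neg(\exists\text{ monotone }T\to S)$, both manifestly $\Pi_1^1$. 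I would simply cite this; the remainder of the argument, as shown above, is just closure of $\Pi_1^1$ under Borel preimages applied to the map $g$.
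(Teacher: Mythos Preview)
Your proposal is correct and matches the paper's approach: the paper does not give a proof but simply cites this as a combination of Theorem~A.4 and Fact~A.8 of \cite{D}, which is exactly the content you invoke (that $\textbf{WF}$ is $\Pi_1^1$, that $o$ is a $\Pi_1^1$ rank on $\textbf{WF}$, and that both pull back along Borel maps). You have in fact written out more detail than the paper provides, including a sketch of why $o$ is a $\Pi_1^1$ rank via monotone tree maps, which is the standard argument.
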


This combines Theorem $A.4$ with Fact $A.8$ of \cite{D}.  We will use this to prove that a number of ranks are $\Pi_1^1$ ranks, including $\textbf{NP}_{(e_i)}$ and $\textbf{SS}$.  In what follows, for $t\in \nn^{<\nn}$ and $X\in \textbf{SB}$, let $d(t,X)=(d_{n_1}(X), \ldots, d_{n_l}(X))$ if $t=(n_1, \ldots, n_l)$ and $d(\varnothing, X)=\varnothing$.  

The final fact that we recall concerns $\Pi_1^1$ complete sets.  

\begin{fact}\cite{Ke} If $B\subset P$ is a $\Pi_1^1$ subset of $P$, $P$ a Polish space, then $B$ is $\Pi_1^1$ complete if there exists a Borel function $f:\emph{\textbf{Tr}}\to X$ so that $f^{-1}(B)=\emph{\textbf{WF}}$.  

\label{fact3}
\end{fact}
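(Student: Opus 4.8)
The plan is to deduce this from the one genuinely substantive input of the subject, namely that $\textbf{WF}\subset \textbf{Tr}$ is itself $\Pi^1_1$-complete, and then to transport completeness along the given Borel reduction. Recall that for $B\subset P$ to be $\Pi^1_1$-complete means that $B$ is $\Pi^1_1$ and that for every Polish space $Q$ and every $\Pi^1_1$ set $C\subset Q$ there is a Borel map $g:Q\to P$ with $g^{-1}(B)=C$. The hypothesis of the fact already provides that $B$ is $\Pi^1_1$ (and, implicitly, that $X$ there is just $P$), so only the reduction property must be produced.

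First I would invoke the classical tree normal form for $\Pi^1_1$ sets (Luzin--Sierpiński; see \cite{Ke}): for every Polish space $Q$ and every $\Pi^1_1$ set $C\subset Q$ there is a Borel map $h:Q\to \textbf{Tr}$ with $x\in C\iff h(x)\in \textbf{WF}$, i.e. $h^{-1}(\textbf{WF})=C$. For $Q=\nn^\nn$ one in fact gets continuity of $h$; for a general Polish $Q$ one composes with a Borel injection of $Q$ into $\nn^\nn$, using that pulling a $\Pi^1_1$ set back along a Borel map is again $\Pi^1_1$. Now, given the hypothesized Borel reduction $f:\textbf{Tr}\to P$ with $f^{-1}(B)=\textbf{WF}$, the composition $f\circ h:Q\to P$ is Borel and
\[
(f\circ h)^{-1}(B)=h^{-1}\bigl(f^{-1}(B)\bigr)=h^{-1}(\textbf{WF})=C .
\]
Since $C$ was an arbitrary $\Pi^1_1$ subset of an arbitrary Polish space $Q$, this exhibits $B$ as $\Pi^1_1$-complete, completing the argument.

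The only real content, and hence the main obstacle, is the normal form theorem underlying the $\Pi^1_1$-completeness of $\textbf{WF}$ itself: one Skolemizes the universal real quantifier in a $\Pi^1_1$ predicate and codes the resulting statement as the well-foundedness of an explicitly built tree of finite approximations, with the coding depending in a Borel (indeed continuous, on $\nn^\nn$) way on the parameter. Everything else --- that $f\circ h$ is Borel and that preimages commute with the displayed equalities --- is routine. It is worth recording that the statement is insensitive to the precise notion of reduction used, since on the zero-dimensional spaces $\textbf{Tr}$ and $\nn^\nn$ the maps produced are continuous, and a Borel reduction into $B$ is all that is needed for the later applications of $\Pi^1_1$-completeness (via Fact \ref{fact1} and the rank arguments of this section).
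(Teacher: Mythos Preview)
Your argument is correct and is exactly the standard proof: one invokes the $\Pi^1_1$-completeness of $\textbf{WF}$ (equivalently, the Luzin--Sierpi\'nski normal form for $\Pi^1_1$ sets) and then composes reductions. The paper itself does not prove this statement; it is recorded as a fact cited from \cite{Ke}, so there is no ``paper's proof'' to compare against beyond noting that your sketch is the textbook route Kechris follows.
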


We recall that any $\Pi_1^1$ complete set is necessarily non-Borel.

\begin{proposition} Define $f:\mathfrak{L}\to \emph{\textbf{Tr}}$ by letting $$f(X, Y, \hat{A})=\{\varnothing\}\cup \Bigl\{k\verb!^!t:k\in \nn, d(t,X)\in T_{(e_i)}(A,X,Y,k)\Bigr\}.$$  Then $f$ is Borel, $f^{-1}(\emph{\textbf{WF}})=\mathfrak{NP}_{(e_i)}\cap \mathfrak{L}$, and $o(f(X,Y, \hat{A}))=\emph{\textbf{NP}}_{(e_i)}(A,X,Y)+1$.  Moreover, if we replace $T_{(e_i)}(A,X, Y,k)$ with $SS(A,X,Y, k)$ in the definition of $f$, then the resulting $f$ is also Borel, $f^{-1}(\emph{\textbf{WF}})=\mathfrak{SS}\cap \mathfrak{L}$, and $o(f(X,Y, \hat{A}))=\emph{\textbf{SS}}(A,X,Y)+1$.  

\label{coanalytic1}

\end{proposition}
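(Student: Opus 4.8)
The plan is to verify the three assertions about $f$ — that it is Borel, that $f^{-1}(\textbf{WF})=\mathfrak{NP}_{(e_i)}\cap \mathfrak{L}$, and that $o(f(X,Y,\hat A))=\textbf{NP}_{(e_i)}(A,X,Y)+1$ — and then note that the $SS$ version follows by an entirely parallel argument. The central structural observation is that $f(X,Y,\hat A)$ is, by construction, the tree on $\nn$ obtained by gluing together, over all $k\in\nn$, copies of the trees $T_{(e_i)}(A,X,Y,k)$ pulled back along the Borel selectors: a nonempty finite string $k^\frown t$ with $t=(n_1,\dots,n_\ell)$ lies in $f(X,Y,\hat A)$ precisely when $(d_{n_1}(X),\dots,d_{n_\ell}(X))\in T_{(e_i)}(A,X,Y,k)$. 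Since each $T_{(e_i)}(A,X,Y,k)$ is downward closed, $f(X,Y,\hat A)$ is genuinely a tree, and it is nonempty as it contains $\varnothing$.

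First I would prove Borelness. The space $\textbf{Tr}\subseteq 2^{\nn^{<\nn}}$ carries the product topology, so a map into $\textbf{Tr}$ is Borel iff for each fixed finite string $s\in\nn^{<\nn}$ the set $\{(X,Y,\hat A):s\in f(X,Y,\hat A)\}$ is Borel. For $s=\varnothing$ this set is all of $\mathfrak{L}$. For $s=k^\frown t$ with $t=(n_1,\dots,n_\ell)$, membership means $(d_{n_i}(X))_{i=1}^\ell\in B_X^{<\nn}$, $(d_{n_i}(X))_{i=1}^\ell\lesssim_1 (e_i)_{i=1}^\ell$, and $(e_i)_{i=1}^\ell\lesssim_k (Ad_{n_i}(X))_{i=1}^\ell=(\hat A(n_i))_{i=1}^\ell$. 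Each of these is a conjunction over a countable dense (rational) set of scalar sequences of inequalities between norms of the Borel functions $(X,Y,\hat A)\mapsto d_{n_i}(X)$ and $(X,Y,\hat A)\mapsto\hat A(n_i)$ — here I use the same type of argument already used in the excerpt to see that $\aaa_K(q,n)$ is Borel, together with the fact that the coordinate projection to $\cts^\nn$ composed with evaluation is continuous and the selectors $d_n$ are Borel. Hence each such set is Borel, and $f$ is Borel.

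Next I would identify $f^{-1}(\textbf{WF})$ and compute the order. By Proposition~\ref{tedious}(iv) and the definition of $\textbf{NP}_{(e_i)}$, for $(X,Y,\hat A)\in\mathfrak{L}$ (so that $A:X\to Y$ is a genuine operator via the Claim) one has $A\in\mathfrak{NP}_{(e_i)}(X,Y)$ iff every $T_{(e_i)}(A,X,Y,k)$ is well-founded, equivalently $\textbf{NP}_{(e_i)}(A,X,Y)=\sup_k \textbf{NP}_{(e_i)}(A,X,Y,k)<\infty$. A branch through $f(X,Y,\hat A)$ is either empty or of the form $k^\frown t$ with $t$ an infinite branch witnessing ill-foundedness of $T_{(e_i)}(A,X,Y,k)$; so $f(X,Y,\hat A)\in\textbf{WF}$ iff every $T_{(e_i)}(A,X,Y,k)$ is well-founded iff $A\in\mathfrak{NP}_{(e_i)}(X,Y)$, giving $f^{-1}(\textbf{WF})=\mathfrak{NP}_{(e_i)}\cap\mathfrak{L}$. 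For the order, I would use the standard fact that the order of a tree formed as a disjoint union of a countable family $(S_k)$ of subtrees hung below the root — here $S_k$ being an isomorphic copy of $T_{(e_i)}(A,X,Y,k)$, the isomorphism $t\mapsto k^\frown t$ being valid because the selectors need not be injective but $T_{(e_i)}(A,X,Y,k)$ is downward closed and the map on strings is injective — satisfies $o(\text{union})=\sup_k\big(o(S_k)\big)+1=\sup_k o(T_{(e_i)}(A,X,Y,k))+1$; one checks by transfinite induction on $\zeta$ that the $\zeta$-th derivative of $f(X,Y,\hat A)$ contains $\varnothing$ exactly when some $o(T_{(e_i)}(A,X,Y,k))>\zeta$, whence $o(f(X,Y,\hat A))=\sup_k o(T_{(e_i)}(A,X,Y,k))+1=\textbf{NP}_{(e_i)}(A,X,Y)+1$. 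One small subtlety worth addressing: if $A$ is such that every $T_{(e_i)}(A,X,Y,k)$ is well-founded but the supremum of their orders is a limit ordinal not attained, the $+1$ is still correct because $\varnothing$ survives into every derivative strictly below the sup but is removed at the sup; and if some $T_{(e_i)}(A,X,Y,k)$ is ill-founded, $f(X,Y,\hat A)$ is ill-founded, consistent with $\textbf{NP}_{(e_i)}(A,X,Y)=\infty$.

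I expect the main obstacle to be purely bookkeeping rather than conceptual: getting the Borel-measurability verification fully rigorous requires carefully writing the conditions ``$(d_{n_i}(X))_i\lesssim_1(e_i)_i$'' and ``$(e_i)_i\lesssim_k(\hat A(n_i))_i$'' as countable Boolean combinations of preimages of closed half-lines under the relevant Borel real-valued maps, exactly as was done for $\aaa_K(q,n)$, and then observing these combine to a Borel set; and the order computation requires the slightly delicate transfinite induction handling the limit-ordinal case of the supremum. The $SS$ variant is then immediate: $SS(A,X,Y,k)$ is likewise a closed, downward-closed tree on $X$ whose branch membership is a conjunction of norm inequalities in the same variables, so the identical arguments give Borelness of the modified $f$, $f^{-1}(\textbf{WF})=\mathfrak{SS}\cap\mathfrak{L}$ (using Proposition/Theorem statements on $\textbf{SS}$ already in the excerpt, in particular that $\textbf{SS}(A,X,Y)<\infty$ iff all $SS(A,X,Y,k)$ are well-founded), and $o(f(X,Y,\hat A))=\textbf{SS}(A,X,Y)+1$.
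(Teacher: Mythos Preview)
Your Borelness argument is essentially the paper's, and the structural formula $o(f(X,Y,\hat A))=\bigl(\sup_k o(f(X,Y,\hat A)(k))\bigr)+1$ is correct. The gap is in your claim that $S_k:=f(X,Y,\hat A)(k)$ is ``an isomorphic copy of $T_{(e_i)}(A,X,Y,k)$.'' It is not. The map $(n_1,\ldots,n_\ell)\mapsto (d_{n_1}(X),\ldots,d_{n_\ell}(X))$ is a monotone map from $S_k$ into $T_{(e_i)}(A,X,Y,k)$, which only gives $o(S_k)\leqslant o(T_{(e_i)}(A,X,Y,k))$; there is no map in the other direction, since $T_{(e_i)}(A,X,Y,k)$ is a tree on the (typically uncountable) set $B_X$ while $S_k$ only records sequences drawn from the countable dense set $D_X$. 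So the equality $o(S_k)=o(T_{(e_i)}(A,X,Y,k))$ that you assert can fail for individual $k$.

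What the paper does instead is a perturbation step: given $(x_t)_{t\in\ttt_\xi}$ witnessing $o(T_{(e_i)}(A,X,Y,k))>\xi$, one scales slightly below $1$, then uses density of $D_X$ to replace each $x_t$ by some $d_{n_t}(X)$ so close that the resulting tree lands in $T_{(e_i)}(A,X,Y,k+1)$. This yields the sandwich
\[
o(f(X,Y,\hat A)(k))\ \leqslant\ o(T_{(e_i)}(A,X,Y,k))\ \leqslant\ o(f(X,Y,\hat A)(k+1)),
\]
from which $\sup_k o(S_k)=\sup_k o(T_{(e_i)}(A,X,Y,k))=\textbf{NP}_{(e_i)}(A,X,Y)$ follows. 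The same density issue affects your proof of $f^{-1}(\textbf{WF})=\mathfrak{NP}_{(e_i)}\cap\mathfrak L$: if $A$ preserves a copy of $[(e_i)]$, the witnessing infinite sequence lies in $B_X$, not in $D_X$, so you still need the approximation to produce an infinite branch through some $f(X,Y,\hat A)(k')$. Once you insert this perturbation argument, the rest of your outline (including the $SS$ variant) goes through.
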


\begin{proof} First note that to show that $f$ is Borel, it is sufficient to show that for each $t\in \nn^{<\nn}$, $f^{-1}(t)$ is Borel.  This is because $$\nnn=\Bigl\{ \{T\in \textbf{Tr}: T\cap F=E\}: E\subset F \in [\nn^{<\nn}]^{<\nn}\Bigr\}$$ is a countable neighborhood basis for $\textbf{Tr}$ and for each $E\subset F\in [\nn^{<\nn}]^{<\nn}$, $$f^{-1}(\{T\in \textbf{Tr}: T\cap F=E\})= \bigcap_{t\in E}f^{-1}(t)\setminus \bigcup_{t\in F\setminus E}f^{-1}(t).$$  

Fix $t\in \nn^{<\nn}$.  If $|t|\leqslant 1$, $f^{-1}(t)=\mathfrak{L}$.  Assume $t=(k, n_1, \ldots, n_l)$.  Let $S_1=\{(x,y)\in \rr^2: x\leqslant k y\}$, $S_2=\{(x,y)\in \rr^2: x\leqslant y\}$.  Then for $q=(q_i)\in c_{00}\cap \mathbb{Q}^\nn=:Q$, $$(X, Y, \hat{A})\underset{M_q}{\mapsto} \Bigl(\|\sum_{i=1}^l q_i d_{n_i}(X)\|, \|\sum_{i=1}^l q_i e_i\|\Bigr)$$ and $$(X, Y, \hat{A})\underset{N_q}{\mapsto} \Bigl(\|\sum_{i=1}^l q_i e_i\|, \|\sum_{i=1}^l q_i \hat{A}(n_i)\|\Bigr)$$ are both Borel.  Then $f^{-1}(t)= \cap_{q\in Q} [M_q^{-1}(S_2)\cap N_q^{-1}(S_1)]$ is Borel.  This shows that $f$ is Borel in the first case.  For the strictly singular trees, for $(k, n_1, \ldots, n_l)$ fixed, and for each $q\in Q$ and $1\leqslant m<n$, we consider $$(X, Y, \hat{A})\underset{M_{q,m}}{\mapsto} \Bigl(\|\sum_{i=1}^m q_i d_{n_i}(X)\|, \|\sum_{i=1}^l q_i d_{n_i}(X)\|\Bigr),$$ $$(X, Y, \hat{A})\underset{N_q}{\mapsto} \Bigl(\|\sum_{i=1}^l q_i d_{n_i}(X)\|, \|\sum_{i=1}^l q_i \hat{A}(n_i)\|\Bigr).$$  In this case, $f^{-1}(t)= \cap_{q\in Q} [N_q^{-1}(S_1)\cap \cap_{m=1}^{l-1} M_{q,m}^{-1}(S_1)]$.

If $(X, Y, \hat{A})\in \mathfrak{L}$ is such that $A:X\to Y$ preserves a copy of $[(e_i)]$, we may choose $k\in \nn$ and $(n_i)\in \nn^\nn$ so that $(d_{n_i}(X))$ is $1$-dominated by $(e_i)$ and $(Ad_{n_i}(X))$ $k$-dominates $(e_i)$.  This means $k\verb!^!(n_i)_{i=1}^l\in f(X, Y, \hat{A})$ for all $l\in \nn$, and $o(f(X,Y, \hat{A}))=\infty$.  This means that $f^{-1}(\textbf{WF})\subset \mathfrak{NP}_{(e_i)}$.  Similarly, we deduce in the strictly singular case that $f^{-1}(\textbf{WF})\subset \mathfrak{SS}$.  We next show that $o(f(X,Y, \hat{A}))= \textbf{NP}_{(e_i)}(A,X,Y)+1$, which will yield that $\mathfrak{NP}_{(e_i)}\cap \mathfrak{L}\subset f^{-1}(\textbf{WF})$.  For this, we first observe that for any $T\in \textbf{Tr}$, $o(T)=(\sup_{k\in \nn} o(T(k)))+1$.  This is well-known, and easy to see.  Thus in order to reach the conclusion, we only need to show that $\sup_k o(f(X, Y, \hat{A})(k))=\textbf{NP}_{(e_i)}(A,X,Y)=\sup_k o(T_{(e_i)}(A,X,Y, k))$.  Note that for $(n_1,\ldots, n_l)\mapsto (d_{n_1}(X), \ldots, d_{n_l}(X))$ is a monotone map from $f(X,Y, \hat{A})(k)$ into $T_{(e_i)}(A,X,Y, k)$, whence $o(f(X,Y, \hat{A})(k))\leqslant o(T_{(e_i)}(A,X,Y, k))$.  If $\xi=o(f(X,Y, \hat{A})(k+1))< o(T_{(e_i)}(A,X,Y, k))$, we can choose $(x_t)_{t\in \ttt_\xi}\subset B_X$ so that $(x_{t|_i})_{i=1}^{|t|}\in T(A,X,Y,k)$ for all $t\in \ttt_\xi$.  By scaling $(x_t)_{t\in \ttt_\xi}$ by some $c<1$, $c\approx 1$, we can assume that for every $t\in \ttt_\xi$, $(x_{t|_i})_{i=1}^{|t|}$ is $c$-dominated by $(e_i)_{i=1}^{|t|}$ and $(Ax_{t|_i})_{i=1}^{|t|}$ $(k+1/2)$-dominates $(e_i)_{i=1}^{|t|}$.  Then if $\ee_n\downarrow 0$ rapidly (depending on $c$, $k$, and $\|A\|$), we can choose for each $t\in \ttt_\xi$ some $n_t\in \nn$ so that $\|x_t- d_{n_t}(X)\|<\ee_{|t|}$ and that $(d_{n_{t|_i}}(X))_{i=1}^{|t|}\in T_{(e_i)}(A,X,Y, k+1)$ for each $t\in \ttt_\xi$.  Then $(n_{t|_i})_{i=1}^{|t|}\in f(X, Y, \hat{A})(k+1)$ for each $t\in \ttt_\xi$, yielding that $o(f(X,Y, \hat{A})(k+1))>\xi$, a contradiction.  This yields that $o(f(X,Y, \hat{A})(k))\leqslant o(T_{(e_i)}(A,X,Y, k))\leqslant o(f(X,Y, \hat{A})(k+1))$ for all $k\in \nn$, which finishes the proof.

\end{proof}

\begin{remark} Note that in the proof that $f$ is Borel, we deduced $f^{-1}(t)=\cap_{q\in Q}[M_q^{-1}(S_2)\cap N_q^{-1}(S_1)]$. Fix $0<\xi<\omega_1$. Let $g(X,Y, \hat{A})$ be the tree consisting of $\varnothing$, $(k)$ such that $k\in \nn$, and $(k, n_1, \ldots, n_l)$ so that $(d_{n_i}(X))_{i\in E}$ is $1$-dominated by the $\ell_p^{|E|}$ basis and $(\hat{A}(n_i))_{i\in E}$ $k$-dominates the $\ell_p^{|E|}$ basis for each $E\subset \{1,\ldots, l\}$ such that $E\in \sss_\xi$.  If we let $Q_\xi=\{q\in Q: \supp(q)\in\sss_\xi\}$ and if we fix $t=(k, n_1, \ldots, n_l)$, arguing as in the previous proof, $g^{-1}(t)= \cap_{q\in Q_\xi}[M_q^{-1}(S_2)\cap N_q^{-1}(S_1)]$.  We therefore deduce that $g$ is Borel.  Moreover, $g^{-1}(\textbf{WF})$ consists of all of those $(X, Y, \hat{A})\in \mathfrak{L}$ so that $A$ does not preserve an $\ell_p^\xi$ ($c_0^\xi$ spreading model if $p=\infty$) spreading model.  Thus the sets $\mathfrak{SM}_p^\xi$ are also $\Pi_1^1$.  

Similarly, we can define a map from $\mathfrak{L}\mapsto \textbf{Tr}$ so that $(k, n_1, \ldots, n_l)\in f(X, Y, \hat{A})$ if $(d_{n_i}(X))_{i=1}^l$ is $k$-basic and $(\hat{A}{n_i})_{i\in E}$ $k$-dominates the summing basis for each $E\subset \{1, \ldots, l\}$ with $E\in \sss_\xi$, and deduce that the set $\mathfrak{WC}^\xi \cap \mathfrak{L}$ is $\Pi_1^1$.  We will see later that $\mathfrak{SM}_p^\xi$ and $\mathfrak{WC}^\xi$ are actually $\Pi_1^1$ complete.  

\end{remark}

\begin{proof}[Proof of Theorem \ref{our theorem}] For $\eta<\omega_1$, let $\bbb_\eta=\{(X,Y, \hat{A})\in \mathfrak{L}: \textbf{NP}_{(e_i)}(X,Y, A)\leqslant \eta\}$ and recall that this is a Borel subset of $\mathfrak{L}$.  Observe that $$\textbf{A}:=\{(X_i, Y_i, \hat{A}_i)_{i=1}^3\in \textbf{S}^3: X_1=X_2=X_3, Y_1=Y_2=Y_3, \hat{A}_3(n)=\hat{A}_1(n)+\hat{A}_2(n)\text{\ }\forall n\in \nn\}$$ is closed in $\textbf{S}^3$.  We therefore deduce that $$\bbb:=\mathfrak{L}^3\cap \textbf{A}\cap (\bbb_\xi\times \bbb_\zeta \times \textbf{S})$$ is Borel in $\textbf{S}^3$.  Therefore if $\pi$ is the projection onto the third coordinate of $\textbf{S}^3$, $\aaa:=\pi(\bbb)$ is analytic.  Then $\aaa$ is simply collection of all sums of pairs of operators $A,B:X\to Y$ so that $A\in \bbb_\xi$ and $B\in \bbb_\zeta$, $X,Y\in \textbf{SB}$.  Because $(e_i)$ has property $(S')$, $\aaa\subset \mathfrak{NP}_{(e_i)}$.  By boundedness, $$\phi_{(e_i)}(\xi, \zeta):=\sup \{\textbf{NP}_{(e_i)}(A,X,Y): (X,Y, \hat{A})\in \aaa\}<\omega_1.$$  This implies the conclusion if we only consider operators between separable spaces.  

Next, suppose $X,Y\in \Ban$ are (not necessarily separable) Banach spaces, $\xi, \zeta<\omega_1$, and $\textbf{NP}_{(e_i)}(A,X,Y)=\xi$, $\textbf{NP}_{(e_i)}(B,X,Y)=\zeta$.  If $\textbf{NP}_{(e_i)}(A+B, X, Y, K)>\phi_{(e_i)}(\xi, \zeta)=:\eta$ for some $K\geqslant 1$, choose $(x_t)_{t\in \ttt_\eta}\subset B_X$ so that $(x_{t|_i})_{i=1}^{|t|}\in T_{(e_i)}(A+B,X,Y, K)$ for all $t\in \ttt_\eta$.  Since $\eta<\omega_1$, $\ttt_\eta$ is countable, $W:=[x_t:t\in \ttt_\eta]$ is separable.  Let $Z$ be a closed, separable subspace of $Y$ so that $A$ and $B$ map $W$ into $Z$.  Then the collection $(x_t)_{t\in \ttt_\eta}\subset B_W$ implies that $\eta<\textbf{NP}_{(e_i)}((A+B)|_W, W, Z,K)$.  But using the previous paragraph together with the fact that $\textbf{NP}_{(e_i)}(A|_W, W, Z)\leqslant \textbf{NP}_{(e_i)}(A,X,Y)=\xi$ and $\textbf{NP}_{(e_i)}(B|_W, W, Z)\leqslant \textbf{NP}_{(e_i)}(B,X,Y)=\zeta$, we deduce that $\textbf{NP}_{(e_i)}((A+B)|_W, W, Z)\leqslant \phi_{(e_i)}(\xi, \zeta)$.  This contradiction yields the non-separable case.

\end{proof}

\begin{proposition} For each $0<\xi<\omega_1$ and $1\leqslant p\leqslant \infty$, the classes $\mathfrak{WC}^\xi$ and $\mathfrak{SM}_p^\xi$ are $\Pi_1^1$ complete and therefore non-Borel.  

\label{complete}

\end{proposition}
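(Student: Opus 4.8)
The classes $\mathfrak{SM}_p^\xi\cap\mathfrak{L}$ and $\mathfrak{WC}^\xi\cap\mathfrak{L}$ are $\Pi_1^1$ by the discussion following Proposition \ref{coanalytic1}, so it remains to prove $\Pi_1^1$-hardness. By Fact \ref{fact3} it suffices, for each fixed $0<\xi<\omega_1$ and $1\leqslant p\leqslant \infty$, to produce a Borel map $f:\textbf{Tr}\to \mathfrak{L}$ with $f^{-1}(\mathfrak{SM}_p^\xi)=\textbf{WF}$, and similarly a Borel map with preimage of $\mathfrak{WC}^\xi$ equal to $\textbf{WF}$. The plan is to attach to each tree $S$ on $\nn$ a separable ``tree space'' $X_S$, built from $S$ in a Borel fashion, and to set $f(S)=(X_S,X_S,\hat I_S)$, the coding of the identity operator $I_{X_S}$. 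The norm of $X_S$ will be chosen so that an infinite branch of $S$ produces an isometric copy of $\ell_p$ (resp. $c_0$, resp. $\ell_1$) inside $X_S$, while a well-founded $S$ yields a space with \emph{no} $\ell_p^1$ (resp. $c_0^1$, resp. $\ell_1^1$) spreading model; in the $\mathfrak{WC}^\xi$ case $X_S$ will moreover be reflexive exactly when $S$ is well-founded.

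Concretely, for $1\leqslant p<\infty$ let $X_S$ be the completion of $c_{00}(S\setminus\{\varnothing\})$ under $\|x\|_S=\sup\{\|x|_\sigma\|_{\ell_p}:\sigma\subset S\text{ a chain}\}$, where $x|_\sigma$ is the restriction of $x$ to the coordinates indexed by $\sigma$. If $S$ is ill-founded and $(t_i)_{i=1}^\infty$ is an infinite chain, then $(e_{t_i})_i$ spans an isometric copy of $\ell_p$, which is an $\ell_p^\xi$ spreading model preserved by $I_{X_S}$; since $\mathfrak{SM}_p^\xi\subseteq \mathfrak{NP}_p$, this gives $I_{X_S}\notin \mathfrak{SM}_p^\xi$. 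If $S$ is well-founded then every chain of $S$ is finite, and one shows this forces $X_S$ to admit no $\ell_p^1$ spreading model, so $I_{X_S}\in \mathfrak{SM}_p^1\subseteq \mathfrak{SM}_p^\xi$; hence $f^{-1}(\mathfrak{SM}_p^\xi)=\textbf{WF}$. For $p=\infty$ one replaces $X_S$ by the analogous $c_0$ tree space (so an infinite branch gives a $c_0$ copy while well-foundedness kills all $c_0^1$ spreading models; alternatively one can dualize the $\mathfrak{SM}_1^\xi$ construction using Theorem \ref{dualize}, after replacing the non-separable dual by an appropriate separable subspace). For $\mathfrak{WC}^\xi=\mathfrak{WC}\cap\mathfrak{SM}_1^\xi$ one takes instead the norm $\|x\|_S=\sup\{(\sum_j\|x|_{\sigma_j}\|_{\ell_1}^2)^{1/2}:\sigma_1,\dots,\sigma_k\text{ pairwise incomparable chains in }S\}$, under which incomparable nodes behave like an $\ell_2$-sum and a chain like $\ell_1$: an infinite branch again yields an $\ell_1$ copy, so $X_S$ is nonreflexive and $I_{X_S}\notin\mathfrak{WC}$ when $S$ is ill-founded, while for well-founded $S$ the absence of infinite chains makes $X_S$ reflexive and free of $\ell_1^1$ spreading models, giving $I_{X_S}\in\mathfrak{WC}^\xi$.

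That $f$ is Borel is routine and parallels the codings of \cite{Bos,D}: the finitely supported rational vectors form a dense subset of $X_S$ depending Borel-ly on $S$, each of their $\|\cdot\|_S$-norms is a Borel function of $S$ (a supremum of continuous functionals over the Borel set of chains of $S$), and one then embeds $X_S$ isometrically into $\cts$ and reads off $\hat I_S=(d_n(X_S))_n$ via the Borel selectors $d_n$. So by Fact \ref{fact3} each class is $\Pi_1^1$-complete, hence non-Borel.

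The main obstacle is the well-founded direction: verifying that for well-founded $S$ the space $X_S$ really has no $\ell_p^1$ (resp. $c_0^1$, $\ell_1^1$) spreading model, and, in the $\mathfrak{WC}$ case, is reflexive. Since the basis $(e_t)_{t\in S}$ is $1$-unconditional, a putative spreading-model sequence may be replaced by a disjointly supported normalized block sequence $(y_m)$ with finite supports $F_m\subset S$; the norm of $\sum_{m\in E}a_m y_m$ is then controlled by how much of the chains of $S$ can simultaneously meet the $F_m$ for $m\in E$, and because $S$ has no infinite chain no single chain of $S$ can capture positive mass from infinitely many $F_m$. Turning this into the required uniform estimate — that the $\sss_1$-averages of $(y_m)$ have norm $o(|E|)$ — is carried out by transfinite induction on $o(S)$, and is the technical heart of the argument. (Independently of the reduction, non-Borelness of these classes also follows from Fact \ref{fact1}$(ii)$: by Lemma \ref{successors} the identities of $W_\eta$, $W_\eta^p$, $W_\eta^*$ lie in $\mathfrak{SM}_p^1\subseteq\mathfrak{SM}_p^\xi$, and $W_\eta$ is moreover reflexive so $I_{W_\eta}\in\mathfrak{WC}^\xi$, yet $\textbf{NP}_p(I_{W_\eta})=\omega^{\eta+1}$ is unbounded as $\eta\uparrow\omega_1$; since $\textbf{NP}_p$ is a $\Pi_1^1$-rank on $\mathfrak{NP}_p\cap\mathfrak{L}$, were $\mathfrak{SM}_p^\xi\cap\mathfrak{L}$ or $\mathfrak{WC}^\xi\cap\mathfrak{L}$ analytic this would contradict boundedness.)
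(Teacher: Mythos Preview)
Your overall strategy is sound and the boundedness argument at the end matches the paper's, but the construction you use for the reduction differs from the paper's in two ways worth noting.

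First, the paper does not use identities on tree-dependent spaces. It fixes a single James--tree type target space $Z=Z_{1,2}$ and considers, for each $T\in\textbf{Tr}$, the operator $S^T:\ell_1(\nn^{<\nn})\to Z$ given by the formal inclusion composed with the basis projection onto $[e_t:t\in T]$; for $1<p<\infty$ it takes the $p$-convexification $S^T_p:\ell_p(\nn^{<\nn})\to Z_{p,2p}$, and for $p=\infty$ the preadjoint $S_*^T:Z_*\to c_0(\nn^{<\nn})$. Because the domain and codomain are fixed, the Borel map $T\mapsto S^T$ is just SOT-continuity of $T\mapsto P^T$ on a fixed operator space, which is immediate from the continuity of $J\mapsto P_J$ for unconditional bases. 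Your route through Borel embeddings of tree-dependent spaces into $\cts$ is standard but considerably heavier.

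Second, and more substantively, the well-founded direction in the paper hinges on the $\ell_2$ outer norm of $Z_{1,2}$: one has the isometric decomposition $\ker(e_\varnothing^*)\cap Z^T\cong(\oplus_k Z^{T(k)})_{\ell_2}$, so the induction on $o(T)$ closes immediately via Corollary~\ref{spreading model corollary} (the $\ell_2$-sum of spaces with no $\ell_1^1$ spreading model has no $\ell_1^1$ spreading model), which was already proved. Your chain-sup-$\ell_p$ space $X_S$ decomposes instead as a $c_0$-sum over the first-level subtrees, so your induction step needs that $c_0$-sums preserve ``no $\ell_1^1$ spreading model''. That statement is true, but Proposition~\ref{infinite direct sum spreading model}(i) applied with $E=F=c_0$ only yields $\mathfrak{SM}_1^{1+1}=\mathfrak{SM}_1^2$, and parts (ii),(iii) require $F$ reflexive. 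So the ``technical heart'' you flag is a genuine extra piece of work that the paper deliberately avoids by its choice of norm. Observe that the norm you write down for the $\mathfrak{WC}^\xi$ case (with the outer $\ell_2$ over incomparable chains) is essentially the paper's $Z_{1,2}$ restricted to $S$; had you used that same space for $\mathfrak{SM}_1^\xi$ as well, the well-founded step would follow from Proposition~\ref{well founded 1} with no further argument.
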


For this, we will use modifications of the examples considered in \cite{BeD}, which are themselves modifications of the James tree space.  Let $(e_t)_{t\in \nn^{<\nn}}$ denote the canonical basis for $c_{00}(\nn^{<\nn})$.  A finite subset $\mathfrak{s}\subset \nn^{<\nn}$ is called a \emph{segment} if there exist $s,t\in \nn^{<\nn}$ so that $\mathfrak{s}=\{u\in \nn^{<\nn}: s\preceq u \preceq t\}$.   For $1\leqslant p,q< \infty$, let $Z_{p,q}$ be the completion of $c_{00}(\nn^{<\nn})$ under the norm $$\|x\|= \sup \Bigl\{\Bigl(\sum_{i=1}^n \bigl(\sum_{t\in \mathfrak{s}_i} |x(t)|^p\bigr)^{q/p}\Bigr)^q: (\mathfrak{s}_i)_{i=1}^n \text{\ are disjoint segments}\Bigr\}.$$  Note that the norm of a vector $x\in Z_{p,q}$ is at least its norm in $\ell_q(\nn^{<\nn})$, since coordinate projections are projections onto segments of length $1$.  Therefore the basis of $Z_{p,q}$ is boundedly complete.  This means that $Z:=Z_{1,2}$ is therefore naturally the dual of a Banach space $Z_*$ having a shrinking basis the biorthogonal functionals of which are the basis of $Z$.  Given a subset $T$ of $\nn^{<\nn}$, let $Z^T=[e_t:t\in T]$ and let $P^T:Z\to Z^T$ denote the basis projection onto $Z^T$, which has norm $1$ if $T\neq \varnothing$ since $(e_t)$ is a $1$-unconditional basis for $Z$. We let $Z^\varnothing=\{0\}$.  Let $S^T:\ell_1(\nn^{<\nn})\to Z$ be the composition of the formal identity from $\ell_1(\nn^{<\nn})$ to $Z$ with the projection $P^T$. 

\begin{proposition} If $T\in \emph{\textbf{WF}}$, then $S^T$ fails to preserve an $\ell_1^1$ spreading model.  
\label{well founded 1}
\end{proposition}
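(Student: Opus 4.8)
The plan is to reduce the statement to a structural fact about the spaces $Z^T$ and then prove that fact by transfinite induction on $o(T)$. Since $S^T x = P^T x$ lands in $Z^T = [e_t : t\in T]$ for every $x\in\ell_1(\nn^{<\nn})$, and the induced norm on $Z^T$ is the restriction of the $Z$-norm, it suffices to prove: \emph{if $T$ is well-founded then the Banach space $Z^T$ admits no $\ell_1^1$ spreading model}. Indeed, if some $\ell_1^1$ spreading model $(x_n)\subset\ell_1(\nn^{<\nn})$ had $(S^T x_n)$ an $\ell_1^1$ spreading model, then $(S^T x_n)$ would be an $\ell_1^1$ spreading model lying in $Z^T$. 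Recall that an $\ell_1^1$ spreading model is a seminormalized sequence $(y_n)$ with $a,b>0$ so that $\tfrac1a\sum_{i\in E}|a_i|\le\|\sum_{i\in E}a_iy_i\|\le b\sum_{i\in E}|a_i|$ for all $E\in\sss_1$ and all scalars.

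The key structural ingredient is a decomposition of $Z^T$ at the root. Write $x\in Z^T$ as $x = x(\varnothing)e_\varnothing + \sum_k x^{(k)}$ with $x^{(k)} = \sum_{t\succeq(k)}x(t)e_t$; via the shift $e_{(k)^\frown s}\mapsto e_s$, the span of the $x^{(k)}$-block is isometric to $Z^{T((k))}$, and I write $\|x^{(k)}\|$ for the norm computed there. Because a finite family of pairwise disjoint segments of $\nn^{<\nn}$ contains at most one segment through $\varnothing$ — which then lies inside one ``root $+$ single subtree'' $\{\varnothing\}\cup\{(k)^\frown s\}$ — while all other segments lie inside the subtrees, a short computation with the defining supremum gives
\[
\max\bigl(|x(\varnothing)|,\ \bigl(\textstyle\sum_k\|x^{(k)}\|^2\bigr)^{1/2}\bigr)\ \le\ \|x\|_Z\ \le\ |x(\varnothing)| + \bigl(\textstyle\sum_k\|x^{(k)}\|^2\bigr)^{1/2}.
\]
In particular the codimension-one subspace $\{x: x(\varnothing)=0\}$ of $Z^T$ is isometric to $\bigl(\bigoplus_k Z^{T((k))}\bigr)_{\ell_2}$.

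Now the induction. If $o(T)=1$ then $T=\{\varnothing\}$ and $Z^T$ is one-dimensional, so has no $\ell_1^1$ spreading model. Suppose $o(T)=\alpha>1$ and the assertion holds for all well-founded trees of order $<\alpha$. Since $o(T((k)))<o(T)$ whenever $(k)\in T$ (a standard fact on orders of trees), each $Z^{T((k))}$ admits no $\ell_1^1$ spreading model, hence by Corollary~\ref{spreading model corollary} neither does $\bigl(\bigoplus_k Z^{T((k))}\bigr)_{\ell_2}$. Assume toward a contradiction that $(y_n)\subset Z^T$ is an $\ell_1^1$ spreading model. Pass to a subsequence along which $y_n(\varnothing)$ converges, and replace $(y_n)$ by the difference sequence $(y_{4n}-y_{4n-1})_n$: this is again an $\ell_1^1$ spreading model (for $E=\{j_1<\cdots<j_k\}\in\sss_1$ one rewrites $\sum_l b_l(y_{4j_l}-y_{4j_l-1})$ as $\sum_{m\in E'}a_m y_m$ with $E'=\{4j_l-1,4j_l : l\}$, and $|E'|=2k\le 4j_1-1=\min E'$, so $E'\in\sss_1$ and the estimates for $(y_n)$ apply), and now $y_n(\varnothing)\to 0$. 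Passing further so that $|y_n(\varnothing)|$ is at most half the lower constant, the displayed inequalities show that the root-free parts $(y_n^{\ge 1}):=(\sum_k y_n^{(k)})$ form an $\ell_1^1$ spreading model inside $\{x:x(\varnothing)=0\}\cong\bigl(\bigoplus_k Z^{T((k))}\bigr)_{\ell_2}$: the lower estimate survives since $\|\sum a_i y_i^{\ge1}\|_Z\ge\|\sum a_i y_i\|_Z-|\sum a_i y_i(\varnothing)|$, while the upper estimate and seminormalization are immediate. This contradicts the preceding sentence, completing the induction.

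The parts needing real care are: (a) verifying the root-decomposition inequality — elementary, but everything rests on it, the crucial points being the ``at most one segment through $\varnothing$'' observation and that a segment through $\varnothing$ contributes $|x(\varnothing)|$ plus a single-subtree branch-sum which can be merged with that subtree's remaining disjoint segments; and (b) checking that a suitably spaced difference sequence of an $\ell_1^1$ spreading model is again one with comparable constants, which is the standard but fiddly Schreier-family bookkeeping (the spacing $y_{4n}-y_{4n-1}$ is what forces the relevant index sets back into $\sss_1$). It is worth noting that well-foundedness is essential and the result sharp: for ill-founded $T$ the vectors $(e_t)_{t\in B}$ along an infinite branch $B\subset T$ span an isometric copy of $\ell_1$ on which $S^T$ acts isometrically, so $S^T$ then \emph{does} preserve an $\ell_1^1$ spreading model.
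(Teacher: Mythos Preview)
Your proof is correct and follows essentially the same route as the paper's: reduce to showing $Z^T$ admits no $\ell_1^1$ spreading model, identify $\ker(e_\varnothing^*)\cap Z^T$ isometrically with $\bigl(\oplus_k Z^{T((k))}\bigr)_{\ell_2}$, and induct on $o(T)$ using Corollary~\ref{spreading model corollary}. Your treatment is in fact more detailed than the paper's, which simply asserts ``and therefore $Z^T$'' for the codimension-one step; your difference-sequence and perturbation argument spells this out correctly, and the $\sss_1$-bookkeeping via the spacing $y_{4n}-y_{4n-1}$ is exactly what is needed.
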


\begin{proof} We will show by induction on $\xi$ that if $T\in \textbf{WF}$ is such that $o(T)\leqslant \xi+1$, then $Z^T$ does not admit an $\ell_1^1$ spreading model, which yields the result. We first recall, as we have already mentioned, that $o(T)=(\sup_k o(T(k)))+1$.  In particular, $o(T)>o(T(k))$ for all $k\in \nn$. We also recall that if $T$ is a non-empty, well-founded tree, $o(T)$ is a successor, since all non-empty trees contain the empty sequence.   Note that $T(k)$ may be empty, but this will cause no problems.  First, if $o(T)\leqslant 1$, then $T=\{\varnothing\}$, and $Z^T$ is one-dimensional.  Thus the result is trivial in this case.  

Next, assume $0<\xi<\omega_1$ and for every $0\leqslant \zeta<\xi$, the result holds for every well-founded tree $T$ on $\nn$ with order not exceeding $\zeta+1$.  Suppose $T\in \textbf{WF}$ is such that $o(T)\leqslant \xi+1$.  Note that $\ker(e_\varnothing^*)\cap Z^T=(\oplus_k Z^{T(k)})_{\ell_2}$ isometrically. To see that these spaces are isometrically isomorphic, first note that we can partition $(e_t:t\in T\setminus \{\varnothing\})$ into the sets $(e_t: t\in T, (k)\preceq t)$, and for distinct $k,l\in \nn$, any vectors $x$ and $y$ supported in $[e_t: t\in T, (k)\preceq t]$ and $[e_t:t\in T, (l)\preceq t]$, respectively, the members of the supports of $x$ and $y$ are incomparable, which means $\|x+y\|^2= \|x\|^2+\|y\|^2$.  Moreover, the identification $e_{k \verb!^!t}\leftrightarrow e_t$ between $(e_t: t\in T, (k)\preceq t)$ and $(e_t: t\in T(k))=(e_t: t\in \nn^{<\nn}, k\verb!^!t\in T)$ extends to an isometric isomorphism between $[e_t:t\in T, (k)\preceq t]$ and $Z^{T(k)}$.  Thus $\ker(e^*_\varnothing)\cap Z^T$ is isometrically isomorphic to $(\oplus_k Z^{T(k)})_{\ell_2}$.  For each $k\in \nn$, $o(T(k))\leqslant \zeta+1$ for some $\zeta<\xi$, whence $Z^{T(k)}$ does not admit an $\ell_1^1$ spreading model by the inductive hypothesis.  Thus $\ker(e_\varnothing^*)\cap Z^T$ is the $\ell_2$ sum of Banach spaces none of which admits an $\ell_1^1$ spreading model, whence $\ker(e_\varnothing^*)\cap Z^T$, and therefore $Z^T$, does not admit an $\ell_1^1$ spreading model.

\end{proof}

\begin{remark} Note that if $T\in \textbf{Tr}$, $S^T$ clearly preserves a copy of $\ell_1$ if $T$ is ill-founded.  That is, if $(n_i)\in \nn^\nn$ is such that $(n_i)_{i=1}^l\in T$ for all $l\in \nn$, then $(e_{(n_1, \ldots, n_l)})_{l=1}^\infty \subset \ell_1(\nn^{<\nn})$ is isometrically equivalent to the $\ell_1$ basis, and so is its image under $S^T$.  But since $S^T$ is a diagonal operator between spaces with unconditional bases, $S^T$ fails to preserve a copy of $\ell_1$ if and only if $ (S^T)^*\subset [e_t^*:t\in \nn^{<\nn}]\subset \ell_1(\nn^{<\nn})^*$, which happens if and only if $T$ is well-founded.  It is easy to see that in the case that $T$ is well-founded, $S^T$ must actually be weakly compact.  Therefore $S^T$ is weakly compact if and only if $T$ is well-founded if and only if $S^T$ fails to preserve a copy of $\ell_1$ if and only if $S^T$ fails to preserve an $\ell_1^1$ spreading model.  Thus $\{S^T: T\in \textbf{WF}\}\subset \mathfrak{WC}^1\cap \mathfrak{L}$.

\end{remark}

Note that $S^T$ takes disjointly supported vectors in $\ell_1(\nn^{<\nn})$ to disjointly supported vectors in $Z$. Note also that the $p$-convexification of $Z^T$ is $Z_{p, 2p}^T$ for any $T\in \textbf{Tr}$.  Therefore $S^T$ has a $p$-convexification $S^T_p:\ell_p(\nn^{<\nn})\to Z_{p, 2p}$.  Note also that $S^T$ is the adjoint of a map $S_*^T:Z_*\to c_0(\nn^{<\nn})$, where $S^T_*$ is the composition of the projection $P_*^T:Z_*\to Z_*^T$ with the formal identity from $Z_*$ into $c_0(\nn^{<\nn})$.

\begin{corollary} If $T\in \emph{\textbf{WF}}$, then $S^T_p:\ell_p(\nn^{<\nn})\to Z_{p,2p}^T$ fails to preserve an $\ell_p^1$ spreading model.  The preadjoint $S^T_*:Z_*^T\to c_0(\nn^{<\nn})$ of $S^T:\ell_1(\nn^{<\nn})\to Z$ fails to preserve a $c_0^1$ spreading model. 
\label{well founded 2}
\end{corollary}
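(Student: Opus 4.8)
The plan is to obtain both assertions as immediate consequences of Proposition \ref{well founded 1} (which, for $T\in\textbf{WF}$, gives $S^T\in\mathfrak{SM}_1^1(\ell_1(\nn^{<\nn}), Z^T)$) together with the convexification result Proposition \ref{convexify spreading model} and the dualization result Theorem \ref{dualize}$(iv)$; the only work is to match up the relevant operators and spaces with those appearing in those statements.

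For the first assertion, I would note that $\ell_1(\nn^{<\nn})$ carries the $1$-unconditional unit vector basis and $Z^T=[e_t:t\in T]$ carries the $1$-unconditional basis $(e_t)_{t\in T}$, and that $S^T$ sends distinct basis vectors of $\ell_1(\nn^{<\nn})$ to disjointly supported (or zero) vectors of $Z^T$, so the $p$-convexification of $S^T$ in the sense of Proposition \ref{convexify} is defined. One then records the identifications $\ell_1(\nn^{<\nn})^p=\ell_p(\nn^{<\nn})$, $(Z^T)^p=Z_{p,2p}^T$ (the latter already observed in the text), and $(S^T)^p=S^T_p$. Now Proposition \ref{convexify spreading model}, applied with $\xi=1$, says $S^T\in\mathfrak{SM}_1^1$ if and only if $(S^T)^p\in\mathfrak{SM}_p^1(\ell_p(\nn^{<\nn}), Z_{p,2p}^T)$; since $S^T\in\mathfrak{SM}_1^1$ by Proposition \ref{well founded 1}, this gives $S^T_p\in\mathfrak{SM}_p^1$, i.e. $S^T_p$ preserves no $\ell_p^1$ spreading model (for $1<p<\infty$; the case $p=1$ is Proposition \ref{well founded 1} itself).

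For the second assertion, I would recall that $(e_t)_{t\in T}$ is a boundedly complete basis of $Z^T$, so $Z^T$ is the dual of $Z_*^T=[f_t:t\in T]$, and that under the identifications $c_0(\nn^{<\nn})^*=\ell_1(\nn^{<\nn})$ and $(Z_*^T)^*=Z^T$ the adjoint of the formal inclusion $S^T_*:Z_*^T\to c_0(\nn^{<\nn})$ is precisely $S^T:\ell_1(\nn^{<\nn})\to Z^T$; equivalently, $S^T_*$ is the preadjoint of $S^T$. Then Theorem \ref{dualize}$(iv)$, applied with $X=Z_*^T$, $Y=c_0(\nn^{<\nn})$, $A=S^T_*$ and $\xi=1$, says that if $A^*=S^T\in\mathfrak{SM}_1^1(\ell_1(\nn^{<\nn}), Z^T)$ then $A=S^T_*\in\mathfrak{SM}_\infty^1(Z_*^T, c_0(\nn^{<\nn}))$; the hypothesis holds by Proposition \ref{well founded 1}, so $S^T_*$ preserves no $c_0^1$ spreading model.

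Since each half reduces to a single application of a result established earlier, there is no substantial obstacle: the one point that needs attention is the bookkeeping that $p$-convexification carries $S^T:\ell_1(\nn^{<\nn})\to Z^T$ to $S^T_p:\ell_p(\nn^{<\nn})\to Z_{p,2p}^T$ and that taking (pre)adjoints carries it to $S^T_*:Z_*^T\to c_0(\nn^{<\nn})$, with the domains and codomains as claimed. These follow directly from the definitions of $Z_{p,q}$, of $p$-convexification, and of the duality between shrinking and boundedly complete bases, and I would settle each with a one-line verification rather than a computation.
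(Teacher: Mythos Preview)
Your proposal is correct and follows essentially the same route as the paper: convexification for the $\ell_p$ statement and dualization for the $c_0$ statement, both resting on Proposition \ref{well founded 1}. The only presentational difference is that the paper works at the level of the spaces rather than the operators---it simply observes that $Z_{p,2p}^T$ (the range of $S_p^T$) admits no $\ell_p^1$ spreading model because its $p$-deconvexification $Z^T$ admits no $\ell_1^1$ spreading model, and that $Z_*^T$ (the domain of $S_*^T$) admits no $c_0^1$ spreading model because its dual $Z^T$ admits no $\ell_1^1$ spreading model---which spares the bookkeeping of identifying $(S^T)^p$ with $S_p^T$ and the preadjoint with $S_*^T$. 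Your operator-level argument is equally valid and arguably more in keeping with how Propositions \ref{convexify spreading model} and Theorem \ref{dualize}$(iv)$ are actually stated.
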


\begin{proof} The space $Z_{p, 2p}^T$ is just the $p$-convexification of $Z^T$, and so $Z_{p, 2p}^T$ cannot admit an $\ell_p^1$ spreading model unless $Z^T$ admits an $\ell_1^1$ spreading model, which it does not.  Similarly, $Z_*^T$ is a predual of $Z^T$.  If $Z_*^T$ were to admit a $c_0^1$ spreading model, $Z^T$ would admit an $\ell_1^1$ spreading model.

\end{proof}

\begin{proof}[Proof of Proposition \ref{complete}]
We have already seen that each of these classes is $\Pi_1^1$.  To see that these sets are not analytic, one can simply observe that if $\mathfrak{SM}_p^\xi\cap \mathfrak{L}$ is analytic, then it is an analytic subset of $\mathfrak{NP}_p\cap \mathfrak{L}$, and boundedness of $\textbf{NP}_p$ on analytic subsets of $\mathfrak{NP}_p\cap \mathfrak{L}$ would yield that $\sup\{\textbf{NP}_p(A,X,Y):(X, Y, \hat{A})\in \mathfrak{SM}_p^\xi \cap \mathfrak{L}\}$ must be countable.  But we have already seen that for $0<\zeta<\omega_1$, the identity on one of the spaces $W_\zeta$, $W_\zeta^p$, or $W_\zeta^*$ (depending on if $p=1$, $1<p<\infty$, or $p=\infty$) has $\textbf{NP}_p$ index exceeding $\omega^\zeta$, but lies in $\mathfrak{SM}_p^1\cap \mathfrak{L}$.  Moreover, since $W_\xi$ is reflexive for all $\xi$, this yields the result for $\mathfrak{WC}^\xi\cap \mathfrak{L}$.  But we will see the formally stronger statement that these sets are $\Pi_1^1$ complete.  

For the remainder of the proof, we will endow each space $\mathfrak{L}(X,Y)$ with the strong operator topology.  First we note that for $X, Y\in \textbf{SB}$, the map from $\mathfrak{L}(X,Y)$ into $\mathfrak{L}$ given by $A\mapsto (X, Y, \hat{A})$ is continuous. To verify this, since the first two components $X$ and $Y$ are fixed, it is sufficient to show that any net $(S_\lambda)\subset \mathfrak{L}(X,Y)$ converging SOT to $S$ has $(S_\lambda d_n(X))_n$ converging to $(S d_n(X))_n$ in $\cts^\nn$.  But this is simply $S_\lambda d_n(X)\to S d_n(X)$ for each $n\in \nn$, which is implied by SOT convergence.  

Let $X\in \textbf{SB}$ be isometrically isomorphic to $\ell_1(\nn^{<\nn})$ and $Y\in \textbf{SB}$ be isometrically isomorphic to $Z$.  Note that we can identify $\mathfrak{L}(\ell_1(\nn^{<\nn}), Z)$ and $\mathfrak{L}(X, Y)$, and this identification forms a (SOT-SOT) homeomorphism between these spaces. Let $\Phi:\mathfrak{L}(\ell_1(\nn^{<\nn}), Z)\to \mathfrak{L}(X,Y)$ be this identification.  Then the map from $\textbf{Tr}$ to $\mathfrak{L}$ defined by $$\underset{\in \textbf{Tr}}{T}\mapsto \underset{\in\mathfrak{L}(\ell_1(\nn^{<\nn}), Z)}{S^T} \mapsto \underset{\in\mathfrak{L}(X,Y)}{\Phi(S^T)} \mapsto \underset{\in\mathfrak{L}}{(X, Y, \widehat{\Phi(S^T)})}$$ is continuous, once we show that $T\mapsto S^T$ is continuous.  Similar arguments will yield that $T\mapsto S^T_p\in \mathfrak{L}(\ell_p(\nn^{<\nn}), Z_{p, 2p})$ and $T\mapsto S_*^T\in \mathfrak{L}(Z_*, c_0(\nn^{<\nn}))$ are also continuous.  We first show how this finishes the proof, and then return to proving continuity.

We first complete the $p=1$ case, with the $1<p<\infty$ case following by the analogous steps with the $p$-convexifications of the operators, and the $p=\infty$ case following by taking the preadjoints.  Note that we have defined a continuous function $f:\textbf{Tr}\mapsto \mathfrak{L}$.  Moreover,for each $0<\xi<\omega_1$, our previous remarks yield that $f^{-1}(\mathfrak{SM}_1^\xi\cap \mathfrak{L})=f^{-1}(\mathfrak{WC}^1\cap \mathfrak{L})=\textbf{WF}$.  Thus by Fact \ref{fact3} and our above sketch that $\mathfrak{SM}_1^\xi\cap \mathfrak{L}$ and $\mathfrak{WC}^\xi\cap \mathfrak{L}$ are $\Pi_1^1$, we deduce that these classes are $\Pi_1^1$ complete.

We return to the proof of continuity of $T\mapsto S^T$.  This will follow from the following: If $P_\lambda\underset{SOT}{\to}P\in \mathfrak{L}(X,X)$ and $S\in \mathfrak{L}(X,Y)$, then $SP_\lambda\underset{SOT}{\to}SP$.  Similarly, if $P_\lambda\underset{SOT}{\to}P\in \mathfrak{L}(Y,Y)$ and $S\in \mathfrak{L}(X,Y)$, then $P_\lambda S\underset{SOT}{\to}PS$.  Finally, if $(e_i)_{i\in \Lambda}$ is an unconditional basis for $X$, then the map from $2^\Lambda$ to $\mathfrak{L}(X,X)$ given by $J\mapsto P_J$ is continuous.  To see this, suppose $J_\lambda \to J$.  By unconditionality, $(P_{J_\lambda})$ is uniformly bounded, and it is sufficient to check pointwise convergence $P_{J_\lambda}x\to P_Jx$ for all $x$ in a dense subset to conclude that $P_{J_\lambda}\underset{SOT}{\to}P_J$.   To that end, we check that this is true for all finitely supported vectors in $X$.  Fix $x$ with finite support and for each $i\in \supp(x)$, note that $1_{J_\lambda}(i)=1_J(i)$ eventually by definition of convergence in $2^\Lambda$.  Thus $P_{J_\lambda}x=P_J x$ eventually.

\end{proof}

\section{Open questions and discussion}

\subsection{Ideals}

We begin with the most natural question. 

\begin{question} For which ordinals $\xi$, $1\leqslant p\leqslant \infty$, and normalized Schauder bases $(e_i)$ are the following classes ideals? \begin{enumerate}[(i)]\item $\mathfrak{NP}_p^\xi$ \item $\mathfrak{SM}_p^\xi$ \item $\{A:X\to Y: \emph{\textbf{NP}}_{(e_i)}(A,X,Y)\leqslant \xi\}$ \item $\mathfrak{SS}^\xi$ \end{enumerate}

\end{question}

A natural step to showing classes above are ideals is to improve the product estimates provided in this work.  Recall the index $\iota$ defined on non-empty regular families by $$\iota(\fff)=\min \{\xi: \fff^\xi=\{\varnothing\}\}.$$  Recall that if $A\in \mathfrak{SM}_p^\xi(X,Y)$ and $B\in \mathfrak{SM}_p^\zeta(X,Y)$ for some $X,Y\in \Ban$ and $1\leqslant p\leqslant \infty$ and $0<\xi, \zeta<\omega_1$, then $A+B\in \mathfrak{SM}_p^{\xi+\zeta}(X,Y)$.  Since the quantified complexity of $\sss_\xi$ is $\iota(\sss_\xi)=\omega^\xi$, we see that this estimate essentially multiplies complexity.  That is, estimates of complexity $\omega^\xi$ and $\omega^\zeta$ on $A$ and $B$, respectively, yield an estimate on the complexity of the sum $A+B$ of $\omega^{\xi+\zeta}=\omega^\xi\omega^\zeta$.  This is in complete analogy to the local case, where $\textbf{NP}_p(A+B, X, Y)\leqslant \textbf{NP}_p(A,X,Y)\textbf{NP}_p(B,X,Y)$.  

\begin{question} Are the product estimates optimal? 

\end{question}

We have already seen that if $p=1$ or $p=\infty$, better estimates are possible for the spreading model indices. We have seen that a better estimate is possible for $\textbf{NP}_\infty$, and for the $\textbf{NP}_1$ index when the spaces involved have unconditional bases.    

\subsection{Weak compactness}

Let $(s_i)$ be the summing basis for $c_0$.  It follows from standard techniques modifying James's characterization of reflexivity that an operator $A:X\to Y$ fails to be weakly compact if and only if there exists $(x_i)\subset B_X$ so that $(Ax_i)$ dominates $(s_i)$.  Therefore for any operator $A:X\to Y$ and $K\geqslant 1$, we define $$\textbf{WC}(A,X,Y,K)=\{\varnothing\}\cup \Bigl\{(x_i)_{i=1}^n \in B_X^{<\nn}: (s_i)_{i=1}^n \lesssim_K (Ax_i)_{i=1}^n\Bigr\}.$$  We then let $$\textbf{WC}(A,X,Y)=\sup_{K\geqslant 1} o(\textbf{WC}(A,X,Y,K)).$$  Then $A$ is weakly compact if and only if $\textbf{WC}(A,X,Y)<\infty$, which follows immediately from the definition.  We make the following easy observations: 

\begin{proposition}\begin{enumerate}[(i)]\item The class of operators $A:X\to Y$ such that $\emph{\textbf{WC}}(A,X,Y)\leqslant \omega$ is the ideal of super weakly compact operators.  \item The class of operators $A:X\to Y$ such that $\emph{\textbf{WC}}(A,X,Y)<\omega_1$ is an ideal.  \end{enumerate}\end{proposition}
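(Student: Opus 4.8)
## Proof Proposal

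The plan is to prove the two statements of the proposition in parallel, reducing both to facts already established in the excerpt together with the elementary closure properties of the ranks $\mathbf{WC}(\cdot,\cdot,\cdot,K)$ under composition and restriction. The key observation is that $\mathbf{WC}(A,X,Y)\leqslant\omega$ is equivalent to the statement that for every $K\geqslant 1$ there is a finite upper bound on the length of sequences $(x_i)_{i=1}^n\subset B_X$ with $(s_i)_{i=1}^n\lesssim_K(Ax_i)_{i=1}^n$, since the order of the tree $\mathbf{WC}(A,X,Y,K)$ is a successor ordinal (it contains $\varnothing$), so $o(\mathbf{WC}(A,X,Y,K))\leqslant\omega$ forces it to be finite. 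This is the analogue of Proposition~\ref{ultrafilter}, and the first step is to record the ultrafilter characterization: $\mathbf{WC}(A,X,Y)\leqslant\omega$ if and only if $A_\uuu$ is weakly compact for every ultrafilter $\uuu$, by exactly the diagonal-sequence construction in the sketch of Proposition~\ref{ultrafilter} (one builds $\chi_i\in B_{X_\uuu}$ from the witnesses $(x_i^n)_{i=1}^n$ and checks $(A_\uuu\chi_i)$ dominates $(s_i)$; conversely one transfers a finite $(s_i)$-dominating sequence in $X_\uuu$ back into $X$ via a near-isometry onto a finite-dimensional subspace). Granting this, part $(i)$ is immediate: the operators whose every ultrapower is weakly compact are by definition the super weakly compact operators, and this class is a (closed) operator ideal by the standard general construction of super-ideals from the ideal $\mathfrak{WC}$.

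For part $(ii)$, the first step is to observe that $\mathbf{WC}(A,X,Y)<\infty$ is equivalent to $A$ being weakly compact (stated in the excerpt), so the class in question is $\{A:\mathbf{WC}(A,X,Y)<\omega_1\}\cap\mathfrak{WC}$; but in fact, by Bourgain's version of the Kunen–Martin theorem applied to the closed tree $\mathbf{WC}(A,X,Y,K)$ on the Polish space $X$ (as in Proposition~\ref{tedious}$(iv)$ and Theorem~\ref{ideals}), when $X$ is separable $\mathbf{WC}(A,X,Y)<\infty$ already forces $\mathbf{WC}(A,X,Y)<\omega_1$, so the condition $\mathbf{WC}(A,X,Y)<\omega_1$ is just weak compactness plus a separable-reduction caveat. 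Since the ideal axioms are separably determined, I would check the three ideal axioms directly: (a) closure under composition on either side — if $\mathbf{WC}(B,X,Y)<\omega_1$ then for $C\in\mathfrak{L}(W,X)$, $A\in\mathfrak{L}(Y,Z)$ one has $\mathbf{WC}(ABC,W,Z)\leqslant\mathbf{WC}(B,X,Y)$ by the same scaling-and-embedding argument as Proposition~\ref{tedious}$(iii)$, using that $(s_i)$ is spreading and unconditionally-behaved enough that $ABCw_i$ dominates $(s_i)$ forces $Bx$ (with $x=cCw_i$) to dominate $(s_i)$ after rescaling; (b) containment of finite-rank operators, trivial since finite-rank operators are weakly compact and $\mathbf{WC}(B,X,Y,K)$ then has finite order; (c) closure under sums — this is where the real content lies.

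The main obstacle is step (c): showing that if $\mathbf{WC}(A,X,Y)<\omega_1$ and $\mathbf{WC}(B,X,Y)<\omega_1$ (i.e.\ $A,B$ weakly compact) then $\mathbf{WC}(A+B,X,Y)<\omega_1$, i.e.\ $A+B$ weakly compact — which is of course true since $\mathfrak{WC}$ is an ideal, so the ordinal bound is automatic once one knows $\mathbf{WC}(A+B,X,Y)<\infty$. Thus the only genuinely non-trivial point is the quantitative refinement one might hope for, but for the bare assertion in the proposition it suffices to invoke that $\mathfrak{WC}$ is an ideal and that on separable domains the rank is automatically countable by Kunen–Martin; the non-separable case then follows by the standard restriction argument used at the end of the proof of Theorem~\ref{our theorem} (restrict to the separable space $W=[x_t:t\in\ttt_\eta]$ spanned by a putative witnessing tree and a separable $Z\leqslant Y$ containing $A(W)$ and $B(W)$). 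I would therefore organize the proof of $(ii)$ as: reduce to separable spaces; note $\mathbf{WC}(A,X,Y)<\omega_1\iff A\in\mathfrak{WC}(X,Y)$ via Kunen–Martin together with the $\mathbf{WC}(A,X,Y)<\infty\iff$ weak compactness equivalence; conclude the ideal property from the fact that $\mathfrak{WC}$ is an operator ideal. The one place to be careful is checking that the Kunen–Martin hypothesis applies — that $\mathbf{WC}(A,X,Y,K)$ is a closed subset of $X^{<\nn}$ — which holds because the domination condition $(s_i)_{i=1}^n\lesssim_K(Ax_i)_{i=1}^n$ is a closed condition in the $x_i$ (it is a conjunction over rational coefficient tuples of non-strict norm inequalities, each continuous in $(x_i)$ since $A$ is bounded).
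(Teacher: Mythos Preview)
Your argument for part $(i)$ is correct and matches the paper's approach exactly.

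For part $(ii)$, there is a genuine gap in your reduction to the separable case. You correctly observe that for separable $X$, the condition $\textbf{WC}(A,X,Y)<\omega_1$ is equivalent to weak compactness via Kunen--Martin, so on separable spaces the class coincides with $\mathfrak{WC}(X,Y)$ and is trivially an ideal. But the paper explicitly remarks that for non-separable $X$ there exist weakly compact operators (e.g.\ the identities on the reflexive spaces $W_\xi$ for $\xi\geqslant\omega_1$) with \emph{uncountable} $\textbf{WC}$ index; hence the class in $(ii)$ is a proper subclass of $\mathfrak{WC}$, and the non-separable case is genuinely non-trivial.

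Your proposed separable reduction does not close this gap. Suppose $\textbf{WC}(A,X,Y)=\xi<\omega_1$, $\textbf{WC}(B,X,Y)=\zeta<\omega_1$, and assume for contradiction $\textbf{WC}(A+B,X,Y)\geqslant\omega_1$. For each $\eta<\omega_1$ you can restrict to a separable $W_\eta\leqslant X$ with $\textbf{WC}((A+B)|_{W_\eta},W_\eta,Z_\eta)>\eta$; the restrictions $A|_{W_\eta},B|_{W_\eta}$ are weakly compact, hence so is their sum, hence $\textbf{WC}((A+B)|_{W_\eta},W_\eta,Z_\eta)$ is some countable ordinal $\gamma_\eta>\eta$. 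But this is perfectly consistent --- no contradiction arises, because $W_\eta$ varies with $\eta$ and nothing forces the $\gamma_\eta$ to be bounded. The restriction argument at the end of Theorem~\ref{our theorem} works precisely because one first establishes a \emph{uniform} bound $\phi_{(e_i)}(\xi,\zeta)$ independent of the (separable) spaces, and that is the step you have skipped.

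The paper supplies exactly this missing ingredient: it shows via the Borel map $f:\mathfrak{L}\to\textbf{Tr}$ that $(X,Y,\hat{A})\mapsto\textbf{WC}(A,X,Y)+1$ is a $\Pi_1^1$ rank on $\mathfrak{WC}\cap\mathfrak{L}$, and then uses boundedness of $\Pi_1^1$ ranks on analytic sets (Fact~\ref{fact1}) to produce a function $\varphi_{WC}:[1,\omega_1)\times[1,\omega_1)\to[1,\omega_1)$ with $\textbf{WC}(A+B,X,Y)\leqslant\varphi_{WC}(\xi,\zeta)$ for all separable $X,Y$. Only with this uniform bound in hand does your separable-reduction argument go through.
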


The proof of part $(i)$ is essentially the same as the proof that when $(e_i)$ has property $(S')$, the class of operators $A:X\to Y$ so that $\textbf{NP}_{(e_i)}(A,X,Y)\leqslant \omega$ is the ideal of all operators all ultrapowers of which fail to preserve a copy of $(e_i)$.

Of course, part $(ii)$ would be trivial if we restricted our attention to separable domains, since if $X$ is separable and $A:X\to Y$ is an operator, $\textbf{WC}(A,X,Y)$ is either countable or $\infty$.   But since the $\textbf{NP}_1$ index of an operator cannot be larger than the $\textbf{WC}$ index, the identity operators on the reflexive examples $W_\xi$ yield weakly compact operators having uncountable $\textbf{WC}$ index.  Thus part $(ii)$ is non-trivial.  Our proof of part $(ii)$ follows by another descriptive set theoretic argument.  Define the function $f:\mathfrak{L}\to \textbf{Tr}$ by $$(X,Y, \hat{A})\mapsto \{\varnothing\}\cup \Bigl\{(k), k\verb!^!(n_i)_{i=1}^l: (d_{n_i}(X))_{i=1}^l \in \textbf{WC}(A,X,Y, k)\Bigr\}.$$ Then by the same methods as in Lemma \ref{coanalytic1}, $(X,Y, \hat{A})\mapsto o(f(X, Y, \hat{A}))$ is a $\Pi_1^1$ rank on the $\Pi_1^1$ subset $\mathfrak{WC}\cap \mathfrak{L}=f^{-1}(\textbf{WF})$ such that $o(f(X,Y, \hat{A}))= \textbf{WC}(A,X,Y)+1$.  With this we establish the analogue of Theorem \ref{our theorem}: There exists a function $\varphi_{WC}:[1, \omega_1)\times [1, \omega_1)\to [1, \omega_1)$ so that if $X,Y\in \Ban$ (not necessarily seaparable), $\xi, \zeta<\omega_1$ are such that $\textbf{WC}(A,X,Y)\leqslant \xi$ and $\textbf{WC}(B,X,Y)\leqslant \zeta$, then $\textbf{WC}(A+B,X,Y)\leqslant \varphi_{WC}(\xi, \zeta)$.  The proof is an inessential modification of the proof of Theorem \ref{our theorem}.  

\begin{question} For which ordinals $\xi$ is the class of operators $A:X\to Y$ such that $\emph{\textbf{WC}}(A,X,Y)\leqslant \xi$ an ideal? 

\end{question}

\subsection{Other applications of $\mathfrak{L}$ and $\mathfrak{L}_1$} In some cases, it is perhaps more convenient to code only the operators having norm not exceeding $1$.  One convenience of $\mathfrak{L}_1$ is the following concerning the Szlenk index $Sz(A)$ of an operator.  Recall that if $X$ is a separable Banach space, $A:X\to Y$ is an Asplund operator if and only if $A^*B_{Y^*}$ is norm separable.  Let $\mathfrak{A}$ denote the ideal of Asplund operators.  

\begin{proposition} The class $\mathfrak{A}\cap \mathfrak{L}_1$ is a $\Pi_1^1$ subset of $\mathfrak{L}_1$ and the Szlenk index $(X, Y, \hat{A})\mapsto Sz(A)$ is a $\Pi_1^1$ rank on $\mathfrak{A}\cap \mathfrak{L}_1$.  
\end{proposition}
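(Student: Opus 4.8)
The plan is to imitate the construction of the Borel map $f$ in Proposition \ref{coanalytic1}, but with the operator Szlenk index realized on the dual side. Concretely, I would produce a Borel map $f\colon\mathfrak L_1\to\textbf{Tr}$ with $f^{-1}(\textbf{WF})=\mathfrak A\cap\mathfrak L_1$ and with $o(f(X,Y,\hat A))$ a fixed order-preserving and order-reflecting reparametrization of $Sz(A)$ (essentially $Sz(A)+1$, once the ``parameter in the first coordinate'' device is used to absorb the supremum over $\varepsilon$). Granting this, the proposition follows from the facts recalled above about Borel reductions to $\textbf{WF}$ and $\Pi_1^1$ ranks (\cite{D}), together with the standard observation that passing from a $\Pi_1^1$ rank to an order-isomorphic reparametrization does not change the associated $\Pi_1^1$ and $\Sigma_1^1$ preorders, so that $Sz$ itself is then a $\Pi_1^1$ rank on $\mathfrak A\cap\mathfrak L_1$.

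The first step is to make $\overline{A^*B_{Y^*}}^{w^*}$ available, in a Borel way, from the triple $(X,Y,\hat A)$. Fix, Borel-measurably in $X\in\textbf{SB}$, a sequence $(e^X_n)_n$ dense in $B_X$ (a Borel reindexing of $\{d_n(X):\|d_n(X)\|\leqslant 1\}$ works), so that $x^*\mapsto(x^*(e^X_n))_n$ is a $w^*$-homeomorphism of $(B_{X^*},w^*)$ onto a closed subset of $[-1,1]^\nn$ under which the norm of $X^*$ becomes the sup-metric. Since $A^*$ is $w^*$-$w^*$-continuous and $B_{Y^*}$ is $w^*$-compact, $A^*B_{Y^*}$ is $w^*$-compact, convex and balanced, so by the bipolar theorem (polar computed in $X$) one gets $A^*B_{Y^*}=\{x^*\in X^*:|x^*(x)|\leqslant\|Ax\|\text{ for all }x\in X\}$. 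Under the identification above this becomes the closed set
$$\mathcal K_A=\Bigl\{t\in[-1,1]^\nn:\bigl|\sum_i q_it_i\bigr|\leqslant\bigl\|\sum_i q_i\hat A(i)\bigr\|\ \text{for every finitely supported }q\in\mathbb Q^\nn\Bigr\},$$
and, exactly as with $\aaa_K$ and $\mathcal J$, the map $(X,Y,\hat A)\mapsto\mathcal K_A$ is Borel into $F([-1,1]^\nn)$. Recalling that $Sz(A):=Sz\bigl(\overline{A^*B_{Y^*}}^{w^*}\bigr)$, we then have $Sz(A)=Sz(\mathcal K_A)$, the Szlenk derivation being computed with respect to the product topology and the sup-norm diameter on $[-1,1]^\nn$.

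The second step is the tree. Using Borel selectors $(c_m)$ for $F([-1,1]^\nn)$ (\cite{Ke}), I would encode the iterated Szlenk $\tfrac1k$-derivation of $\mathcal K_A$ by a tree on $\nn$ in the standard way that shows $Z\mapsto Sz(Z)$ to be a $\Pi_1^1$ rank on the separable Asplund spaces (\cite{Bos,Bos1,D}; see also \cite{Brooker}): for each $k\in\nn$, let $T_k(X,Y,\hat A)$ consist of the finite sequences $(m_1,\dots,m_l)$ for which $(c_{m_1}(\mathcal K_A),\dots,c_{m_l}(\mathcal K_A))$ is a length-$l$ branch of the $\tfrac1k$-Szlenk derivation tree of $\mathcal K_A$, i.e. $c_{m_j}(\mathcal K_A)\in s_{1/k}^{(j)}(\mathcal K_A)$, certified level by level by finite rational product-neighbourhood data together with the dense set $(c_m(\mathcal K_A))_m$; quantifying these certificates over the rationals and over the countable dense set keeps admissibility Borel in $(X,Y,\hat A)$. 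Then $f(X,Y,\hat A):=\{\varnothing\}\cup\{(k)\con s:k\in\nn,\ s\in T_k(X,Y,\hat A)\}$ is Borel, $o(T_k(X,Y,\hat A))$ equals $Sz(\mathcal K_A,\tfrac1k)$ up to the usual fixed shift, and hence $o(f(X,Y,\hat A))=\bigl(\sup_k o(T_k(X,Y,\hat A))\bigr)+1$ is the announced reparametrization of $\sup_k Sz(\mathcal K_A,\tfrac1k)=Sz(\mathcal K_A)=Sz(A)$. Finally, for separable $X$ one has $A$ Asplund $\iff$ $\mathcal K_A$ norm-separable $\iff$ $Sz(\mathcal K_A)<\omega_1$, the content-bearing implication being the Szlenk-index characterization of Asplund operators (\cite{Brooker}); thus $f^{-1}(\textbf{WF})=\{(X,Y,\hat A)\in\mathfrak L_1:Sz(A)<\omega_1\}=\mathfrak A\cap\mathfrak L_1$, which completes the argument. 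The main obstacle here is purely the descriptive bookkeeping of the second step: arranging the certificates for $c_{m_j}(\mathcal K_A)\in s_{1/k}^{(j)}(\mathcal K_A)$ so that admissibility is genuinely Borel in the parameter and so that $o(T_k)$ tracks $Sz(\mathcal K_A,\tfrac1k)$ with no hidden gap. Since this is entirely parallel to Bossard's coding of the Szlenk index on $\textbf{SB}$ and to the construction of $f$ in Proposition \ref{coanalytic1}, the only genuinely new input is the Borel formula for $\mathcal K_A$ supplied by the bipolar computation. (One could instead try to stay inside $X$ by routing through the weakly null $\ell_1^+$-tree characterization of the Szlenk index, as in the proof of Proposition \ref{mixed}, but the ``weakly null at each node'' clause is awkward to render Borel, so the $w^*$-side coding is cleaner.)
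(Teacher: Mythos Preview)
Your proposal is correct and follows essentially the same route as the paper. Both arguments embed $A^*B_{Y^*}$ into a fixed compact metrizable space via evaluation at a Borel-in-$X$ dense sequence in the unit ball (sphere, in the paper) of $X$, check that the assignment $(X,Y,\hat A)\mapsto\text{(image)}$ is Borel into the hyperspace, and then invoke the Bossard--Dodos machinery to conclude. The paper embeds into $H=(B_{\ell_\infty},\sigma(\ell_\infty,\ell_1))$ and simply cites the known fact that $\Omega=\{F\in F(H):F\text{ norm separable}\}$ is $\Pi_1^1$ with $\Pi_1^1$ rank $\sup_n|F|_{D_n}$, so that the whole proof reduces to checking that the graph $D=\bigcup\{(X,Y,\hat A)\}\times F_{(X,Y,\hat A)}$ is Borel with compact sections.

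Your bipolar computation $A^*B_{Y^*}=\{x^*:|x^*(x)|\leqslant\|Ax\|\text{ for all }x\}$ is a clean addition: it makes the Borel-ness of $D$ (equivalently, of your $\mathcal K_A$) transparent as a countable intersection of Borel conditions, whereas the paper just asserts it. On the other hand, your second step does more work than necessary. Rather than rebuilding the tree encoding of the iterated $\varepsilon$-derivations and then appealing to Bossard anyway, it is simpler to do what the paper does: once you have the Borel map $(X,Y,\hat A)\mapsto\mathcal K_A\in F([-1,1]^\nn)$, pull back the existing $\Pi_1^1$ rank on $\Omega$ directly. One small notational slip: in your formula for $\mathcal K_A$ you write $\|\sum_i q_i\hat A(i)\|$, but $\hat A(i)=Ad_i(X)$ while your embedding uses the reindexed $e_i^X$; you need $\|A(\sum_i q_ie_i^X)\|$, which is still Borel in $(X,Y,\hat A)$ once you express $e_i^X$ as a Borel function of the $d_n(X)$.
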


\begin{proof} We follow the argument from \cite{D}, the ideas of which have their origins in \cite{Bos}, where it was shown that the class \textbf{SD} of Banach spaces having separable dual is a $\Pi_1^1$ subset of \textbf{SB} and the Szlenk index is a $\Pi_1^1$ rank on \textbf{SD}. Of course, \textbf{SD} is simply the class of separable Banach spaces whose identity operators lie in $\mathfrak{A}$.  

Let $H=(B_{\ell_\infty}, \sigma(\ell_\infty, \ell_1))$, and note that this set is compact metrizable.  Then $\Omega=\{F\in F(H): F \text{\ is norm separable}\}$ is a $\Pi_1^1$ subset of $H$ and the index $\sup_n |F|_{D_n}$ is a $\Pi_1^1$ rank on $\Omega$.  We do not define the indices $|\cdot|_{D_n}$, only state the relevant properties as necessary.  

For each $n\in \nn$, the map $s_n:\textbf{SB}\to \cts$ defined by $s_n(X) = d_n(X)/\|d_n(X)\|$ if $d_n(X)\neq 0$ and $s_n(X)=0$ is Borel and $\{s_n(X): n\in \nn\}$ is dense in $S_X$ for all $X\in \textbf{SB}$.  For $A^*y^*\in A^*B_{Y^*}$, we let $f_{A^*y^*}=(A^*y^* s_n(X))$.  Then one easily observes that $A^*y^*\leftrightarrow f_{A^*y^*}$ is a homeomorphism between $(A^*B_{Y^*}, \sigma(X^*, X))$ and its image, call it $F_{(X,Y, \hat{A})}\in F(H)$, which preserves norm distances.  Then $(X,Y,\hat{A})\in \mathfrak{A}\cap \mathfrak{L}_1$ if and only if $F_{(X,Y, \hat{A})}\in \Omega$ and $Sz(A^*B_{Y^*})= \sup_n |F_{(X,Y, \hat{A})}|_{D_n}$.  Let $D=\cup_{(X,Y, \hat{A})\in \mathfrak{L}_1} \{(X,Y, \hat{A})\}\times F_{(X,Y, \hat{A})} \subset \mathfrak{L}_1\times H$.  Note that $D$ is Borel.  Since each section $D_{(X,Y, \hat{A})}= F_{(X, Y, \hat{A})}$ is compact, the map $(X, Y, \hat{A})\mapsto F_{(X,Y, \hat{A})}$ is Borel \cite{Ke}.  Thus the map $(X,Y, \hat{A})\underset{\Phi}{\mapsto} F_{(X,Y, \hat{A})}$ is a Borel reduction, $\Phi^{-1}(\Omega)=\mathfrak{A}\cap \mathfrak{L}_1$, and $Sz(A)=\sup_n |F_{(X,Y, \hat{A})}|_{D_n}$ is a $\Pi_1^1$ rank on $\mathfrak{A}\cap \mathfrak{L}_1$.

\end{proof}


\begin{thebibliography}{HD}

\normalsize
\baselineskip=17pt


\bibitem{AA}  D. Alspach,  S.A. Argyros. \emph{Complexity of weakly null sequences}, Diss. Math., 321 (1992), 1-44.

\bibitem{AJO} D. Alspach, R. Judd, E. Odell. \emph{The Szlenk index and local $\ell_1$-indices}, Positivity, 9 (2005), no. 1,1-44. 

\bibitem{ADST} G. Androulakis, P. Dodos, G. Sirotkin, V. G. Troitsky. \emph{Classes of strictly singular operators and their products}, Israel J. Math., 169 (2009), 221-250.  

\bibitem{AMT} S.A. Argyros, S. Mercourakis, and A. Tsarpalias.  \emph{Convex unconditionality and summability of weakly null sequences}, Israel J. Math. 107 (1998), 157-193. 

\bibitem{BeD} K. Beanland, P. Dodos. \emph{On strictly singular operators between separable Banach spaces}, Mathematika, 56 (2010), no. 2, 285-304.

\bibitem{BF} K. Beanland, D. Freeman. \emph{Ordinal ranks on weakly compact and Rosenthal operators}, Extracta Math. 26(2) (2011), 173-194. 

\bibitem{BF2} K. Beanland, D. Freeman. \emph{Uniformly factoring weakly compact operators}, J. Funct. Anal. 266 (2014), no. 5, 2921-2943.

\bibitem{Bos} B. Bossard. \emph{Th\'{e}orie descriptive des ensembles en g\'{e}om\'{e}trie des espaces de Banach}. Th\'{e}se, Univ. Paris VI, (1994).  

\bibitem{Bos1} B. Bossard. \emph{A coding of separable {B}anach spaces. {A}nalytic and coanalytic families of {B}anach spaces}. Fund. Math. (2002), no. 2, 172:117-152.

\bibitem{Bo} J. Bourgain. \emph{On convergent sequences of continuous functions}, Bull. Soc. Math. Bel., (1980) 32:235-249.

\bibitem{Bo1} J. Bourgain. \emph{On separable Banach spaces, universal for all separable reflexive spaces}, Proc. Amer. Math. Soc., 79(2) (1980), 241-246.  

\bibitem{BD} J. Bourgain, F. Delbaen. \emph{A class of special $\mathcal{L}_\infty$ spaces}, Acta Math. (1980) 145(3-4):155-176. 

\bibitem{Brooker} P.A.H. Brooker, \emph{Asplund operators and the Szlenk index}, Operator Theory 68 (2012), 405-442.

\bibitem{C1} R.M. Causey. \emph{Concerning the Szlenk index}, submitted. 

\bibitem{C3} R.M. Causey. \emph{Estimation of the Szlenk index of reflexive Banach spaces using generalized Baernstein spaces}, to appear in Fundamenta Mathematicae. 

\bibitem{C2} R.M. Causey. \emph{Proximity to $\ell_p$ and $c_0$ in Banach spaces}, submitted.  

\bibitem{D} P. Dodos. \emph{Banach Spaces and Descriptive Set Theory: Selected Topics}, Lecture Notes in Mathematics, Vol. 1993, Springer, (2010).

\bibitem{FOS} D. Freeman, E. Odell, Th. Schlumprecht. \emph{The universality of $\ell_1$ as a dual space}, Math. Ann. (2011) 351:149-186. 

\bibitem{H} R. Haydon. \emph{Subspaces of the Bourgain-Delbaen space}, Studia Math., (2000)139(3):275-293.  

\bibitem{JO} R. Judd, E. Odell. \emph{Concerning Bourgain's $\ell_1$ index of a Banach space}, Israel J. Math (1998)108:145-171. 

\bibitem{Ke} A. S. Kechris, \emph{Classical Descriptive Set Theory}, Grad. Texts in Math., 156, Springer-Verlag, 1995. 

\bibitem{K} J.L. Krivine. \emph{Sous-espaces de dimension finie des espaces de Banach r\'{e}ticul\'{e}s}, Ann. of Math., (1976)104:1-29. 

\bibitem{KRN} K. Kuratowski, C. Ryll-Nardzewski. \emph{A general theorem on selectors}, Bull Acad. Pol. Sci. S\'{e}r. Sci., Math. Astr. et Phys. 13 (1965), 397-403. 

\bibitem{M} J.D. Monk. \emph{Introduction to set theory}, McGraw-Hill, (1969). 

\bibitem{OSZ} E. Odell, Th. Schlumprecht, A. Zs\'{a}k.  \emph{Banach spaces of bounded Szlenk index}, Studia Math. 183 (2007), no. 1, 63-97. 

\bibitem{OT} E. Odell, R. Teixeira, \emph{On $S_1$-strictly singular operators}, to appear in Proc. Amer. Math. Soc.

\bibitem{Z} M. Zippin. \emph{On perfectly homogeneous bases in Banach spaces}, Israel J. Math. \textbf{4} (1966), 265-272. 
\end{thebibliography}
\end{document}